\documentclass[12pt]{amsart}

\usepackage[margin=1.25in]{geometry}

\usepackage{amsmath}
\usepackage{amsthm}
\usepackage{amssymb}
\usepackage{caption}
\usepackage[curve]{xypic}
\usepackage{cite}
\usepackage{enumitem}
\usepackage{url}
\usepackage{color}
\usepackage[usenames,dvipsnames]{xcolor}
\usepackage{graphicx}
\usepackage{tikz}
\usepackage{tikz-cd}
\usepackage{hyperref} 
\usepackage{verbatim}
\usepackage{multirow}
\usepackage{subcaption}
\usepackage{graphicx}
\usepackage{algorithm}
\usepackage{algorithmic}

\setlist{itemsep=4pt, topsep=4pt}

\makeatletter

\def\chaptermark#1{}

\def\chapter{%
\if@openright\cleardoublepage\else\clearpage\fi
\thispagestyle{plain}\global\@topnum\z@
\@afterindenttrue \secdef\@chapter\@schapter}

\def\@chapter[#1]#2{\refstepcounter{chapter}%
\ifnum\c@secnumdepth<\z@ \let\@secnumber\@empty
\else \let\@secnumber\thechapter \fi
\typeout{\chaptername\space\@secnumber}%
\def\@toclevel{0}%
\ifx\chaptername\appendixname \@tocwriteb\tocappendix{chapter}{#2}%
\else \@tocwriteb\tocchapter{chapter}{#2}\fi
\chaptermark{#1}%
\addtocontents{lof}{\protect\addvspace{10\p@}}%
\addtocontents{lot}{\protect\addvspace{10\p@}}%
\@makechapterhead{#2}\@afterheading}
\def\@schapter#1{\typeout{#1}%
\let\@secnumber\@empty
\def\@toclevel{0}%
\ifx\chaptername\appendixname \@tocwriteb\tocappendix{chapter}{#1}%
\else \@tocwriteb\tocchapter{chapter}{#1}\fi
\chaptermark{#1}%
\addtocontents{lof}{\protect\addvspace{10\p@}}%
\addtocontents{lot}{\protect\addvspace{10\p@}}%
\@makeschapterhead{#1}\@afterheading}
\newcommand\chaptername{Chapter}

\def\@makechapterhead#1{\global\topskip 7.5pc\relax
\begingroup
\fontsize{\@xivpt}{18}\bfseries\centering
\ifnum\c@secnumdepth>\m@ne
  \leavevmode \hskip-\leftskip
  \rlap{\vbox to\z@{\vss
      \centerline{\normalsize\mdseries
          \uppercase\@xp{\chaptername}\enspace\thechapter}
      \vskip 3pc}}\hskip\leftskip\fi
 #1\par \endgroup
\skip@34\p@ \advance\skip@-\normalbaselineskip
\vskip\skip@ }
\def\@makeschapterhead#1{\global\topskip 7.5pc\relax
\begingroup
\fontsize{\@xivpt}{18}\bfseries\centering
#1\par \endgroup
\skip@34\p@ \advance\skip@-\normalbaselineskip
\vskip\skip@ }

\newcounter{chapter}

\newif\if@openright

\makeatother

\makeatletter 
\def\@cite#1#2{{\m@th\upshape\bfseries%
[{#1\if@tempswa{\m@th\upshape\mdseries, #2}\fi}]}}
\makeatother 

\theoremstyle{plain}
\newtheorem{thm}{Theorem}[section]
\newtheorem{cor}[thm]{Corollary}

\newtheorem{prop}[thm]{Proposition}
\newtheorem{lem}[thm]{Lemma}
\newtheorem{sublem}[thm]{Sublemma}
\theoremstyle{definition}

\theoremstyle{remark}
\newtheorem{rem}[thm]{Remark}

\numberwithin{equation}{subsection}

\newcommand{\nc}{\newcommand}
\newcommand{\rnc}{\renewcommand}

\nc\bA{\mathbb{A}}
\nc\bB{\mathbb{B}}
\nc\bC{\mathbb{C}}
\nc\bD{\mathbb{D}}
\nc\bE{\mathbb{E}}
\nc\bF{\mathbb{F}}
\nc\bG{\mathbb{G}}
\nc\bH{\mathbb{H}}
\nc\bI{\mathbb{I}}
\nc{\bJ}{\mathbb{J}} 
\nc\bK{\mathbb{K}}
\nc\bL{\mathbb{L}}
\nc\bM{\mathbb{M}}
\nc\bN{\mathbb{N}}
\nc\bO{\mathbb{O}}
\nc\bP{\mathbb{P}}
\nc\bQ{\mathbb{Q}}
\nc\bR{\mathbb{R}}
\nc\bS{\mathbb{S}}
\nc\bT{\mathbb{T}}
\nc\bU{\mathbb{U}}
\nc\bV{\mathbb{V}}
\nc\bW{\mathbb{W}}
\nc\bY{\mathbb{Y}}
\nc\bX{\mathbb{X}}
\nc\bZ{\mathbb{Z}}
\nc\cA{\mathcal{A}}
\nc\cB{\mathcal{B}}
\nc\cC{\mathcal{C}}
\rnc\cD{\mathcal{D}}
\nc\cE{\mathcal{E}}
\nc\cF{\mathcal{F}}
\nc\cG{\mathcal{G}}
\rnc\cH{\mathcal{H}}
\nc\cI{\mathcal{I}}
\nc{\cJ}{\mathcal{J}} 
\nc\cK{\mathcal{K}}
\rnc\cL{\mathcal{L}}
\nc\cM{\mathcal{M}}
\nc\cN{\mathcal{N}}
\nc\cO{\mathcal{O}}
\nc\cP{\mathcal{P}}
\nc\cQ{\mathcal{Q}}
\rnc\cR{\mathcal{R}}
\nc\cS{\mathcal{S}}
\nc\cT{\mathcal{T}}
\nc\cU{\mathcal{U}}
\nc\cV{\mathcal{V}}
\nc\cW{\mathcal{W}}
\nc\cY{\mathcal{Y}}
\nc\cX{\mathcal{X}}
\nc\cZ{\mathcal{Z}}
\nc\bfA{\mathbf{A}}
\nc\bfB{\mathbf{B}}
\nc\bfC{\mathbf{C}}
\nc\bfD{\mathbf{D}}
\nc\bfE{\mathbf{E}}
\nc\bfF{\mathbf{F}}
\nc\bfG{\mathbf{G}}
\nc\bfH{\mathbf{H}}
\nc\bfI{\mathbf{I}}
\nc{\bfJ}{\mathbf{J}} 
\nc\bfK{\mathbf{K}}
\nc\bfL{\mathbf{L}}
\nc\bfM{\mathbf{M}}
\nc\bfN{\mathbf{N}}
\nc\bfO{\mathbf{O}}
\nc\bfP{\mathbf{P}}
\nc\bfQ{\mathbf{Q}}
\nc\bfR{\mathbf{R}}
\nc\bfS{\mathbf{S}}
\nc\bfT{\mathbf{T}}
\nc\bfU{\mathbf{U}}
\nc\bfV{\mathbf{V}}
\nc\bfW{\mathbf{W}}
\nc\bfY{\mathbf{Y}}
\nc\bfX{\mathbf{X}}
\nc\bfZ{\mathbf{Z}}

\newcommand{\xra}{\mathop{\longrightarrow}^}
\newcommand{\del}{\partial}

\nc{\dmo}{\DeclareMathOperator}
\nc{\wt}{\widetilde}
\rnc{\Re}{\operatorname{Re}}
\rnc{\Im}{\operatorname{Im}}
\rnc{\span}{\operatorname{span}}
\dmo{\rank}{rank}
\dmo{\End}{End}
\dmo{\Hom}{Hom}
\dmo{\Jac}{Jac}
\dmo{\Id}{Id}
\dmo{\Ann}{Ann}
\dmo{\Area}{Area}
\dmo{\CP}{\bC P^1}
\dmo{\rk}{rk}
\dmo{\rel}{rel}
\dmo{\ra}{\rightarrow}
\dmo{\AGL}{\mathrm{AGL}}
\dmo{\AO}{\mathrm{AO}}
\dmo{\Sym}{\mathrm{Sym}}
\dmo{\Hur}{\mathrm{Hur}}
\dmo{\Aut}{\mathrm{Aut}}
\dmo{\Mod}{\mathrm{Mod}}

\rnc{\Col}{\operatorname{Col}}

\nc{\ColOne}{\Col_{\bfC_1}}
\nc{\ColOneX}{\ColOne(X,\omega)}

\nc{\ColTwo}{\Col_{\bfC_2}}
\nc{\ColTwoX}{\ColTwo(X,\omega)}

\nc{\ColThree}{\Col_{\bfC_3}}
\nc{\ColThreeX}{\ColThree(X,\omega)}

\nc{\ColOneTwo}{\Col_{\bfC_1, \bfC_2}}
\nc{\ColOneTwoX}{\ColOneTwo(X,\omega)}

\nc{\ColOneThree}{\Col_{\bfC_1, \bfC_3}}
\nc{\ColOneThreeX}{\ColOneThree(X,\omega)}

\nc{\MOne}{\cM_{\bfC_1}}
\nc{\MTwo}{\cM_{\bfC_2}}
\nc{\MOneTwo}{\cM_{\bfC_1, \bfC_2}}

\nc{\MThree}{\cM_{\bfC_3}}

\nc{\MOneThree}{\cM_{\bfC_1, \bfC_3}}

\dmo{\For}{\cF}

\nc{\GL}{\mathrm{GL}^+(2, \bR)}

\title[Components of strata and their orbit closures]{Components of strata of $k$-differentials and their orbit closures}
\author[Apisa]{Paul Apisa}
\author[Aygun]{Juliet Aygun}
\subjclass[2010]{32G15, 37D40, 14H15}

\begin{document}

\begin{abstract}
We obtain a complete classification of components of strata of holomorphic and meromorphic $k$-differentials. We show that, when genus is at least two and outside of explicit exceptions when $k \leq 3$, there is one primitive nonhyperelliptic component unless $k$ is odd and all singularities have even order, in which case there are two distinguished by their Arf invariant. The exceptions include new sporadic components of strata of cubic differentials. Our work provides a new proof of earlier results of Kontsevich-Zorich, Boissy, Lanneau, and Chen-Gendron when $k = 1, 2$. The proofs are almost purely algebraic, relying on the multiscale compactification of Bainbridge-Chen-Gendron-Grushevsky-M\"oller. This answers a question of Chen-Yu. 

We also show that for any component of a stratum of finite area $k$-differentials of positive genus, the smallest $\mathrm{GL}(2, \bR)$-orbit closure containing all of its holonomy covers is as big as possible. This answers the positive genus analogue of a ``long term goal'' of Mirzakhani-Wright. The result also holds in genus zero strata that contain surfaces with Euclidean cylinders, thus addressing infinitely many cases of the original question of Mirzakhani-Wright.
\end{abstract}

\maketitle

\setcounter{tocdepth}{1} 
\tableofcontents

\vspace{-5mm}

\section{Introduction}\label{S:Intro}

Let $k$ be a positive integer. If $X$ is a closed Riemann surface of genus $g$, then a \emph{k-differential} $\omega$ is a tensor on $X$ that can be written in any local holomorphic coordinate $z$ on $X$ as $f(z)dz^k$ where $f$ is meromorphic. In particular, if $K_X$ is the canonical bundle and $P = \sum_{p} |n_p| p$ is the sum of poles weighted by their order, then $\omega \in H^0(K_X^{\otimes k}(P))$. The \emph{order multiset} $\kappa$ of orders of zeros and poles is an integer partition of $k(2g-2)$. Given such a partition, the \emph{stratum} $\Omega^k \cM_g(\kappa)$ is the set of all pairs $(X, \omega)$ where $X$ belongs to the moduli space $\cM_g$ of smooth genus $g$ curves and $\omega$ is a $k$-differential on $X$ with order multiset $\kappa$. Given such an order multiset $\kappa = (k_i)_{i=1}^n$, $\Omega^k \cM_g(\kappa)$ is a $\bC^\times$-bundle over
\[ \bP \Omega^k \cM_g(\kappa) := \{ (X; p_1, \hdots, p_n) \in \cM_{g,n} : \sum_i k_i p_i \sim k K_X \}. \]
Perhaps surprisingly, components of strata have remained unclassified when $k > 2$. The question of classifying them has been around for many years and was formally posed by Chen-Yu \cite[Problem 3.3]{chen2026algebrogeometric}. For $k=1$, the classification of strata without poles was achieved by Kontsevich-Zorich \cite{KZ} and then by Boissy \cite{Boissy-Components} in general. For $k=2$, strata with at most simple poles were classified by Lanneau \cite{Lconn} and then by Chen-Gendron \cite{ChenGendron} in general. 

A component of a stratum is \emph{primitive} if one and hence every $k$-differential in it is not a power of a lower order differential. A component is \emph{hyperelliptic} if every element $(X, \omega)$ has the property that $X$ is hyperelliptic with the divisor of $\omega$ invariant under the hyperelliptic involution. Hyperelliptic components of strata were classified by Chen-Gendron \cite{ChenGendron}. In light of these observations, in order to classify components of strata, it suffices to classify primitive nonhyperelliptic ones. 

Given a $k$-differential $(X, \omega)$, let $\Sigma_{g,n}$ be the underlying topological genus $g$ surface punctured at the $n$ singularities. A $k$-differential has an associated winding number $w \in H^1(UT\Sigma_{g,n}, \bZ)$ where $UT\Sigma_{g,n}$ is the unit tangent bundle. If $k$ is odd and all singularities of the $k$-differential are of even order, $w$ mod $2$ belongs to $H^1(UT\Sigma_g; \bZ/2)$ and defines a $\bZ/2$-valued \emph{Arf invariant}. See Section \ref{S:Arf} for a description of this invariant and its ``relative'' version. The Arf invariant was studied by Atiyah \cite{Atiyah-spin}, Mumford \cite{Mumford-theta}, and Johnson \cite{Johnson-Spin}. 

The main theorem of this paper is the following. By our previous remarks, this provides a complete classification of components of strata.

\begin{thm}\label{T1}
If $g \geq 2$ and $k > 3$, $\Omega^k \cM_g(\kappa)$ has a unique primitive nonhyperelliptic component unless $k$ is odd and all singularities have even order, in which case there are two such components distinguished by their Arf invariants. When $g \geq 2$ and $k \leq 3$, the same holds except:
\begin{itemize}
    \item $\Omega^1 \cM_g(\kappa)$ is empty if the multiset of poles in $\kappa$ is $(-1)$.
    \item There are no such components of $\Omega^1 \cM_2(\kappa)$ with $\kappa = (2), (1,1)$ and of $\Omega^2 \cM_2(\kappa)$ with $\kappa = (4), (3,1), (2,2), (2,1,1), (1,1,1,1)$. 
    \item There is only one such component of $\Omega^1 \cM_3(\kappa)$ and $\Omega^1 \cM_2(\kappa, -2)$ where $\kappa = (4), (2,2)$.
    \item There is only one such component of $\Omega^3 \cM_2(\kappa)$ where $\kappa = (6), (4,2), (2,2,2)$.
    \item There are two such components of $\Omega^1 \cM_g(\kappa, -1, -1)$ distinguished by the relative Arf invariant when $g \geq 3$ and $\kappa$ is a positive even partition of $2g$.
    \item There are two such components of $\Omega^2 \cM_3(\kappa, -1)$ with $\kappa = (9), (6,3), (3,3,3)$ and $\Omega^2 \cM_4(\kappa)$ with $\kappa = (12), (9,3), (6,6), (6,3,3), (3,3,3,3)$.
    \item There are three such components of $\Omega^3 \cM_3(\kappa)$ with $\kappa = (12), (8,4), (4,4,4)$ distinguished by the Arf invariant and the generic value of $h^0(\cO_X(P))$ where $4P$ is the divisor of the cubic differential $X$.
\end{itemize}
\end{thm}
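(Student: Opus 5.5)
The plan is to induct on genus and on the number of singularities, using the multiscale compactification of Bainbridge--Chen--Gendron--Grushevsky--M\"oller to degenerate any primitive nonhyperelliptic component to a boundary stratum of smaller complexity. The argument has two halves. The lower bound on the number of components comes from invariants. When $k$ is odd and all orders are even, the Arf invariant of Section \ref{S:Arf} is locally constant. One checks that both parities occur among primitive nonhyperelliptic differentials, for instance by smoothing nodal differentials with prescribed spin data. The exceptional cases need finer invariants: the relative Arf invariant for $(-1,-1)$, and the generic value of $h^0(\cO_X(P))$ for the cubic strata in genus three. For the upper bound, I would show that every component contains a specific boundary point of a standard type. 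The components are then controlled by the components of lower-complexity strata, together with the finitely many ways of smoothing that boundary point.

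The key steps, in order, are as follows.

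First, merge zeros. Every component of $\Omega^k \cM_g(\kappa)$ has in its closure a multiscale differential in which two zeros collide. This is a genus-zero bottom level carrying two zeros and a pole, attached to a top level in the stratum with the merged order. Conversely, breaking up a zero is locally unique given the top-level component, up to the choice of prong-matching. The prong-matchings act transitively after monodromy, except when the parity obstruction coming from the Arf invariant appears. This reduces the problem to minimal strata $\kappa = (k(2g-2))$, and to strata with poles where only the zero count shrinks.

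Second, treat the minimal and near-minimal strata by degenerating to a curve of compact type. Concretely, take a two-level differential whose top level is a genus $g-1$ differential glued to a genus-one piece through a single node. Show that each primitive nonhyperelliptic component degenerates to such a point. The natural approach is to find a flat cylinder or saddle connection and pinch it, which the paper's cylinder machinery should supply. Then apply induction on $g$. The base cases are genus one, where components are classified by rotation number and the divisor-of-torsion order, and genus two, where hyperellipticity is forced for small $k$. Primitivity must be tracked throughout: a degeneration whose parts are all $d$-th powers might only smooth to imprimitive differentials. I would handle this by choosing the genus-one piece to have a torsion condition coprime to $d$.

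Third, carry out the case analysis for $k \le 3$. Here the count of boundary configurations and the dimension estimates fail in low genus, and the sporadic components must be identified explicitly:
\begin{itemize}
    \item the cubic components in $\Omega^3\cM_3(12)$ distinguished by $h^0$;
    \item the quadratic strata in genus $3$ and $4$ with orders divisible by $3$, which arise as covers related to cubic/abelian data.
\end{itemize}
In these cases I would verify the invariants separate the components and that each invariant value gives a connected locus, using the same degeneration but with explicit low-genus base strata.

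I expect the main obstacle to be the monodromy/prong-matching step in the upper bound. There one must show that the smoothings of the chosen boundary point from different prong-matchings are joined within the stratum, unless an Arf invariant separates them. I would first try to compute the action of the level-rotation torus together with the monodromy of the lower-genus component on prongs. I would then show that this action is transitive modulo the spin obstruction. In the sporadic $k \le 3$ cases this transitivity genuinely fails, and that failure is where the extra components come from.
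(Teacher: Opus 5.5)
Your lower-bound half (the Arf invariant is preserved under suitable smoothings, and the relative Arf invariant handles $(-1,-1)$) is consistent with the paper. The upper-bound half has a genuine gap: the reductions you propose lose exactly the information the count depends on, and nothing in your plan keeps the degenerations primitive and nonhyperelliptic.

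\textbf{Merging to the minimal stratum is too weak.} Merging zeros down to $\Omega^k\cM_g(k(2g-2))$ only bounds the count by that of the minimal stratum. For example, $\Omega^5\cM_2(10)$ has two Arf components while $\Omega^5\cM_2(9,1)$ has one. Likewise $\Omega^3\cM_3(12)$ and $\Omega^2\cM_4(12)$ are exceptional while $\Omega^3\cM_3(11,1)$ and $\Omega^2\cM_4(11,1)$ are not. A merge has a single node, so each component of the merged stratum smooths to exactly one component. Nothing in your scheme shows that the two Arf components of $\Omega^5\cM_2(10)$ smooth to the same component.

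\textbf{The compact-type point has the same defect.} Your point is genus $g-1$ over $\Omega^k_r\cM_1(2k(g-1),-2k(g-1))$, joined at one separating node. There the prong-matching monodromy you single out as the main obstacle is vacuous. The real problem is that the genus-one level ranges over many rotation numbers $r$, and the top level may be imprimitive or hyperelliptic. You give no argument that different pairs (top, $r$) smooth into the same component.

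\textbf{The missing idea is the paper's choice of split.} A zero $a\ge 2$ is split into $(1-k,a-k-1)$. The node of top order $1-k$ carries a single prong, so $\gcd(k+a_1,k+a_2)=1$ and all prong matchings are equivalent. For odd $k$, the boundary stratum $\Omega^k\cM_{g-1}(1-k,a-k-1,\kappa')$ has an odd entry exactly when $\kappa$ does. The count is therefore bounded by the inductively known count one genus down. In genus two the rotation number is normalized to $1$ (or $\{1,2\}$) by re-splitting (Lemmas~\ref{L:NewSplit} and \ref{L:KeyChangeOfRN}, Corollary~\ref{C:SplitinGenusTwo}).

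\textbf{Existence of the degenerations is not supplied by the cylinder machinery.} Collapsing a simple cylinder gives a split or a merge, not the $(g-1)\cup 1$ point you want. It may also land in a hyperelliptic component, where the induction says nothing. Upgrading it to degenerations that preserve primitivity and nonhyperellipticity is the content of Theorems~\ref{T:SimpleDegeneration} and \ref{T:Merge} and Proposition~\ref{P:Splitting}. These use the split/merge swapping constructions of Lemma~\ref{L:CommonConstruction} and a case analysis of hyperelliptic boundary components. Your plan never addresses nonhyperellipticity.

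\textbf{Your account of the sporadic cases is inaccurate and leaves their existence unproved.} The extra cubic components do not come from a failure of prong-matching transitivity. For $\Omega^3\cM_3(\kappa)$ with $\kappa=(12),(8,4),(4,4,4)$, the $(1-k,a-k-1)$ split cannot be guaranteed to avoid a hyperelliptic component of the genus-two boundary stratum. That gives a third possible boundary component, and hence only an upper bound of three. Sharpness requires constructing a third component, which the paper does algebro-geometrically:
\begin{itemize}
\item the Arf invariant is computed as the parity of $h^0(K_X^{\otimes 2}(-6p))$ via the cyclic triple cover;
\item explicit plane quartics are taken at a non-flex versus a flex point;
\item Chen--M\"oller's exceptional component of $\Omega^2\cM_3(9,-1)$ is multiplied by a holomorphic $1$-form (Propositions~\ref{P:Component1} and \ref{P:ConstructionOfSporadicComponentK3}).
\end{itemize}
Your step ``verify that each invariant value gives a connected locus'' is exactly the part that needs an argument. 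Also, the extra quadratic components in genus $3$ and $4$ are Chen--M\"oller's exceptional components, not covers built from cubic or abelian data.
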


It is classical that genus zero strata are connected. Components of genus one strata were classified by Boissy \cite{Boissy-Components} when $k=1$ and Chen-Gendron \cite{ChenGendron} in general. It will be rederived from scratch in Section \ref{S:GenusOneComponents}. Finally, Bogatyrev-Gendron \cite{Bogatyrev-Gendron} classified components of $\Omega^k \cM_2(2k)$ for all $k$. 

We emphasize that the existence of sporadic components of $\Omega^3 \cM_3(\kappa)$ with $\kappa = (12), (8,4), (4,4,4)$ was previously unknown. They are constructed in Section \ref{S:SporadicCubics}. This parallels the algebro-geometric characterization of the sporadic components of strata of quadratic differentials discovered by Chen-M\"oller \cite{ChenMoller-Exceptional}. Section \ref{S:SporadicCubics} fundamentally uses their work.

The classification in Theorem \ref{T1} solves a question of Chen-Yu \cite[Problem 3.3]{chen2026algebrogeometric}. Its proof is almost purely algebraic, relying most heavily on the multiscale compactification of \cite{BCGGM-k-diff}, which we recall in Section \ref{S:BCGGM}. Curiously, the global $k$-residue condition makes almost no appearance in the sequel. Our proof provides a new proof of results of Kontsevich-Zorich, Boissy, Lanneau, and Chen-Gendron when $k = 1, 2$. The only geometric, non-algebraic inputs to the proof are: (1) the noncompactness of strata that do not project to points in $\cM_{g,n}$, (2) horizontal nodes in the multiscale compactification correspond to Euclidean cylinders, (3) nonseparating Euclidean cylinders are generically simple, and (4) collapsing a simple Euclidean cylinder splits a zero or merges two singularities (see Figure \ref{F:SplitMerge}). Removing this geometric input would yield a purely algebraic proof and hence resolve another question of Chen-Yu \cite[Problem 3.1]{chen2026algebrogeometric}.

\subsection{Geometric structures, orbit closures, and counting problems.}

A \emph{flat cone metric} on $\Sigma_{g,m}$ is a set of $n$ points $P \subseteq \Sigma_{g,m}$, called \emph{cone points}, and a metric on $\Sigma_{g,m}$ that is smooth and flat on $\Sigma_{g,n+m} := \Sigma_{g,m} - P$. This determines a $(G, X)$-structure on $\Sigma_{g,n+m}$ where $X = \bC$ and $G = S^1 \ltimes \bC$ is the group of rigid motions. Each such surface admits a \emph{holonomy homomorphism} from $\pi_1(\Sigma_{g,n+m})$ to $S^1$. The cover associated to its kernel is the \emph{holonomy cover}. It is characterized as the smallest cover under which the metric pulls back to one induced from a meromorphic $1$-form. 

A $k$-differential is equivalently a flat cone metric with holonomy valued in the $\bZ/k$ subgroup of $S^1$. It has finite area if all poles have (signed) order strictly greater than $-k$. Any finite area flat cone metric can be described as a finite union of Euclidean triangles in $\bR^2$ with sides glued together by rigid motions.

The holonomy cover of a finite area $k$-differential is a finite area abelian differential. Strata of abelian differentials admit an action of $\mathrm{GL}(2, \bR)$ generated by scalar multiplication and Teichm\"uller geodesic flow. By work of Eskin-Mirzakhani \cite{EM} and Eskin-Mirzakhani-Mohammadi \cite{EMM}, the orbit closure of any point in the stratum is a submanifold and, by work of Filip \cite{Fi1}, a variety. Such orbit closures are called \emph{invariant subvarieties}.

Given a component $\cC$ of a stratum of finite-area $k$-differentials, let $\cM_\cC$ be the smallest invariant subvariety containing every holonomy cover of a $k$-differential in the component. When $k$ is even, the subvariety $\cM_\cC$ is naturally contained in a \emph{quadratic double}, which is an invariant subvariety consisting of holonomy covers of a component of a stratum of quadratic differentials. Similarly, if $k$ is odd and $\cC$ is hyperelliptic, $\cM_{\cC}$ is contained in a codimension one hyperelliptic locus. Say that $\cM_\cC$ is \emph{as big as possible} if these containments are equalities or, when $k$ is odd and $\cC$ is nonhyperelliptic, if $\cM_\cC$ coincides with the ambient stratum. 

A deep body of work of Eskin-Masur \cite{EMa}, Eskin-Masur-Zorich \cite{EMZboundary}, Eskin-Kontsevich-Zorich \cite{EKZbig}, and others shows that the asymptotics (in $L$) of geometric counting functions, like the number of saddle connections between two fixed points or the number of cylinders of circumference at most $L$, for almost-every element of $\cC$ is governed by $\cM_\cC$. Partially motivated by this, Mirzakhani-Wright \cite{MirWri2} wrote that their ``long term goal is to compute'' $\cM_\cC$ for components $\cC$ of genus zero strata. Another motivation is that $\cM_\cC$ can furnish exotic orbit closures. These include Teichm\"uller curves \cite{V, Ward, Vo, KS} and the seven higher rank examples discovered by Eskin-McMullen-Mukamel-Wright \cite{MMW, EMMW}. We show that such unusual behavior is confined to genus zero. 

\begin{thm}\label{T2}
If $\cC$ is a component of a stratum of surfaces of genus $g \geq 1$, then $\cM_{\cC}$ is as big as possible. The same holds if $\cC = \Omega^k \cM_0(\kappa)$ and $\kappa$ contains a subset summing to $-k$.
\end{thm}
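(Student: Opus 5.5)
The plan is to argue by induction on the dimension of the component, using degenerations of $k$-differentials and orbit-closure boundary theory. We take as given the rank and rel results of Mirzakhani--Wright and the cylinder deformation theorem of Wright. We also use that an invariant subvariety of full rank is either the whole component of the stratum or a hyperelliptic locus (Mirzakhani--Wright's full rank theorem), together with its analogue for quadratic doubles (Apisa's results on full rank subvarieties of quadratic doubles).

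First we reduce to showing that $\cM_\cC$ has full rank in its ambient locus and has maximal rel. Everything we need is encoded in a single degeneration. We use the classification in Theorem \ref{T1} and the genus one classification to choose, in each component $\cC$, a $k$-differential with a simple nonseparating Euclidean cylinder; in genus zero this uses the subset of $\kappa$ summing to $-k$. Collapsing this cylinder (geometric input (4)) splits a zero or merges two singularities. On the holonomy cover, this cylinder lifts to a collection of cylinders, and collapsing them gives a boundary orbit closure $\cM_{\cC'}$ in the sense of Mirzakhani--Wright, where $\cC'$ is a component of a stratum of smaller dimension. By induction $\cM_{\cC'}$ is as big as possible, and the Mirzakhani--Wright boundary theory gives $\dim \cM_\cC = \dim \cM_{\cC'} + 1$ together with control on rank and rel. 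The degenerations should be chosen so that the rank increases when we pass from $\cC'$ to $\cC$; merging singularities or adding a handle by a cylinder does this, while splitting a zero increases rel. If we iterate from the minimal strata, Theorem \ref{T1} guarantees that every component is reached by such a chain of surgeries, provided the Arf invariant and hyperellipticity are tracked correctly.

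The base cases are genus zero strata containing cylinders, and low-dimensional genus one strata. Here the holonomy covers are explicit cyclic covers, and we compute the tangent space directly. Concretely, the span of the $\bZ/k$-eigenspaces of the relative cohomology of the cover coming from the $k$-differential is generated under the $\mathrm{GL}(2,\bR)$-action and the cylinder deformation theorem. For this we need to show that the field of definition and the twist space force the whole $\zeta$-eigenspace sum. We plan to argue that the smallest invariant subvariety containing one eigenspace must contain its Galois conjugates, using Wright's field of definition theorem, and that $\cM_\cC$ contains the primitive $\zeta$-eigenspace. Together these give all eigenspaces with primitive $\zeta$. The remaining eigenspaces come from lower-order covers, which are reached through the degeneration.

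The main obstacle is the step that promotes ``full rank'' to ``equal to the ambient locus''. For odd $k$ and nonhyperelliptic $\cC$ we must exclude $\cM_\cC$ lying in a hyperelliptic locus. We would do this by exhibiting a boundary point that is not hyperelliptic, via Theorem \ref{T1}'s nonhyperelliptic degenerations. For even $k$, $\cM_\cC$ lies in a quadratic double, and the risk is a proper full-rank subvariety of that double. Here we would invoke the classification of full rank subvarieties of quadratic doubles and rule out the exceptional loci by counting cylinders in the degenerate surface.
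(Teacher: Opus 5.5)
There is a genuine gap at the step that carries the induction, and a second one in your base-case and genus-zero plan.

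\textbf{The dimension count in the degeneration step.} The Mirzakhani--Wright boundary formula does not give $\dim\cM_\cC=\dim\cM_{\cC'}+1$. Let $p$ record intersection numbers with the lifts of the vanishing arc $s$ to the holonomy cover $Y$. The formula identifies the boundary tangent space with $\ker(p)\cap T_Y\cM_\cC$, so $\dim\cM_\cC=\dim\partial\cM_\cC+\dim p(T_Y\cM_\cC)$. When both loci are as big as possible, this jump is the rank of the span of the (up to $k$) lifts of $s$. For example, it is $3$ when you merge two simple zeros of a cubic differential, since on $Y$ two zeros of order $3$ become one of order $4$. So your ``$+1$'' would actually show that $\cM_\cC$ is \emph{not} as big as possible.

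Getting the full jump requires knowing that the shear of each \emph{individual} lift of the collapsed cylinder lies in $T_Y\cM_\cC$. Wright's cylinder deformation theorem does not give this: it only shears a whole $\cM_\cC$-equivalence class, and a priori that class can contain other cylinders parallel to the lift that cross other lifts of $s$.

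The missing idea is Lemma~\ref{L:Mfreecyl}. Stretch the Euclidean cylinder until its height exceeds the constant in Mirzakhani--Wright's cylinder finiteness theorem; then no cylinder $\cM_\cC$-parallel to a lift can cross a lift. Apisa--Wright's twist space decomposition then makes each lift $\cM_\cC$-free when $k$ is odd, or twinned only with its $t^{k/2}$-translate when $k$ is even. Because the cylinder crosses $s$ once, $p|_{T_Y\cM_\cC}$ is then surjective (resp.\ onto the $t^{k/2}$-anti-invariant functions), and Lemma~\ref{L:CertificateToConclude} concludes:
\begin{itemize}
\item For odd $k$ one gets codimension at most one, and the full-rank theorem plus Lemma~\ref{L:HypOrbClosure} finish.
\item For even $k$ the squeeze $T_Z\partial\cM=T_Z\cM_\cD\subseteq T_Z\partial\cM_\cC\subseteq T_Z\partial\cM$ finishes, so no classification of full-rank subloci of quadratic doubles is needed.
\end{itemize}
The cylinder itself comes from Theorem~\ref{T:Cylinder}; Theorem~\ref{T1} and Arf invariants play no role.

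\textbf{The Galois argument for the base cases.} Wright's theorem makes the field of definition $K$ of $\cM_\cC$ totally real, not $\bQ$. Write $V_\zeta$ for the eigenspace tangent to $\cN_\cC$. Galois invariance of $T_Y\cM_\cC$ only adds $V_{\sigma(\zeta)}$ for $\sigma$ fixing $K$, which can be nothing beyond $V_\zeta\oplus V_{\bar\zeta}$. Veech's Teichm\"uller curves from unfolding triangles have exactly this tangent space and, in general, a field of definition strictly larger than $\bQ$. So the cylinder hypothesis must enter the computation itself. Your statement that ``the remaining eigenspaces come from lower-order covers'' has no mechanism behind it.

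\textbf{Genus zero.} Cylinders in genus zero are always separating, so your uniform ``simple nonseparating cylinder'' degeneration does not exist there. The paper instead computes the cylinder submodule explicitly in the base cases (Corollary~\ref{C:OrbitClosureBaseCase} and Lemma~\ref{L:CylinderSubmoduleGenus0}). For genus zero with $n\ge 5$ singularities it uses \emph{two} merges, not one. It shows that the annihilators of their two vanishing cycles together span every $\bQ(\zeta_d)$-isotypic piece of $H_1(Y-Q;\bQ)$. When a merge destroys primitivity, it invokes the Chen--Wright prime decomposition (Sublemma~\ref{SL:DisconnectedDegeneration}).
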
 

Previous known cases of Theorem \ref{T2} include Aygun \cite{Aygun2025_PrimeOrderkDifferentials}, when $k$ is prime and $g >2$, and Apisa \cite[Theorems 1.9, 1.10]{Apisa-Codim1Hyp} who proved a special case of the genus zero part of the result. Given a multiset $\theta$ of $n$ rational multiples of $\pi$ summing to $(n-2)\pi$, let $\cP(\theta)$ be the set of $n$-gons whose vertices have angles given by $\theta$.

\begin{cor}\label{C:BilliardConsequence}
If a subset $S$ of $\theta$ sums to $(|S|-1)\pi$, then almost every element of $\cP(\theta)$ has an unfolding with an orbit closure as big as possible. This assumption holds for $\theta$ coming from cyclic $n$-gons when $n$ is even. 
\end{cor}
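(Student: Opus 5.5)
Corollary \ref{C:BilliardConsequence} should follow from the genus zero case of Theorem \ref{T2}, so the plan is a translation. Given $\theta = (\pi a_i/b_i)_{i=1}^n$ with $\sum_i \theta_i = (n-2)\pi$, let $k$ be the least common denominator of the $a_i/b_i$ (with the usual convention of doubling when needed). Doubling a polygon $P \in \cP(\theta)$ along its boundary gives a flat sphere with $n$ cone points of angles $2\theta_i$. Its holonomy lies in $\bZ/k$, so it is a $k$-differential on $\bP^1$. Each cone angle $2\theta_i$ becomes a singularity of order $k_i = k\theta_i/\pi - k$, and $\sum_i k_i = k(n-2) - kn = -2k$, as required for genus zero. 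Every $k_i > -k$, so the differential has finite area. The doubling map $\cP(\theta) \to \Omega^k \cM_0(\kappa)$, with $\kappa = (k_i)$, lands in the real locus fixed by the reflection. Its image is open in that real locus, since $\cP(\theta)$ up to similarity is parametrized by the real points of the projectivized stratum, which has real dimension $n-3$.

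The unfolding of $P$ is the holonomy cover of its double, up to the usual identification of the Katok--Zemlyakov unfolding with that cover. We need the orbit closure of the unfolding of almost every $P$ to be $\cM_\cC$, where $\cC = \Omega^k\cM_0(\kappa)$ (connected in genus zero). Theorem \ref{T2} then says $\cM_\cC$ is as big as possible, provided some subset of $\kappa$ sums to $-k$.

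For the hypothesis, take $S \subseteq \theta$ with $\sum_{i\in S}\theta_i = (|S|-1)\pi$. Then
\[ \sum_{i \in S} k_i = k(|S|-1) - k|S| = -k, \]
which is exactly the hypothesis of Theorem \ref{T2}. For a cyclic $n$-gon with $n$ even, opposite angles in the inscribed sense sum appropriately: the alternating vertices $1,3,\dots,n-1$ have angle sum $(n/2-1)\pi$. This follows by writing each vertex angle as half the sum of the two adjacent arc-complements, so the $n/2$ odd vertices total $\tfrac12(n-2)\pi$, i.e.\ $(|S|-1)\pi$ with $|S|=n/2$.

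The main obstacle is the passage from ``the smallest invariant subvariety containing all holonomy covers'' to ``almost every polygon's unfolding''. Polygons form a real-analytic, positive-codimension subset, so generic orbit closure statements over the whole stratum do not apply directly. I would first argue as follows. By countability of invariant subvarieties (Eskin--Mirzakhani--Mohammadi), the set of polygons whose unfolding lies in a proper invariant subvariety $\cN \subsetneq \cM_\cC$ is a countable union of sets $\cP(\theta)\cap\cN$. Each of these is an intersection of a real-analytic set with a real-algebraic (Filip) locus, so it is either measure zero or all of an open piece of $\cP(\theta)$. In the latter case, analytic continuation together with the $\GL$-invariance of $\cN$ and the fact that $\GL$-orbits of the real locus of the sphere stratum (together with complex scaling) Zariski-generate the whole projectivized stratum would force $\cN \supseteq \cM_\cC$, a contradiction. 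The key point is that the real points of $\bP\Omega^k\cM_0(\kappa)$ are Zariski dense in it.
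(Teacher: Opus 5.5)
Your proposal is correct and is the derivation the paper leaves implicit, since it states the corollary without proof. The steps match: double the polygon to land in $\Omega^k\cM_0(\kappa)$, note that $\sum_{i\in S}\theta_i=(|S|-1)\pi$ becomes a subset of $\kappa$ summing to $-k$, apply the genus zero case of Theorem \ref{T2}, and pass to almost every polygon by the standard Mirzakhani--Wright-type genericity argument; your cyclic-polygon angle computation is also right.

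The one slip is the phrase about $\mathrm{GL}(2,\bR)$-orbits of the real locus, since $\mathrm{GL}(2,\bR)$ does not preserve the locus $\cN_\cC$ of holonomy covers when $k>2$. What you actually need is your stated key point, and it suffices: the polygon locus is totally real of maximal dimension in the connected stratum. So an analytic $\cN$ containing an open piece of it contains $\cN_\cC$, and then $\cN\supseteq\cM_\cC$ by minimality of $\cM_\cC$.
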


Given a $k$-differential, a pole will be called \emph{invisible} if its order is $-k+d$ where $1 \leq d \mid k$. When $k$ is odd, say that a stratum $\Omega^k \cM_g(\kappa)$ is \emph{disconnected type} if every singularity has even order or if the multiset of zeros in $\kappa$ has the form $(a)$ or $(a,a)$ where $\gcd(a,k) = 1$ and all other entries are invisible poles. If $k$ is odd, the stratum will be called \emph{connected type} if it is not of disconnected type. Primitive components of strata of connected type contain $k$-differentials whose holonomy covers belong to connected strata of abelian differentials. By the deep body of work mentioned above and the large-genus asymptotics of Siegel-Veech constants due to Aggarwal \cite{Aggarwal-LargeGenus}, we have the following. 

\begin{cor}\label{C:Asymptotics}
Let $k$ be odd and let $\cC$ be as in Theorem \ref{T2}. Suppose that $\cC$ is primitive, nonhyperelliptic, and connected type. Given $X \in \cC$ and distinct $p,q \in X$, let $N_X(L)$ be the number of saddle connections from $p$ to $q$ of length at most $L$. Let $\theta_p$ and $\theta_q$ be the cone angles around $p$ and $q$. For almost every $X \in \cC$ and any $p,q \in X$, 
\[ N_X(L) \sim \left(\frac{\theta_p \theta_q}{4\pi} + O\left( \frac{1}{k(g+1)} \right) \right) \frac{L^2}{\mathrm{area}(X)}. \]
\end{cor}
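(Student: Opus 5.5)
The plan is to deduce Corollary \ref{C:Asymptotics} from Theorem \ref{T2} in three steps: identify $\cM_\cC$ with a full connected stratum of abelian differentials, invoke the Eskin--Masur and Siegel--Veech machinery there, and then control the constant using Aggarwal's large-genus asymptotics.

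First, fix $\cC$ primitive, nonhyperelliptic and of connected type, with $k$ odd. By Theorem \ref{T2}, $\cM_\cC$ is as big as possible. Since $k$ is odd and $\cC$ is nonhyperelliptic, this means $\cM_\cC$ is the entire ambient stratum $\cH$ of abelian differentials containing the holonomy covers.

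Here the connected-type hypothesis enters. By the remark preceding the corollary, holonomy covers of elements of $\cC$ lie in a stratum with no spin or hyperelliptic splitting, so $\cH$ is connected. Concretely:
\begin{itemize}
\item a zero of order $m$ whose order is coprime to $k$ lifts to a single zero of order $(m+k)-1$ on the $k$-fold cyclic cover;
\item in general, a zero lifts to $\gcd(m,k)$ zeros of order $(m+k)/\gcd(m,k)-1$.
\end{itemize}
The excluded disconnected-type cases are exactly those in which every lifted zero has even order (spin splitting) or the cover lands in a stratum of the form $\cH(2g'-2)$ or $\cH(g'-1,g'-1)$ (hyperelliptic splitting). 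I would verify this by checking these two lifting formulas against the definition of disconnected type.

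Second, for almost every $X\in\cC$ I would show its holonomy cover $\tilde X$ is generic in $\cH$ with respect to the Masur--Veech measure. Since $\cM_\cC=\cH$, the locus of covers is not contained in any proper invariant subvariety. By Eskin--Mirzakhani--Mohammadi, together with the countability of invariant subvarieties, a full-measure subset of $\cC$ has covers whose orbit closure is $\cH$. By Eskin--Masur (in the form valid for every point with that orbit closure, following Eskin--Mirzakhani--Mohammadi), the count of saddle connections from a fixed zero $\tilde p$ to a fixed zero $\tilde q$ on $\tilde X$ satisfies
\[
N_{\tilde X}(L)\sim c\,\pi L^2/\mathrm{area}(\tilde X),
\]
where $c$ is the Siegel--Veech constant of $\cH$ for that configuration.

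Saddle connections from $p$ to $q$ on $X$ lift, each to exactly $k$ saddle connections on $\tilde X$. Each of these runs from some lift of $p$ to some lift of $q$, and
\[
\mathrm{area}(\tilde X)=k\,\mathrm{area}(X).
\]
Summing over lifts therefore gives $N_X(L)$ asymptotically, with a constant expressed through Siegel--Veech constants of $\cH$ for pairs of zeros.

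Third, and this is the main obstacle, I would input Aggarwal's theorem. In large genus, the Siegel--Veech constant for saddle connections joining two distinct zeros of orders $m_1,m_2$ in a connected stratum is
\[
\frac{(m_1+1)(m_2+1)}{4\pi}\cdot\frac{1}{\pi}\bigl(1+O(1/g)\bigr)
\]
up to normalization. The task is to translate this into the stated form. One lift of $p$ has cone angle $2\pi(m_1+1)$; there are $\gcd$-many lifts, with total angle $k\theta_p$, and similarly for $q$. Summing over pairs of lifts and dividing by $k$ for the saddle-connection multiplicity and for the area rescaling should produce exactly $\theta_p\theta_q/(4\pi)$.

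The error is $O(1/\tilde g)$, and Riemann--Hurwitz gives $\tilde g\asymp k(g+1)$. I expect the uniformity of Aggarwal's error in the zero orders, and the bookkeeping of the $\gcd$ factors, to be the delicate part. The case $p=q$ is excluded, and the case where $p$ or $q$ is a regular point is handled by marking it, which is still a connected stratum.
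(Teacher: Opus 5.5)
Your outline follows the paper's own sketched proof. Theorem \ref{T2} together with the connected-type hypothesis makes $\cM_\cC$ a connected stratum $\cH$ of abelian differentials. Your countability argument plays the role of \cite[Lemma 3.1]{Aygun2025_PrimeOrderkDifferentials}. You then count on the holonomy cover and push the count down; the paper cites \cite[Lemma 8.6]{Apisa-Codim1Hyp} for this bookkeeping, and your computation $\sum_{\tilde p}\theta_{\tilde p}=k\theta_p$ does give exactly $\theta_p\theta_q/(4\pi)$. You finish with Aggarwal and a lower bound on the genus of the cover.

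\textbf{The gap is in your second step.} You assert $N_{\tilde X}(L)\sim c\,\pi L^2/\mathrm{area}(\tilde X)$ for every $\tilde X$ whose orbit closure is $\cH$. You attribute this to Eskin--Masur ``in the form valid for every point, following Eskin--Mirzakhani--Mohammadi.'' Neither cited result gives this.
\begin{itemize}
\item The every-point result \cite[Theorem 2.12]{EMM} is only a weak asymptotic: for unit-area $\tilde X$ it gives $\lim_{t\to\infty}\frac{1}{t}\int_0^t N_{\tilde X}(e^s)e^{-2s}\,ds=\pi c$.
\item Eskin--Masur's honest asymptotic holds only for almost every point with respect to the affine invariant measure on $\cH$.
\end{itemize}
For $k\geq 3$ the locus $\cN_\cC$ of holonomy covers has strictly smaller dimension than $\cH$, so it is a null set. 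Hence ``almost every $X\in\cC$'' says nothing about whether $\tilde X$ lies in the Eskin--Masur full-measure set. Only for $k=1$, where $\cN_\cC=\cC$, does your step go through as written. The paper is explicit about this: its argument yields the corollary a priori only as a weak asymptotic, and it invokes recently announced work of Solan to upgrade to genuine asymptotics. So you must either state the conclusion in this weak sense or supply such an equidistribution input for the non-invariant locus $\cN_\cC$.

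\textbf{Smaller points.}
\begin{itemize}
\item The paper reads Aggarwal's error as an additive $O(1/h)$. With that reading the uniformity you worry about disappears: there are at most $\gcd(m_p,k)\gcd(m_q,k)\leq k^2$ pairs of lifts, and you divide by $k^2$.
\item Riemann--Hurwitz gives only the lower bound $h\geq Ck(g+1)$ uniformly. The ramification term grows with the number of singularities, so $h\asymp k(g+1)$ is false in general. The lower bound is all you need.
\item For the lifting bijection, mark all preimages of singularities of $X$ on $\tilde X$. Invisible poles lift to regular points, so otherwise a saddle connection on $\tilde X$ between lifts of $p$ and $q$ could project to a path through a pole.
\item Handling a regular point $p$ by marking it only yields the statement for almost every pair $(X,p)$, not for almost every $X$ and every $p$.
\end{itemize}
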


For similar asymptotics when $\cC$ is hyperelliptic, see Apisa \cite{Apisa-Codim1Hyp}, which uses work of Athreya-Eskin-Zorich \cite{AEZ}. The $O(1/(k(g+1)))$ term in the asymptotics can be computed explicitly by work of Eskin-Okounkov \cite{EskinOkounkov}. By combining Corollaries \ref{C:BilliardConsequence} and \ref{C:Asymptotics}, one can compute the asymptotic number of billiard trajectories between any two points on a full measure set of polygons in $\cP(\theta)$ where $\theta$ has a subset $S$ summing to $(|S|-1)\pi$. The only obstacle to upgrading Corollary \ref{C:Asymptotics} to handle the case of $k$ even, strata of disconnected type, or to deduce the asymptotic number of closed geodesics is that analogues of Aggarwal's results are not yet fully available in those settings.

\subsection{Cylinders, fundamental groups, and the proof of Theorem \ref{T1}}

Two simple disjoint saddle connections on a flat cone surface are \emph{hat-homologous} if they have a locally constant ratio of lengths in the stratum. In Section \ref{S:MasurZorich}, we show the following.

\begin{thm}\label{T:MZ}
Two simple disjoint saddle connections on a $k$-differential that is not a finite area translation surface are hat-homologous if and only if their complement has a component that is a finite area translation surface. 
\end{thm}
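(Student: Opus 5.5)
We argue in the holonomy (canonical) cover, where period coordinates are available, and translate the condition ``locally constant ratio of lengths'' into a statement about relative homology classes. Let $\pi\colon (\widehat X,\widehat\omega)\to (X,\omega)$ be the holonomy cover, with deck group $\bZ/k$ generated by $\tau$ acting on $\widehat\omega$ by multiplication by a primitive $k$-th root of unity $\zeta$. A saddle connection $\gamma$ on $X$ has $k$ lifts $\tau^j\hat\gamma$. The stratum of $k$-differentials is locally modeled on the $\zeta$-eigenspace $V_\zeta$ of $H^1(\widehat X,\widehat Z;\bC)$ (relative to the preimages of singularities, with poles treated as punctures). The holonomy of $\gamma$ is $\widehat\omega(\hat\gamma)$, defined up to powers of $\zeta$. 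Thus $\gamma_1,\gamma_2$ are hat-homologous exactly when the images of $[\hat\gamma_1]$ and $[\hat\gamma_2]$ in the dual space $V_\zeta^*$ are proportional, i.e.\ the linear functionals $\xi\mapsto \xi(\hat\gamma_i)$ on $V_\zeta$ are proportional.

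\emph{The ``if'' direction.} Suppose a component $Y$ of $X\setminus(\gamma_1\cup\gamma_2)$ is a finite-area translation surface. Its boundary consists of copies of $\gamma_1$ and $\gamma_2$. Since the holonomy of $Y$ is trivial, $Y$ lifts isomorphically to $\widehat X$. Stokes' theorem applied to $\widehat\omega$ on the lift expresses the boundary sum as zero. Finite area ensures that no poles of order $\le -1$ of the abelian form lie in $Y$, so there are no residue contributions. The resulting relation has the form $a\,\widehat\omega(\hat\gamma_1)+b\,\widehat\omega(\hat\gamma_2)=0$, where $a,b$ are combinations of roots of unity determined by how the boundary sides are glued. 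A short case check shows that neither $a$ nor $b$ vanishes (otherwise $\gamma_i$ would bound $Y$ alone, making it closed), so the ratio is locally constant.

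\emph{The ``only if'' direction.} Suppose the functionals are proportional: $c_1\,\pi_\zeta[\hat\gamma_1]=c_2\,\pi_\zeta[\hat\gamma_2]$ with $c_i\neq 0$, where $\pi_\zeta=\frac1k\sum_j \zeta^{-j}\tau^j_*$. Then the chain
\[ \sigma=\sum_j \zeta^{-j}\left(c_1\tau^j\hat\gamma_1-c_2\tau^j\hat\gamma_2\right) \]
is null in $H_1(\widehat X,\widehat Z;\bC)$ (after projecting appropriately). Since the curves $\tau^j\hat\gamma_i$ are pairwise disjoint away from endpoints, a standard argument (as in Masur--Zorich / Eskin--Masur--Zorich for quadratic differentials) shows that a relation among disjoint arcs forces the arcs to cut $\widehat X$ so that $\sigma$ is the boundary of a $\bC$-weighted sum of complementary components, with weights locally constant on components. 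Equivariance under $\tau$ forces the weight on $\tau\widehat Y$ to be $\zeta$ times the weight on $\widehat Y$.

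Now let $Y$ be the image of a component with nonzero weight. If $Y$ had nontrivial holonomy, some $\tau^m$ with $\zeta^m\neq 1$ would preserve a component $\widehat Y$, giving a weight equal to $\zeta^m$ times itself, hence zero. Choosing $\widehat Y$ with nonzero weight therefore shows that $Y$ has trivial holonomy and is a translation surface. Finite area of $Y$ follows because a pole of the abelian form inside $\widehat Y$ would give a residue that the relation cannot absorb: we use the relative homology with punctures, and a small loop around the pole is nonzero there. The hypothesis that $X$ itself is not a finite-area translation surface rules out the degenerate case $Y=X$ (for $k=1$ with poles, any pole lies outside $Y$).

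\emph{Main obstacle.} The delicate step is the topological lemma that a vanishing weighted combination of disjoint saddle connections in relative homology must be a boundary of complementary regions with constant weights. We would prove it via the long exact sequence of the pair, or via intersection with dual arcs crossing each $\tau^j\hat\gamma_i$ exactly once. We also need to handle the case where $\gamma_1$ and $\gamma_2$ both border the same region on both sides.
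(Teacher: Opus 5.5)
Your argument is sound, and it takes a genuinely different route from the paper. The paper first reduces to the case where $X-(s\cup t)$ is connected. It does this by folding a separating saddle connection onto itself, and by folding both $s$ and $t$ in each component when the complement has two pieces. It then cuts $X$ into a flat disk, along a maximal family of disjoint saddle connections plus arcs to small disks around the infinite-area poles. The only relation among the boundary periods is $w+\sum_i(1-\zeta_i)v_i=0$. Hence a constant ratio $v_1/v_2$ forces $\zeta_3=\cdots=\zeta_d=1$ and $w\equiv 0$, and Apisa--Bainbridge--Wang is invoked to see that $w\equiv 0$ excludes infinite-area poles.

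You instead work on the holonomy cover. You use eigenspace period coordinates to turn hat-homology into proportionality of $\pi_\zeta[\hat\gamma_1]$ and $\pi_\zeta[\hat\gamma_2]$. The exact sequence of $(\widehat X-\widehat P,\,G,\,\widehat Z)$, with $G=\widehat Z\cup\pi^{-1}(\gamma_1\cup\gamma_2)$, identifies relations among the lifted arcs with boundaries of weighted compact complementary regions. Deck-equivariance then kills the weight of every region with nontrivial holonomy. This treats all topological configurations at once, with no folding. It disposes of poles without any residue input, since punctured regions carry no relative $2$-class. Your Stokes computation in the ``if'' direction also directly yields the length ratio of Remark~\ref{R:HolonomyRatio}. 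The paper's argument is more hands-on and needs only the local period coordinates of the cut-open disk.

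A few details need fixing.
\begin{enumerate}
\item The hypothesis that $X$ is not a finite-area translation surface is used in the ``if'' direction, not the ``only if'' one. The ``only if'' direction holds trivially for finite-area translation surfaces, since every complementary region qualifies, and $Y\neq X$ always.
\item In your relation $a\,\widehat\omega(\hat\gamma_1)+b\,\widehat\omega(\hat\gamma_2)=0$, exactly one of $a,b$ vanishing would force a saddle connection to have zero period. The case $a=b=0$ occurs precisely when $Y$ is all of $X-(\gamma_1\cup\gamma_2)$ and loops crossing each $\gamma_i$ once have trivial holonomy, i.e.\ when $X$ itself is a finite-area translation surface. This is the case the hypothesis excludes. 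Your parenthetical reason (``$\gamma_i$ would bound $Y$ alone'') is not the right one.
\item When $\widehat P=\emptyset$ the weights are determined only modulo the fundamental class, so $\tau\sigma=\zeta\sigma$ gives only $w_{\tau R}=\zeta w_R+c$. Replace the $2$-chain $\Sigma$ by its average $\frac1k\sum_j\zeta^{-j}\tau^j_*\Sigma$ to get genuinely equivariant weights.
\item A small loop around a pole can be null-homologous, for instance when $\widehat X$ has a single pole. The correct reason punctured regions get zero weight is that $H_2(R',\partial R')=0$ for a cut-open region $R'$ containing a puncture, which your exact-sequence argument already provides.
\item For nonprimitive $k$-differentials the deck group is $\bZ/d$, with $d$ the order of the holonomy, so first pass to the underlying primitive $d$-differential.
\end{enumerate}
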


This is the direct analogue of work of Masur-Zorich \cite{MZ} in strata of finite area quadratic differentials. As in \cite{MZ}, a consequence is that, up to perturbing to a nearby surface in the stratum, every Euclidean cylinder has one of five types (see \cite[Section 4.1]{ApisaWrightDiamonds}). 

Recall that a \emph{Euclidean cylinder} of height $h$ and circumference $\ell$ on a $k$-differential is an isometrically embedded copy of $[0,\ell] \times (0,h) / \sim$ where $(0,y) \sim (\ell, y)$ for all $y$. The boundary of a cylinder is a union of saddle connections. A cylinder is \emph{simple} if each boundary component is a single saddle connection that appears exactly once on the boundary\footnote{The second part of the condition rules out envelopes.}. In Section \ref{S:Cylinder}, we build on Theorem \ref{T:MZ} to show the following. 

\begin{thm}\label{T:Cyl}
Every component of strata of positive genus surfaces contains a surface with a simple Euclidean cylinder. The same holds for $\Omega^k \cM_0(\kappa)$ if and only if $\kappa$ can be partitioned into two subsets summing to $-k$ neither of which is $(-\frac{k}{2}, -\frac{k}{2})$. 
\end{thm}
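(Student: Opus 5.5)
We plan to prove Theorem \ref{T:Cyl} by degenerating to the boundary of the multiscale compactification of \cite{BCGGM-k-diff} and then smoothing back. The governing principle is ingredient (2) from the introduction: horizontal nodes correspond to Euclidean cylinders. A multiscale differential whose level graph has one horizontal edge joining two sides (or one nonseparating horizontal self-node), all at the top level, smooths to a surface with a Euclidean cylinder. If each boundary of that cylinder consists of the single saddle connection obtained by plumbing, the cylinder is simple. So it suffices to find, in the boundary of each component $\cC$, a point whose top level contains a horizontal node of this shape.

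\textbf{Positive genus.} We would proceed in three steps.

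\begin{enumerate}
\item Use noncompactness of strata that do not project to points of $\cM_{g,n}$ (ingredient (1)). This lets us degenerate within $\cC$ until a nonseparating curve is pinched. Concretely, take a nonseparating simple closed curve $\gamma$ and the associated one-cylinder direction. By the classical argument, we would first produce a surface in $\cC$ with a nonseparating cylinder, for instance by flowing in the Teichm\"uller direction / $\GL$-action to a boundary point with a horizontal node.
\item Invoke ingredient (3): nonseparating cylinders are generically simple. This is exactly where Theorem \ref{T:MZ} enters. A nonseparating cylinder is not simple only if one of its boundary components contains several saddle connections. Generically these are hat-homologous to one another. By Theorem \ref{T:MZ}, they cut off a finite area translation surface subcomponent, or they are forced by a half-translation envelope. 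In either case a further small perturbation, or collapsing the extra saddle connections, produces a surface in the same component with a simple cylinder.
\item Handle the case where only separating curves can be pinched horizontally. The complementary pieces are lower genus $k$-differentials, and we would apply induction on $(g,n)$ to one of them. Since $g \ge 1$, some level component has positive genus. We find a simple cylinder there and glue it back; simplicity is preserved because plumbing at a separating node away from the cylinder does not alter its boundary.
\end{enumerate}

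\textbf{Genus zero.} Here a simple cylinder always separates the sphere into two components.

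\begin{itemize}
\item \emph{Necessity.} Pinching the core curve of a simple cylinder gives two genus-zero components, each with a pole of order $-k$ at the node. Each side therefore has orders summing to $-2k$ including the node, so the original singularities on each side sum to $-k$. If one side were exactly $(-\frac{k}{2},-\frac{k}{2})$, that piece would be a $k$-differential on $\bP^1$ with two poles of order $-k/2$ and one of order $-k$. This is a flat infinite cylinder piece, so the original cylinder's boundary on that side consists of two saddle connections, or else a single saddle connection appearing twice (an envelope). Either way the cylinder is not simple.
\item \emph{Sufficiency.} Given a partition $\kappa=\kappa_1\sqcup\kappa_2$ with each part summing to $-k$, construct differentials on $\bP^1$ with orders $\kappa_i\cup(-k)$ having real $k$-residue at the $-k$ pole. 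Such a residue exists unless $\kappa_i=(-\frac k2,-\frac k2)$, which is the exceptional case and should be checked by hand via the residue theorem for $k$-differentials on $\bP^1$. Glue the two pieces along a horizontal node and smooth. Connectedness of genus zero strata then places the result in $\Omega^k\cM_0(\kappa)$. Simplicity requires that each side, near the pole, has a single boundary saddle connection. We would arrange this by choosing each piece so that the boundary of the half-infinite cylinder is one saddle connection, for example by collapsing all but two singularities together.
\end{itemize}

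\textbf{Main obstacle.} We expect step 2 to be the hardest: showing that nonseparating cylinders can be made simple in every component, including hyperelliptic and non-primitive ones. Our first attempt would be the hat-homology classification from Theorem \ref{T:MZ}, listing the five cylinder types of \cite{ApisaWrightDiamonds} and checking that each nonsimple type can be perturbed or degenerated to a simple one.
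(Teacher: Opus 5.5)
\textbf{Verdict: there is a genuine gap.} Your genus-zero part matches the paper's Lemma \ref{L:Genus0Cylinder}: you glue two spheres along a horizontal node. The positive-genus argument, however, breaks in Steps 1 and 3, not in Step 2.

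\textbf{Step 1 does not produce a horizontal node.} For $k \geq 3$ there is no $\mathrm{GL}(2,\bR)$-action or Teichm\"uller flow on strata of $k$-differentials, since only $\bC^*$ commutes with rotation by $2\pi/k$. There is also no ``classical'' Masur-type argument producing cylinders: Tahar's examples, recalled in the introduction, are components with open subsets containing no Euclidean cylinder at all. Noncompactness (Theorem \ref{T:Chen-Aygun}) only gives you \emph{some} boundary point, which may have only vertical nodes. Nothing in your sketch turns that into a horizontal node.

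\textbf{Step 2 is not the real obstacle.} By Corollary \ref{C:TypesofCylinders2}, a generic nonseparating cylinder is automatically simple. So by Lemma \ref{L:CylindricalBoundary}, the whole problem is to find a \emph{cylindrical} boundary point: one irreducible component with one horizontal node.

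\textbf{Step 3 fails at exactly the hard case.} The claim ``since $g \geq 1$, some level component has positive genus'' is false. The genus can sit entirely in cycles of the dual graph, e.g.\ a genus-zero top component and a genus-zero bottom component joined by $g+1$ vertical nodes. The paper handles this as follows.
\begin{itemize}
\item It passes to a \emph{minimal} degeneration. By Lemma \ref{L:MinDegen} this is cylindrical, or two components joined by a horizontal node, or two levels with only vertical nodes. In the last case, if the top has genus zero, there is exactly one component per level.
\item When some component has positive genus, it is degenerated to a cylindrical point by induction on dimension, and the other nodes are smoothed. If that component is at the bottom, this is legitimate because the top is genus zero, so the global $k$-residue condition is vacuous.
\item When both levels have genus zero, $g \geq 2$ forces at least three nodes. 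One merges two nodal poles on the bottom sphere and smooths those two nodes. This adds a handle to the top component and reduces to the previous case.
\item The induction needs a base case, which you do not supply. The paper treats genus one separately (Corollary \ref{C:GenusOneCyl}). It merges zeros and poles (Corollary \ref{C:GenusOneMerging}) down to $\Omega^k_r\cM_1(a,-a)$. There, a square torus with $p = (\frac{1}{d},0)$ visibly degenerates to a horizontal node.
\end{itemize}

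\textbf{Two smaller issues in genus zero.}
\begin{itemize}
\item Your residue remark is off. A pole of order exactly $-k$ always has nonzero $k$-residue, which scaling normalizes. So $(-\frac{k}{2},-\frac{k}{2})$ is not a residue obstruction; it is the envelope case.
\item Simplicity of the glued cylinder should come from perturbing to a generic cylinder and applying Corollary \ref{C:TypesofCylinders}. It should not come from ``collapsing all but two singularities together,'' which leaves the stratum.
\end{itemize}
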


In genus zero, when $\kappa$ admits a partition into two subsets summing to $-k$ and one of them is $(-\frac{k}{2}, -\frac{k}{2})$, the stratum is either $\Omega^2 \cM_0(-1^4)$ or contains a surface with a simple Euclidean envelope. Theorem \ref{T:Cyl} is subtler than one might expect; Tahar \cite{Tahar-Chamber} exhibited components of strata of $k$-differentials with open subsets in which a Euclidean cylinder exists and open subsets in which no Euclidean cylinder exists. Another subtlety is that the result is false in general in genus zero. Theorem \ref{T:Cyl} may be the closest analogue of Masur's theorem \cite{Masur-cylinder} on the existence of cylinders that holds for $k$-differentials when $k>2$.

We are interested in simple Euclidean cylinders for the following reason. A simple cylinder either has a unique zero on both boundaries or distinct singularities on opposite boundaries. Collapsing the cylinder, i.e. sending its height and length of a cross curve to zero, splits the zero or merges the two singularities respectively. Call these deformations \emph{splits} and \emph{merges} respectively. See Figure \ref{F:SplitMerge} for the resulting stable curves. Showing that components of strata of positive genus $k$-differentials have splits or merges in their boundary is the only non-algebraic part of the proof of Theorem \ref{T1}.

\begin{figure}[h]
    \centering
    \begin{subfigure}{0.46\linewidth}
        \centering
        \includegraphics[width=\linewidth]{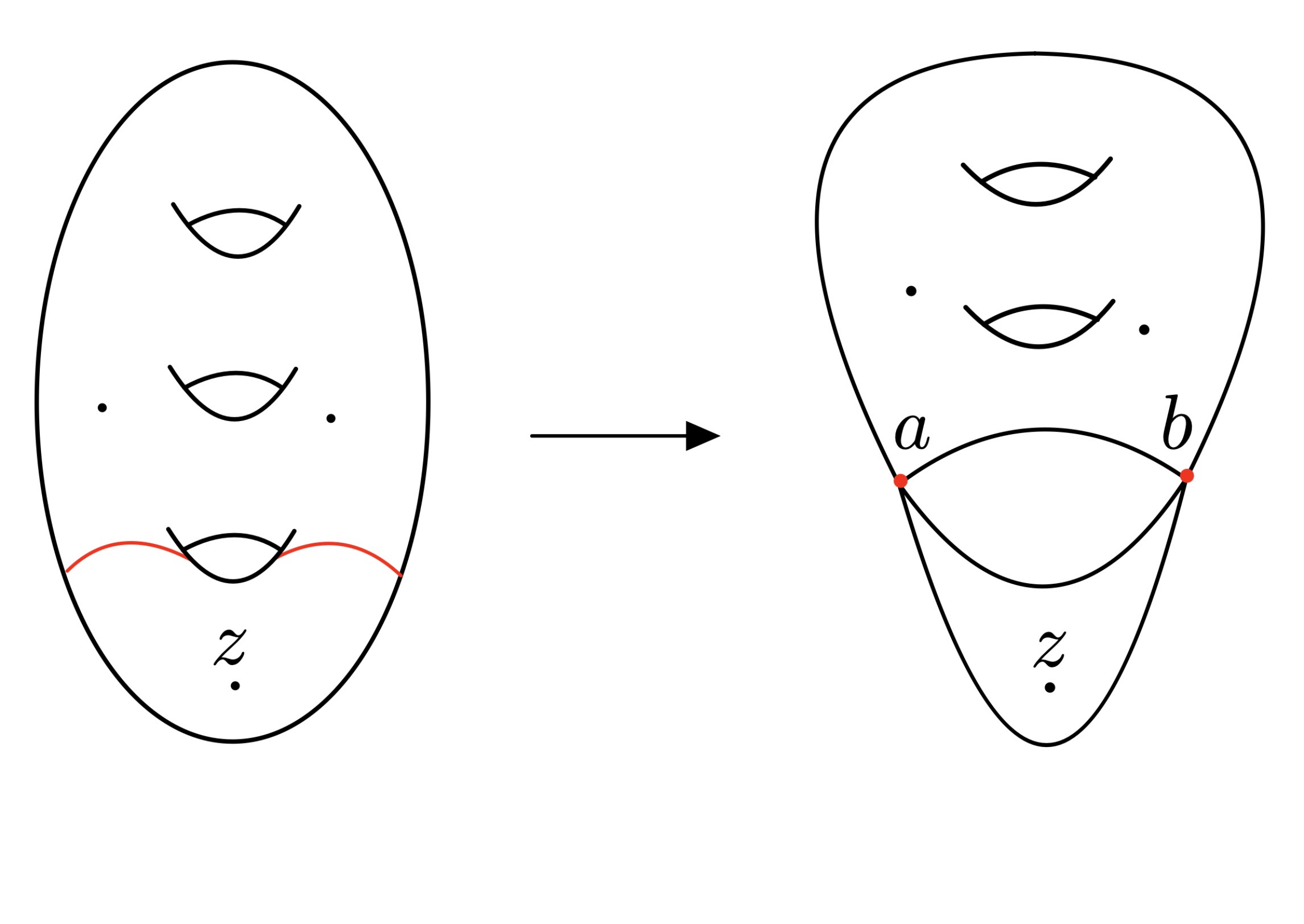}
        \caption{Splitting the zero $z$.}
        \label{F:Split}
    \end{subfigure}
    \hfill
    \begin{subfigure}{0.43\linewidth}
        \centering
        \includegraphics[width=\linewidth]{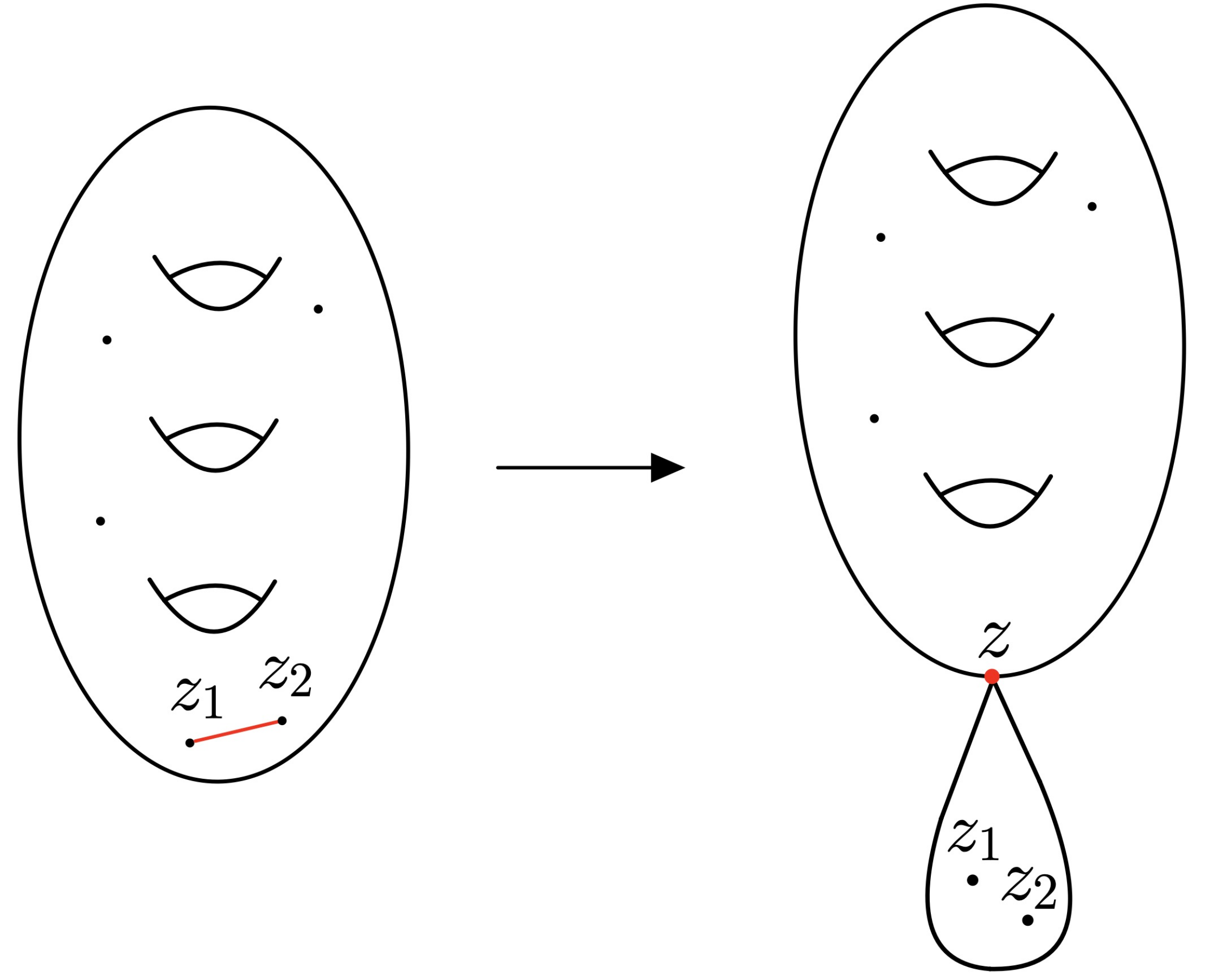}
        \caption{Merging singularities $z_1$ and $z_2$.}
        \label{F:Merge}
    \end{subfigure}
    \caption{Splitting and merging. In a split, $z \geq 2$, $a+b=z-2k$, and $a,b > 1-k$. For a merge, $z = z_1 + z_2$.}
    \label{F:SplitMerge}
\end{figure}

Each irreducible component of a split or merge is equipped with a nonzero $k$-differential. Smoothing the nodes to return to a smooth $k$-differential requires a \emph{prong matching}, which is recalled in Section \ref{S:BCGGM}. Two prong matchings are \emph{component-equivalent} if the resulting smooth $k$-differentials are in the same component. For a merge, all prong matchings are equivalent. For a split, the situation is subtler.

\begin{thm}\label{T:ProngMatching}
Consider a split that produces a component $\cC$ of $\Omega^k \cM_g(a,b, \kappa)$. There is a homomorphism $\rho: \pi_1(\cC) \ra \bZ/d$ where $d = \gcd(|k+a|, |k+b|)$ and a surjection from its cokernel to the set of component-equivalence classes of prong matchings.
\end{thm}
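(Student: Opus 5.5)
We view the split as a boundary point of the multiscale compactification of \cite{BCGGM-k-diff} and argue by monodromy. The boundary stratum consists of a top-level $k$-differential $(X,\omega)\in\cC\subseteq\Omega^k\cM_g(a,b,\kappa)$ and a bottom-level genus zero $k$-differential on $\bP^1$ with a single zero of order $z=a+b+2k$ and two poles of orders $-a-2k$ and $-b-2k$ at the two nodes. The bottom level is rigid up to scaling. The number of prongs at the nodes is $|k+a|$ and $|k+b|$, so the prong matchings form a torsor for $\bZ/|k+a|\times\bZ/|k+b|$.

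The first step is to quotient by the level rotation torus. Rotating the bottom level by the $\bC^\times$-action moves both matchings simultaneously. The orbits of the prong matchings under this action form a torsor for the cokernel of the diagonal map $\bZ\to\bZ/|k+a|\times\bZ/|k+b|$. This cokernel is cyclic of order $\gcd(|k+a|,|k+b|)=d$ (up to the factor by which the diagonal map fails to be surjective, which is exactly $d$). Prong matchings in the same orbit give the same smooth differential, so they are component-equivalent. This gives a transitive $\bZ/d$-action on a set surjecting onto the component-equivalence classes.

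The second step is to use the fundamental group of $\cC$. Take a loop $\gamma$ in $\cC$ based at $(X,\omega)$. Transporting the pair of nodes along $\gamma$ returns the prongs at $a$ and $b$ permuted by some rotations. Concretely, the horizontal directions at the zeros of orders $a$ and $b$ rotate by $m_a(\gamma)\in\bZ/|k+a|$ and $m_b(\gamma)\in\bZ/|k+b|$. Continuity gives $m_a(\gamma)=(k+a)\,\theta/2\pi$ and $m_b(\gamma)=(k+b)\,\theta/2\pi$ when lifted appropriately, where $\theta$ is the accumulated rotation of $\omega$ along $\gamma$. I would define $\rho(\gamma)$ as the image of $(m_a(\gamma),m_b(\gamma))$ in $\bZ/d$. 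Once the ambiguity of lifting $\omega$ is accounted for, this is a homomorphism, since concatenating loops adds the rotations. Transporting a smoothing along $\gamma$, keeping the smoothing parameter small so that the path stays in the open boundary neighborhood, gives a path in the smooth stratum. That path joins the smoothing with matching $\sigma$ to the smoothing with matching $\rho(\gamma)\cdot\sigma$. Hence the image of $\rho$ acts trivially on the component-equivalence classes, and the $\bZ/d$-torsor map descends to a surjection from $\mathrm{coker}\,\rho$.

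The main obstacle is making the second step rigorous: the monodromy of the prong data along loops in $\cC$ has to be computed consistently with the level rotation action. A $k$-differential only determines directions modulo $\bZ/k$, and $\omega$ may rotate along $\gamma$. I would handle this by working on the $\bC^\times$-bundle over $\bP\Omega^k\cM_g$, or equivalently on the boundary of the moduli space of multiscale differentials, which is an orbifold. There, the fiber of prong-matched boundary points over $\cC$ is a finite covering, and its monodromy is exactly $\rho$ composed with the torus quotient. Surjectivity is then immediate, because every smoothing of every prong matching is a point of the stratum near the boundary.
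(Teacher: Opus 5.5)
Your proposal follows the paper's route (the proof of Theorem \ref{T:ProngHom} together with Lemma \ref{L:ProngsAndComps}), and its logical skeleton is correct: prong matchings form a $\bZ/(k+a)\times\bZ/(k+b)$-torsor, level rotation acts through the diagonal subgroup (the kernel of $(x,y)\mapsto x-y\in\bZ/d$), $\rho$ is the prong monodromy of $\pi_1(\cC)$ composed with that map, and transporting smoothings along loops puts globally equivalent matchings in the same component. You should delete the ``continuity'' formula $m_a(\gamma)=(k+a)\theta/2\pi$, which is false: already for the loop $e^{i\phi}\omega$ with $\phi\in[0,2\pi]$ each prong set shifts by exactly one step, and for a general loop $m_a-m_b$ is the change in winding number (mod $d$) of an arc joining the two zeros, as the paper shows, not a function of any single rotation angle. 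The formula is not needed anyway, since monodromy valued in the abelian group $\bZ/(k+a)\times\bZ/(k+b)$ is automatically a homomorphism.
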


In Section \ref{S:ProngMatchings}, we determine the cokernel of $\rho$ when $\cC$ consists of genus one surfaces. In Section \ref{S:Merge}, we use Theorem \ref{T:Cyl} to show that, in any component of a stratum of positive genus surfaces, any two zeros and any two poles can be merged while preserving nonhyperellipticity and primitivity. At that point, we will have a powerful inductive engine to classify components in Section \ref{S:ProofMainTheorem}.

\subsection{An open question}

Let $\cF_{g,n}$ (resp. $\cF_{g,n}(\kappa)$) be the moduli space of flat cone metrics on genus $g$ surfaces with $n$ cone points (resp. whose cone angles are given by $\kappa$). Troyanov \cite{troyanov} showed that $\cF_{g,n}(\kappa)$ can be identified with $\cM_{g,n}$ (up to scaling). The \emph{holonomy map} $\mathrm{Hol}: \cF_{g,n} \ra H^1(\Sigma_{g,n}; S^1)$ sends a flat surface to its holonomy homomorphism. It is a submersion away from the trivial homomorphism by Veech \cite{veech93}, in the finite area case, and by Apisa-Bainbridge-Wang \cite{ABW2}, in general. Fibers of this map were studied by Deligne-Mostow \cite{DM} and Thurston \cite{thurstonshapes} in genus zero and Veech \cite{veech93} in general. The preimage of $H^1(\Sigma_{g,n}; \bZ/k)$ is the set of $k$-differentials. So, from a holonomy perspective, $k$-differential are ``rational points'' in $\cF_{g,n}$. The asymptotic formula in Corollary \ref{C:Asymptotics} suggests an obvious analogue for ``irrational points,'' i.e. by setting $O(1/(k(g+1)))$ to $0$. It would be interesting to know when it holds.

\text{}

\noindent \textbf{Acknowledgements.} The first author was partially supported by NSF grant no. DMS-2304840. The second author was partially supported by the Simons Dissertation Fellowship. The second author thanks her advisor Ben Dozier for his ongoing support.

\section{Framings, spin, and the Arf invariant}\label{S:Arf}

Recall that $\Sigma_{g,n}$ is an oriented topological surface of genus $g$ with $n > 0$ points removed. Let $UT \Sigma_{g,n}$ be its unit tangent bundle after equipping the surface with a metric. This is a fiber bundle $S^1 \ra UT\Sigma_{g,n} \ra \Sigma_{g,n}$ which produces the following short exact sequence,
\[ 0 \ra H^1(\Sigma_{g,n}) \ra H^1(UT \Sigma_{g,n}) \ra H^1(S^1) \ra 0. \]
A \emph{$k$-framing} is an element of $H^1(UT\Sigma_{g,n})$ that sends the positively oriented generator of $H^1(S^1)$ to $k$. The set of $k$-framings is an $H^1(\Sigma_{g,n})$-torsor. We will also allow ourselves to use the terminology of $k$-framings when the coefficient group is any cyclic group.

Suppose that $(X, \omega)$ is a $k$-differential with a basepoint $p$ that is not a singularity. Fix a unit tangent vector $v$ at $p$. Let $\gamma: S^1 \ra X$ be a unit-speed smooth arc that begins at $p$. For $t \in [0,1]$, where $S^1 = [0,1]/(0 \sim 1)$, let $v(t)$ be the result of parallel transporting $v$ to $\gamma(t)$. The \emph{winding number} of $\gamma$ is then $WN(\gamma) = \frac{1}{2\pi} \int_0^1 d\angle( \gamma'(t), v(t)) dt$. It is straightforward to check that $WN$ takes values in $\frac{1}{k} \bZ$ and is \emph{twist-linear}, i.e. $WN(T_d c) = WN(c) + \langle c, d \rangle WN(d)$ for any simple closed curves $c$ and $d$ where $T_d$ is the Dehn twist about $d$ and where $\langle c, d \rangle$ is the intersection pairing between $c$ and $d$.

\begin{lem}
The function $k \cdot WN$ is a $k$-framing.
\end{lem}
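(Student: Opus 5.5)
We must show that $k\cdot WN$ is a well-defined element of $H^1(UT\Sigma_{g,n};\bZ)$ and that it evaluates to $k$ on the positively oriented fiber class. The plan is to work on the complement of the singularities, where the $k$-differential gives a flat metric whose holonomy lies in $\bZ/k \subset S^1$. First we reinterpret $WN$ as a function on loops in $UT\Sigma_{g,n}$, not only on smooth arcs in the surface. A loop $\tilde\gamma(t) = (\gamma(t), u(t))$ in the unit tangent bundle is sent to $\frac{1}{2\pi}$ times the total change of $\angle(u(t), v(t))$, where $v(t)$ is the parallel transport of $v$ along $\gamma$. Since parallel transport around $\gamma$ rotates by an element of $\frac{2\pi}{k}\bZ$, the endpoint and starting angles differ by an element of $\frac{2\pi}{k}\bZ$. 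So this quantity lies in $\frac1k\bZ$, and $k\cdot WN$ is integer-valued. For the canonical lift $u=\gamma'$ of a smooth immersed loop, this recovers the formula in the statement.

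Next we show that this function descends to a homomorphism $H_1(UT\Sigma_{g,n})\to\bZ$, which is the main step. The cleanest route is to note that on $\Sigma_{g,n}$ the flat structure has a local $k$-valued set of ``horizontal'' directions. Concretely, locally $\omega=dz^{k}$ up to the $\bZ/k$ ambiguity, and the set of unit vectors $u$ with $\omega(u)\in\bR_{>0}$ consists of $k$ directions permuted by holonomy. Define the map $\Phi: UT\Sigma_{g,n}\to S^1$ by $\Phi(x,u)=\omega(u)/|\omega(u)|$, which is well defined because $\omega$ is a genuine tensor. In flat coordinates with $u=e^{i\theta}$ we get $\omega(u)=e^{ik\theta}$. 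Then $\Phi$ is continuous, and $\Phi^*$ of the generator of $H^1(S^1;\bZ)$ is an honest cohomology class. Its value on a loop is $\frac{1}{2\pi}$ times the total change of $k\theta$, where $\theta$ is measured against a parallel frame. Because parallel transport is locally trivial in flat charts, this equals $k\cdot WN$. This gives additivity and homotopy invariance for free, and it also shows independence of the choices of $p$ and $v$. The expected obstacle is exactly this identification: checking carefully that the angle measured against parallel-transported $v$ agrees with the change of $\arg\omega(u)/k$ across chart changes. This holds because the transition maps $z\mapsto \zeta z+c$ with $\zeta^k=1$ preserve $dz^k$.

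Finally, we evaluate on the fiber. The fiber loop over a fixed point $x$ is $u(t)=e^{2\pi i t}u_0$ with $\gamma$ constant, so $v(t)$ is constant. The angle therefore increases by $2\pi$, giving $WN=1$ and $k\cdot WN = k$ on the positively oriented generator. Equivalently, $\Phi$ restricted to the fiber is $\theta\mapsto k\theta$, which has degree $k$. Hence $k\cdot WN$ maps to $k$ under $H^1(UT\Sigma_{g,n})\to H^1(S^1)$, so it is a $k$-framing.
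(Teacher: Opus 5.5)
Your argument is correct, but it takes a different route from the paper. The paper's proof is one line. It notes that $WN$ is twist-linear (asserted beforehand as straightforward) and invokes Humphries--Johnson's theorem that any twist-linear function on simple closed curves is represented by a class in $H^1(UT\Sigma_{g,n})$. It does not explicitly check that the class takes the value $k$ on the fiber.

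You instead build the class directly as $\Phi^*$ of the generator of $H^1(S^1;\bZ)$, where $\Phi(x,u)=\omega(u)/|\omega(u)|$. You then identify its value on tangent lifts with $k\cdot WN$ by working in flat charts. There parallel transport is trivial, and the transition maps $z\mapsto \zeta z+c$ with $\zeta^k=1$ shift the arguments of $u$ and $v$ by the same amount, so $\theta_u-\theta_v$ is chart-independent.

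Your approach buys several things:
\begin{itemize}
\item It is self-contained and elementary.
\item It defines the class on all of $H_1(UT\Sigma_{g,n})$, not only on simple closed curves.
\item Independence of $p$ and $v$ and the fiber value $k$ (the degree of $\theta\mapsto k\theta$) come for free.
\item Twist-linearity becomes a consequence rather than an input.
\end{itemize}
One consequence is that your first paragraph, showing $WN$ is $\frac{1}{k}\bZ$-valued, becomes redundant once the identification with $\Phi^*$ is made.

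The paper's route is shorter given the citation. Humphries--Johnson also applies to abstract twist-linear functions that do not come from any geometric map. That generality is not needed here, because the $k$-differential itself supplies $\Phi$.
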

\begin{proof}
By Humphries-Johnson \cite[Theorem 2.5]{HumphriesJohnson}, any twist-linear map from simple closed curves to $\bZ$ is given by an element of $H^1(UT \Sigma_{g,n})$.
\end{proof}

Call $w := k \cdot WN$ the \emph{$k$-framing associated to $(X, \omega)$}. Note that $w$ mod $k$ is the holonomy homomorphism. If a $k$-differential has a cone point $q$ of order $N$ and $\delta_q$ is a small positively oriented loop around $q$, then $w(\delta_q) = N+k$. 

\begin{lem}[Arf]
If $w$ is a $\bZ/2$-valued $1$-framing on a closed genus $g$ surface $\Sigma_g$ and $(a_1, \hdots, a_g, b_1, \hdots, b_g)$ is a symplectic basis of simple closed curves, then $\sum_{i=1}^g (w(a_i)+1)(w(b_i)+1)$ mod $2$ is independent of the choice of basis.
\end{lem}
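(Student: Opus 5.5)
The plan is the classical route via quadratic forms over $\bZ/2$, following Johnson. First, on a closed surface a $\bZ/2$-valued $1$-framing $w$ on $UT\Sigma_g$ induces a function on $H_1(\Sigma_g;\bZ/2)$. Concretely, set $q(c) = w(\vec c) + 1$ for a simple closed curve $c$, where $\vec c$ is its tangential lift to $UT\Sigma_g$. Twist-linearity, $w(T_d c) = w(c) + \langle c,d\rangle w(d)$, combined with the fact that the fiber class evaluates to $1$, should show that $q$ depends only on the homology class of $c$ mod $2$. Then $q$ extends to a function $q: H_1(\Sigma_g;\bZ/2) \to \bZ/2$ satisfying
\[ q(x+y) = q(x) + q(y) + \langle x, y\rangle. \]
The plan for this step is to verify the formula on pairs of simple closed curves meeting once: resolving the intersection produces a simple curve in the class $x+y$, and the lifts differ by one fiber. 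For disjoint curves the smoothing yields a multicurve, and one uses $q(\text{null}) = 0$ on a bounding curve (winding $\pm1$ plus $1$). This is Johnson's theorem that spin structures correspond to quadratic refinements of the intersection form; I would cite Johnson \cite{Johnson-Spin} for it or reprove it along these lines.

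Second, with $q$ a quadratic form refining the nondegenerate symplectic form on $V = H_1(\Sigma_g;\bZ/2)$, the quantity in the statement is exactly $\sum q(a_i)q(b_i)$, the classical Arf invariant. Basis independence is then pure linear algebra. The cleanest argument is the majority characterization: $\mathrm{Arf}(q) = 1$ iff $|q^{-1}(1)| = 2^{2g-1}+2^{g-1}$, and $\mathrm{Arf}(q)=0$ iff $|q^{-1}(1)| = 2^{2g-1}-2^{g-1}$. This is proved by computing for any symplectic basis. The form splits as an orthogonal sum of the hyperbolic planes $\langle a_i,b_i\rangle$, on which $q$ takes the value $1$ either once (when $q(a_i)q(b_i)=0$) or three times (when $q(a_i)q(b_i)=1$). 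Counting through the orthogonal sum, the generating count $\sum_{x}(-1)^{q(x)} = \prod_i (\pm 2) = 2^g(-1)^{\sum q(a_i)q(b_i)}$ depends only on $q$, so the sum mod $2$ is basis-independent.

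The main obstacle is the first step: checking carefully that $w(\vec c)+1$ is well defined on mod-$2$ homology classes and satisfies the quadratic relation. This comes down to the local computation that smoothing a transverse intersection changes total winding by $\pm1$ per crossing. It also needs the fact that a separating simple curve has $w$-value $1$, which follows since the framing has value $1$ on the fiber and Poincaré–Hopf gives winding $\pm1$ for the boundary of a disk minus handles with total contribution even. If that local bookkeeping becomes messy, I would instead invoke Humphries–Johnson and Johnson's correspondence directly, and keep only the Gauss-sum argument as the self-contained part.
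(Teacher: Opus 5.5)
Your argument is correct, but it takes a different route from the paper.

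\textbf{The paper's route.} The paper never builds a quadratic form on homology. It uses that $\mathrm{Mod}(\Sigma_g)$ acts transitively on geometric symplectic bases, so it suffices to check invariance under the Humphries generators.
\begin{itemize}
\item Twists about $a_i$ or $b_i$ are handled directly by twist-linearity: the change is $(w(a_i)+1)w(a_i)\equiv 0$.
\item A twist about the curve $c_i$ meeting $a_i$ and $a_{i+1}$ changes the sum by $w(c_i)(w(b_i)+w(b_{i+1}))$.
\item This vanishes because homological coherence on the pair of pants bounded by $b_i,b_{i+1},c_i$ gives $w(c_i)+w(b_i)+w(b_{i+1})\equiv 1$.
\end{itemize}
So the only topological input beyond twist-linearity, which is already in hand from Humphries--Johnson, is one Euler-characteristic fact for a single pair of pants.

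\textbf{Your route.} You front-load the topology into Johnson's theorem that $q=w+1$ descends to a quadratic refinement of the intersection form on $H_1(\Sigma_g;\bZ/2)$. This is where all the work lies. Your remark that twist-linearity alone ``should'' give homology invariance is not an argument. The real content is:
\begin{itemize}
\item the Poincar\'e--Hopf identity that the tangential lifts of the oriented boundary of a compact subsurface $S$ sum to $\chi(S)$ times the fiber class, so a mod-$2$ bounding multicurve with $n$ components has $\sum_j (w(\gamma_j)+1)\equiv \chi(S)+n\equiv 0$;
\item the smoothing computation at crossings.
\end{itemize}
After that, basis independence is the purely algebraic Gauss-sum identity $\sum_x(-1)^{q(x)}=2^g(-1)^{\sum_i q(a_i)q(b_i)}$.

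\textbf{Comparison.} Your approach gives a manifestly basis-free invariant and a somewhat stronger statement. It proves independence over any simple closed curves whose mod-$2$ classes form a symplectic basis, not only geometric symplectic bases, which are the only ones on which the mapping class group acts transitively. It also identifies the invariant with Johnson's classical Arf invariant, which the paper later invokes in Section~\ref{S:SporadicCubics}. The paper's approach is shorter and self-contained with the tools of that section. It also adapts immediately to the relative Arf invariant of Lemma~\ref{L:RelativeArf} by checking one extra generator, whereas your method would need a relative version of Johnson's correspondence.
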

\begin{proof}
Since any two bases are in the same mapping class group orbit, it suffices to show that the expression is invariant under a generating set of mapping classes. By twist-linearity, the claim is clear when the mapping class is a Dehn twist about an element of the symplectic basis. For $i \in \{1,\hdots, g-1\}$ let $c_i$ be a simple closed curve whose geometric intersection number with $a_i$ is $1$, with $a_{i+1}$ is $-1$, and with any other element of the symplectic basis is $0$. So $b_i, b_{i+1}, c_i$ form a pair of pants (see Figure \ref{F:Generators}). It follows that $w(c_i) + w(b_i) + w(b_{i+1}) = 1$. (This property is called \emph{homological coherence of a $1$-framing}). Dehn twisting by $c_i$ causes the expression in the claim to change by $w(c_i)(w(b_i)-w(b_{i+1}))$. Either $w(c_i) = 0$ or $w(c_i) = 1$ and $w(b_i) + w(b_{i+1}) = 0$ so the change is $0$. The two types of Dehn twists we described account for all Humphries generators of the mapping class group. See Farb-Margalit \cite[Figure 4.10 and Corollary 4.16]{FarbMargalit-Primer}. 
\end{proof}

The invariant in the previous lemma is called the \emph{Arf invariant}. Given a $k$-framing $w$ where $k$ is odd and $w(\delta_q)$ is odd for all punctures $q$, we have that $w$ mod $2$ is a $1$-framing on a closed surface. The \emph{spin} of the $k$-framing is the Arf invariant of $w$ mod $2$. The spin of a $k$-differential is the spin of its associated $k$-framing. 

There is also a variant of the Arf-invariant for a $\bZ/2$-valued $1$-framing $w$ on a surface $\Sigma_{g,2}$ where $w(\delta_q) = 0$ for $q$ either of the two marked points. As in Apisa-Salter \cite[Section 2.3]{ApisaSalter}, $w$ may be extended to a relative framing $\overline{w} \in H^1(UT\Sigma_g, P; \bZ/2)$ where $P$ is the set of two punctures. The \emph{relative Arf invariant (of $\overline{w})$} is then $\overline{w}(s) + \sum_i (w(a_i)+1)(w(b_i)+1)$ where $s$ is a simple arc joining the two points in $P$ and $(a_1, \hdots, a_g, b_1, \hdots, b_g)$ is a symplectic basis of simple closed curves in its complement. This was originally defined by Randal-Williams \cite[Proposition 2.8]{Randal-Williams-RelativeArf}.

\begin{lem}\label{L:RelativeArf}
The relative Arf invariant only depends on $\overline{w}$.
\end{lem}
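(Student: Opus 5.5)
We need to show that $\overline{w}(s) + \sum_i (w(a_i)+1)(w(b_i)+1)$ does not depend on the choice of the arc $s$ joining the two points of $P$, nor on the symplectic basis $(a_1,\dots,a_g,b_1,\dots,b_g)$ of simple closed curves in $\Sigma_g - s$. The plan is to reduce to the closed Arf lemma above by capping off, and then use transitivity of the mapping class group on arcs.

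\textbf{Step 1: fixed arc.} Fix $s$. Cut $\Sigma_g$ along $s$ and regard a regular neighborhood $N(s)$ as a disk. Its complement is a genus $g$ surface with one boundary component $\delta$, which we cap with a disk to obtain a closed genus $g$ surface $\Sigma'$.

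Since $w(\delta_q)=0$ at both punctures, the boundary curve $\delta$ is homotopic to the band sum of $\delta_{q_1}$ and $\delta_{q_2}$ along $s$. So $w(\delta) = w(\delta_{q_1}) + w(\delta_{q_2}) + 1 = 1 \pmod 2$, by the same pair-of-pants homological coherence used in the Arf lemma. This is exactly the value needed for $w$ to extend over the capping disk as a $1$-framing on $\Sigma'$.

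Any symplectic basis in $\Sigma_g - s$ can be isotoped off $N(s)$, so it becomes a symplectic basis of $\Sigma'$. The Arf lemma then shows that $\sum_i (w(a_i)+1)(w(b_i)+1)$ is independent of the basis once $s$ is fixed. Note that $\overline{w}(s)$ depends only on $\overline{w}$ and the isotopy class of $s$ rel endpoints.

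\textbf{Step 2: changing the arc.} Any two simple arcs joining the two points of $P$ differ by a mapping class of $(\Sigma_g, P)$. The group of mapping classes fixing $P$ pointwise is generated by Dehn twists about nonseparating curves, which are the Humphries-type generators. So it suffices to check invariance of the total expression under a twist $T_c$ applied to the pair (arc, basis).

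Choose a good basis for each twist:
\begin{itemize}
\item If $c$ is disjoint from $s$, the arc is unchanged, and the sum changes by $0$ by Step 1.
\item If $c$ meets $s$ once, choose the basis so that $c = a_1$ after a small isotopy, with $b_1$ disjoint from $s$.
\end{itemize}

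For the second case, twist-linearity extended to the relative class (the relative framing also satisfies $\overline{w}(T_c s) = \overline{w}(s) \pm w(c)$ when $|c \cap s| = 1$) gives the change
\[ \overline{w}(T_c s) - \overline{w}(s) = w(c). \]
Meanwhile $T_c$ applied to the basis $(a_1,b_1,\ldots)$ gives $(a_1, T_{a_1}b_1,\ldots)$. Here $w(T_{a_1} b_1) = w(b_1) + w(a_1)$, which changes the $i=1$ summand by $(w(a_1)+1)w(a_1) = 0 \pmod 2$.

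This does not yet balance the $w(c)$ change, so the generating set should instead be taken to be the twists along a standard Humphries chain in which the curve crossing $s$ is the ``$c_0$'' curve. That curve bounds a pair of pants with $\delta_{q_1}$ and a curve $b$ of the basis, so $w(c) = w(b) + 0 + 1$.

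A cleaner route is to recompute the sum using the basis $T_c(a_i), T_c(b_i)$. This is a basis in the complement of $T_c s$, and $w$ of it equals $T_c^{*} w$ of the original basis. Equivalently, invariance amounts to naturality: the expression computed for $(T_c s, T_c\text{-basis})$ with respect to $w$ equals the expression for $(s,\text{basis})$ with respect to $T_c^{*}\overline{w}$. It then suffices to show the expression for fixed $s$ is unchanged when $\overline{w}$ is replaced by $\overline{w} + \langle \cdot, c\rangle w(c)$. This is a linear-algebra check in $H_1(\Sigma_g, P;\bZ/2)$: the class $[c]$ pairs with $s$ and with the basis curves, and the quadratic expression changes by $w(c)^2 + w(c) = 0$.

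\textbf{Main obstacle.} The hardest part is this last bookkeeping: verifying that the relative framing obeys twist-linearity on arcs, and that the arc term and the symplectic sum shift compatibly. I would carry it out by writing $[c] = \epsilon [\delta\text{-dual}] + \sum(x_i a_i + y_i b_i)$ and expanding both terms.
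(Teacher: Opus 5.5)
Your Step 1 is correct. The route you end with in Step 2 is also a sound strategy: by naturality, reduce everything to showing that the expression for a fixed $(s,B)$ is unchanged under $\overline{w}\mapsto T_c^*\overline{w}=\overline{w}+w(c)\langle\cdot,c\rangle$. But the proposal stops exactly at the step that carries all the content. Expanding, the change is
\[ w(c)\Big(\langle s,c\rangle+\sum_i\big[\langle b_i,c\rangle(w(a_i)+1)+\langle a_i,c\rangle(w(b_i)+1)+\langle a_i,c\rangle\langle b_i,c\rangle\big]\Big) \pmod 2. \]
You assert that this equals $w(c)^2+w(c)$, i.e.\ that the bracket equals $w(c)+1$. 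That is not linear algebra. The bracket only sees the homology class of $c$ and the values of $w$ on the basis, whereas $w(c)$ is the framing evaluated on the particular curve $c$.

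The missing idea is Johnson's theorem: $q(x)=w(x)+1$ descends to a quadratic refinement of the intersection form on $H_1(\Sigma_g-P;\bZ/2)$, so $q(x+y)=q(x)+q(y)+x\cdot y$. Apply it to
\[ [c]=\langle s,c\rangle[\delta_{q_1}]+\sum_i\big(\langle b_i,c\rangle a_i+\langle a_i,c\rangle b_i\big), \]
and use $q(\delta_{q_1})=1$, which is where the hypothesis $w(\delta_q)=0$ enters. Then the bracket is $q(c)=w(c)+1$, and the change is $w(c)(w(c)+1)=0$ for every simple closed curve $c$. With this added, your argument closes and needs no special generating set.

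Separately, discard your first bullet in Step 2. A curve $c$ meeting $s$ once cannot be taken to be $a_1$: the basis must lie in $\Sigma_g-s$, and the algebraic intersection of a closed curve in $\Sigma_g-P$ with an arc between the punctures is a homological invariant, so it cannot be isotoped away. That, not the choice of generators, is why your arithmetic there did not balance.

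For comparison, the paper avoids Johnson's theorem. The closed-surface Humphries generators disjoint from $s$ are handled by the closed Arf lemma. The one extra generator is $T_\gamma$, where $\gamma$ is $b_1$ pushed across the puncture $q$; it meets $s$ and $a_1$ once each and is disjoint from $b_1$. The change is therefore $w(\gamma)+w(\gamma)(w(b_1)+1)=w(\gamma)w(b_1)$. This vanishes because $\gamma, b_1, \delta_q$ bound a pair of pants, so homological coherence gives $w(\gamma)+w(b_1)=1$.

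Your abandoned ``$c_0$'' attempt was exactly this computation. What you missed there is that the twist moves $a_1$, not $b_1$, contributing the term $w(\gamma)(w(b_1)+1)$. The paper's check is shorter and uses only homological coherence, but it relies on a specific generating set for the pure mapping class group of $\Sigma_{g,2}$. Your completed route handles every twist uniformly, at the cost of invoking the quadratic form on the punctured surface.
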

\begin{proof}
We need only consider the one additional Humphries generator. This is the Dehn twist $T_\gamma$ about a simple closed curve whose geometric intersection number with $s$ and $a_1$ is $1$ and is $0$ with all other elements of the basis. In particular, we take $\gamma$ to be the result of pulling $b_1$ across the puncture $q$. The change of the proposed invariant is then by $w(\gamma) + w(\gamma)(w(b_1)+1) = w(\gamma)w(b_1)$.  Since $(\gamma, b_1, \delta_q)$ form a pair of pants, we're done by homological coherence. See Figure \ref{F:Generators}.
\end{proof}
\begin{figure}[H]
    \centering
    \includegraphics[width=.55\linewidth]{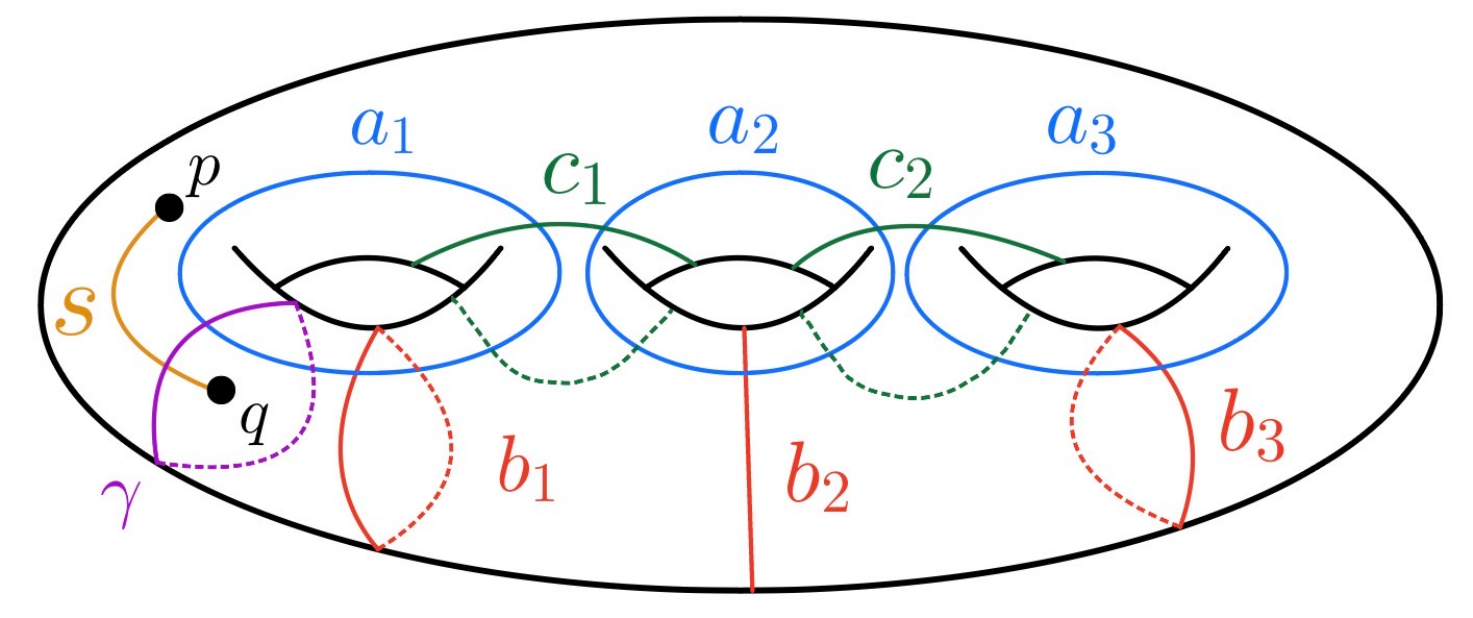}
    \caption{A collection of simple closed curves from Lemma \ref{L:RelativeArf} whose corresponding Dehn twists span the mapping class group of $\Sigma_{g,2}$.}
    \label{F:Generators}
\end{figure}
The construction of $\overline{w}$ involves blowing up the punctures and making a choice of a point and tangent vector on the resulting boundary circles, say $(p_1, v_1)$ and $(p_2, v_2)$. When $w$ comes from the winding number of a $k$-differential, the choices are made so that a smooth arc $\gamma : [0,1] \ra \Sigma_{g,2}$ with $\gamma(0) = p_1, \gamma'(0) = v_1, \gamma(1) = p_2$, and $\gamma'(1) = -v_2$ has a winding number in $\frac{1}{k} \bZ$. Since the extension $\overline{w}$ depends on $(p_1, v_1)$ and $(p_2, v_2)$ it is typically noncanonical. But when $w$ comes from the $1$-framing associated to a $1$-differential where the two punctures are simple poles and there are no other poles, there are canonical choices of $v_1$ and $v_2$ (and any choice of basepoints $p_1$ and $p_2$ determine the same extension). In particular, neighborhoods of the simple poles are infinite Euclidean cylinders whose boundaries are cylinder core curves. Choose $v_i$ to be normal vectors to these core curves pointing into the surface. The assumptions force all choices of $\gamma$ from $v_1$ to $v_2$ as above to have integral winding numbers. So the relative Arf invariant is an invariant of a component of $\Omega^1 \cM_g(\kappa, -1, -1)$ where $\kappa$ is an even positive partition of $2g$. This was described in different terms by Boissy \cite{Boissy-Components}. 

\section{Background}\label{S:BCGGM}

Fix integers $g \geq 0$, $k > 0$, and a partition $\kappa$ of $k(2g-2)$ into $n$ integers. The main tool in the sequel is the \emph{multiscale compactification} $\overline{\Omega^k \cM_g(\kappa)}$ of a stratum of $k$-differentials. This was introduced in \cite{BCGGM-k-diff} and further studied in \cite{CMZ}. 

The boundary points of this compactification, called \emph{multiscale $k$-differentials}, are nodal Riemann surfaces in $ \overline{\cM_{g,n}}$ equipped with $k$-differentials whose singularities only occur at the nodes and the $n$ marked points and which must satisfy the following conditions. First, the sum of the orders of vanishing of the $k$-differentials on both sides of a node must be $-2k$. The node is called \emph{vertical} if one side, called the \emph{higher level side}, has strictly larger order of vanishing than the other. Second, the notion of being at a higher level must define a partial order, called a \emph{level structure}, on irreducible components. Third, part of the data of a multiscale $k$-differential is a combinatorial object associated to each vertical node called a \emph{(local equivalence class of a) prong matching}, which we will define in a moment. Finally, a multiscale $k$-differential must satisfy the \emph{global $k$-residue condition} and the \emph{matching $k$-residue condition}. The matching $k$-residue condition is vacuous unless there are horizontal nodes and the global $k$-residue condition is vacuous unless there is a locally maximal irreducible component which is a $k$th power of a $1$-differential whose only poles occur at nodes. We will exploit these vacuities to mostly ignore the residue conditions. 

%

\begin{thm}[Costantini-M\"oller-Zachhuber \cite{CMZ}]
The multiscale compactification is a smooth projective variety.
\end{thm}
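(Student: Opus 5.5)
The plan is to reduce to the abelian case and then build the space in three stages: local charts, a global complex orbifold, and properness together with projectivity. Since the statement concerns $k$-differentials, the first step is the canonical cyclic cover. To a $k$-differential $(X,\omega)$ attach the $\bZ/k$-cover $\widehat X \to X$ on which $\omega$ pulls back to the $k$th power of an abelian differential $\widehat\omega$. This identifies $\Omega^k\cM_g(\kappa)$ with a union of components of the $\bZ/k$-fixed locus inside a stratum of abelian differentials, namely those on which the deck group acts on $\widehat\omega$ by a primitive $k$th root of unity. The multiscale compactification of the $k$-differential stratum should then be the normalization of the closure of this fixed locus in the multiscale space of abelian differentials of BCGGM. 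Equivalently, it is the space of $\bZ/k$-equivariant multiscale differentials, with level structures, prong matchings and the global residue condition all taken equivariantly.

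Second, I would prove smoothness in the orbifold sense by writing explicit plumbing coordinates near each boundary point. The data at a boundary point are an enhanced level graph $\Gamma$, differentials on each level satisfying the global residue condition, and prong matchings. The construction goes in four steps:
\begin{itemize}
  \item Take a versal deformation of the nodal curve.
  \item Scale each level $i$ by a parameter $t_i$, with $t_i^{\ell_i}$ entering the equation, where $\ell_i$ is the lcm of the prong numbers crossing the level.
  \item Add modification differentials so that the residues match across the nodes.
  \item Glue the nodes by standard plumbing $xy = t$, twisted by the prong matching.
\end{itemize}
The key point is that, after replacing the naive parameters $s_i = t_i^{\ell_i}$ by their roots $t_i$, the boundary stratum of $\Gamma$ becomes a normal crossing divisor. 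The residual ambiguity is the level rotation torus acting on prong matchings. The quotient by its finite stabilizer, the ghost automorphisms, is what produces orbifold rather than manifold charts. In the $\bZ/k$-equivariant setting one checks that all of these constructions can be carried out equivariantly. The fixed locus of a finite group acting on a smooth orbifold chart is smooth, so the charts remain smooth.

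Third, properness. I would use the valuative criterion, or a sequential compactness argument via Deligne--Mumford compactness of $\overline{\cM}_{g,n}$. Given a one-parameter family degenerating, first extract the stable limit curve. Then rescale the differential componentwise by the orders of vanishing of the family to define levels. Finally, the limiting prong matching is determined by the monodromy of the arguments. The global residue condition is forced on limits by the argument of BCGGM. Uniqueness of limits up to the orbifold identifications gives separatedness.

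The main obstacle is projectivity, since a compact complex orbifold need not be algebraic. I would follow CMZ here. Build the space algebraically as a proper Deligne--Mumford stack, using the algebraic description of the level-wise projectivized strata and the log-geometric description of the boundary. Then exhibit an ample line bundle on the coarse space. The candidate is a positive combination of the tautological bundle $\cO(-1)$ on projectivized differentials, the pullback of an ample class from $\overline{\cM}_{g,n}$, and a small negative multiple of boundary divisors. Ampleness would be checked via the Nakai--Moishezon criterion or a Kleiman-type argument on curves inside boundary strata. In particular, curves lying in a fixed level graph stratum must pair positively, which uses that each level carries its own scaling. This positivity check on curves contracted by the forgetful map to $\overline{\cM}_{g,n}$ is the step I expect to require the most care.
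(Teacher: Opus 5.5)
The paper gives no argument for this statement; it imports it from Costantini--M\"oller--Zachhuber. Your first two stages are a reasonable outline of how the $k$-differential space is obtained from the abelian one. Those stages are: the canonical cyclic cover, BCGGM plumbing charts made $\bZ/k$-equivariant, smoothness of the fixed locus of a finite group acting on a smooth chart, and normalizing. Two minor corrections apply. First, there is no global $\bZ/k$-action on an abelian stratum, so the $k$-differential locus is a fixed locus only inside orbifold charts (or globally on a Hurwitz space of cyclic covers). Second, non-primitive components have disconnected canonical covers, so disconnected abelian strata must be allowed. Be precise, too, about what this yields. The projectivized multiscale space is a smooth proper orbifold (Deligne--Mumford stack) with projective coarse space. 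The coarse space itself in general has finite quotient singularities, and this is the sense in which ``smooth projective variety'' has to be read.

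The genuine gap is the projectivity stage. You name a candidate class and a criterion but verify nothing, and the verification is the entire content. The candidate is a combination of $\cO(-1)$, $\pi^*A$ for $A$ ample on $\overline{\cM_{g,n}}$, and $-\sum_\Gamma \epsilon_\Gamma D_\Gamma$.

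The hard loci are those contracted by $\pi$: families inside multi-level boundary strata in which only lower-level differentials, or the relative scales of components on a level, vary. There $\pi^*A$ contributes nothing. Positivity must therefore come from the restrictions of $\cO(-1)$ and of the normal bundles of the $D_\Gamma$, which involve the tautological bundles of different levels and the prong numbers. These have to be controlled uniformly over all level graphs. Moreover, Nakai--Moishezon needs $L^{\dim V}\cdot V>0$ for every subvariety $V$, and positivity on curves alone does not give ampleness. As written this is a hope, not an argument.

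Within your own framework there is a more economical route for $k>1$. You realize the $k$-differential space as the normalization of a closed subspace of the abelian multiscale space of canonical covers. The induced map of coarse spaces is then finite, and the pullback of an ample bundle under a finite morphism is ample. So projectivity for $k$-differentials follows formally from the case $k=1$, and that case is the input you must either cite or genuinely prove.
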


We will use the following result of Chen \cite[Theorem 1.1]{Chen-AffineGeometry} for strata of $k$-differentials with some pole of order at least $k$ and Aygun \cite[Lemma 3.9]{Aygun2025_PrimeOrderkDifferentials} for the remaining strata.

\begin{thm}[Chen, Aygun]\label{T:Chen-Aygun}
Each component of a stratum either projects to a point or to a noncompact subset of $\cM_{g,n}$.
\end{thm}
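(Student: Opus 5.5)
The plan is to treat the two cases in the statement separately: strata with a pole of order at least $k$, and strata of finite-area $k$-differentials (all pole orders strictly greater than $-k$). The first case reduces to an affine-geometry statement in the spirit of Chen. The second is handled by passing to holonomy covers and using that the $\mathbb{C}^\times$-action and the $\GL$-action are both available.

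\emph{Finite-area case.} Let $\cC$ be a component and let $\pi:\cC\to\cM_{g,n}$ be the forgetful map to the underlying pointed curve. Suppose the image of $\pi$ is not a point. The image is constructible and irreducible. I would show it is not closed, or at least not compact, in $\cM_{g,n}$. Pass to the holonomy cover, which lands in a stratum of abelian differentials, and consider the image of $\cC$ there. It is invariant under $\mathrm{GL}^+(2,\bR)$, since the holonomy cover construction commutes with the action on period coordinates.

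Take a point of $\cC$ that is not fixed modulo rotation and scaling; such a point exists because $\pi$ is nonconstant, so $\cC$ has positive projective dimension. Apply the Teichm\"uller geodesic flow. By Masur's criterion, or more simply by the existence of short saddle connections along generic directions, the flow contains points where some saddle connection becomes arbitrarily short. Saddle connections on the cover project to saddle connections on the $k$-differential, so the corresponding curves leave every compact subset of $\cM_{g,n}$. The main subtlety is to ensure that the short curve is not merely a collision of marked points on a fixed curve. That would still be noncompact in $\cM_{g,n}$, since marked points colliding means leaving $\cM_{g,n}$, so this case is fine.

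If instead the action is trivial on the projectivization, then $\cC$ is a single $\mathbb{C}^\times$-orbit and $\pi$ is constant. This gives the dichotomy.

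\emph{Infinite-area case (some pole of order $\le -k$).} Here I would invoke Chen's argument. Periods and relative periods give local affine coordinates. Nonconstancy of $\pi$ means $\cC$ has dimension at least two, and then there is a linear direction along which some relative period can be sent to zero while staying in $\cC$. In the limit this produces a degeneration, so the image is noncompact.

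The main obstacle is making the degeneration argument rigorous: showing that a linear path in period coordinates does not exit $\cC$ before a period reaches zero. I would handle this with the multiscale compactification. The limit along the path is a multiscale differential, and if the underlying pointed curve were smooth, the periods would remain bounded away from zero. This contradicts the fact that the chosen period tends to zero.
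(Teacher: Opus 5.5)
The paper gives no argument of its own for this statement. It cites Chen's Theorem 1.1 for strata with a pole of order at least $k$ and Aygun's Lemma 3.9 for the remaining, finite-area strata. Your finite-area argument has a genuine gap in exactly the case Aygun's lemma is needed: $k\ge 3$.

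\textbf{The finite-area case for $k\ge3$.} Your claim that the holonomy-cover locus is $\mathrm{GL}^+(2,\bR)$-invariant is false once $k\ge 3$.
\begin{itemize}
\item A holonomy cover satisfies $t^*\omega=\zeta_k\omega$. So the holonomy-cover locus is cut out in period coordinates by the $\zeta_k$-eigenspace of the deck transformation in $H^1(Y,Q;\bC)$.
\item Complex conjugation sends this eigenspace to the $\bar\zeta_k$-eigenspace. Hence it is not defined over $\bR$ and cannot be $\mathrm{GL}^+(2,\bR)$-invariant.
\item Equivalently, applying $g\in\mathrm{GL}^+(2,\bR)$ to the flat charts conjugates the transition rotations by $g$. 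These remain rotations by $k$th roots of unity only if $g$ commutes with rotation by $2\pi/k$, i.e.\ only if $g\in\bC^\times$.
\end{itemize}
So for $k\ge3$ the only action on $\cC$ is rotation and scaling, which is trivial on $\bP\cC$. There is no Teichm\"uller flow on $\cC$. Flowing the holonomy cover $Y$ in the abelian stratum leaves the holonomy-cover locus, so the short saddle connections on $g_tY$ say nothing about degenerations inside $\cC$. This failure of invariance is exactly why the paper has to work with the orbit closure $\cM_\cC$ in Theorem \ref{T2}.

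Your argument is fine for $k=1,2$: there, contracting a saddle connection by $g_t$ with area fixed does exhibit noncompactness. For finite-area $k\ge 3$ you need a genuinely non-dynamical input, such as Aygun's lemma or a real degeneration argument in period coordinates. You also cannot fall back on stretching a Euclidean cylinder, because the paper's proof of Theorem \ref{T:Cylinder} uses this very statement to get a boundary point.

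\textbf{The infinite-area case.} Your sketch also fails at its key step. The contradiction ``if the limiting pointed curve were smooth, the periods would remain bounded away from zero'' is not true. The period of a fixed relative homology class can vanish on a smooth $k$-differential in the stratum, since the class need not be represented by a saddle connection. What compactness of the image in $\cM_{g,n}$ could control is the length of the shortest saddle connection relative to some normalization. With a pole of order $\le -k$ the area is infinite, so you would first need a different normalization. You would also have to show that the linear path can be continued inside $\cC$ and that the chosen saddle connection survives along it. If you simply cite Chen's Theorem 1.1 for this case, you match the paper; the sketch you give in its place is not a proof.
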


Before proceeding we note that, given a $k$-differential on a Riemann surface $X$ and a point $p$ that is not a singularity, the $k$-differential determines a nonzero map $T_p X \ra \bC$ that is the composition of a linear map to $\bC$ and the self-map of $\bC$ that sends $z$ to $z^k$. The $k$ preimages of $1$ are called the \emph{horizontal unit tangent vectors} at $p$. 

We will now define a prong matching at a vertical node. Let $a$ be the order of the node at the higher level. Deleting balls $B_+$ and $B_-$ around the node on the higher and lower level sides respectively produces a (possibly disconnected) surface whose two boundary components can be identified with circle sectors that sweep out an angle of $2\pi(k+a)$. A \emph{prong matching at a node} is the choice of normal horizontal vectors on $\del B_{\pm}$ that point into the surface. There are $k+a$ choices on each boundary component. A \emph{prong matching} is then a choice of prong matching at all vertical nodes. 

Given a multiscale $k$-differential, multiplying all $k$-differentials at a given level by a $k$th root of unity, i.e. rotating the $k$-differential by some multiple of $\frac{2\pi}{k}$, acts by permutations on prong matchings. If there are $L$ levels, this defines an action of $(\bZ/k)^L$ on prong matchings. Two prong matchings are called \emph{locally equivalent} if they are in the same orbit of this action. 

If $X$ is a multiscale $k$-differential with two irreducible components connected at two vertical nodes and whose $k$-differentials live in strata $\cH_+$ and $\cH_-$, then $\pi_1(\cH_+ \times \cH_-)$ acts on the set of prong matchings. Two prong matchings will be called \emph{globally equivalent} if they are in the same orbit. 

\begin{lem}\label{L:ProngsAndComps}
Smoothing two globally equivalent prong matchings produces two $k$-differentials in the same component of a stratum.
\end{lem}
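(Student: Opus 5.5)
The plan is to realize the action of $\pi_1(\cH_+ \times \cH_-)$ on prong matchings as monodromy of a family of multiscale differentials inside a single boundary stratum of the multiscale compactification, and then to use the fact that smoothing is continuous in families.

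First, set up the relevant boundary stratum. Let $D$ be the locus in $\overline{\Omega^k \cM_g(\kappa)}$ of multiscale $k$-differentials with the given two-level dual graph: two components joined at two vertical nodes, with differentials in $\cH_+$ and $\cH_-$. Forgetting the prong matching gives a map from $D$ to (a quotient of) $\cH_+ \times \cH_-$; note that the level rescaling by $\bC^\times$ on the lower level is absorbed here. The fiber consists of the locally equivalent classes of prong matchings. Since prongs at a node are the horizontal normal directions on $\del B_\pm$, they vary continuously with $(X_+,X_-)$. Hence this map is a finite covering space, and its monodromy is exactly the action of $\pi_1(\cH_+ \times \cH_-)$ on prong matchings described above.

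Second, if two prong matchings $\sigma$ and $\sigma'$ are globally equivalent, pick a loop $\gamma$ in $\cH_+ \times \cH_-$ whose monodromy sends $\sigma$ to $\sigma'$. Lifting $\gamma$ to the cover gives a path in $D$ from $(X_+,X_-,\sigma)$ to the same pair of differentials with prong matching $\sigma'$. Thus both points lie in the same connected component of $D$, which I regard as a connected subset of the boundary of $\overline{\Omega^k \cM_g(\kappa)}$.

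Third, use the plumbing construction of \cite{BCGGM-k-diff}, which identifies a neighborhood of each boundary point, together with the theorem of Costantini-M\"oller-Zachhuber that the compactification is smooth. By smoothness, the boundary is a normal crossings divisor. Consequently, a small neighborhood $U$ of any point $x \in D$ has $U \setminus \partial$ connected, since the complement of a normal crossings divisor in a ball is connected. Every point of $D$ therefore lies in the closure of a unique connected component of the open stratum. This assignment is locally constant along $D$, so it is constant on the connected component containing the path from the second step. The smoothings of $\sigma$ and $\sigma'$ therefore land in the same component.

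The main obstacle is the covering-space claim in the first step. Two points need care: that prongs really vary continuously without monodromy coming from anything other than $\pi_1(\cH_+\times\cH_-)$, and that the passage to local equivalence classes (the $(\bZ/k)^2$ rotation action) is compatible with this. I would handle both by working on the level of the universal cover of $\cH_+\times\cH_-$, where horizontal prongs can be labeled globally, and then checking that rotation by roots of unity commutes with the deck action.
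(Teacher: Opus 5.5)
Your proposal is correct and follows the paper's argument. The paper's proof is one line: the loop in $\cH_+ \times \cH_-$ gives a path in the boundary joining the two multiscale $k$-differentials, and smoothness of the multiscale compactification lets that path be pushed into the open stratum, inside a single component. You additionally spell out the finite-cover structure of the boundary stratum and the local connectedness of the complement of the boundary, which the paper leaves implicit; for the latter step you only need local irreducibility, which follows from smoothness, not the normal-crossings property.
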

\begin{proof}
Since the multiscale compactification of a stratum is smooth, the path between the two multiscale $k$-differentials in the boundary can be homotoped to a closed loop in a unique component of the stratum. 
\end{proof}

We will show the following strengthening of Theorem \ref{T:ProngMatching}.

\begin{thm}\label{T:ProngHom}
If the top-level $k$-differential of $X \in \cH_+ \times \cH_-$ has order $a$ and $b$ at the nodes, then there is a \emph{prong matching homomorphism} $\rho: \pi_1(\cH_+ \times \cH_-) \ra \bZ/\gcd(k+a, k+b)$ so that the cokernel of $\rho$ surjects onto the set of global equivalence classes of prong matchings. Moreover, $\rho$ is a change of winding number homomorphism.
\end{thm}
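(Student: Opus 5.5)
The plan is to make the set of prong matchings explicit as a torsor for a finite abelian group, identify local equivalence classes with $\bZ/d$, where $d=\gcd(k+a,k+b)$, and read off the action of $\pi_1(\cH_+\times\cH_-)$ as translations by a winding-number quantity.

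\emph{Step 1 (prong matchings as a torsor).} Label the two vertical nodes $n_1,n_2$, with top-level orders $a,b$. At $n_1$ there are $k+a$ outgoing horizontal prongs on each side, cyclically ordered by angle. A matching at $n_1$ is therefore determined, once one reference matching is fixed, by a cyclic shift in $\bZ/(k+a)$; similarly for $n_2$. So the set $\mathcal{P}$ of prong matchings is a torsor for $\bZ/(k+a)\times\bZ/(k+b)$.

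Next I compute the level-rotation action of $(\bZ/k)^2$. Multiplying the top-level differential by a primitive $k$th root of unity rotates the flat picture near each node by one prong. Hence it acts on $\mathcal{P}$ by the translation $(1,1)$. Rotating the bottom level acts by $(\pm1,\pm1)$ with the same sign at both nodes, which again generates the diagonal. Therefore the local equivalence classes form a torsor for
\[ \big(\bZ/(k+a)\times\bZ/(k+b)\big)/\langle(1,1)\rangle \;\cong\; \bZ/d, \]
via $(x,y)\mapsto x-y \bmod d$.

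\emph{Step 2 (monodromy and the definition of $\rho$).} For a loop $\gamma$ in $\cH_+\times\cH_-$ based at $X$, carry the horizontal prongs at the marked points $n_1,n_2$ continuously along $\gamma$ on each component. On returning, the prongs at $n_i$ on the top component have been cyclically shifted by some $r_i^+(\gamma)$, and those on the bottom component by $r_i^-(\gamma)$. The resulting action on $\mathcal{P}$ is translation by $(r_1^+-r_1^-,\,r_2^+-r_2^-)$.

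I define
\[ \rho(\gamma) := (r_1^+-r_1^-) - (r_2^+-r_2^-) \bmod d. \]
Since the monodromy acts on the torsor by translations and composition of loops adds translations, $\rho$ is a homomorphism. Global equivalence classes are the orbits of $\pi_1$ on the local classes, so they are exactly the cosets of $\mathrm{im}\,\rho$ in $\bZ/d$. This gives a bijection, and in particular a surjection, from $\mathrm{coker}\,\rho$ onto the set of global equivalence classes. One point to check is that the transported prong matching stays locally equivalent to a genuine prong matching at each time. This should follow because the monodromy commutes with the level rotations.

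\emph{Step 3 (change of winding number).} I must show that $r_1^+-r_2^+$ equals, mod $d$, the change under $\gamma$ of the winding number $w$, for the $k$-framing associated to the top differential, of an arc $s$ from $n_1$ to $n_2$. The arc $s$ leaves $n_1$ along a chosen prong and enters $n_2$ along a chosen prong, as in the relative-framing construction of Section~\ref{S:Arf}. Transporting the endpoints' prongs along $\gamma$ and following $s$ along the isotopy changes $k\cdot WN(s)$ by exactly the number of prongs rotated at each end, with opposite signs. Ambiguities from the choice of $s$, such as twisting around $n_i$, change $w(s)$ by multiples of $w(\delta_{n_1})=k+a$ or $w(\delta_{n_2})=k+b$, hence by multiples of $d$. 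The bottom contribution $r_1^--r_2^-$ is treated identically using the bottom differential.

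I expect this to be the main obstacle. It requires a careful sign convention matching the boundary blow-up and tangent-vector choices used in defining the relative framing $\overline{w}$. It also requires checking that the winding change is computed consistently for loops in $\pi_1$ of the stratum, where the surface itself moves by mapping classes. For that I would argue locally in a period-coordinate chart and then use that the framing is twist-linear, as established in Section~\ref{S:Arf}.
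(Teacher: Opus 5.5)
Your proposal is correct and follows essentially the same route as the paper: prong matchings form a $\bZ/(k+a)\times\bZ/(k+b)$-torsor, local equivalence is the orbit of the diagonal $(1,1)$ so that $\epsilon(x,y)=x-y$ identifies local classes with $\bZ/\gcd(k+a,k+b)$, and $\rho$ is the monodromy action composed with $\epsilon$. Your winding-number step is also the paper's computation: transport an arc between the two nodes that leaves and enters along prongs, then close it up with circular arcs sweeping $x$ and $-y$ prongs, which gives $A(f)=x-y=\rho(f)$.
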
 

Set $\delta := \gcd(k+a, k+b)$. We will first construct homomorphisms $\pi_1(\cH_\pm) \ra \bZ/\delta$ whose sum will be $\rho$. Suppose that surfaces in $\cH_+$ have genus $h$ and $m$ singularities. If $w$ is the $k$-framing of a $k$-differential, then $w$ mod $\delta$ gives winding number $0$ to loops around the nodes. As in Section \ref{S:Arf}, form the relative framing $\overline{w} \in H^1(UT\Sigma_{h,m-2}^2, P; \bZ/\delta)$, where we blow up the two singularities $a$ and $b$ and mark two points $P$ on the resulting boundary circles. The change of framing cocycle $A(f) := f^* \overline{w} - \overline{w}$ is a cocycle $A: \mathrm{Mod}_{h,m} \ra H^1(\Sigma_{h,m-2}^2, P; \bZ/\delta)$. (See Apisa-Salter \cite[Construction A.17]{ApisaSalter}). 

\begin{lem}
The restriction of $A$ to $\pi_1(\cH_+)$ is a \emph{(change of winding number) homomorphism} to $\widetilde{H}^0(P, \bZ/\delta) \cong \bZ/\delta$.
\end{lem}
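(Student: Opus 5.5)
Two facts need proof. First, for every loop $f\in\pi_1(\cH_+)$ the class $A(f)\in H^1(\Sigma^2_{h,m-2},P;\bZ/\delta)$ lies in the image of $\widetilde{H}^0(P;\bZ/\delta)$. Second, $A$ is additive on $\pi_1(\cH_+)$. I will use the long exact sequence of the pair $(\Sigma^2_{h,m-2},P)$:
\[ 0 \ra \widetilde{H}^0(P;\bZ/\delta) \ra H^1(\Sigma^2_{h,m-2},P;\bZ/\delta) \ra H^1(\Sigma^2_{h,m-2};\bZ/\delta). \]
The first map is injective because $\Sigma^2_{h,m-2}$ is connected. So it is enough to show that the image of $A(f)$ in absolute cohomology is $0$, i.e.\ that $f^*w = w \bmod \delta$ on absolute classes.

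For the vanishing, I would argue as follows. A loop in $\cH_+$ is a continuous family of $k$-differentials $(X_t,\omega_t)$, and its monodromy $f$ is realized by the identification $\Sigma \cong X_0 \cong X_1$ obtained by following the family. The $k$-framing $w_t$ is the winding number of $\omega_t$, which lies in the discrete set $\frac1k\bZ$ and varies continuously, so it is locally constant along the family. Hence the framing $w$ on $UT\Sigma_{h,m}$ satisfies $f^*w = w$ exactly, and therefore also mod $\delta$ on closed curves. This is the standard reason monodromy preserves the framing. Consequently $A(f)$ vanishes on absolute classes. It is determined by its value on the arc $s$ joining the two points of $P$, which lands in $\widetilde{H}^0(P;\bZ/\delta)\cong\bZ/\delta$. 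Concretely, the value is the change of winding number of $s$ relative to the chosen boundary prongs $(p_i,v_i)$, and it is well defined mod $\delta$ because the boundary circles carry $k+a$ and $k+b$ prongs.

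For the homomorphism property, I would use the cocycle identity $A(fg) = g^*A(f) + A(g)$, which follows directly from $A(f)=f^*\overline{w}-\overline{w}$. For $g\in\pi_1(\cH_+)$, $g^*$ acts trivially on $\widetilde{H}^0(P)$, since $g$ fixes each marked singularity and so fixes each point of $P$ (the labels are preserved within the stratum). Hence $A(fg)=A(f)+A(g)$ on $\pi_1(\cH_+)$. Strictly, $\pi_1(\cH_+)$ maps to $\mathrm{Mod}_{h,m}$ only through its monodromy, so I would compose with that map; $A$ is defined on $\mathrm{Mod}_{h,m}$, so the composite is a homomorphism.

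The main obstacle is making the framing-preservation step rigorous at the level of the relative extension $\overline{w}$. The choices of boundary prongs $(p_i,v_i)$ on the blown-up circles move along the loop, so $f^*\overline{w}$ may differ from $\overline{w}$ by a rotation of prongs. Showing that this discrepancy is only detected on $s$, and exactly mod $\delta$, requires following the construction of Apisa-Salter \cite[Construction A.17]{ApisaSalter}. I would first check that changing $(p_i,v_i)$ by one prong changes $\overline{w}$ by a coboundary supported on $P$. With that in hand, the claim reduces to the absolute vanishing established above.
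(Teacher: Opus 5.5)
Your proposal is correct and follows the paper's proof essentially verbatim. Like the paper, you use the exact sequence $\widetilde{H}^0(P;\bZ/\delta) \ra H^1(\Sigma^2_{h,m-2},P;\bZ/\delta) \ra H^1(\Sigma^2_{h,m-2};\bZ/\delta)$, note that the absolute part of $A(f)$ is $(f^*w-w) \bmod \delta$, which vanishes because monodromy of the stratum preserves the framing, and obtain additivity from the cocycle identity together with the trivial action of (pure) mapping classes on $\widetilde{H}^0(P;\bZ/\delta)$. The boundary-prong well-definedness issue you flag is not treated separately in the paper; it is absorbed into the citation of Apisa--Salter's Construction A.17 for the cocycle $A$, and, as you observe, a full rotation of a boundary circle changes the value on $s$ by $k+a$ or $k+b$, which is $0 \bmod \delta$.
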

\begin{proof}
Identifying the points in $P$ with their image on the surface, there is a short exact sequence
\[ 0 \ra \widetilde{H}^0(P; \bZ/\delta) \ra H^1(\Sigma_{h,m-2}^2, P; \bZ/\delta) \ra H^1( \Sigma_{h,m-2}^2; \bZ/\delta) \ra 0 \]
Composing $A(f)$ with the second map is $(f^*w - w)$ mod $\delta$. But elements of $\pi_1(\cH_+)$ stabilize the framing $w$. So the restriction of $A$ to $\pi_1(\cH_+)$ is valued in $\widetilde{H}^0(P; \bZ/\delta)$. Since $\mathrm{Mod}_{h,m}$ acts trivially on $\widetilde{H}^0(P, \bZ/\delta)$, the restriction of $A$ to $\pi_1(\cH_+)$ is a homomorphism. 
\end{proof}

\begin{proof}[Proof of Theorem \ref{T:ProngHom}:] The set of cyclically ordered normal vectors (that point into the surface) near the node on the higher (resp. lower level component) whose order is $a$ at the higher level admits a simply transitive order-preserving $\bZ/(k+a)$ action. Prong matchings at this node correspond to order preserving maps between these two sets, which again admits a simply transitive order-preserving $\bZ/(k+a)$ action. So the collection of all prong matchings is a $G := \bZ/(k+a) \times \bZ/(k+b)$ torsor. Local equivalence corresponds to orbits of the subgroup of $G$ generated by $(1,1)$. This subgroup is precisely the kernel of the map $\epsilon: G \ra \bZ/\delta$ where $\epsilon(x,y)=x-y$. The prong homomorphism $\pi_1(\cH_+ \times \cH_-) \ra G$ is the homomorphism that records how traveling along a loop acts on prong matchings. Let $\rho: \pi_1 \ra \bZ/\delta$ be its composition with $\epsilon$. 

Mark any two normal horizontal vectors at the two nodes on the higher level component and identify these vectors with $0$ in $\bZ/(k+a)$ and $\bZ/(k+b)$. The set of outward pointing normal horizontal vectors at the two nodes can now be identified with $\bZ/(k+a) \times \bZ/(k+b)$. Write the vectors corresponding to $(c,d)$ as $(v_c, w_d)$. By fixing two ``reference prongs,'' i.e. normal vectors on the other sides of the nodes on the lower level component, prong matchings are completely specified by the pairs in $\bZ/(k+a) \times \bZ/(k+b)$ that are matched with the reference prongs.

Following a loop $f$ in $\pi_1(\cH_+)$ sends $(0,0)$ to $(x,y)$, while fixing the reference prongs. So the prong homomorphism is $\rho(f) := x-y$ mod $\delta$. Let $s:[0,1] \ra X$ be any smooth arc so that $s'(0) = v_0$ and $s'(1) = -w_0$. Transporting the arc along the loop determined by $f$ while demanding that the derivative at $0$ and $1$ be an outward (resp. inward) pointing horizontal normal vector determines an arc, up to homotopy rel endpoints, $t: [0,1] \ra X$ that has the same winding number as $s$ and so that $t'(0) = v_x$ and $t'(1) = -w_y$. If $s_x$ (resp. $s_y$) are positively (resp. negatively) oriented circular arcs joining $v_0$ to $v_x$ (resp. $w_y$ to $w_0$) then the change of winding number cocycle is
\[ A(f) = w(s_x) + w(t) + w(s_y) - w(s) = x-y = \rho(f). \]
See Figure \ref{F:ProngHom}. 
\end{proof}

\begin{figure}[h]
    \centering    \includegraphics[width=.5\linewidth]{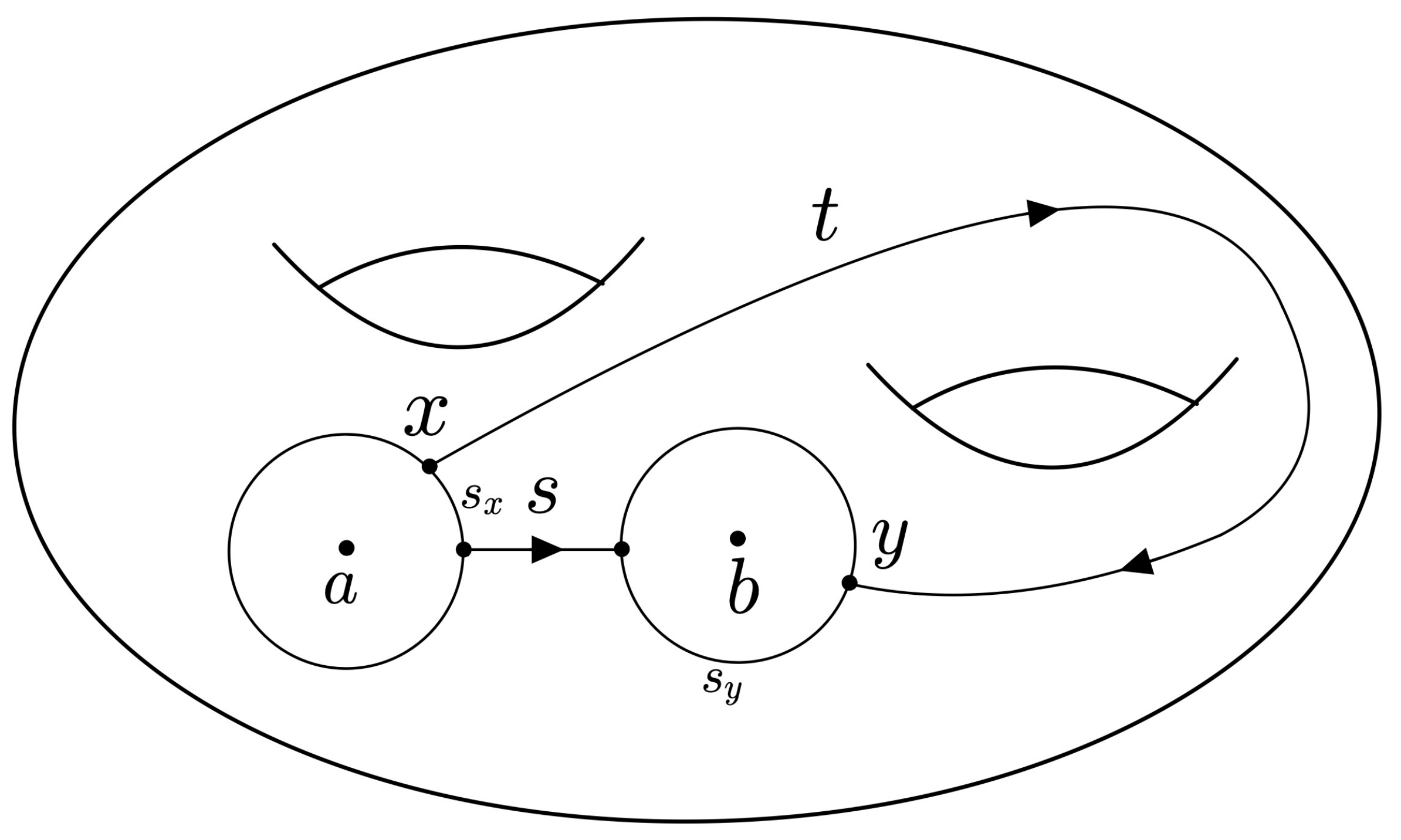}
    \caption{An illustration of the proof of Theorem \ref{T:ProngHom}.}
    \label{F:ProngHom}
\end{figure}

\section{Genus one components and merging}\label{S:GenusOneComponents}

The goal of this section is to classify components of strata of genus one surfaces and to study their boundary in the multiscale compactification. Most of these results are originally due to Boissy \cite{Boissy-Components} and Chen-Gendron \cite{ChenGendron}.

If $E$ is an elliptic curve and $dz$ is its invariant differential, then every $k$-differential on $E$ can be written as $f(z)dz^k$ where $f:E \ra \bP^1$ is some meromorphic function. If we fix the orders $\kappa := (k_i)_{i=1}^n$ of zeros and poles that prescribe a stratum, then the fiber of this stratum over $E \in \cM_1$ will be $\{ (p_i)_{i=1}^n \in E^n \setminus \Delta : \sum_i k_i p_i = 0 \}$ where $\Delta$ is the sublocus of $E^n$ where two points coincide. 

\begin{lem}\label{L:ComponentGenus1}
Let $A$ be an abelian group and $0 \ne \kappa := (k_i)_{i=1}^n$ integers summing to $0$ and so $\gcd(\kappa) = d$. Then $B := \{ (a_i)_{i=1}^n \in A^n : \sum_i k_i a_i = 0\} \cong A^{n-1} \times A[d]$, where $A[d]$ is the $d$-torsion of $A$. 
\end{lem}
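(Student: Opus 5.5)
Write $\kappa = (k_1, \hdots, k_n)$ and consider the homomorphism $\phi: A^n \ra A$, $\phi(a_1,\hdots,a_n) = \sum_i k_i a_i$, so that $B = \ker \phi$. The plan is to change coordinates on $A^n$ by an element of $\mathrm{GL}_n(\bZ)$ so that $\phi$ becomes as simple as possible. The row vector $(k_1, \hdots, k_n) \in \bZ^n$ is nonzero with content $d$. By the Smith normal form for a $1 \times n$ integer matrix (equivalently, by the fact that the primitive vector $\kappa/d$ extends to a basis of $\bZ^n$), there is $M \in \mathrm{GL}_n(\bZ)$ with $(k_1,\hdots,k_n) M = (d, 0, \hdots, 0)$. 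Since $M$ has integer entries and integer inverse, it defines an automorphism of $A^n = \bZ^n \otimes A$ for any abelian group $A$. Precomposing $\phi$ with this automorphism gives the map $(b_1, \hdots, b_n) \mapsto d b_1$.

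The kernel of $(b_1,\hdots,b_n) \mapsto d b_1$ is visibly $A[d] \times A^{n-1}$, and since $M$ is an automorphism of $A^n$ it carries this kernel isomorphically onto $B$. This gives $B \cong A^{n-1} \times A[d]$ as claimed. Note that the hypothesis $\sum_i k_i = 0$ is not needed for the isomorphism itself; only $\kappa \neq 0$ is used, to ensure $d$ is defined and $\kappa/d$ is primitive.

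The only step needing care is the existence of $M$. To establish it I would use the elementary lemma that a primitive vector $v \in \bZ^n$ (one with entries of gcd $1$) is the first column of some matrix in $\mathrm{GL}_n(\bZ)$, or equivalently run the Euclidean algorithm by integer column operations on $(k_1,\hdots,k_n)$, reducing it to $(d,0,\hdots,0)$. Each column operation is invertible over $\bZ$, so their product is the required $M$.

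For the later application to genus one strata, with $A = E$ an elliptic curve, it is worth noting that the isomorphism is algebraic (given by an integer matrix). Hence it identifies $B$ with $E^{n-1} \times E[d]$ as varieties, and so $B$ has $d^2$ connected components.
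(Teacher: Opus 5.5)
Your proof is correct and is essentially the paper's argument. Your matrix $M$ is just a basis of $\bZ^n$ adapted to the split exact sequence $0 \ra \ker s \ra \bZ^n \xrightarrow{s} \bZ \ra 0$ for the primitive functional $s = \kappa/d$; the paper likewise tensors this splitting with $A$ and identifies $B$ as the preimage of $A[d]$. Your remark that $\sum_i k_i = 0$ is never used is also accurate.
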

\begin{proof}
Consider the homomorphism $s: \bZ^n \ra \bZ$ that sends $(a_i)_{i=1}^n \in \bZ^n$ to $\sum_i\frac{k_i}{d} a_i$. It is a surjection since $\gcd(\kappa) = d$ and its kernel is therefore a free $\bZ$-module of rank $n-1$. So there is a short exact sequence,
\[ 0 \ra \bZ^{n-1} \ra \bZ^n \xra{s} \bZ \ra 0. \]
Since the sequence splits, tensoring with $A$ preserves exactness, so
\[ 0 \ra A^{n-1} \ra A^n \xra{s} A \ra 0 \]
is split exact. Finally, $B$ is the preimage of $A[d]$.
\end{proof}

Let $\Delta_{ij} \subseteq A^n$ be the subgroup of $n$-tuples $(a_\ell)_{\ell=1}^n \in A^n$ so that $a_i = a_j$. Let $\kappa_{ij}$ be the tuple formed by deleting $k_i$ and $k_j$ from $\kappa$ and adding in $k_i + k_j$. 

\begin{cor}\label{C:ComponentGenus1Pt2}
For $B$ as in Lemma \ref{L:ComponentGenus1}, $B \cap \Delta_{ij} \cong A^{n-2} \times A[e]$ where $e = \gcd(\kappa_{ij})$ if $\kappa_{ij} \ne 0$ and $B \cap \Delta_{ij} = \Delta_{ij}$ otherwise.
\end{cor}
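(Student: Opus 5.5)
The plan is to reduce to Lemma \ref{L:ComponentGenus1} by identifying $\Delta_{ij}$ with $A^{n-1}$. Concretely, consider the isomorphism $\phi: A^{n-1} \ra \Delta_{ij}$ that inserts a repeated coordinate: index $A^{n-1}$ by the set $\{1,\hdots,n\} \setminus \{j\}$ and send $(a_\ell)_{\ell \ne j}$ to the $n$-tuple whose $j$th entry is $a_i$ and whose other entries are the $a_\ell$. This is clearly a group isomorphism onto $\Delta_{ij}$. Under $\phi$, the defining equation $\sum_\ell k_\ell a_\ell = 0$ of $B$ pulls back to
\[ (k_i + k_j) a_i + \sum_{\ell \ne i,j} k_\ell a_\ell = 0, \]
which is exactly the defining equation of the set $B'$ associated by Lemma \ref{L:ComponentGenus1} to the tuple $\kappa_{ij}$ of $n-1$ integers. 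So $\phi$ restricts to an isomorphism $B' \cong B \cap \Delta_{ij}$.

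Next I would check the hypotheses of Lemma \ref{L:ComponentGenus1} for $\kappa_{ij}$. Its entries still sum to $0$, since we only combined two of the summands of $\kappa$. If $\kappa_{ij} \ne 0$, then its gcd is $e$, and Lemma \ref{L:ComponentGenus1} gives $B' \cong A^{(n-1)-1} \times A[e] = A^{n-2} \times A[e]$, which is the claimed isomorphism. If instead $\kappa_{ij} = 0$, the pulled-back equation is vacuous. Then $B' = A^{n-1}$, and so $B \cap \Delta_{ij} = \phi(A^{n-1}) = \Delta_{ij}$.

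There is essentially no obstacle here. The only point needing care is the indexing convention in $\phi$, and making sure the zero-tuple case is separated out: Lemma \ref{L:ComponentGenus1} assumes a nonzero tuple, because the gcd is undefined (or the splitting fails) when all entries vanish.
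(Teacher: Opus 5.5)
Your proposal is correct and follows the paper's proof. The paper likewise takes $\{i,j\} = \{n-1,n\}$, writes $B \cap \Delta_{ij}$ as the tuples with a repeated last coordinate satisfying the equation with coefficient $k_{n-1}+k_n$, and applies Lemma \ref{L:ComponentGenus1} to $\kappa_{ij}$; your explicit isomorphism $\phi$ and your separate treatment of the case $\kappa_{ij} = 0$ spell out details the paper leaves implicit.
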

\begin{proof}
Without loss of generality, let $\{i,j\} = \{n-1,n\}$. Then,
\[ B \cap \Delta_{ij} = \left\{ (a_1, \hdots, a_{n-2}, a_{n-1}, a_{n-1}) \in A^n : \left( \sum_{i=1}^{n-2} k_i a_i \right) + (k_{n-1} + k_n)a_{n-1} = 0 \right\}. \] 
The claim now follows from Lemma \ref{L:ComponentGenus1}.
\end{proof}

The following was shown by Boissy \cite[Theorem 1.1]{Boissy-Components} when $k =1$ and Chen-Gendron \cite[Theorem 3.12]{ChenGendron} in general. 

\begin{prop}\label{P:Genus1Components}
The components of $\Omega^k \cM_1(\kappa)$ correspond to divisors of $\gcd(\kappa)$ except when $\kappa$ has two elements, in which case they correspond to divisors greater than $1$.
\end{prop}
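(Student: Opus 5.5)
We plan to compute the components fiberwise over $\cM_1$ and then analyze the monodromy of the family of elliptic curves. Fix $E = \bC/\Lambda$ with invariant differential $dz$. By the discussion preceding Lemma \ref{L:ComponentGenus1}, the fiber of $\Omega^k \cM_1(\kappa)$ over $E$ (modulo the $\bC^\times$ scaling, which does not affect component counts) is $B \setminus \bigcup_{i<j} \Delta_{ij}$ modulo translation by $E$, where $B = \{(p_i) \in E^n : \sum k_i p_i = 0\}$. By Lemma \ref{L:ComponentGenus1} with $A = E$ and $d = \gcd(\kappa)$, we get $B \cong E^{n-1} \times E[d]$. So $B$ is a disjoint union of $d^2$ translates of a connected abelian variety of dimension $n-1$, indexed by $E[d] \cong (\bZ/d)^2$. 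Concretely, the label of a point $(p_i)$ is $\sum_i \frac{k_i}{d} p_i \in E[d]$, which is translation invariant since $\sum k_i/d = 0$.

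Next we remove the diagonals. By Corollary \ref{C:ComponentGenus1Pt2}, $B \cap \Delta_{ij}$ has dimension $n-2$ whenever $\kappa_{ij} \neq 0$, so it is a proper closed subvariety of each of the $d^2$ connected components. Removing it leaves each component connected, and hence irreducible. The case $\kappa_{ij} = 0$ happens only when $n = 2$ and $\kappa = (m,-m)$, since the entries are nonzero. There, $\Delta_{12} \cap B = \Delta_{12}$ is the entire component with label $0$ (points with $p_1 = p_2$). So that component is deleted, while the others, labelled by nonzero elements of $E[d]$, survive. Because $n=2$ and we take the quotient by translation, these are just points. Thus the fiber has one component for each element of $E[d]$, or of $E[d]\setminus\{0\}$ when $n = 2$.

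Finally, we account for monodromy as $E$ varies over $\cM_1$. Two fiber components lie in the same component of the stratum exactly when they are in the same orbit of $\pi_1^{orb}(\cM_1) = \mathrm{SL}(2,\bZ)$ acting on $E[d] \cong (\bZ/d)^2$. This is the main step. We need the standard fact that the $\mathrm{SL}(2,\bZ)$-orbits on $(\bZ/d)^2$ are classified by the order of the element, equivalently by $\gcd(x,y,d)$. We would prove it by reducing via $\mathrm{SL}(2,\bZ) \twoheadrightarrow \mathrm{SL}(2,\bZ/d)$ to the claim that a vector of exact order $d$ is unimodular mod $d$. Such a vector extends to a basis, so it lies in the orbit of $(1,0)$. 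The general case then follows by writing an element of order $e \mid d$ as $(d/e)$ times an element of order $e$ and applying the primitive case in $(\bZ/e)^2$, lifted to $(\bZ/d)^2$.

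Orbits therefore correspond to divisors $e \mid d$ (the order of the label). In the $n=2$ case, excluding $0$ removes exactly the divisor $e=1$, which gives the stated exception. It remains to check that the monodromy of the universal family genuinely realizes the full $\mathrm{SL}(2,\bZ)$ action on $E[d]$. This holds because moving $E$ around loops in $\cM_1$ acts on $\Lambda$, and hence on $E[d] = \frac{1}{d}\Lambda/\Lambda$, through the standard representation. Since the label $\sum (k_i/d)p_i$ is a continuous invariant, the order of the label is constant on components, so distinct divisors give distinct components.
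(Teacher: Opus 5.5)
Your proposal is correct and follows essentially the same route as the paper: a fiberwise identification $B \cong E^{n-1}\times E[d]$ via Lemma \ref{L:ComponentGenus1}, transitivity of $\mathrm{SL}(2,\bZ)$ on points of exact order $e$ in $E[d]$, and Corollary \ref{C:ComponentGenus1Pt2} to see that deleting the diagonals removes a whole component only when $n=2$. You supply details the paper leaves implicit, namely the reduction to $\mathrm{SL}(2,\bZ/d)$ for the orbit classification and the translation-invariant label $\sum_i \frac{k_i}{d}p_i$, whose order separates the components.
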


Let $\Omega^k_{d/e} \cM_1(\kappa)$ be the component corresponding to $e \mid d := \gcd(\kappa)$. The number $d/e$ is called the \emph{rotation number}.

\begin{rem}\label{R:RotationNumberDefinition}
Given an elliptic curve $E$ with a $k$-differential with divisor $\sum_i k_i p_i$, $e = \mathrm{ord}\left( \sum_i \frac{k_i}{d} p_i \right)$. 
\end{rem}

\begin{proof}
Set $d = \gcd(\kappa)$. By Lemma \ref{L:ComponentGenus1}, $\Omega^k \cM_1(\kappa)$ can be completed to a fiber bundle over $\cM_1$ with fiber over $E \in \cM_1$ being $E^{n-1} \times E[d]$. Since $\mathrm{SL}(2, \bZ)$ acts transitively on the elements of $E$ of order exactly $e$, for any integer $e$, the components of the completion are indexed by divisors of $d$. To get the original stratum, we must delete elements of $\Delta$ in each fiber. By Corollary \ref{C:ComponentGenus1Pt2}, this is a positive codimension locus, and hence deleting it does not alter the components, except when $n = 2$ and, hence $\kappa_{12} = (0)$, in which case $\Delta$ is a component of $B$.
\end{proof}

\begin{rem}\label{R:GenusOneComplexAnalyticView}
An element of $\Omega_{d/e}^k \cM_1(\kappa)$ on $E \in \cM_1$ can be written as $f(z)^{d/e} dz^k$ where $f: E \ra \bP^1$ is meromorphic. In particular, the component is primitive if and only if $\gcd(k, \frac{d}{e}) = 1$.
\end{rem}

\begin{lem}\label{L:RotationNumberImplication}
If a $k$-differential in $\Omega^k \cM_1(\kappa)$ belongs to $\Omega^k_{d/e} \cM_1(\kappa)$, then the associated $k$-framing sends a simple closed curve that is nonseparating on the closed surface to a multiple of $d/e$.
\end{lem}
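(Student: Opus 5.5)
Write $m := d/e$. By Remark \ref{R:GenusOneComplexAnalyticView}, any element of $\Omega^k_{d/e}\cM_1(\kappa)$ on $E$ has the form $\omega = f^{m}\, dz^k$ with $f : E \ra \bP^1$ meromorphic. The plan is to compute the $k$-framing $w = k\cdot WN$ of $\omega$ directly from this factorization.

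\textbf{Step 1: split the winding number.} Parallel transport for the flat metric of $\omega$ is described by the argument of $\omega$ along a curve. Concretely, for a smooth closed curve $\gamma$ avoiding the singularities, $k\,WN(\gamma)$ equals $k$ times the winding number of $\gamma'$ with respect to the flat coordinate $dz$, plus the total change in $\arg(f^m)$ along $\gamma$ divided by $2\pi$. This is the usual formula: the angle measured by the $k$-differential is $k\arg\gamma'(t) + \arg f^m(\gamma(t))$, normalized by $2\pi$. Equivalently,
\[ w(\gamma) = k\cdot WN_{dz}(\gamma) + m \cdot \mathrm{ind}_\gamma(f), \]
where $\mathrm{ind}_\gamma(f) = \frac{1}{2\pi}\oint_\gamma d\arg f$ is an integer.

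\textbf{Step 2: the flat term vanishes.} Take $\gamma$ to be a simple closed curve that is nonseparating on the closed torus. Its winding number with respect to the translation structure $dz$ on the closed torus is $0$: it is isotopic on $E$ to a straight closed geodesic, and $dz$ has no singularities. However, the isotopy may cross singular points of $\omega$. To handle this, I would instead use twist-linearity or homological coherence to compare $w(\gamma)$ with $w(\gamma')$ for a straight representative $\gamma'$. Crossing a singularity $p_i$ changes $w$ by $w(\delta_{p_i}) - k = k_i$, which is a multiple of $d$ and hence of $m$. So it suffices to treat a curve that is geodesic for $dz$, where the flat term is $0$.

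\textbf{Step 3: conclude.} For such a geodesic curve, Step 1 gives $w(\gamma) = m\cdot \mathrm{ind}_\gamma(f) \in m\bZ$. Combined with the correction of Step 2, every nonseparating simple closed curve has $w(\gamma) \equiv 0 \pmod m$.

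The main subtlety is the bookkeeping in Step 2. Curves on the punctured torus that are isotopic on the closed surface differ by loops around punctures, and these contribute $w(\delta_{p_i}) = k_i + k$ in homology, with the $k$ being absorbed by the change in the $dz$-winding. I would verify this using homological coherence of framings, which is the same pair-of-pants argument as in the Arf lemma. It reduces the correction to a sum of the $k_i$, which are divisible by $d$ and hence by $d/e$.
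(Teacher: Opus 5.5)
Your proof is correct and follows the paper's route: you factor $\omega = f^{d/e}\,dz^k$, split $w(\gamma)$ additively into $k\,WN_{dz}(\gamma)$ plus $\frac{d}{e}$ times the index of $f$ along $\gamma$, and then kill the flat term. The puncture-crossing bookkeeping via homological coherence in Step 2 is valid but unnecessary. $WN_{dz}$ is the degree of the Gauss map for the global trivialization $dz$ of the closed torus $E$, so it is invariant under any isotopy in $E$, even one passing through singularities of $\omega$, and is therefore $0$ for every nonseparating simple closed curve; the paper uses this fact directly.
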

\begin{proof}
As in Remark \ref{R:GenusOneComplexAnalyticView}, the $k$-framing sends a simple closed nonseparating curve $\gamma$ to the degree of  $\frac{f(\gamma(t))^{d/e} \gamma'(t)^k}{|f(\gamma(t))^{d/e} \gamma'(t)^k|}$. If $g_1, g_2: S^1 \ra S^1$, then $\deg(g_1 \cdot g_2) = \deg(g_1) + \deg(g_2)$. Since $\gamma$ is simple closed and nonseparating the degree of $\frac{\gamma'(t)}{|\gamma'(t)|}$ is the winding number of $\gamma$ in the flat $dz$ metric, which is $0$. So the framings associated to $f(z)^{d/e} dz^k$ and $(f(z) dz)^{d/e}$ send $\gamma$ to the same value. The result now follows. 
\end{proof}

Say that two singularities $a$ and $b$ in a component of a stratum can be \emph{merged} if it is possible to make them coincide without pinching a curve other than the one which bounds a disc containing just $a$ and $b$.

\begin{prop}\label{P:GenusOneMerging}
The stratum $\Omega_r^k \cM_1(\kappa)$ contains $\Omega_{r'}^k \cM_1(\kappa_{ij})$ on its boundary where $r \mid r' \mid \gcd(\kappa_{ij})$ provided that the latter stratum is nonempty and $\kappa_{ij} \ne 0$. In particular, any two singularities can be merged except:
\begin{enumerate}
    \item The two singularities when $\kappa = (a,-a)$.
    \item The $a$ and $b$ singularities when $\kappa = (a,b,c)$ and $r = |c|$. 
    \item The $a$ and $-a$ singularities when $\kappa = (a,-a,b,-b)$ and $r = |b|$.
\end{enumerate}
\end{prop}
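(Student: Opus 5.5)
Throughout, I work fiberwise over $E \in \cM_1$, using the description from the proof of Proposition \ref{P:Genus1Components}. Write $d = \gcd(\kappa)$, $d' = \gcd(\kappa_{ij})$, and note that $d \mid d'$. The fiber of the completed stratum is $B = \{\sum_\ell k_\ell a_\ell = 0\}$, and by Remark \ref{R:RotationNumberDefinition} the component $\Omega^k_r \cM_1(\kappa)$ with $r = d/e$ is the locus where $\sum_\ell \frac{k_\ell}{d} a_\ell$ has order exactly $e$. Merging $p_i$ and $p_j$ means degenerating inside $B$ to a generic point of $B \cap \Delta_{ij}$, with no other collisions. A generic point $x$ of a component of $B\cap\Delta_{ij}$ away from the other diagonals is a smooth $k$-differential in $\Omega^k\cM_1(\kappa_{ij})$, since $\kappa_{ij}\neq 0$.

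\textbf{Step 1: rotation numbers at the collision point.} At $x$, set $T = \sum \frac{k_\ell}{d'} a_\ell$, with $p_i, p_j$ merged. Then $\frac{d'}{d} T = \sum \frac{k_\ell}{d} a_\ell$. So if $T$ has order $e'$, the old invariant has order $e = e'/\gcd(e', d'/d)$, which gives the relation between $r = d/e$ and $r' = d'/e'$. The statement says that the components of $\Omega^k\cM_1(\kappa_{ij})$ in the closure of $\Omega^k_r\cM_1(\kappa)$ are exactly those with $r \mid r'$. I will check this divisibility directly: $r' / r = (d'/d)\cdot (e/e')$, and $e' \mid (d'/d) e$ holds because $\frac{d'}{d}T$ has order $e$.

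\textbf{Step 2: every such point is a limit from the correct component.} Conversely, given $x$ in $\Omega^k_{r'}\cM_1(\kappa_{ij})$, the point $x$ lies in $B$ with old invariant of order $e$ determined as above. The component of $B$ through $x$ is a translate of $A^{n-1}$, so it is irreducible of dimension $n-1$ and contains $B\cap\Delta_{ij}$ near $x$ as a proper subvariety. Hence $x$ is a limit of points of that component lying off all diagonals, that is, of points of $\Omega^k_r\cM_1(\kappa)$. The degeneration only collides $p_i$ and $p_j$, so it is a merge. One caveat is needed: if $e'$ and $e$ are related but the given $r$ forces a specific $r'$, I will make sure the map from admissible orders $e'$ to $e$ hits every $r$ with the prescribed multiples. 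Nonemptiness of the target stratum is assumed in the statement. This handles the first sentence.

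\textbf{Step 3: the exceptions.} For any $r$, take $r' = r$ when possible. The only failure is when the target stratum $\Omega^k_{r'}\cM_1(\kappa_{ij})$ is empty or $\kappa_{ij}=0$ for every admissible $r'$. By Proposition \ref{P:Genus1Components}, emptiness occurs only when $\kappa_{ij}$ has two elements and $r'=1$ is forced, or when $\kappa_{ij} = 0$. The case $\kappa_{ij}=0$ with $n=2$ gives (1).

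When $n=3$ with $\kappa=(a,b,c)$, we get $\kappa_{ij}=(a+b,c)=(-c,c)$, so $d'=|c|$, and the allowed $r'$ are the multiples of $r$ dividing $|c|$ other than $r' = |c|$ itself (which corresponds to $e'=1$). This fails exactly when $r=|c|$, giving (2).

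When $n=4$, merging $a$ with $-a$ leaves $(0,b,-b)$. Ignoring the marked point of order $0$, the effective constraint is the two-point one, so the same argument shows failure exactly when $r=|b|$, giving (3). For the other pairings in $n=4$, and for all $n\ge 5$, $\kappa_{ij}$ has at least three nonzero entries, or an admissible $r'$ survives, so merging is possible.

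The main obstacle is the bookkeeping in Steps 1 and 2. I must verify that the order-$e$ condition on $\sum \frac{k_\ell}{d}a_\ell$ pulls back to exactly the divisibility $r\mid r'$, including surjectivity onto all such $r'$. I will attack this by choosing explicit torsion points on $E$ via $E[N]\cong(\bZ/N)^2$ and using $\mathrm{SL}(2,\bZ)$-transitivity on points of exact order.
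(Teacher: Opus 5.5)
\textbf{Gap: the surjectivity you set as the main obstacle is false.} Your setup is essentially the paper's computation. You work fiberwise over $E$, compare the order of the torsion invariant before and after the collision, and use irreducibility of $B_g$ in Step 2. The paper instead fixes $g$ of order $e$ and decomposes $B_g\cap\Delta_{ij}\cong C$ into components indexed by $\{g' : mg'=g\}$, where $m=\gcd(\kappa_{ij})/d$. It then reads off $r'=rm/m'$ from $|g'|=em'$. The computations in your Steps 1 and 2 are correct. The problem is the claim of surjectivity onto every $r'$ with $r\mid r'\mid\gcd(\kappa_{ij})$, and your own Step 1 shows why it fails. Since $e=e'/\gcd(e',m)$ depends only on $e'$, for this merge each nonempty $\Omega^k_{r'}\cM_1(\kappa_{ij})$ lies in the boundary of exactly one $\Omega^k_r\cM_1(\kappa)$.

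Concretely, take $\kappa=(2,2,-4)$ and merge the two double zeros, so $m=2$. On $\Omega^k_1\cM_1(2,2,-4)$ the invariant $g$ has order $2$, so any $T$ with $2T=g$ has order $4$. Hence only $\Omega^k_1\cM_1(4,-4)$ is reached, never the nonempty $\Omega^k_2\cM_1(4,-4)$, even though $1\mid 2\mid 4$.

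So the first sentence of the proposition can only mean that the strata reached satisfy $r\mid r'\mid\gcd(\kappa_{ij})$, which is what the paper's proof establishes. Your Step 3 treats the admissible $r'$ for $\kappa=(a,b,c)$ as all multiples of $r$ dividing $|c|$. That rests on the false description, even though its conclusion happens to be right. Separately, ``$r'=1$ is forced'' should read $e'=1$, i.e. $r'=\gcd(\kappa_{ij})$.

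\textbf{Repair within your framework.} Take $e'=em$. Then $\gcd(e',m)=m$, so $e'/\gcd(e',m)=e$, and by Step 2 every point of $\Omega^k_r\cM_1(\kappa_{ij})$ is a limit of points of $\Omega^k_r\cM_1(\kappa)$. This rotation-number-preserving merge is the form in which the proposition is used later, e.g. in Corollary \ref{C:GenusOneMerging}.

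By Proposition \ref{P:Genus1Components}, this target is empty only when $\kappa_{ij}$ has two nonzero entries and $em=1$. When $em=1$, the only compatible $e'$ is $1$, so no merge exists at all. Since $em=1$ means $r=d=\gcd(\kappa_{ij})$, this condition together with the case $\kappa_{ij}=0$ yields exactly (1)--(3). It is the same as the paper's observation that a component of $C$ lies in another diagonal only when $g=g'=0$ and $\kappa_{ij}$ has two nonzero entries.
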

\begin{proof}
Suppose that $n > 2$. Set $d = \gcd(\kappa)$ and write $r = d/e$. Fix some surface $E \in \cM_1$. Write $\overline{k_i} := \frac{k_i}{d}$ and fix $g \in E$ of order $e$. It suffices to consider $B_g := \{ (a_1, \hdots, a_n) \in E : \sum_i  \overline{k_i} a_i = g \}$. Without loss of generality $\{i,j\} = \{n-1,n\}$. Then,
\[ B_g \cap \Delta_{ij} = \left\{ (a_1, \hdots, a_{n-2}, a_{n-1}, a_{n-1}) \in E^n : \left( \sum_{i=1}^{n-2} \overline{k_i} a_i \right) + (\overline{k_{n-1}} + \overline{k_n})a_{n-1} = g \right\}. \]
We identify this subset with
\[ C := \left\{ (a_1, \hdots, a_{n-2}, a_{n-1}) \in E^{n-1} : \left( \sum_{i=1}^{n-2} \overline{k_i} a_i \right) + (\overline{k_{n-1}} + \overline{k_n})a_{n-1} = g \right\}. \]
Since $n > 2$, $\kappa_{ij} \ne 0$. Write $ \gcd(\kappa_{ij}) = dm$. By Lemma \ref{L:ComponentGenus1}, the components of $C$ correspond to elements $g'$ of $E$ where $mg' = g$, which is a coset of $E[m]$. If $|g'| = em'$ where $m' \mid m$, then this component corresponds\footnote{This follows since multiscale $k$-differentials on nodal Riemann surfaces of genus $g$ with $n$ singularities and nodes are recoverable from the underlying element of $\overline{\cM_{g,n}}$ with marked points and nodes labeled by their orders. If a component of $C$ were contained in $\Delta_{\ell h}$, then either three singularities simultaneously collided while fixing the underlying elliptic curve or two pairs of singularities did. The first case does not specify a unique point in $\overline{\cM_{g,n}}$. In both cases, a curve other than one bounding a disk containing $a$ and $b$ is pinched.} to $\Omega^k_{rm/m'} \cM_1(\kappa_{ij})$ (see the proof of Proposition \ref{P:Genus1Components}) provided that the component is not contained in $\Delta_{\ell h}$ for some indices $\ell$ and $h$. If $C$ contains such a component then, for that component, $g = g' = 0$ and $\kappa_{ij}$ contains exactly two nonzero coefficients, which are indexed by $\ell$ and $h$. This would correspond to $\Omega_{\gcd(\kappa_{ij})}^k \cM_1(\kappa_{ij})$, which is empty. This establishes the first claim and the second claim is now straightforward to check. 
\end{proof}

\begin{cor}\label{C:GenusOneMerging}
In primitive nonhyperelliptic components of genus one strata of $k$-differentials without marked points, it is possible to merge any two zeros or any two poles while preserving primitivity and nonhyperellipticity. 
\end{cor}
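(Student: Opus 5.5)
The plan is to apply Proposition \ref{P:GenusOneMerging} with the smallest allowed choice $r' = r$, and then check that primitivity and nonhyperellipticity survive. Let $\cC = \Omega^k_r \cM_1(\kappa)$ be primitive and nonhyperelliptic, where $\kappa$ has no zero entries. Since $\kappa$ sums to $0$ and is not identically zero, it contains at least one zero and at least one pole. So merging two zeros, or two poles, gives $\kappa_{ij} \neq 0$, and $\kappa_{ij}$ still contains both a zero and a pole.

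\textbf{Nonemptiness of the target.} Since $r \mid d = \gcd(\kappa) \mid \gcd(\kappa_{ij})$, the component $\Omega^k_r \cM_1(\kappa_{ij})$ exists by Proposition \ref{P:Genus1Components}, unless $\kappa_{ij}$ has two elements and $r = \gcd(\kappa_{ij})$. Exception (1) of Proposition \ref{P:GenusOneMerging} is irrelevant, since it concerns a single zero and a single pole. Exception (3) merges a zero with a pole, so it is not one of the merges we need.

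The remaining problem is exception (2): $\kappa = (a,b,c)$ with $a,b$ of the same sign, $c$ of the opposite sign, and $r = |c|$. I will show this component is empty, as follows.
\begin{itemize}
\item Here $|c| \mid d$ and $d \mid c$, so $d = |c|$ and $e = 1$.
\item By Remarks \ref{R:RotationNumberDefinition} and \ref{R:GenusOneComplexAnalyticView}, the differential is $f^{d}\,dz^k$, where $f$ is meromorphic with divisor $\pm\bigl(\tfrac{a}{d}p_1 + \tfrac{b}{d}p_2\bigr) \mp p_3$.
\item Such an $f$ has degree one on an elliptic curve, which is impossible.
\end{itemize}
So every merge of two zeros or two poles lands in a nonempty stratum containing $\Omega^k_r \cM_1(\kappa_{ij})$ in the boundary of $\cC$.

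\textbf{Primitivity.} By Remark \ref{R:GenusOneComplexAnalyticView}, primitivity of $\Omega^k_{r}$ means $\gcd(k,r) = 1$. This condition does not depend on $\kappa$, so the merged component $\Omega^k_r \cM_1(\kappa_{ij})$ is primitive exactly because $\cC$ is.

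\textbf{Nonhyperellipticity (main obstacle).} I expect this to be the hardest step. In genus one, a component is hyperelliptic only if the divisor of a generic element is invariant under an involution $z \mapsto -z + c$. Since the points are otherwise generic, this forces very special shapes of $\kappa_{ij}$. These are the two-element case $(a,-a)$ and three- or four-element cases of the form $(2a,-a,-a)$, $(a,a,-2a)$, $(a,a,-b,-b)$.

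First I would check, using Chen-Gendron's description of hyperelliptic components together with the dimension count $\dim \Omega^k\cM_1(\kappa) = n$, which rotation numbers $r'$ in these strata actually give hyperelliptic components. For each such shape I would then do one of two things.
\begin{itemize}
\item Choose a different admissible $r'$ with $r \mid r' \mid \gcd(\kappa_{ij})$ and $\gcd(k,r') = 1$ that is nonhyperelliptic, for instance by choosing which pair to merge or which coset $g'$ with $mg' = g$ to use in the proof of Proposition \ref{P:GenusOneMerging}.
\item Show directly that $\kappa$ itself would then be forced into a hyperelliptic or imprimitive component, contradicting the hypothesis.
\end{itemize}
The delicate cases are those where $\kappa$ has three or four entries, since there the merged stratum has dimension two or three and every component could be hyperelliptic. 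These I would handle by direct inspection, using Lemma \ref{L:RotationNumberImplication} to compare the values of the $k$-framing on nonseparating curves.
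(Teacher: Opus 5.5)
Your first two steps agree with the paper. You merge using Proposition \ref{P:GenusOneMerging} while keeping the rotation number $r$, and primitivity survives because it depends only on $\gcd(k,r)=1$. Your nonemptiness check is correct and more explicit than the paper's. For exception (2) with $a,b$ of the same sign there is a quicker argument: $r=|c|$ divides the nonzero same-sign entries $a,b$, so $|a+b|\ge 2|c|$, which contradicts $a+b=-c$.

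The gap is the nonhyperellipticity step. It is the only content of the corollary beyond Proposition \ref{P:GenusOneMerging}, and you leave it as a plan with unspecified case work. The missing idea is that in genus one, hyperellipticity is pinned down by the rotation number. The hyperelliptic components are exactly $\Omega^k_r\cM_1(r,r,-r,-r)$, $\Omega^k_r\cM_1(\pm 2r,\mp r,\mp r)$ and $\Omega^k_r\cM_1(2r,-2r)$, so every order is $\pm r$ or $\pm 2r$. For instance, in $(2a,-a,-a)$ the involution must be $z\mapsto 2p_1-z$ and must swap $p_2,p_3$. This forces $2p_1-p_2-p_3=0$, i.e. $e=1$ and $r=|a|$. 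So components with $r$ a proper divisor of $|a|$ are not hyperelliptic, and your worry that low-dimensional merged strata might be entirely hyperelliptic is unfounded.

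With this fact, you need neither a different $r'$ nor case-by-case inspection. Every entry of $\kappa$ is a nonzero multiple of $r$, so merging two zeros or two poles creates an order $o$ with $|o|\ge 2r$. The target has rotation number $r$, so if it were hyperelliptic, $o=\pm 2r$ would have to be the merge of two entries equal to $\pm r$. That forces $\cC$ to be $\Omega^k_r\cM_1(r,r,-r,-r)$ or $\Omega^k_r\cM_1(\pm r,\pm r,\mp 2r)$, both of which are hyperelliptic, a contradiction. This is exactly the paper's argument.
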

\begin{proof}
By Proposition \ref{P:GenusOneMerging}, merging two zeros or two poles while preserving rotation number, and hence primitivity, is always possible. We claim that these merges preserve nonhyperellipticity. The hyperelliptic components of genus one strata are 
\[ \Omega^k_r \cM_1(r,r,-r,-r), \Omega^k_r \cM_1(\pm 2r, \mp r, \mp r), \text{ and } \Omega_r^k \cM_1(2r, -2r).\] 

Since our merges of two zeros or two poles preserve rotation number $r$, the merged singularity will have order $o$ where $|o| \geq 2r$. If the resulting surface lived in a hyperelliptic component, then $o$ is a merge of two singularities whose orders are both $r$ or both $-r$. So merging to a hyperelliptic component implies that the original component was also hyperelliptic.
\end{proof} 

\section{Rotation number and genus one prong matchings}\label{S:ProngMatchings}

The projectivization of $\Omega^k_r \cM_1(a,-a)$ is $\{(E,0,p) \in \cM_{1,2} : |p-0| = e\}$ where $\frac{a}{e} = r$. This is the modular curve $\bH/\Gamma_1(e)$ where $\Gamma_1(e)$ is the subgroup of matrices in $\mathrm{SL}(2, \bZ)$ that are unipotent upper triangular mod $e$. Let $\delta_q \in H_1(UT\Sigma_{1,2}; \bZ)$ be a positively oriented loop around $q$, where $q$ is $0$ or $p$, and let $f$ be the class of the fiber of $UT\Sigma_{1,2} \ra \Sigma_{1,2}$. When working topologically, we will identify $\Sigma_1$ with the square torus equipped with holomorphic $1$-form $\omega$ and where $p = (1/e, 0)$. Let $\gamma$ be the core curve of the horizontal cylinder and let $s$ be the horizontal path from $0$ to $p$. Intersections with these curves will be written using $\langle \cdot, \cdot \rangle$. They determine linear maps from $H_1(\Sigma_{1,2})$ to $\bZ$. 


\begin{prop}\label{P:WNofSCC}
For $a, e, k \in \bZ$ where $1 < e \mid |a|$, there is a unique $\Gamma_1(e)$-fixed element $w$ of $H^1(UT\Sigma_{1,2}; \bZ)$ where $w(f) = k$ and $w(\delta_0) = k+a$. Moreover, if $\eta$ is a curve in $\Sigma_{1,2}$ whose winding number with respect to $\omega$ is $0$, then $w(\eta) = r \langle \gamma, \eta \rangle - a \langle s, \eta \rangle$. 
\end{prop}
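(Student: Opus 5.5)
Here $r = a/e$. The plan is to reduce the question to the torsor structure of framings and then to $\Gamma_1(e)$-invariants of a small lattice.

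\textbf{Step 1: Setup.} The $k$-framings $w$ with $w(f)=k$ form an $H^1(\Sigma_{1,2};\bZ)$-torsor. The group $H_1(\Sigma_{1,2};\bZ)\cong \bZ^3$ is spanned by $\gamma$, a vertical curve $\beta$ missing $s$, and $\delta_0$, with $\delta_0+\delta_p=0$ up to sign. The extra requirement $w(\delta_0)=k+a$ forces $w(\delta_p) = k-a$, since on a torus $w(\delta_0)+w(\delta_p)=2k$. This cuts the torsor down to one over $H^1(\Sigma_1;\bZ)\cong\bZ^2$.

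\textbf{Step 2: Reference framing.} Let $w_0$ be the framing of $\omega$ itself, which has winding number $0$ on $\gamma$ and $\beta$. Let $\phi$ be the class with $\phi(\eta)= r\langle \gamma,\eta\rangle - a\langle s,\eta\rangle$. Set
\[ w = k\, w_0 + \phi. \]
Then $w(\delta_0)$ is $k$ plus $-a\langle s,\delta_0\rangle$, which equals $k+a$ after fixing orientations. For any curve $\eta$ with zero $\omega$-winding number we get $w(\eta)=\phi(\eta)$, which is the displayed formula. This $w$ is the framing of the $k$-differential $f(z)^r dz^k$ from Remark \ref{R:GenusOneComplexAnalyticView}, where $\mathrm{div}(f)= e\cdot p - e\cdot 0$ (using $ep=0$). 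The $-a\langle s,\cdot\rangle$ term records the winding of $f^r$ around the branch cut $s$. The $r\langle\gamma,\cdot\rangle$ term is the correction making the value integral and well defined, compare Lemma \ref{L:RotationNumberImplication}.

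\textbf{Step 3: Invariance.} A framing arising from a $k$-differential is fixed by the monodromy of its stratum component, and that monodromy group is $\Gamma_1(e)$. This gives existence. Alternatively, one can check invariance directly on generators of $\Gamma_1(e)$ using twist-linearity.

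\textbf{Step 4: Uniqueness.} Any two $\Gamma_1(e)$-fixed framings with the same values on $f$ and $\delta_0$ differ by an element of $H^1(\Sigma_1;\bZ)^{\Gamma_1(e)}$. The torsor is affine: if $w$ is fixed, then $w+c$ is fixed if and only if $c$ is fixed. The action of $\Gamma_1(e)$ on $H^1(\Sigma_1;\bZ)=\bZ^2$ factors through $\mathrm{SL}(2,\bZ)$ acting standardly. The subgroup $\Gamma_1(e)$ contains $\begin{pmatrix}1&1\\0&1\end{pmatrix}$ and $\begin{pmatrix}1&0\\e&1\end{pmatrix}$. The only vector fixed by both is $0$, since the first forces one coordinate to vanish and the second, with $e\neq 0$, forces the other to vanish. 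So at most one fixed framing exists.

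\textbf{Main obstacle.} The delicate point is bookkeeping in the identification of $\pi_1$ of $\bH/\Gamma_1(e)$, more precisely of $\cM_{1,2}$-type loops, with mapping classes of $\Sigma_{1,2}$ acting on $H^1(UT\Sigma_{1,2})$. Point-pushing of $p$ and the hyperelliptic-type elements also act, and I would handle them by noting they act trivially on $H^1(\Sigma_1)$. Orientation and sign conventions for $s$ and $\delta_0$ also need care, to get $+a$ rather than $-a$. I expect to pin these down by computing explicitly on the square torus with $p=(1/e,0)$.
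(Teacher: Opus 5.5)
Your torsor reduction and uniqueness argument are fine, and the class you write down, $w = k\,w_0 + \phi$ with $\phi = r(\gamma - es)^*$, is exactly the one the paper arrives at. But defining $w$ this way makes the displayed formula tautological. The entire content of the second claim is therefore that $k\,w_0+\phi$ is $\Gamma_1(e)$-invariant, and that is never proved.

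\textbf{The gap.} Step 3 shows that the framing $w_f$ of the $k$-differential $f^r dz^k$ is monodromy invariant. That says nothing about your $w$ unless $w_f = k\,w_0 + \phi$, which Step 2 only asserts.
\begin{itemize}
\item Writing $w_f = k\,w_0 + r\,c_f$ with $c_f = [\tfrac{1}{2\pi i}\,d\log f]$, you need $c_f = \langle \gamma,\cdot\rangle - e\langle s,\cdot\rangle$.
\item On $\delta_0$ and $\delta_p$ this is clear. On $\gamma$ and a vertical curve $\beta$ it is a real computation: the periods of $d\log f$ along the sides of a fundamental domain. This is the content of Abel's theorem for $\mathrm{div}(f) = e\cdot 0 - e\cdot p$ with lifts $0$ and $1/e$.
\item Your heuristic does not supply this. $-a\langle s,\cdot\rangle$ is already a well-defined integral class, so the $\gamma$-term is not an integrality correction; it is precisely what invariance forces.
\item Lemma \ref{L:RotationNumberImplication} only gives $w(\eta)\in r\bZ$ for nonseparating simple closed $\eta$. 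The wrong candidates $k\,w_0 - a\langle s,\cdot\rangle$ and $k\,w_0+\phi+r\langle \beta,\cdot\rangle$ satisfy this equally.
\item The fallback of checking generators of $\Gamma_1(e)$ by twist-linearity is not carried out, and you give no generating set for general $e$.
\item Realizing $w$ as the framing of a $k$-differential only makes sense for $k\ge 1$, but the statement is for all $k\in\bZ$.
\end{itemize}

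\textbf{The fix.} The missing idea is the one the paper uses for its $k=0$ case.
\begin{itemize}
\item $\Gamma_1(e)$ acts by affine automorphisms of $(E,\{0,p\})$, so the period map $\omega\colon H_1(\Sigma_1,\{0,p\};\bZ)\to\bC$ is equivariant.
\item Since $\gamma,\beta,s$ have periods $1,i,1/e$, its kernel is $\bZ(\gamma-es)$. So each $g\in\Gamma_1(e)$ sends $\gamma-es$ to $\pm(\gamma-es)$.
\item The sign is $+$ because $g$ fixes $\delta_0$ and $\langle\gamma-es,\delta_0\rangle=\pm e\neq 0$. By Lefschetz duality, $\phi$ is invariant.
\item $w_0$ is invariant because affine maps with derivative in $\mathrm{SL}(2,\bZ)$ preserve flat winding numbers.
\end{itemize}
Hence $k\,w_0+\phi$ is invariant for every $k\in\bZ$. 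This gives existence for all $k$ at once and, with your Step 4, the formula.

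The paper instead proves existence case by case: $k>0$ via the $k$-differential, $k<0$ by negation, $k=0$ by the kernel argument. It then applies uniqueness to $w-r(\gamma-es)^*$. Once the kernel argument is in hand, your direct route is slightly more economical and never needs the $k$-differential.
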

\begin{proof}
Uniqueness is clear since the difference of two such elements is a $\Gamma_1(e)$-invariant element of $H^1(\Sigma_1)$, which must be zero. When $k > 0$, existence amounts to taking the winding number function associated to a $k$-differential in $\Omega^k_{a/e} \cM_1(a, -a)$. When $k < 0$ we can reduce to the $k > 0$ case by multiplying by $-1$. When $k = 0$, Poincare duality reduces the problem to finding invariant elements of $H_1(\Sigma_1, \{0,p\})$. Since $\omega: H_1(\Sigma_1, \{0,p\}; \bZ) \ra \bC$ is $\Gamma_1(e)$-equivariant with a kernel generated by $\gamma - es$, it follows that elements of $\Gamma_1(e)$ send $\gamma - es$ to $\pm(\gamma-es)$. Invariance follows by considering pairings with $\delta_0$. 

For the second claim, note that $w' := w - r(\gamma - es)^*$, where the $*$ denotes the Poincare dual, is a $\Gamma_1(e)$-invariant element of $H^1(UT\Sigma_{1,2}; \bZ)$ where $w'(f) = w'(\delta_0) = w'(\delta_p) = k$. By uniqueness of $w'$, $w'/k$ must be the winding number of the flat metric prescribed by $\omega$, so the result follows.
\end{proof}

The following is due to Boissy \cite{Boissy-Components} when $k=1$ and Chen-Gendron \cite[Proposition 3.13]{ChenGendron} in general.

\begin{prop}
A $k$-differential in $\Omega^k \cM_1(\kappa)$ belongs to $\Omega^k_{r} \cM_1(\kappa)$ if and only if its associated $k$-framing sends the set $\cS$ of simple closed curve that are nonseparating on the closed surface surjectively to $r\bZ$.
\end{prop}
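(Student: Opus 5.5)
Since the rotation numbers $r = d/e$ of the components $\Omega^k_{d/e}\cM_1(\kappa)$ are pairwise distinct, it suffices to prove one direction. If $(E,\omega) \in \Omega^k_r\cM_1(\kappa)$, then the associated $k$-framing $w$ sends $\cS$ surjectively onto $r\bZ$. Once this is shown, a differential whose framing hits $r\bZ$ surjectively cannot lie in $\Omega^k_{r'}\cM_1(\kappa)$ with $r' \neq r$, since there its framing would surject onto $r'\bZ \neq r\bZ$. Containment $w(\cS) \subseteq r\bZ$ is exactly Lemma \ref{L:RotationNumberImplication}. So the work is to produce a curve in $\cS$ with $w$-value exactly $r$ (or $-r$, replacing the curve by its reverse). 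Then twist-linearity gives all multiples: if $c, \gamma \in \cS$ meet once with $w(\gamma) = r$, then $w(T_\gamma^n c) = w(c) + n\,w(\gamma)$ sweeps out $w(c) + r\bZ = r\bZ$.

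To find such a curve, I would use the degeneration from Proposition \ref{P:GenusOneMerging}. Merging singularities pairwise while preserving the rotation number $r$ reduces everything to a stratum $\Omega^k_r\cM_1(a,-a)$ with $a/e = r$ and $e > 1$. Alternatively, if $n = 2$ we are already there. More carefully, I would induct on $n$. Given a boundary point of $\Omega^k_r\cM_1(\kappa)$ that is a merge, the top level is a genus one surface in $\Omega^k_{r}\cM_1(\kappa_{ij})$. I would choose the merge so the rotation number is preserved; by Proposition \ref{P:GenusOneMerging} with $r' = r$, this is possible whenever $\kappa_{ij}$ gives a nonempty stratum. The bottom level is a genus zero component containing only the two merged points. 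A nonseparating simple closed curve on the top level avoiding the node lifts to a curve in $\cS$ on a nearby smooth surface with the same winding number. So by induction it suffices to handle a base case. The exceptional non-mergeable configurations (1)--(3) of Proposition \ref{P:GenusOneMerging} must either be avoided by choosing a different pair to merge, or treated directly; case (1) is the base case itself.

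For the base case $\Omega^k_r\cM_1(a,-a)$, I would apply Proposition \ref{P:WNofSCC}. The framing $w$ of a differential in this component is $\Gamma_1(e)$-invariant with $w(f) = k$ and $w(\delta_0) = k+a$. So for a curve $\eta$ of zero $\omega$-winding number, $w(\eta) = r\langle\gamma,\eta\rangle - a\langle s,\eta\rangle$. Take $\eta$ to be the vertical closed curve on the square torus through a point with $x$-coordinate in $(1/e, 1)$, i.e.\ crossing $\gamma$ once and disjoint from $s$. It is simple, closed, nonseparating, and has winding number $0$ for $\omega$. Hence $w(\eta) = \pm r$, which proves surjectivity in the base case.

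The main obstacle is the inductive reduction. First, I need a merge preserving $r$ to exist at every step, avoiding the exceptions: for $\kappa = (a,b,c)$ with $r = |c|$, one should merge $a$ with $c$ instead, and similarly for case (3). Second, I need to justify that curves on the top level component of the boundary point transport to curves in $\cS$ with unchanged $k$-framing value. The latter follows because the framing is locally constant in families and the curve stays away from the degenerating disc. If the exceptional cases resist, I would instead compute them directly via Remark \ref{R:GenusOneComplexAnalyticView}, writing the differential as $f^{r}dz^k$. On a closed geodesic of $dz$ avoiding the singularities, the winding of $f$ can be read off from the divisor of $f$. Choosing a geodesic separating the zeros and poles of $f$ appropriately then gives framing value $\pm r$.
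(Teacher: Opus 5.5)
Your proposal is correct and follows the paper's proof. It uses the same three ingredients: containment from Lemma \ref{L:RotationNumberImplication}, the base case $\Omega^k_r\cM_1(a,-a)$ from Proposition \ref{P:WNofSCC}, and reduction to that base case by $r$-preserving merges from Proposition \ref{P:GenusOneMerging}. Your twist-linearity argument and explicit vertical curve fill in details the paper leaves implicit.

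The exceptions you worry about disappear if you merge only zeros with zeros and poles with poles, as the paper does, because the merged order then has absolute value at least $2r>r$. By contrast, your proposed fix for case (2) can fail: for $\kappa=(-2r,r,r)$ with $a=-2r$, merging $a$ with $c$ also lands in the empty component $\Omega^k_r\cM_1(-r,r)$.
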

\begin{proof}
The claim that the $k$-framing on $\Omega^k_r \cM_1(\kappa)$ sends $\cS$ into $r \bZ$ is Lemma \ref{L:RotationNumberImplication}. Therefore, we must produce an element of $\cS$ on an element of $\Omega^k_r \cM_1(\kappa)$ that has rotation number exactly equal to $r$. When $\kappa = (-a,a)$ this follows from Proposition \ref{P:WNofSCC}. Using Proposition \ref{P:GenusOneMerging}, all other cases reduce to this one by merging all zeros and, separately, merging all poles, while preserving the invariant $r$.
\end{proof}

\begin{prop}\label{P:ProngHomG1TwoSing}
Given a primitive component $\Omega_r^k \cM_1(a,-a)$, the prong matching homomorphism $\rho: \Gamma_1(e) \ra \bZ/\gcd(k+a, k-a)$ is surjective unless $k$ and $a$ are odd, in which case its image has index $2$.
\end{prop}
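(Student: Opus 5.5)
The plan is to compute $\rho$ directly on explicit generators of $\Gamma_1(e)$, using the description of $\rho$ as a change of winding number homomorphism from Theorem \ref{T:ProngHom}. Set $\delta = \gcd(k+a,k-a)$. Since $\delta$ divides both $2k$ and $2a$, and primitivity means $\gcd(k,r)=1$ (Remark \ref{R:GenusOneComplexAnalyticView}), the target $\bZ/\delta$ is small enough that a few explicit values should determine the image.

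Work topologically on the square torus with $0$ and $p=(1/e,0)$ marked, and let $\overline{w}$ be the relative $k$-framing extending the $\Gamma_1(e)$-invariant framing $w$ of Proposition \ref{P:WNofSCC}. For $f\in\Gamma_1(e)$ acting affinely, $\rho(f)=\overline{w}(f(s))-\overline{w}(s)$ mod $\delta$, with the reference prongs fixed as in the proof of Theorem \ref{T:ProngHom}. Both $s$ and $f(s)$ are straight segments for $\omega$, so they have $\omega$-winding $0$. The difference $f(s)-s$ then closes up, after adding circular arcs near $0$ and $p$, to a curve $\eta$ whose $\omega$-winding number I can control. On such a curve the formula $w(\eta)=r\langle\gamma,\eta\rangle-a\langle s,\eta\rangle$ of Proposition \ref{P:WNofSCC} applies, and the circular arcs near $0$ and $p$ contribute integers in the prong indices.

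I will evaluate three kinds of generators.
\begin{itemize}
\item $T=\begin{pmatrix}1&1\\0&1\end{pmatrix}$: it fixes $s$, but $f(s)-s$ picks up the horizontal core curve $\gamma$, so its value should be a combination of $r$ and the prong shift.
\item $\begin{pmatrix}1&0\\e&1\end{pmatrix}$: it moves $s$ to the segment from $0$ to $(1/e,1)$, so $\eta$ is essentially the vertical curve $\beta$ and $\rho$ is $\pm r$ or $\pm a$ up to a correction.
\item A diagonal-type element $\begin{pmatrix}1+e&\ast\\ \ast&\ast\end{pmatrix}$ of $\Gamma_1(e)$: this should realize $a$, possibly shifted by $r$.
\end{itemize}

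From these values I expect the image to be the subgroup of $\bZ/\delta$ generated by $r$ and $a$ (or by $r$ and $k\pm a$). Because $\gcd(r,\delta)$ divides $\gcd(r,2k)\in\{1,2\}$, this subgroup is everything except possibly for a factor of $2$. That pins the only possible defect to index $2$, occurring only when $\delta$ is even.

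For the index-$2$ case ($k$, $a$ odd), I will show that the image lies in $2\bZ/\delta$. When $k$ and $a$ are odd, every puncture has $w(\delta_q)$ odd, so $w$ mod $2$ descends to a $\bZ/2$-valued $1$-framing on the closed torus, and its relative extension across $s$ is defined mod $2$. Since $\Gamma_1(e)$ preserves $w$, its action on the relative class mod $2$ is the mod-$2$ reduction of $\rho$. I plan to argue, in the manner of Lemma \ref{L:RelativeArf}, that this reduction is trivial: the relative Arf invariant is intrinsic, and the only way it could change is through $\overline{w}(s)$, which would contradict invariance. Conversely, when $k$ or $a$ is even, one of the explicit values above should be odd, or $\delta$ is odd, which gives surjectivity.

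The main obstacle will be the bookkeeping of the prong and arc corrections. The terms $w(s_x)$ and $w(s_y)$ from the proof of Theorem \ref{T:ProngHom}, together with signs and orientations, must be tracked carefully to obtain exact values mod $\delta$ rather than values only up to these shifts. I will first check the answer numerically in small cases such as $k=1$, $a=e=2$ and $k=3$, $a=e=3$.
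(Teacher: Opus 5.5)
Your overall strategy is the paper's: evaluate $\rho$ on explicit elements of $\Gamma_1(e)$ via Proposition \ref{P:WNofSCC}, then bound the image from above with the relative Arf invariant. The lower-bound step fails as you set it up, however.

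You expect the image to contain $r$ (and $a$), and then use $\gcd(r,\delta)\mid\gcd(r,2k)$ to confine any defect to index $2$. In fact $\gcd(r,\delta)=1$ always. Indeed $\delta\mid 2k$ and $\gcd(r,k)=1$; and if $r$ is even, then $k$ is odd and $a=re$ is even, so $k\pm a$, and hence $\delta$, are odd. So if $r$ were a value of $\rho$, then $\rho$ would be surjective in every case. That contradicts the index-$2$ conclusion you are trying to prove: when $k,a$ are odd, $r$ and $a$ are both odd, while you then argue the image is even.

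The actual values are different.
\begin{itemize}
\item $T$ fixes $s$ pointwise, together with the prongs at its endpoints; it is a Dehn twist about a horizontal curve disjoint from $s$. So $f(s)-s$ is trivial and $\rho(T)=0$; it does not pick up $\gamma$.
\item For $g=\left(\begin{smallmatrix}1&0\\ em&1\end{smallmatrix}\right)$, the loop $t=g(s)s^{-1}$ has $\omega$-winding $0$. Proposition \ref{P:WNofSCC} gives $\rho(g)=r\langle\gamma,g(s)\rangle-a\langle s,t\rangle=(r-a)m$. Both terms appear together, not ``$\pm r$ or $\pm a$''.
\end{itemize}

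The single value $r-a$ is what drives the paper's argument. Here $\delta$ is $\gcd(k,e)$ or $2\gcd(k,e)$.
\begin{itemize}
\item If $\delta\mid a$, then $\delta\mid k$, and $r-a\equiv r$ is a unit mod $\delta$, so $\rho$ is surjective.
\item If $\delta\nmid a$, then $\delta/2$ divides both $a$ and $k$, so $r-a$ is a unit mod $\delta/2$ and the image has index at most $2$. Moreover $r$ is odd, so $r-a$ is odd, hence a generator, exactly when $a$ is even.
\item The remaining case, $\delta\nmid a$ with $a$ odd, forces $k$ odd.
\end{itemize}
No diagonal-type generator is needed.

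Your parity argument is also the paper's, but the justification is wrong as written. For $k$ and $a$ odd, $w(\delta_0)=k+a$ and $w(\delta_p)=k-a$ are \emph{even}, not odd, so $w \bmod 2$ does not extend over the closed torus. The condition $w(\delta_q)\equiv 0 \pmod 2$ is exactly the hypothesis under which the relative framing $\overline{w}$ and the relative Arf invariant of Lemma \ref{L:RelativeArf} are defined.

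With that corrected, the argument goes through. Compute the relative Arf invariant once with $(s,a_1,b_1)$ and once with $(gs,ga_1,gb_1)$. Since $g^*w=w$, the $(w(a_1)+1)(w(b_1)+1)$ terms agree. Lemma \ref{L:RelativeArf} then gives $\overline{w}(gs)\equiv\overline{w}(s)\pmod 2$, so $\rho(g)$ is even.
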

\begin{proof}
Fix $g = \begin{pmatrix} \alpha & \beta \\ \epsilon & \zeta \end{pmatrix} \in \Gamma_1(e)$. Let $t$ be a closed curve on $\Sigma_{1,2}$ representing $g(s)s^{-1}$ which has winding number zero in the flat metric associated to $\omega$. Let $w$ be the $k$-framing associated to the $k$-differential. By Theorem \ref{T:ProngHom} (see Figure \ref{F:ProngHom}), $\rho(g) = w(t)$ mod $\gcd(k+a, k-a)$. By Proposition \ref{P:WNofSCC}, this is 
\[ r\langle \gamma, g(s) \rangle - a \langle s, t \rangle = \frac{r\epsilon}{e} - a \langle s, t \rangle. \]
The slope of $g(s)$ is $\frac{\epsilon}{\alpha}$ and $\langle s, t \rangle$ is the number of times that $g(s)$ crosses $s$. This is number of times that $n \frac{\alpha}{\epsilon}$ mod $1$ belongs to $[0, \frac{1}{e})$ for $n \in \{0, \hdots, \frac{\epsilon}{e}-1\}$. Write $m = \frac{\epsilon}{e}$. We have
\[ \langle s, t \rangle = \#\{ n \in \{0, \hdots, m-1\} : 0 \leq n\alpha  \text{ mod } \epsilon  < m \}. \]
Note that lower unipotents are sent to $(r-a)m$ and that $\delta := \gcd(k+a, k-a) = \gcd(k,e)$ unless the same powers of two divide $k$ and $a$, in which case $\gcd(k+a, k-a) = 2\gcd(k,e)$. (This uses that primitivity implies that $\gcd(k,r) = 1$ and that $a = re$). 

When $\delta \mid a$, $\rho$ is a surjection since $\gcd(r,k) = 1$. So suppose now that $\delta \nmid a$, which implies that $r$ is odd and $\delta = 2 \gcd(k,e)$. Since $\rho \mod \frac{\delta}{2}$ is surjective, the image of $\rho$ has index one or two and, in the latter case consists of even numbers. When $a$ is even, $r-a$ is odd so $\rho$ is surjective. 

Finally, let $k$ and $a$ be odd. Since $g$ is in the framed mapping class group and since it preserves the relative Arf invariant (by Lemma \ref{L:RelativeArf}) it must preserve $\overline{w}(s)$ mod $2$ where $\overline{w}$ is the relative framing associated to $w$. So $\rho(g) = \overline{w}(gs) - \overline{w}(s)$ mod $\delta$ is even. 
\end{proof}

\begin{cor}\label{C:GenusOneProngMatchings}
Split a singularity on a genus two $k$-differential to form a surface $X$ in a primitive component $\Omega^k_r \cM_1(a,b,\kappa)$ where $a$ and $b$ are the nodes formed by the split. There is one global equivalence class of prong matchings unless either (1) $k$, $a$, and $b$ are odd and all other singularities have even orders (in which case there are two) or (2) the component is hyperelliptic and the two singularities are exchanged by the hyperelliptic involution (in which case there are $|k+a|$). 
\end{cor}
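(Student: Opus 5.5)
The plan is to apply Theorem \ref{T:ProngHom} to the two-level multiscale differential produced by the split. The top level is $X \in \Omega^k_r \cM_1(a,b,\kappa)$, and the bottom level is a genus zero $k$-differential in $\Omega^k \cM_0(z, -2k-a, -2k-b)$. That bottom stratum is a point projectively, so its fundamental group only rotates prongs by local equivalences. Hence the global equivalence classes are the cokernel of $\rho$ restricted to $\pi_1(\Omega^k_r\cM_1(a,b,\kappa))$, with $\rho$ valued in $\bZ/\delta$, $\delta=\gcd(k+a,k+b)$. Two things are needed: (i) compute the image of $\rho$ to get an upper bound on the number of classes, and (ii) show the classes in the exceptional cases really give distinct components, which supplies the matching lower bound.

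For (i), I would produce two families of loops, working with the relative framing $\overline{w}$ and the formula of Proposition \ref{P:WNofSCC}.
\begin{itemize}
\item \emph{Point-pushing loops.} Drag a singularity of order $c\in\kappa$ once around the node $a$, keeping everything else fixed (possible inside the stratum since $X$ has genus one and the other points can compensate in $\sum k_ip_i=0$). This changes the winding number of the reference arc $s$ between the two nodes by $\langle s,\cdot\rangle$ times the appropriate order. Using the normalization in Proposition \ref{P:WNofSCC}, where the coefficient is the order itself rather than $k+$order, I expect $\rho$ of this loop to be $\pm c$.
\item \emph{Monodromy loops.} Use Proposition \ref{P:GenusOneMerging} to merge all singularities of $\kappa$ into a single one, degenerating to a boundary stratum. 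There is a compatible reduction in which the nodes $a,b$ become the two singularities of some $\Omega^k_{r'}\cM_1(c',-c')$, with the remaining singularities carried in a genus zero bubble. Loops of that stratum lift to nearby smooth surfaces. By Proposition \ref{P:ProngHomG1TwoSing}, these loops contribute a subgroup that is all of $\bZ/\gcd(k+a,k+b)$, or index two when $k,a$ are odd.
\end{itemize}
Combining the two families, the image of $\rho$ is everything unless $k,a,b$ are odd, every $c\in\kappa$ is even, and the index-two phenomenon persists. This gives at most one class generically and at most two in case (1).

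For (ii) in case (1), I would argue as in the last paragraph of Proposition \ref{P:ProngHomG1TwoSing}. Every element of $\pi_1$ lies in the framed mapping class group and preserves the relative Arf invariant of Lemma \ref{L:RelativeArf}, hence preserves $\overline{w}(s)$ mod $2$, so $\rho$ takes only even values and the image has index exactly two. To upgrade "two classes" to "two components", note that the smoothed genus two $k$-differentials have all singularities even with $k$ odd. The two prong classes differ by one unit of twist, which changes the winding number of a curve crossing the vanishing cycle by one, so the resulting smoothings have different spin and lie in different components.

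For case (2), hyperelliptic with $a,b$ exchanged by the involution, I expect $a=b$ and $\delta=k+a$. Every loop in the hyperelliptic component commutes with the involution and so moves the prongs at the two nodes symmetrically, giving $\rho\equiv 0$ and hence $|k+a|$ classes. The main obstacle is the middle step, namely that the merge degeneration really realizes the two-singularity stratum at the nodes $a,b$ with enough of $\Gamma_1(e)$ lifting. Most delicate are the exceptional merges listed in Proposition \ref{P:GenusOneMerging}, which would need separate handling, likely by using point-pushing loops alone.
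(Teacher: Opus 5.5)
Your reduction is sound: the bottom $\bP^1$ only contributes local equivalences, so the count is the index of $\operatorname{im}\rho$ in $\bZ/\delta$. Your relative Arf lower bound in case (1) is also correct. So is your symmetry argument for case (2): $J$ commutes with all monodromy, so the prong shifts at $a$ and $b$ agree and $\rho\equiv 0$. (Upgrading classes to components is not needed.)

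\textbf{The gap is the upper bound, at the step you flag, and as formulated it is incorrect rather than merely unverified.}

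First, merging all of $\kappa$ does not make $a$ and $b$ the two singularities of some $\Omega^k_{r'}\cM_1(c',-c')$ unless $\sum\kappa=0$. One node must absorb the merged singularities, e.g.\ the boundary stratum is $\Omega^k\cM_1(a,-a)$ with $-a=b+\sum\kappa$.

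Second, that stratum's prong homomorphism takes values in $\bZ/\gcd(k+a,k-a)$, not in $\bZ/\gcd(k+a,k+b)$. Loops lifted from the boundary only control $\rho$ modulo the order of what was merged into the node. So Proposition \ref{P:ProngHomG1TwoSing} cannot give ``all of $\bZ/\gcd(k+a,k+b)$ or index two.'' For example, take $k=1$, $(a,b,\kappa)=(2,2,-4)$ and $r=1$, so $\delta=3$. Merging $-4$ into $b$ lands in $\Omega^1_1\cM_1(2,-2)$, whose prong group $\bZ/\gcd(3,-1)$ is trivial. Surjectivity comes entirely from the twist value $-4\equiv 2 \pmod 3$.

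The missing ingredient is the compatibility statement of Sublemma \ref{SL:MergeFTW}. Suppose $a$ can be merged with $c$ into $\Omega^k_r\cM_1(a+c,b,\kappa')$. Then the Dehn twist about the pinched curve lies in the monodromy and contributes $k+a+c\equiv c$. Moreover, $\operatorname{im}\rho$ reduced modulo $c$ contains the image of the boundary prong homomorphism. Since $\gcd(k+a,k+b,c)=\gcd(k+a+c,k+b,c)$, this bounds the index of $\operatorname{im}\rho$ by the index for the merged stratum.

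You then induct on $|\kappa|$, one merge at a time. The merges must preserve the rotation number, hence primitivity. They must also preserve nonhyperellipticity: if the merged stratum is hyperelliptic with its two nodes exchanged, its prong homomorphism vanishes and the bound is vacuous.

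Your ``combining'' step also skips two further points.
\begin{itemize}
\item \emph{Parity bookkeeping.} Suppose the merged node pair is odd but the original configuration is not in case (1), e.g.\ $k,b,c$ odd and $a$ even. You must observe that $\delta$ is then odd, so $\bZ/\delta$ has no index-two subgroup.
\item \emph{Existence of your point-pushing loops.} These are exactly the above twists, but they cannot be made ``keeping everything else fixed'' under $\sum_i k_ip_i=0$. They exist only when $c$ can actually be merged with a node in the given component, which fails in the exceptions of Proposition \ref{P:GenusOneMerging}.
\end{itemize}

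The paper handles the remaining configurations separately. When neither node merges nonhyperelliptically with the remaining singularity $c$, it merges $b$ with $c$ directly into $\Omega^k_r\cM_1(a,-a)$ with $a\neq\pm r$. In the hyperelliptic case with $J(a)\notin\{a,b\}$, it merges $a$ with $J(a)$ to get $r\in\operatorname{im}\rho$, which generates $\bZ/\delta$ because $\gcd(k,r)=1$.
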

\begin{proof}
In light of Theorem \ref{T:ProngHom} and Proposition \ref{P:ProngHomG1TwoSing}, suppose that $|\kappa| \geq 1$. Let $\Gamma$ be the image of the fundamental group of the stratum in the mapping class group $\mathrm{Mod}_{1,|\kappa|}$. Let $\rho: \Gamma \ra \bZ/\delta$ be the prong matching homomorphism where $\delta = \gcd(|k+a|, |k+b|)$. We will use the following.

\begin{sublem}\label{SL:MergeFTW}
Let $c \in \kappa$ and set $\kappa' = \kappa \setminus (c)$. If $a$ can be merged with $c$ to form $\Omega^k_r \cM_1(a+c, b, \kappa')$, then the image of $\rho$ contains $c = (k+a+c)$ mod $\delta$. Moreover, $\rho$ has an image with index at most that of the image of the prong homomorphism associated to $\Omega^k_r \cM_1(a+c, b, \kappa')$.
\end{sublem} 
\begin{proof}
Let $\gamma$ be the curve pinched by merging $a$ and $c$. The $k$-framing sends it to $k+a+c$. Let $\overline{w}$ be the relative framing associated to the $k$-framing $w$. The Dehn twist $T$ about this curve is contained in $\Gamma$. By twist-linearity, $\overline{w}(Ts) = \overline{w}(s) + (k+a+c)$. Moreover, by lifting paths in the boundary stratum to the original stratum, the image of $\rho$ mod $c$ contains the image of the prong homomorphism associated to $\Omega^k_r \cM_1(a+c, b, \kappa')$. The second claim in the statement now follows. 
\end{proof}

\noindent \textbf{Case 1: The component is hyperelliptic.} Let $J$ be the hyperelliptic involution on the surface $X$. If $a$ and $b$ are exchanged by the involution, then prong matchings correspond to a choice of two prongs (up to simultaneous rotation) at the singularity corresponding to $a$ on $X/J$. Up to choosing one of the prongs at $a$ on $X/J$.

So suppose without loss of generality that $J(a) \notin \{a,b\}$. This implies that $a = \pm r$ and $b = nr$ where $n \in \{\pm 1, \pm 2\}$. Since we can merge $a$ and $J(a)$, the image of $\rho$ contains $r$ (by Sublemma \ref{SL:MergeFTW}). Since the component is primitive, $\gcd(k,r) = 1$ and so $\gcd(k+nr,k\pm r, r) = 1$ so $\rho$ is surjective. We are done by Theorem \ref{T:ProngHom}. 

\noindent \textbf{Case 2: The component is nonhyperelliptic.} Induct on $|\kappa|$. By Corollary \ref{C:GenusOneMerging}, we can merge any two zeros and poles while retaining primitivity, nonhyperellipticity, and the rotation number. If $\kappa$ contains two zeros or two poles we merge them and conclude. So $\kappa$ has at most one pole and at most one zero.

\noindent \textbf{Case 2a: $a$ or $b$ can be merged with a third singularity $c$ while preserving nonhyperellipticity.} Say we merge $a$ and $c$. By the induction hypothesis, on the merged surface there is only one global equivalence class of prong matching unless $k,a+c,b$ are odd, when there are two. By Sublemma \ref{SL:MergeFTW} we are done unless $k, b, c$ are odd and $a$ is even. However, in that case there is no index two subgroup of $\bZ/\gcd(|k+a|, |k+b|)$. 

\noindent \textbf{Case 2b: Neither $a$ nor $b$ can be merged with a third singularity while preserving nonhyperellipticity.} In this case, $a$ and $b$ have the same sign and $\kappa$ contains exactly one singularity $c$ of the opposite sign. Note that $a$ and $b$ are not both equal to $\pm r$ since then the component would be hyperelliptic. Without loss of generality, suppose that $a \ne \pm r$. By Proposition \ref{P:GenusOneMerging}, we can merge $b$ and $c$ to form $\Omega_r \cM_1(a,-a)$. We are done by Sublemma \ref{SL:MergeFTW} except possibly if $k,a,c$ are odd and $b$ is even. But in that case there is no index two subgroup of $\bZ/\gcd(|k+a|, |k+b|)$
\end{proof} 

\section{Hat-homology and the proof of Theorem \ref{T:MZ}}\label{S:MasurZorich}

As in Masur-Zorich \cite{MZ}, two simple disjoint saddle connections $s$ and $t$ on a $k$-differential $X$ are called \emph{hat-homologous} if they have a ratio of lengths that is generically constant in the stratum. The saddle connections are called \emph{generically parallel} if any two lifts of these saddle connections to the holonomy cover have periods with a locally constant ratio of arguments in the locus of holonomy covers of surfaces in the stratum. 

\begin{lem}
Two simple disjoint saddle connections are generically parallel if and only if they are hat-homologous.
\end{lem}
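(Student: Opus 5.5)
The plan is to pass to the holonomy cover and work in local period coordinates there, so that both conditions become statements about two linear functions on an open subset of a complex vector space. Let $\pi:\widehat X\ra X$ be the holonomy cover. It carries a holomorphic $1$-form $\widehat\omega$ with $\widehat\omega^k=\pi^*\omega$, and the deck group $\bZ/k$ acts with generator $\tau$ satisfying $\tau^*\widehat\omega=\zeta\widehat\omega$, where $\zeta$ is a primitive $k$th root of unity. The locus of holonomy covers of the component is locally cut out, in period coordinates $H^1(\widehat X,\widehat\Sigma;\bC)$, by the linear subspace $V:=H^1(\widehat X,\widehat\Sigma;\bC)_\zeta$ on which $\tau^*$ acts by $\zeta$. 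I will use as input the standard fact that a small neighborhood of $X$ in the stratum maps biholomorphically onto an open subset $U\subseteq V$.

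Let $s,t$ be the two saddle connections, and let $\hat s,\hat t$ be lifts; these are relative classes in $H_1(\widehat X,\widehat\Sigma;\bZ)$. Write $p_{\hat s},p_{\hat t}$ for their periods, which are the restrictions to $V$ of linear functionals. Let $\hat s_\zeta,\hat t_\zeta$ be the projections of $\hat s,\hat t$ to the dual eigenspace, i.e.\ the part of homology that pairs nontrivially with $V$. The lifts of $s$ are the $\tau^j\hat s$, with periods $\zeta^{j}p_{\hat s}$, so all lifts have the same length $|s|$. Nondegeneracy of the pairing gives the key linear-algebra reduction: $p_{\hat s}/p_{\hat t}$ is constant on $U$ if and only if $\hat s_\zeta=\lambda\hat t_\zeta$ for a constant $\lambda\in\bC$. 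In this sense constant period ratio says precisely that the lifts are related in the homology of the holonomy cover, which is the Masur--Zorich meaning of hat-homologous.

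For generically parallel $\Rightarrow$ hat-homologous, suppose $\arg(p_{\hat s}/p_{\hat t})$ is locally constant on $U$. The quotient $h:=p_{\hat s}/p_{\hat t}$ is holomorphic on $U$, since $p_{\hat t}\neq 0$ because $t$ has positive length. A holomorphic function with constant argument is constant, by the open mapping theorem applied on complex lines. Hence $h\equiv\lambda$, so $\hat s_\zeta=\lambda\hat t_\zeta$, and $|s|/|t|=|p_{\hat s}|/|p_{\hat t}|=|\lambda|$ is locally constant. Conversely, if $|s|/|t|=|h|$ is locally constant, then $h$ is holomorphic with constant modulus, hence constant. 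So the periods, and therefore the homology classes of the eigenspace projections of the lifts, are proportional, and every pair of lifts $\tau^i\hat s,\tau^j\hat t$ has constant period ratio $\zeta^{i-j}\lambda$, hence constant argument ratio. Note that this is why the definition quantifies over all lifts: any choice differs only by a root of unity.

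The main obstacle is the input that period coordinates restricted to the $\zeta$-eigenspace give local coordinates on the stratum, so that $U$ is a genuinely open set in which $p_{\hat s}$ and $p_{\hat t}$ vary as independent linear functions unless forced otherwise. I would cite this from \cite{BCGGM-k-diff} or the standard treatment of $k$-differentials via canonical covers. A second point to check is that ``locally constant'' and ``generically constant'' agree. Both $h$ and $|h|$ are real-analytic on a connected parameter space, so constancy on an open set propagates.
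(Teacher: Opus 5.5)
Your proof is correct and is essentially the paper's: the ratio $h=p_{\hat s}/p_{\hat t}$ is holomorphic on the locus of holonomy covers, and a holomorphic function with constant argument (for the forward direction) or constant modulus (for the reverse direction) must be constant. Your extra scaffolding — eigenspace period coordinates, and the proportionality $\hat s_\zeta=\lambda\hat t_\zeta$ — only makes explicit what the paper leaves implicit, and the lemma does not need it. Two harmless details there: if the differential is a power, the deck group is $\bZ/d$ rather than $\bZ/k$, and if there are infinite-area poles, you should use relative cohomology of the punctured cover.
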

\begin{proof}
For the forward direction, the ratio of periods is holomorphic and any holomorphic function that has constant argument is constant. For the reverse direction, the ratio of periods is holomorphic and unit modulus, hence it is constant.
\end{proof}

The main theorem of this section is the following, which is a restatement of Theorem \ref{T:MZ}. Note that two saddle connections are called \emph{disjoint} if their interiors are disjoint. 

\begin{thm}\label{T:HatHomology}
Two disjoint simple saddle connections $s$ and $t$ on a $k$-differential $X$ that is not a finite-area translation surface are hat-homologous if and only if $X - s \cup t$ has a component that is a finite area translation surface.
\end{thm}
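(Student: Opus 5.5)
We follow the strategy of Masur--Zorich \cite{MZ}, working on the canonical (holonomy) cover. Let $\pi: \widehat{X} \ra X$ be the canonical cyclic cover on which the pullback of the $k$-differential becomes the $k$th power of an abelian differential $\widehat{\omega}$. The deck group $\bZ/k$ is generated by $\tau$, which acts by $\tau^*\widehat{\omega} = \zeta \widehat{\omega}$ with $\zeta$ a primitive $k$th root of unity. Local period coordinates on the stratum are given by the $\zeta$-eigenspace $H^1_\zeta$ of relative cohomology $H^1(\widehat{X}, \widehat{\Sigma}; \bC)$. A saddle connection $s$ on $X$ then determines the class
\[ \widehat{s} := \sum_{j=0}^{k-1} \zeta^{-j}\, \tau^j_* \widetilde{s} \]
in the dual eigenspace of relative homology, where $\widetilde{s}$ is a lift of $s$. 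The length of $s$ is $|\widehat{\omega}(\widetilde{s})|$, and $\widehat{\omega}(\widehat{s}) = k\,\widehat{\omega}(\widetilde{s})$. Since the eigenspace coordinates are local coordinates on the stratum, a ratio of lengths can be locally constant only if $\widehat{s}$ and $\widehat{t}$ are proportional in the dual eigenspace. The converse also holds. By the preceding lemma, hat-homologous is therefore equivalent to $\widehat{s} = c\,\widehat{t}$ in $H_1(\widehat{X}, \widehat{\Sigma}; \bC)_\zeta$ for some constant $c$.

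For the ``if'' direction, suppose a component $Y$ of $X - s \cup t$ is a finite area translation surface. Then the holonomy is trivial on $Y$, so $Y$ lifts isomorphically to $\widehat{X}$. Its boundary consists of copies of $s$ and $t$, and $\partial Y$ is null-homologous relative to $\widehat{\Sigma}$. This gives a relation $\widetilde{s}^{\pm} = \widetilde{t}^{\pm}$ (up to a $\tau$-translate) in relative homology of the cover, and applying the projection to the $\zeta$-eigenspace gives $\widehat{s} = c\,\widehat{t}$ with $|c| = 1$. If both $s$ and $t$ bound $Y$ on only one side, we use the appropriate boundary combination instead. In either case the period ratio is constant.

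For the ``only if'' direction, suppose $\widehat{s} = c\,\widehat{t}$. Equivalently, every class in $H^1_\zeta$ pairs with $\widehat{s}$ and $\widehat{t}$ proportionally. We argue by intersection numbers, exactly as in \cite{MZ}.
\begin{enumerate}
\item First, we rule out $s$ or $t$ being a non-separating arc whose complement stays connected with nontrivial holonomy. In that case we construct a closed curve $\gamma$ on $X$ crossing $s$ once and avoiding $t$. Its eigen-lift $\widehat{\gamma}$ pairs nontrivially with $\widehat{s}$ but trivially with $\widehat{t}$, which contradicts proportionality.
\item The nontrivial-holonomy hypothesis on $X$ is used here. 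If $X$ is not a translation surface, the eigen-lift of a relevant curve is nonzero precisely because the projector does not kill it. This is where we use that the component is not itself a translation surface; for a translation surface every pair would be trivially disqualified.
\item We conclude that $s \cup t$ disconnects $X$. If every complementary component had nontrivial holonomy, then on each component we could find a closed curve $\gamma$ with nontrivial holonomy crossing $s$ but not $t$, or build a dual arc class. This again gives a class in $H^1_\zeta$ separating $\widehat{s}$ from $\widehat{t}$. Hence some component has trivial holonomy. It has finite area because $X$ does, or because poles of order $\le -k$ give infinite area, and this forces the translation component to avoid such poles.
\end{enumerate}

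The main obstacle is the last step: producing, in each configuration of $X - s\cup t$ with all pieces of nontrivial holonomy, an explicit eigen-cohomology class distinguishing $\widehat{s}$ from $\widehat{t}$. To do this, I would first enumerate the possible topological types of the complement, namely one component, or two components with $s$ and $t$ distributed among the boundaries. In each type I would build the separating class from a loop with holonomy $\zeta^j \ne 1$. The key identity is that for such a loop the eigen-lift picks up a factor $(1 - \zeta^{j})^{-1}$-type nonvanishing coefficient, so the pairing with the arc crossing it is nonzero.
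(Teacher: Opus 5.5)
Your reduction of hat-homology to proportionality of $\widehat s$ and $\widehat t$ is fine. The ``if'' direction is also fine in substance. When $X-s\cup t$ is connected, the boundary of the lifted component gives, up to signs, $(1-\zeta^{m_s})\widehat s+(1-\zeta^{m_t})\widehat t=0$, and you need $\widehat s,\widehat t\neq 0$ plus the hypothesis on $X$ to see that both coefficients are nonzero.

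The gap is the ``only if'' direction, which is the actual content of the theorem. You leave it as a plan, and the mechanism the plan relies on is wrong.

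\begin{itemize}
\item \textbf{Loops with nontrivial holonomy detect nothing.} Let a closed curve $\gamma$ have holonomy $\zeta^m\neq 1$ of order $d>1$. Each closed component $\widetilde\gamma_0$ of $\pi^{-1}(\gamma)$ is preserved by $\tau^m$, so $\sum_i \zeta^{\mp i}\tau^i_*\widetilde\gamma_0$ contains the factor $\sum_{j=0}^{d-1}\zeta^{\mp jm}=0$. The eigen-lift of such a loop is therefore zero, contrary to your ``key identity'', and it cannot distinguish $\widehat s$ from $\widehat t$.
\item \textbf{Step (1).} It only works if the curve crossing $s$ once and avoiding $t$ has trivial holonomy, and nothing guarantees that. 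All such curves have holonomy in a single coset of the holonomy group of $X-s\cup t$, which need not be the trivial coset.
\item \textbf{Step (3).} Suppose $s\cup t$ jointly separates $X$ into two pieces, each bounded by both $s$ and $t$. Then every closed curve avoiding $t$ crosses $s$ an even number of times, so untwisted intersection counts cannot help there.
\end{itemize}

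What is needed are genuinely twisted cycles. For loops $\gamma,\delta$ at a common point with holonomies $\lambda,\mu$, the chain $(\mu-1)\gamma-(\lambda-1)\delta$ with coefficients in the holonomy local system is closed. If $\delta\subset X-s\cup t$ while $\gamma$ crosses $s$ once and avoids $t$, this cycle pairs with $s$ by a nonzero multiple of $\mu-1$ and trivially with $t$. Analogous ``cross $s$, loop with nontrivial holonomy, cross back'' cycles have to be built in each disconnected configuration, and none of this is in your proposal.

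Your finite-area conclusion is also unjustified. $X$ is not assumed to have finite area, only not to be a finite-area translation surface. So you must exclude a trivial-holonomy component containing a pole of order $\le -k$. Such poles must be removed, not marked, in the period coordinates. The resulting residue directions, or dual cycles relative to the poles, are exactly what excludes this case.

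The paper avoids constructing dual classes altogether. When $X-s\cup t$ is connected, it cuts along a maximal system of disjoint saddle connections, plus arcs to small disks around the infinite-area poles. This yields a single flat disk developed by a $1$-form, and the edge periods then satisfy only the relation $w+\sum_i(1-\zeta_i)v_i=0$. Hence $v_1/v_2$ is locally constant iff $\zeta_i=1$ for $i\ge 3$, $\zeta_1,\zeta_2\neq 1$, and $w\equiv 0$. For $k\neq 1$, the condition $w\equiv 0$ forces there to be no infinite-area poles, since their residues give independent coordinates. The disconnected cases are reduced to this one by gluing the boundary copies of $s$ and $t$ to themselves by a rotation by $\pi$.

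Either adopt that computation of the full relation space, or carry out the twisted-cycle constructions case by case. As it stands, the ``only if'' direction is not proved.
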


We will deduce this result from the following one.

\begin{prop}\label{P:HatHomology}
Theorem \ref{T:HatHomology} holds when $X - s \cup t$ is connected.  
\end{prop}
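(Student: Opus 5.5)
We work throughout on the holonomy cover $\pi:\widehat X\to X$, which is a translation surface with deck group $\bZ/k$ generated by $\tau$, where $\tau^*\widehat\omega=\zeta\widehat\omega$ and $\zeta$ is a primitive $k$th root of unity. Let $\widehat\Sigma$ be the preimage of the singularities. Recall that a neighborhood of $X$ in its stratum is parametrized by the $\zeta$-eigenspace $V_\zeta\subseteq H^1(\widehat X,\widehat\Sigma;\bC)$. Hence the period of a lift $\widehat s$ is a linear functional on $V_\zeta$, given by pairing with the projection $p_\zeta[\widehat s]$ of $[\widehat s]\in H_1(\widehat X,\widehat\Sigma;\bC)$ to the dual eigenspace. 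Consequently, $s$ and $t$ are hat-homologous if and only if $p_\zeta[\widehat s]$ and $p_\zeta[\widehat t]$ are proportional. By the preceding lemma, this is the same as their being generically parallel. Both directions of Proposition \ref{P:HatHomology} will be checked against this criterion.

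For the ``if'' direction, suppose that $Y:=X-s\cup t$ is connected and is a finite area translation surface, i.e.\ the holonomy of $X$ restricted to $Y$ is trivial. Then $Y$ lifts isomorphically to a sheet $\widehat Y\subseteq\widehat X$, and its closure is a translation surface whose boundary consists of four saddle connections: two lifts $s_1,s_2$ of $s$ and two lifts $t_1,t_2$ of $t$. Since $X$ is not a translation surface, at least one of these pairs is glued by a nontrivial rotation, so $s_2=\tau^i s_1$ and $t_2=\tau^j t_1$ with $(i,j)\ne(0,0)$. The oriented boundary of $\widehat Y$ is null-homologous relative to $\widehat\Sigma$, so its period vanishes. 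This gives
\[ (1\pm\zeta^{i})\,\widehat\omega(s_1)+(1\pm\zeta^{j})\,\widehat\omega(t_1)=0, \]
with signs determined by the orientations, and this relation holds identically on a neighborhood in the stratum. If both coefficients are nonzero, the ratio of periods is constant. If one of them vanishes, the other vanishes as well, since saddle connections have nonzero periods. In that case we instead use that the complement is a translation surface to exhibit the boundary relation separately for each side, giving the degenerate relation directly. Along the way we should check that the boundary of $\widehat Y$ really traverses each of $s_1,s_2,t_1,t_2$ exactly once; this is where simplicity and disjointness of $s$ and $t$ enter.

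For the ``only if'' direction, suppose that $Y$ is connected but carries nontrivial holonomy. We aim to produce a class $c\in V_\zeta^\vee$-dual, concretely an absolute cycle on $\widehat X$, whose algebraic intersection with $p_\zeta[\widehat s]$ is nonzero while its intersection with $p_\zeta[\widehat t]$ vanishes. We would then deform in the Poincar\'e dual direction to change $\widehat\omega(\widehat s)$ while fixing $\widehat\omega(\widehat t)$. Since $Y$ is connected, there is a closed curve $\gamma$ on $X$ crossing $s$ exactly once and disjoint from $t$. We may also choose a loop $\beta\subseteq Y$ with holonomy $\zeta^m\ne1$. We then set
\[ c=\sum_{\ell}\zeta^{-\ell}\,\tau^\ell\widehat{\gamma'}, \]
where $\gamma'$ is $\gamma$ possibly modified by concatenation with powers of $\beta$, and $\widehat{\gamma'}$ is a lift that closes up after the appropriate number of turns. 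Then $c$ is an eigencycle disjoint from every lift of $t$, and its intersection with the lifts of $s$ is a sum of roots of unity.

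The main obstacle is to ensure that this sum does not cancel. When $\gamma$ itself has trivial holonomy, the sum is a single root of unity. When it does not, the freedom to insert $\beta$ shifts the relevant exponents, and we will choose the power of $\beta$ so that the lift of the crossing with $s$ occurs only once per period. The fact that $Y$ is not a translation surface is exactly what provides such a $\beta$. Conversely, if $Y$ had trivial holonomy, every such sum would collapse to the relation found in the ``if'' direction. Once this nonvanishing is established, the proportionality of $p_\zeta[\widehat s]$ and $p_\zeta[\widehat t]$ fails, so $s$ and $t$ are not hat-homologous.
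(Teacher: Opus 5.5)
\noindent Your ``only if'' direction has a genuine gap at exactly the step you call the main obstacle, and the proposed fix does not work. If $\gamma'$ has holonomy $\zeta^h\neq 1$, then its closed lift satisfies $\tau^h\widehat{\gamma'}=\widehat{\gamma'}$. Hence $c=\sum_\ell\zeta^{-\ell}\tau^\ell\widehat{\gamma'}$ satisfies $c=\tau^h c=\zeta^h c$, so $c$ is the zero cycle, not merely a cancelling sum at $s$. Your recipe therefore needs a closed curve that meets $s$ once, misses $t$, and has \emph{trivial} holonomy. Inserting powers of $\beta$ produces such a curve only if the holonomy of $\gamma$ already lies in the holonomy group of $Y$, and nontriviality of that group does not give this.

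For example, double a dart with angles $5\pi/4,\pi/4,\pi/4,\pi/4$ at $A,B,C,D$. The result lies in $\Omega^4\cM_0(1,-3,-3,-3)$, and every puncture has holonomy $i$ because all orders are $\equiv 1 \bmod 4$. Take $s=AB$ and $t=CD$. Then $Y$ is an annulus with holonomy $\{\pm1\}$. But any curve with algebraic intersection $\pm1$ with $s$ and $0$ with $t$ has winding numbers with $n_A+n_B$ odd and $n_C+n_D$ even, hence holonomy $\pm i$. So every cycle your construction can produce vanishes, even though $s$ and $t$ are not hat-homologous: $A$ is the orthocenter of triangle $CBD$, so $|AB|/|CD|=\cot\angle B$, which varies with the dart. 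Using curves that cross $s$ several times, with non-cancelling partial sums of roots of unity, might rescue the approach, but that needs an argument you have not given.

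Second, you only treat the case where $Y$ has nontrivial holonomy. The negation of ``$Y$ is a finite-area translation surface'' also includes $Y$ with trivial holonomy containing a pole of order $\le -k$; this covers every case with $k=1$. Here your eigencycle idea says nothing. What is needed is that the residues at such poles can be varied independently in the stratum.

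The paper handles both issues with one device. Cut along a maximal family of disjoint saddle connections containing $s$ and $t$, so that the complement becomes a disk after excising disks $D_p$ around the infinite-area poles and joining them to zeros by arcs. The edge periods $v_i$ and the total boundary period $w$ of the $D_p$ are then local coordinates subject only to $w+\sum_i(1-\zeta_i)v_i=0$. So $v_s/v_t$ is locally constant exactly when this relation involves only $v_s$ and $v_t$. That means every other $\zeta_i=1$ (the complement is a translation surface), $\zeta_s,\zeta_t\neq1$, and $w\equiv 0$, which forces there to be no infinite-area poles. Any other term with nonzero coefficient supplies the free deformation you were trying to build by hand.

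Your ``if'' direction is this same relation restricted to $Y$ and is fine, with two corrections. The coefficients are $1-\zeta^i$ and $1-\zeta^j$, not $1\pm\zeta^i$ and $1\pm\zeta^j$. The ``degenerate'' case cannot occur: it forces $i=j=0$, which makes $X$ a finite-area translation surface.
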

\begin{proof}
Extend $s$ and $t$ to a maximal collection $S'$ of pairwise disjoint saddle connections. Since $X - s \cup t$ is connected, there is a subset $S \subseteq S'$ containing $s$ and $t$ so that $X - S$ is a topological disk with punctures at the set $P$ of poles of order $\leq -k$. These are the singularities for which every neighborhood has infinite area. For each $p \in P$ fix a neighborhood $D_p$ that is a disk that has empty intersection with $S$. Cut out each of these neighborhoods and mark a point $b_p$ on each resulting boundary component. By drawing $|P|$ additional paths $T$ from zeros to $b_p$ that are disjoint from $S$, we have that $X - (S \cup T) - \bigcup_{p \in P} D_p$ is a flat metric on a topological disk with boundary. Such a metric necessarily comes from a holomorphic $1$-form. Letting $n$ be the number of singularities, the boundary of this disk is made up of $2d := 2(2g+n-1)$ arcs in $S \cup T$ together with the arcs $\bigcup_{p \in P} \del D_p$. Positively orient all of the arcs. Since each arc in $S \cup T$ is formed by cutting an arc on $X$, the $2d$ arcs are naturally paired. Let $v_1, \hdots, v_d$ be periods of one selected representative of each pair and choose $\zeta_1, \hdots, \zeta_d \in S^1$ so that the unselected partners of these representatives have periods $-\zeta_1 v_1, \hdots, -\zeta_d v_d$. Let $w_p$ be the period of $\del D_p$ and set $w := \sum_{p \in P} w_p$. 

The only constraint on the periods is that $w + \sum_{i=1}^d (1-\zeta_i)v_i = 0$. This can be seen directly or by using the period coordinates constructed in \cite[Corollary 2.3]{BCGGM-k-diff}. If $v_1$ and $v_2$ are the periods of $s$ and $t$, then they have generically constant lengths if and only if $w=(1-\zeta_3) = \hdots = (1-\zeta_d) = 0$ and $\zeta_1, \zeta_2 \ne 1$. The last statement implies that $k \ne 1$ and that $X - s \cup t$ is a translation surface. Since $k \ne 1$, by Apisa-Bainbridge-Wang \cite[Theorem 1.6]{ABW2}, $w$ is generically $0$ if and only if the residue around each element of $P$ is generically $0$. But this is a positive codimension condition\footnote{This follows from \cite{ABW2}, but a direct proof is that, when $k \ne 1$, the residues around points in $P$ can be extended to form period coordinates by \cite[Corollary 2.3]{BCGGM-k-diff}.} unless $|P|=0$, as desired. 
\end{proof}

\begin{proof}[Proof of Theorem \ref{T:HatHomology}:]
If $X - (s \cup t)$ is connected, then apply Proposition \ref{P:HatHomology}. If $s$ is separating, then let $Y$ be the component of $X - s$ that contains $t$. $Y$ has a boundary component made up of a single saddle connection $s$, so we mark the midpoint of $s$ and glue both halves to one other with a rotation by $\pi$ to form a closed surface $X'$ on which $s$ and $t$ still have generically constant ratio of lengths. This construction allows us to assume that neither $s$ nor $t$ is separating.

It remains to consider when $X - (s \cup t)$ has two components. Both $s$ and $t$ appear once in the boundary of each component. So glue $s$ and $t$ to themselves as in the previous construction to form $k$-differentials $Y_1, Y_2$. The resulting pair of saddle connections must be hat-homologous on either $Y_1$ or $Y_2$ to be hat-homologous on $X$. By considering the two saddle connections on this component we apply Proposition \ref{P:HatHomology} to conclude.
\end{proof}

\begin{rem}\label{R:HolonomyRatio}
Let $s$ and $t$ be hat-homologous disjoint simple saddle connections on a $k$-differential $X$ of lengths $|s|$ and $|t|$ respectively. If $s$ is nonseparating, let $\zeta_s$ (resp. $\zeta_t$) be the holonomy of a simple closed curve that passes through $s$ (resp. $t$) and that does not intersect $t$ (resp. $s$). Otherwise set $\zeta_s = -1$ (resp. $\zeta_t = -1$). Let $a_s$ (resp. $a_t$) be $1$ if $s$ (resp. $t$) is nonseparating and $2$ otherwise. A consequence of the proof is that $\frac{|1-\zeta_s||s|}{a_s} = \frac{|1-\zeta_t||t|}{a_t}.$
\end{rem}

A cylinder on a $k$-differential is called \emph{generic} if, on each boundary component of the cylinder, all saddle connection are generically parallel. Any cylinder can be perturbed in the stratum to be generic. Masur-Zorich \cite{MZ} found that generic cylinders on quadratic differentials have one of five types (see Apisa-Wright \cite[Section 4.1]{ApisaWrightDiamonds} for a discussion). We will see that the same five possibilities hold for Euclidean cylinders on $k$-differentials. 

Recall that the \emph{multiplicity of a saddle connection} in the boundary of a cylinder is, roughly, the number of times it appears in that boundary component (see \cite{ApisaWrightDiamonds} for a formal definition). 

\begin{cor} \label{C:TypesofCylinders}
A generic Euclidean cylinder in a stratum of $k$-differentials has at most two saddle connections, counted with multiplicity, on each boundary. When there are two distinct saddle connections, cutting them separates the surface into two components; the component not containing the cylinder has trivial holonomy. 
\end{cor}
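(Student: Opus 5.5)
The plan is to derive Corollary \ref{C:TypesofCylinders} from Theorem \ref{T:HatHomology}. The argument rests on one observation: on a generic cylinder $C$, any two saddle connections in the same boundary component are generically parallel, hence hat-homologous. They are disjoint, since their interiors are distinct pieces of the boundary. We take $X$ not a finite area translation surface, which is the setting where the theorem applies. (When $k=1$, or when the holonomy is trivial, the corollary is about translation surfaces and should be read accordingly.)

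First I would show that no saddle connection appears twice in a boundary component unless it is the whole boundary component. For a single saddle connection appearing with multiplicity two (as for envelopes or the quadratic-type boundaries), I would follow Masur-Zorich and record it as a possible type rather than rule it out. This is consistent with the claim, since it gives two saddle connections counted with multiplicity.

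Next, suppose a boundary component of $C$ contains two distinct saddle connections $s$ and $t$. By Theorem \ref{T:HatHomology}, $X - (s\cup t)$ has a component $Y$ that is a finite area translation surface. I would argue that $Y$ cannot contain $C$. The core curve of $C$ has holonomy $\pm 1$, and more to the point $C$ together with the rest of $X$ would make $Y$ contain a non-translation loop unless $Y$ is all of $X$ minus the two arcs. That case is excluded: cutting $s$ and $t$ and forming the glued surfaces as in the proof of Theorem \ref{T:HatHomology} would yield a translation surface on the side containing everything, contradicting the assumption that $X$ is not a translation surface. So $s\cup t$ disconnects $X$, and the component not containing $C$ has trivial holonomy.

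Finally I would bound the count. Suppose a boundary component had three or more saddle connections $s_1, s_2, s_3$, counted with multiplicity. Apply the previous step to each pair $(s_i, s_j)$: each pair cuts off a translation subsurface $Y_{ij}$ disjoint from $C$. The boundary of $Y_{ij}$ consists of exactly $s_i$ and $s_j$. On the other hand, $Y_{ij}$ is a component of $X - (s_i\cup s_j)$ adjacent to $C$ only along the boundary component, and that component also contains $s_k$. From this I would derive a contradiction, either by topology (two arcs cannot separate off a region while a third arc in the same boundary circle lies between them on the cylinder side) or by Remark \ref{R:HolonomyRatio}, which pins the length ratios and yields inconsistent equalities. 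I expect this step to be the main obstacle: the separation/topology bookkeeping, especially when saddle connections repeat or when the boundary circle passes through the same singularity several times. I would handle it by drawing the boundary word of the cylinder and tracking which side of each arc $C$ lies on.
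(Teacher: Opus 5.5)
There is a genuine gap at the central step: you do not show that the finite-area translation component of $X-(s\cup t)$ supplied by Theorem \ref{T:HatHomology} is not the component $Y$ containing $C$.

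\emph{Connected case.} Your argument would fail here. $Y=X-(s\cup t)$ being a translation surface does not make $X$ one. $X$ is recovered by regluing the two copies of $s$ and of $t$ on $\partial Y$ by rotations, and nothing you say forces these rotations to be trivial. In the proof of Proposition \ref{P:HatHomology}, the hat-homologous pairs with connected complement are exactly those reglued by nontrivial rotations ($\zeta_1,\zeta_2\neq 1$ there). So this case must be excluded using the cylinder. The half-gluing construction from the proof of Theorem \ref{T:HatHomology} is not relevant to it.

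\emph{Disconnected case.} Your claim that $Y$ must contain a loop with nontrivial holonomy is unsupported. Holonomy alone cannot decide this: both components of $X-(s\cup t)$ may have trivial holonomy while $X$ does not, since a loop crossing $s$ and then $t$ picks up the ratio of the two regluing rotations. (The core curve of a Euclidean cylinder has trivial holonomy, not $\pm1$.)

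Your multiplicity-one claim and the three-saddle-connection bound are also only announced, not proved.

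The missing idea is a Stokes argument that uses the fact that $s$ and $t$ lie on the \emph{same} boundary of $C$. Suppose $Y$ were a finite-area translation surface with form $\omega$. Normalize so that $C$ is horizontal and, with the boundary orientation of $Y$, $s$ and $t$ have periods $1$ and $\lambda>0$. Since $\omega$ has no poles, $\int_{\partial Y}\omega=0$.
\begin{itemize}
\item If $s$ and $t$ appear only once on $\partial Y$, this reads $1+\lambda=0$, which is impossible.
\item Otherwise their other copies have periods $\zeta_s$ and $\zeta_t\lambda$ with $|\zeta_s|=|\zeta_t|=1$. Then $1+\lambda+\zeta_s+\zeta_t\lambda=0$ forces $\zeta_s=\zeta_t=-1$, so the regluing is by translations and $X$ is a translation surface, a contradiction.
\end{itemize}

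Hence the translation component is some $Y'\neq Y$, which gives the separation. Then $Y$ and $Y'$ each carry exactly one copy of $s$ and one of $t$ on their boundary, which yields multiplicity one.

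Since a saddle connection has only two sides, a boundary with at least three saddle connections counted with multiplicity must then contain three distinct ones $s,t,u$. No length ratios are needed to rule this out. Let $Z'$ be the translation component of $X-(t\cup u)$. Then $Y'$, $Z'$ and the component containing $C$ are three distinct components of $X-(s\cup t\cup u)$, all bordering $t$, which is impossible.
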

\begin{proof}
Suppose that a boundary of the cylinder $C$ on the surface $X$ has two distinct hat-homologous saddle connections $s$ and $t$. 

We first show that the component $Y$ of $X - s \cup t$ containing $C$ cannot have trivial holonomy. If it did, then there is a holomorphic $1$-form $\omega$ on $Y$ that specifies the flat structure. Rotate and rescale it so that $C$ is horizontal and so that $s$ and $t$ appear on its upper boundary and have periods $1$ and $\lambda \in \bR_{>0}$. If they appear again in the boundary of $Y$ then their periods are $\zeta_s$ and $\zeta_t \lambda$ for some $\zeta_s, \zeta_t \in S^1$. In order for $\int_{\del Y} \omega = 0$ we must have $\zeta_s = \zeta_t = -1$. But then $X$ is a translation surface and $s$ and $t$ cannot be hat-homologous since $X - s \cup t $ is connected, a contradiction. 

Let $Y'$ be the component of $X - s \cup t$ with trivial holonomy. This exists by Theorem \ref{T:HatHomology}. Note that both $Y$ and $Y'$ have exactly two boundary saddle connections, one corresponding to $s$ and one to $t$. In particular, $s$ and $t$ appear with multiplicity one in the boundary of $C$.

It follows that the boundary of $C$ containing $s$ and $t$ cannot contain a third hat-homologous saddle connection $u$. If it did, then let $Z'$ be the component of $X - t \cup u$ that has trivial holonomy. Then $X - (s \cup t \cup u)$ has three components: $Y'$, $Z'$, and the component containing $C$. Each has $t$ on its boundary, which is a contradiction. 
\end{proof}

Recall that a cylinder is called \emph{simple} if each of its boundary components is comprised of a single saddle connection of multiplicity one.

\begin{cor}\label{C:TypesofCylinders2}
    Any nonseparating generic Euclidean cylinder $C$ in a stratum of $k$-differentials is a simple cylinder.
\end{cor}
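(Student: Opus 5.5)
Let $C$ be a nonseparating generic Euclidean cylinder. We need to show that each boundary component of $C$ consists of exactly one saddle connection, and that it appears with multiplicity one. By Corollary \ref{C:TypesofCylinders}, each boundary component carries at most two saddle connections counted with multiplicity. So there are three configurations to rule out on a given boundary component:
\begin{itemize}
\item two distinct saddle connections $s$ and $t$;
\item a single saddle connection $s$ appearing twice;
\item a single saddle connection that also appears on the \emph{other} boundary component of $C$ (the envelope-type situation excluded by the footnote in the definition of simple).
\end{itemize}

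\textbf{Two distinct saddle connections.} Here Corollary \ref{C:TypesofCylinders} says that cutting $s \cup t$ disconnects $X$. One component $Y'$ has trivial holonomy and does not contain $C$; the other component $Y$ contains $C$. Both $Y$ and $Y'$ have exactly two boundary saddle connections, one copy of $s$ and one of $t$. Since $s$ and $t$ already appear on one boundary of $C$, their second copies lie in $\partial Y'$, so they cannot appear on the other boundary of $C$. Now take a core curve $\gamma$ of $C$. I will argue that $\gamma$ is homologous (in the closed surface, away from $Y'$) to the curve $\partial Y'$, i.e.\ to the concatenation $s \cup t$. Then $\gamma$ cobounds the region $C \cup Y'$ with the other boundary of $C$. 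Hence cutting the other boundary of $C$ separates $X$, so $C$ is separating, contradicting the hypothesis. To make this precise, I will note that the boundary component of $C$ made of $s$ and $t$ is freely homotopic to $\gamma$. Since that loop bounds $Y'$ on the side away from $C$, $\gamma$ is separating.

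\textbf{A single saddle connection appearing twice.} Then $\gamma$ is homotopic to the loop traversing $s$ twice, and the boundary looks like a figure-eight identification or an envelope. If the boundary is $s$ followed by $s$ with the same orientation, then the boundary circle double-covers a closed loop $s$. Gluing forces the holonomy around $s$ to be a rotation by $\pi$, making the boundary a folded envelope (a cone point of angle $\pi$ at the midpoint, impossible for a saddle connection between singularities) or a contradiction with simplicity of saddle connections. If the two appearances have opposite orientation, then $s$ together with $C$ forms a closed-up region, and the union of $C$ with that boundary is a component of the surface-with-boundary obtained by cutting the other boundary. Thus $C$ again separates.

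\textbf{A saddle connection shared by both boundaries.} This is the remaining envelope case. Here I will argue directly: if a saddle connection $s$ appears once on each boundary of $C$, then $C \cup s$ is a torus-like subsurface. Either it is the whole surface, which forces genus one with trivial holonomy, or it forces $s$ to be a closed loop. In either situation I will check that the other boundary saddle connections are pinned, using Remark \ref{R:HolonomyRatio} to compare lengths.

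\textbf{Main obstacle.} I expect the hardest step to be the topological bookkeeping in the multiplicity-two and shared-saddle-connection cases, which are genuinely non-generic-homology issues rather than hat-homology ones. I would handle them by the general fact that if a boundary component of $C$ is a loop whose complement side is a union of saddle connections each appearing twice on $\partial(X-C)$ within the same boundary, then that boundary bounds a subsurface, so $C$ separates.
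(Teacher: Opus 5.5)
Your first two cases are the paper's proof, which it compresses into two sentences.

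\begin{itemize}
\item Two distinct saddle connections are excluded by Corollary \ref{C:TypesofCylinders}: pushed out to the boundary $s\cup t$, the core curve has only the trivial-holonomy piece $Y'$ on its far side, so it separates.
\item Multiplicity two is excluded because the half of $C$ along that boundary, together with $s$, is a disk bounded by the core curve.
\end{itemize}

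Two corrections in your second case. First, the ``same orientation'' subcase never occurs, and no holonomy argument is needed. The two occurrences of $s$ must lie on opposite sides of $s$, since otherwise $C$ would overlap itself. Keeping $C$ on the left, opposite sides are traversed in opposite directions, so the boundary loop is $s$ followed by $s^{-1}$. Second, your angle-$\pi$ parenthetical is backwards. In an envelope the fold points are the endpoints of $s$, and these are legitimate cone points of angle $\pi$ (poles of order $-k/2$ with $k$ even).

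The real problem is your third case; delete it rather than try to prove it.

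\emph{It misreads the definition.} ``Appears exactly once'' means multiplicity one within a boundary component; the paper restates simplicity as ``a single saddle connection of multiplicity one'' on each boundary component. The envelope excluded by the footnote is exactly your second case. It is not a saddle connection shared by the two boundaries.

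\emph{The shared configuration actually occurs, so it cannot be ruled out.} Once your first two cases are excluded, a saddle connection $s$ lying on both boundaries forces each boundary to be the loop $s$ alone. Then $\overline{C}$ is the whole surface, with top glued to bottom. The gluing must be a translation: a half-turn would send the part of $C$ below the top boundary onto the side of the bottom boundary already occupied by $C$. So $X$ is a flat torus with one regular marked point, for example $dz^k$ on $\bC/(\bZ+i\bZ)$ in $\Omega^k\cM_1(0)$. Its horizontal cylinder is generic and nonseparating.

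In that configuration there are no ``other boundary saddle connections'' to pin, so Remark \ref{R:HolonomyRatio} has nothing to act on. Under your stricter reading of ``simple'' the corollary would be false for this surface. With the third case removed, your argument is complete.
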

\begin{proof}
Corollary \ref{C:TypesofCylinders} implies there is a unique saddle connection on each boundary component of $C$. They have multiplicity one since otherwise cutting the cylinder core curve disconnects the surface. 
\end{proof}

\begin{rem}
All statements and proofs in this section hold for isoholonomic loci of moduli spaces of flat cone metrics since we only used that holonomy was $S^1$-valued and not that it was discretely-valued.
\end{rem}

\section{Cylinders and the proof of Theorem \ref{T:Cyl}}\label{S:Cylinder}

The main result of this section is the following.

\begin{thm}\label{T:Cylinder}
Any component of a stratum of positive genus surfaces contains a surface with a simple nonseparating Euclidean cylinder. 
\end{thm}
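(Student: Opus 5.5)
By Corollary \ref{C:TypesofCylinders2}, it suffices to show that every component $\cC$ of a positive genus stratum contains a surface with a \emph{nonseparating} Euclidean cylinder. After a small perturbation in $\cC$ that cylinder becomes generic, and it stays nonseparating, so it is simple. The task therefore reduces to producing a Euclidean cylinder whose core curve is nonseparating on the closed surface. I will do this through the multiscale compactification of Section \ref{S:BCGGM}, using the fact listed in the introduction that horizontal nodes correspond to Euclidean cylinders.

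The plan is to degenerate within $\cC$ to a multiscale $k$-differential with a nonseparating horizontal node. The steps are as follows.

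\textbf{Step 1.} Apply Theorem \ref{T:Chen-Aygun}: the image of $\cC$ in $\cM_{g,n}$ is either a point or noncompact.
\begin{itemize}
\item If it is a point, then $g \geq 1$ together with a zero-dimensional projection forces very special strata. An example is genus one with $\kappa=(a,-a)$ on a fixed curve, but that case already moves in $\cM_1$. In these cases I would exhibit the cylinder by hand.
\item Otherwise $\cC$ has boundary points in $\overline{\Omega^k\cM_g(\kappa)}$, and I choose a boundary point whose underlying stable curve has a node that is nonseparating. To get such a node, I degenerate along a one-parameter family that pinches a nonseparating curve, for example by letting the underlying curve go to the boundary of $\overline{\cM_g}$. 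Since $g \geq 1$, a nonseparating node must appear somewhere along a suitably chosen boundary approach; I would induct on genus via the level graph to make this precise.
\end{itemize}

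\textbf{Step 2.} Arrange that the chosen node is horizontal. If it is vertical, I will try to smooth the lower levels while keeping the node, that is, move within the boundary stratum. The aim is to reach a two-level graph in which one vertex carries a nonseparating self-node, or two vertices are joined by two edges. For a self-node on a single component, the two sides have orders summing to $-2k$ and lie at the same level, so the node is automatically horizontal with orders $(-k,-k)$. Hence I aim to produce a loop edge on one vertex. Alternatively, I can work on a top-level component of positive genus: by induction on $(g,n)$, that component already has a nonseparating cylinder, and this cylinder persists after plumbing the lower levels.

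\textbf{Step 3.} Smooth the horizontal node. By the matching $k$-residue condition, the node opens to a Euclidean cylinder whose core curve is the vanishing cycle. This cycle is nonseparating on the smoothed surface, so after perturbing to a generic surface, Corollary \ref{C:TypesofCylinders2} finishes the argument.

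The main obstacle is Step 2. Noncompactness alone yields vertical degenerations, and Tahar's examples show that cylinders can be absent on open sets, so horizontal nodes are not automatic. To handle this, I would run the induction on the dimension of the stratum using top-level components. When the top level has positive genus, I use induction. When every positive-genus piece sits at lower levels, I would argue with the genus-one classification of Section \ref{S:GenusOneComponents} as the base case, where explicit cylinders exist in each $\Omega^k_r\cM_1(\kappa)$. In the inductive step, I would use that a cylinder on a lower level component survives plumbing after rescaling.
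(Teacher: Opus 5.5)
\textbf{Review of the proposal for Theorem \ref{T:Cylinder}: there is a genuine gap at Step 2.}

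Two parts of your plan agree with the paper: the reduction via Corollary \ref{C:TypesofCylinders2} to producing a nonseparating cylinder (equivalently, a boundary point with one irreducible component and one horizontal node), and the induction on dimension through a positive-genus top-level vertex. But the step you flag as the main obstacle is left open, and the remedies you sketch do not close it.

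\begin{itemize}
\item Smoothing lower levels can never turn a vertical node into a horizontal one. A node is horizontal only if its orders are $(-k,-k)$, and smoothing other nodes does not change that. So ``smooth the lower levels while keeping the node'' achieves nothing.
\item The claim in Step 1 that you can choose a degeneration pinching a nonseparating curve is unsupported. You do not control which boundary strata the closure of $\cC$ meets; the first degeneration you find may just be a collision of marked points.
\item More importantly, your induction needs a vertex of positive genus whose differential can move freely in its own stratum component. You give no argument when every vertex of the degeneration is rational, e.g.\ a top $\bP^1$ joined to a bottom $\bP^1$ by three or more vertical nodes.
\item Your fallback, that a cylinder on a lower-level vertex ``survives plumbing,'' ignores that a lower level can be cut down by the global $k$-residue condition. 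In that case the induction hypothesis, which concerns full components of strata, does not apply to that vertex.
\end{itemize}

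The paper supplies the two missing ingredients.

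First, it takes a \emph{minimal} degeneration (Lemma \ref{L:MinDegen}). Such a degeneration has at most two levels. If there are no horizontal nodes and the top is rational, there is exactly one vertex on each level, and the global $k$-residue condition is vacuous. So a positive-genus bottom vertex can be degenerated to a cylindrical point by induction, and the vertical nodes can then be re-smoothed. Note that this framing never needs a nonseparating node at the outset; separating degenerations are handled by induction on the positive-genus vertex.

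Second, suppose both vertices are rational. Since $g \geq 2$, there are at least three nodes between them. The paper uses $\Omega^k\cM_0 \cong \cM_{0,n}$ on the bottom vertex to collide two of the nodal poles. This bubbles off a sphere, at an intermediate level, joined to the top vertex by two edges. Smoothing those two edges creates a genus-one top vertex, which reduces to the case already handled.

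Without this genus-creating move, or a substitute for it, your argument has no way to proceed from an all-rational degeneration.
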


%
%




%
%

We will detect the cylinders in Theorem \ref{T:Cylinder} using the boundary. Say that a multiscale $k$-differential is \emph{cylindrical} if it has exactly one irreducible component and one (necessarily horizontal) node. 

\begin{lem}\label{L:CylindricalBoundary}
Theorem \ref{T:Cylinder} holds if and only if the component contains a cylindrical point in its boundary. 
\end{lem}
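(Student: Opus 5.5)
We prove the two directions separately, using the standard dictionary between horizontal nodes of multiscale $k$-differentials and Euclidean cylinders (input (2) in the introduction).

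\emph{Forward direction.} Suppose the component contains a surface $X$ with a simple nonseparating Euclidean cylinder $C$ of circumference $\ell$ and height $h$. Fix the circumference and let the height $h \to \infty$; equivalently, rescale by $1/h$ so the core curve of $C$ shrinks. Since $C$ is a Euclidean cylinder, its core curve has trivial holonomy. Stretching the height is a deformation within the stratum: in period coordinates it changes only the period of a cross curve of $C$, which is a free coordinate on the holonomy cover.

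In the limit, the core curve of $C$ is pinched to a single horizontal node. The complement $X - C$ is connected because $C$ is nonseparating, and it is unchanged by the stretching. So the limiting stable curve has one irreducible component, obtained from $X - C$ by gluing each boundary saddle connection to itself, and one node. Each boundary of $C$ is a single saddle connection of multiplicity one, and that saddle connection becomes a half-infinite cylinder. Hence the limit has exactly one node, it is horizontal (both sides have a pole of order $-k$), and there are no other degenerations. It is therefore a cylindrical point in the boundary of the component.

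\emph{Reverse direction.} Suppose a cylindrical multiscale $k$-differential $Y$ lies in the boundary of the component. Because there are no vertical nodes, there is a single level. No prong data is needed, and the global $k$-residue condition is vacuous. The only constraint is the matching $k$-residue condition at the horizontal node. By the plumbing construction for horizontal nodes in \cite{BCGGM-k-diff}, any smoothing of $Y$ inserts a long Euclidean cylinder whose core curve is the vanishing cycle. Smoothings of $Y$ land in the component, so the component contains a surface $X$ with a Euclidean cylinder $C$ whose core curve is the vanishing cycle. This curve is nonseparating on $X$ because $Y$ is irreducible.

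After perturbing $X$ inside the stratum we may assume $C$ is generic. This is possible because cylinders persist under small deformations. Corollary \ref{C:TypesofCylinders2} then says that $C$ is simple.

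\emph{Main obstacle.} The delicate step is the forward direction: checking that stretching $C$ stays in the stratum and degenerates to exactly one horizontal node. This is where the two halves of simplicity are used. If the boundary saddle connection appeared twice (an envelope), or if a boundary carried several saddle connections, extra vertical nodes could form in the limit. For the stretching itself, I would verify it through the period coordinates of \cite[Corollary 2.3]{BCGGM-k-diff}, where the cross-curve period of $C$ can be varied independently.
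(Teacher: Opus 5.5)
Your argument is correct and is essentially the paper's: for the forward direction you send the cylinder's height to infinity with the complement fixed, and for the converse you use that smoothing a horizontal node produces a long Euclidean cylinder whose core curve is nonseparating because $Y$ is irreducible, and which is generically simple by Corollary \ref{C:TypesofCylinders2}. Two small corrections. First, the limiting component is $X - C$ with a half-infinite cylinder attached along each boundary of $C$, not $X-C$ with the boundary saddle connections glued to themselves. Second, simplicity is not used in the forward direction: with the complement fixed only the core curve is pinched, so no vertical nodes can arise, and nonseparation alone gives irreducibility.
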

\begin{proof}
For the forward direction, given a simple nonseparating Euclidean cylinder, sending its height to $\infty$ while fixing its complement produces a cylindrical boundary point. 

For the reverse direction, any horizontal node corresponds to an infinite area Euclidean cylinder. Since the node is assumed to be nonseparating, so is the core curve of the cylinder. We therefore have a surface in the interior of the component with a nonseparating Euclidean cylinder. This cylinder is generically simple by Corollary \ref{C:TypesofCylinders2}.
\end{proof}

\begin{cor}\label{C:GenusOneCyl}
Theorem \ref{T:Cylinder} holds for genus one strata.
\end{cor}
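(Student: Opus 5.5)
By Lemma \ref{L:CylindricalBoundary}, it suffices to show that every component $\Omega^k_r \cM_1(\kappa)$ of a genus one stratum contains a cylindrical point in its boundary. Such a point is an irreducible genus zero curve with one horizontal node. The normalization carries a $k$-differential in $\Omega^k \cM_0(\kappa, -k, -k)$, and the two $-k$ points are glued. So the task is to degenerate the elliptic curve itself, i.e. to pinch a nonseparating curve, while keeping all singularities on the same component and with no vertical nodes.

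The plan is to work in the explicit description from the proof of Proposition \ref{P:Genus1Components}. Over $E = \bC/(\bZ + \tau\bZ)$ the component is cut out by $\sum_i \overline{k_i} p_i = g$, where $g$ is a fixed point of exact order $e$, with $r = d/e$. Using the $\mathrm{SL}(2,\bZ)$-action (equivalently, monodromy within the component), I would choose the torsion point $g$ to lie on the real axis, $g = j/e$, rather than involving $\tau$. Then I would let $\tau \to i\infty$ while keeping the $p_i$ at bounded real parts and bounded imaginary parts.

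In this limit the curve $E$ degenerates to a nodal rational curve $\bC^*/\langle$identity$\rangle$ via $z \mapsto e^{2\pi i z}$. The differential $f(z)\,dz^k$ becomes a $k$-differential on $\bP^1$ with poles of order exactly $-k$ at $0$ and $\infty$, because $dz^k = (du/(2\pi i u))^k$. Its divisor on $\bC^*$ is $\sum k_i x_i$ with $x_i = e^{2\pi i p_i}$. The condition $\prod x_i^{\overline{k_i}} = e^{2\pi i j/e}$ is exactly the limit of the defining equation with $g$ real. Distinct $p_i$ with distinct real parts mod $1$ stay distinct, so the limit is a single irreducible component with one horizontal node. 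This is a cylindrical multiscale differential. The matching residue condition holds because the $k$-residues at $0$ and $\infty$ agree: the $k$-differential is literally $c\,(du/u)^k$ near both ends up to sign conventions.

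The key step is checking that this limit point actually lies in the closure of the chosen component $\Omega^k_r\cM_1(\kappa)$. This is automatic because we follow a path inside the fiber bundle with $g$ of order exactly $e$ fixed. The obstacle is the exceptional case $n = 2$, where $\kappa = (a,-a)$ and components correspond to $e>1$: the two points must satisfy $\overline{a}(p_1 - p_2) = g$. Taking $p_2 = 0$ and $p_1 = g/\overline{a}$-type real points is still allowed and keeps them distinct because $e>1$. So the argument goes through, and it amounts to the cusp of the modular curve $\bH/\Gamma_1(e)$ at which the chosen torsion point is real.

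Alternatively, flat-geometrically, the horizontal cylinder of the square torus from Section \ref{S:ProngMatchings}, stretched in height, gives the same boundary point.
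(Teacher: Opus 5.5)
Your argument is correct, but it takes a different route from the paper's. The paper inducts on dimension. Its base case is the two-singularity strata, where it performs exactly your construction: put $p$ at $(\frac{1}{d},0)$ on the square torus and send the height to $\infty$. Its inductive step merges two zeros or two poles (Corollary \ref{C:GenusOneMerging}), uses the induction hypothesis to degenerate the merged top-level surface to a horizontal node, and then smooths the vertical node created by the merge.

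You instead run the stretching construction directly for arbitrary $\kappa$, using the torsion-point description from Proposition \ref{P:Genus1Components}. The fiber of $\Omega^k_r\cM_1(\kappa)$ over $E_\tau$ contains $B_g$ for every $g$ of exact order $e$, so you may take $g=1/e\in\bR$. You then fix complex numbers $p_i$, pairwise distinct mod $\bZ$, with $\sum_i \overline{k_i}p_i = 1/e$ holding exactly, and let $\tau \to i\infty$. For $n\geq 3$ such $p_i$ exist because this hyperplane is not contained in any $\{p_i-p_j\in\bZ\}$. The resulting path stays in the component, and the underlying pointed curves converge to the irreducible one-nodal rational curve with distinct markings $x_i=e^{2\pi i p_i}$.

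What your route buys is uniformity. It treats every genus one component at once, including hyperelliptic, nonprimitive, and marked-point ones, without checking that a suitable merge exists in each. (Corollary \ref{C:GenusOneMerging}, which the paper cites, is stated only for primitive nonhyperelliptic components.) The paper's route is shorter given the machinery already in place and fits the inductive merging framework used throughout.

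Two small remarks on your write-up.
\begin{enumerate}
\item You do not need to identify the limit differential. A multiscale differential on an irreducible curve has a single level, so its self-node is automatically horizontal and the limit is cylindrical.
\item If you do want the explicit residue check, the coefficients of $(du/u)^k$ at $0$ and $\infty$ are $\prod_i x_i^{k_i}$ and $1$. These agree because $\prod_i x_i^{k_i} = e^{2\pi i d/e} = e^{2\pi i r}=1$. They agree for that reason, not merely ``up to sign conventions.''
\end{enumerate}
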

\begin{proof}
We induct on dimension. Recall that minimal dimensional genus one strata project into $\cM_{1,2}$ as $\{(E,p,0) \in \cM_{1,2} : E \in \cM_1, |p| = d \}$ for some integer $d$. These contain cylindrical boundary surfaces as can be seen by placing $p$ at $(\frac{1}{d}, 0)$ on the unit square torus and sending the height of the torus to $\infty$. This establishes the base case by Lemma \ref{L:CylindricalBoundary}. 

For the inductive step, we merge two zeros or two poles (by Corollary \ref{C:GenusOneMerging}), use the induction hypothesis to develop a horizontal node, and then smooth the node formed by merging. 
\end{proof}

A \emph{minimal} degeneration of a stratum is a multiscale $k$-differential in the boundary with the property that no nonempty proper subset of its nodes can be smoothed. 

\begin{lem}\label{L:MinDegen}
Minimal degenerations are one of the following: one or two components joined by a horizontal node or components at two levels with only vertical nodes. In the final case, if all top level components are genus zero then there is one component at each level. 
\end{lem}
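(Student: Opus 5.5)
Let $X$ be a minimal degeneration, with level graph $\Gamma$ (vertices are irreducible components, edges are nodes, and there is a level function). The plan is to show that if $X$ does not have one of the listed shapes, some nonempty proper subset of its nodes can be smoothed while staying in the boundary. That contradicts minimality. The tool is the description of the boundary of the multiscale compactification from \cite{BCGGM-k-diff, CMZ}. Undegenerating a single level passage, by merging two adjacent levels, is always allowed and yields another multiscale $k$-differential. Likewise, smoothing a horizontal node is allowed whenever the matching $k$-residue condition permits it. Each such operation smooths a subset of the nodes, so minimality forces there to be no room for it.

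\emph{Step 1 (few levels).} If $X$ has at least three levels, merge the bottom two levels. This is a legitimate undegeneration that smooths exactly the vertical nodes between those two levels and leaves the others intact. That is a nonempty proper subset, so a minimal degeneration has at most two levels.

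\emph{Step 2 (horizontal versus vertical nodes).} Suppose $X$ has two levels and also a horizontal node. Smoothing the horizontal nodes alone keeps the level structure and the vertical nodes, and is again a proper nonempty subset. This requires that smoothing a horizontal node does not violate the global residue condition. Here I would use that horizontal nodes correspond to cylinders, so the cylinder can be collapsed to finite height independently of the rest; on the level of the compactification this is smoothing the corresponding boundary divisor transversally. So either all nodes are vertical with two levels, or $X$ has a single level and only horizontal nodes. In the latter case, if there are two or more horizontal nodes, smooth all but one. The result has one or two components joined by a single horizontal node, or one component with a self-node. These are the first listed shapes, and minimality then forces $X$ itself to be of that form.

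\emph{Step 3 (genus zero top level).} Suppose the top level consists only of genus zero components and there are at least two components at some level. I would try to split off one top component $Y$ together with its lower neighbours: rescale $Y$ relative to the other top components, making it a third level sitting between. That is not a smoothing, though. The right move is the reverse one: smooth the nodes joining $Y$ to the bottom level while keeping the others vertical. For this I need the smoothed piece to still satisfy the global residue condition with respect to the remaining top components. I expect this step to be the main obstacle. Genus zero top components carry $k$-differentials that are determined up to scale by their pole and zero data, and their residues at the nodes cannot all vanish unless specific conditions hold. I would use this to show that the partial smoothing of $Y$'s nodes is unobstructed, yielding a nonempty proper subset. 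The symmetric case of several bottom components is handled by smoothing the nodes from one bottom component to the top, using that the top is genus zero and connected through it. This should force exactly one component on each level.
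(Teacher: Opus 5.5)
Your Steps 1 and 2 are essentially the paper's argument. There is one small slip in Step 1: the two bottom levels need not be joined by any edge. For example, a top vertex may be joined separately to a vertex on level $-1$ and to a vertex on level $-2$; then merging the bottom two levels smooths nothing. The fix is to undegenerate to a two-level graph by keeping a single level passage that some edge does not cross.

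The genuine gap is Step 3, which you leave open, and the route you sketch there cannot work as stated. In the multiscale compactification, all vertical nodes crossing a given level passage are smoothed together by a single level parameter. Near a point $X$ of a two-level boundary stratum, the compactification is locally that stratum times a disk in this parameter. So there are no nearby boundary points in which only the nodes joining $Y$ to the bottom level have been smoothed while the others stay vertical. Your residue heuristic also points the wrong way. The top components have order $>-k$ at their nodes, and what is needed is not that some residues are nonzero but that the global $k$-residue condition imposes nothing at all.

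The missing idea is the move you discarded, followed by a smoothing.

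First, when every top-level component has genus zero, the global $k$-residue condition is vacuous. A genus zero top component that is a $k$th power has orders at its nodes that are $>-k$ and divisible by $k$, hence nonnegative, so it must carry a marked pole.

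Consequently you may rescale one top vertex $Y$ to zero relative to the other top vertices. The limit is a three-level multiscale differential, in the closure of the boundary stratum of $X$, with $Y$ alone on a new intermediate level.

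Now merge the intermediate and bottom levels. This smooths exactly the nodes between $Y$ and the bottom level and keeps the nodes from the other top vertices. It lands in a two-level boundary stratum whose nodes are a nonempty proper subset of those of $X$, which is the contradiction with minimality; this is how the paper argues.

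Once the top vertex is unique, several bottom vertices are handled symmetrically. Lift one bottom vertex to an intermediate level, again allowed because the residue condition is vacuous, and merge that level with the top.
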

\begin{proof}
Since we can smooth all nodes below a given level, minimal degeneration have at most two levels. 

Suppose that the top level has horizontal nodes. Any horizontal node can be smoothed independently, so there must be no lower level and one such node.

Suppose that the top level has no horizontal nodes. Since horizontal nodes at the lower level can be smoothed, there must be none. If additionally all top level components are genus zero, then the global $k$-residue condition is trivial. If there were multiple top vertices, then the trivial residue condition implies that we can create an intermediate level in the level graph and move one of the top vertices to that level. Smoothing all edges joining the intermediate and bottom levels shows that our original degeneration was not minimal. 

Thus, there is a unique top vertex. If there were multiple bottom vertices, then again we move one to the intermediate level and smooth all nodes connecting the top and intermediate levels to see that our original degeneration was not minimal. So there are unique top and bottom vertices.
\end{proof}

\begin{proof}[Proof of Theorem \ref{T:Cylinder}:] Proceed by induction on the dimension of the stratum. Since the genus $g = 1$ case was handled in Corollary \ref{C:GenusOneCyl}, this is our base case and allows us to assume that $g \geq 2$. 

The component has a nonempty boundary by noncompactness of strata (Theorem \ref{T:Chen-Aygun}). Consider a minimally degenerated surface $X$ on the boundary of the component. If $X$ has a single irreducible component, then it is cylindrical by Lemma \ref{L:MinDegen}. 

If $X$ has two irreducible components, $X_1$ and $X_2$, connected by a horizontal node, then one must have genus at least one. Suppose that component is $X_1$. Using the induction hypothesis, degenerate $X_1$ to a cylindrical boundary differential\footnote{As a minor technical point, this requires rescaling $X_2$ throughout this degeneration so that the residue of its pole of order $-k$ matches that of $X_1$.}. Then, smooth the horizontal node that connected $X_1$ and $X_2$ to conclude.

Suppose that $X$ has two levels and only vertical nodes. If $X$ has a genus at least one surface at the top level, then we proceed as in the previous paragraph. So we may suppose that $X$ has only two irreducible components by Lemma \ref{L:MinDegen}. 

Let $X_1$ be the top component, which we may assume has genus zero, and let $X_2$ be the bottom component. Since $X_1$ is genus zero and has no horizontal nodes, the global $k$-residue condition is vacuous. If $X_2$ has genus at least one, then we may proceed as in the previous paragraph. So suppose that $X_2$ also has genus $0$. Since $X$ has genus at least two there are at least three nodes. We merge two nodal poles $P_1$ and $P_2$ on $X_2$ and then smooth $P_1$ and $P_2$ (as in the last two steps of Figure \ref{F:L10-3}) in order to add genus to $X_1$ and thereby reduce to a previous case. 
\end{proof}

\begin{lem}\label{L:Genus0Cylinder}
$\Omega^k \cM_0(\kappa)$ contains a surface with a Euclidean cylinder if and only if a subset of $\kappa$ sums to $-k$. The cylinder can be taken to be simple if neither subset is $(-\frac{k}{2}, -\frac{k}{2})$.
\end{lem}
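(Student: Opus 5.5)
The plan is to prove the two directions separately. The forward direction is topological and the reverse is a cut-and-paste construction.

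\emph{Necessity.} Suppose $X \in \Omega^k \cM_0(\kappa)$ contains a Euclidean cylinder $C$. Since $X$ has genus zero, the core curve $\gamma$ of $C$ separates $X$ into two discs $D_1, D_2$. Let $S_i$ be the set of singularities in $D_i$, so $S_1 \sqcup S_2 = \kappa$. The core curve is a geodesic with trivial holonomy, so its $k$-framing value is $w(\gamma)=0$. Apply the Poincar\'e--Hopf type identity for the framing $w$ on the disc $D_1$: by homological coherence, the boundary curve satisfies
\[ w(\gamma) = \sum_{q \in S_1} \bigl(w(\delta_q) - k\bigr) + k = \sum_{q\in S_1} q + k, \]
using $w(\delta_q)=q+k$. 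Hence $\sum_{q \in S_1} q = -k$, and likewise for $S_2$.

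\emph{Sufficiency.} Suppose $\kappa = S_1 \sqcup S_2$ with each part summing to $-k$. For each $i$, form the genus zero stratum $\Omega^k\cM_0(S_i, -k)$ by adding a pole of order $-k$. The orders still sum to $-2k$, so the stratum is nonempty, apart from the degenerate small cases, which I will check by hand. Genus zero strata are connected, so I can choose an element whose pole of order $-k$ has nonzero $k$-residue. Such a pole has a half-infinite Euclidean cylinder neighborhood. I then rescale the two differentials so that their $k$-residues at the added poles agree, which the matching residue condition requires. This gives a multiscale $k$-differential with two components joined by a horizontal node. The global residue condition is vacuous here, since there is no vertical node. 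Smoothing the horizontal node yields a smooth surface in $\Omega^k\cM_0(\kappa)$ containing a long finite cylinder, namely the plumbed neighborhood of the node.

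For simplicity, I would use Corollary~\ref{C:TypesofCylinders}: after perturbing, the cylinder is generic. Each boundary then carries either one saddle connection of multiplicity one, or two distinct ones whose complement has a component with trivial holonomy. In the second case, if the cylinder boundary lies in $D_1$, the extra component is a genus-zero translation surface. I need to check that this happens only if $S_1=(-\frac{k}{2},-\frac{k}{2})$, the envelope/pillowcase-like configuration. The alternative is to choose the differential on $\Omega^k\cM_0(S_i,-k)$ directly so that the boundary of the half-infinite cylinder is a single saddle connection. I would do this by degenerating within that genus zero stratum until the boundary circle contains one singularity. When $S_i$ is not $(-\frac k2,-\frac k2)$, I would arrange this by merging the other singularities onto a single point on the boundary. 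Concretely, I would glue the polygon picture: one boundary saddle connection from a singularity to itself, with the remaining singularities of $S_i$ placed inside a slit-type region.

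The main obstacle is this last step. I need to show that each side admits a differential whose cylinder boundary is a single multiplicity-one saddle connection, and to identify $(-\frac k2,-\frac k2)$ as the unique obstruction. In that case the side is a half-pillowcase, and any boundary necessarily consists of two saddle connections. My first approach would be an explicit construction: take a half-infinite cylinder, and cut and reglue its boundary circle to a polygon realizing the angles $2\pi(q+k)/k$ for $q\in S_i$. I would do this by gluing the polygon edges in pairs, so that exactly one saddle connection remains on the cylinder side.
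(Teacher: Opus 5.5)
Your argument for the first statement is correct. For necessity you use the $k$-framing: the core curve is a closed geodesic, so $w(\gamma)=0$, and Poincar\'e--Hopf on the disc $D_1$ gives $w(\gamma)=\sum_{q\in S_1}q+k$. The paper instead pinches the core curve to a horizontal node. Each genus-zero component then carries a pole of order $-k$ at the node, so the remaining orders on it sum to $-k$. Your version avoids the compactification and the paper's avoids framings; both are essentially Gauss--Bonnet. Your sufficiency construction is the paper's argument run from the boundary: two genus-zero pieces joined at a horizontal node with matched $k$-residues, then smoothed. The appeal to connectedness there is unnecessary, since a pole of order exactly $-k$ always has nonzero $k$-residue.

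The simplicity claim has a genuine gap. Corollary~\ref{C:TypesofCylinders} leaves three possibilities for a boundary component of a generic cylinder:
\begin{enumerate}
\item a single saddle connection of multiplicity one;
\item a single saddle connection of multiplicity two;
\item two distinct saddle connections $s,t$ cutting off a finite-area piece with trivial holonomy.
\end{enumerate}
Your list omits the second case, and that is exactly the envelope. If $s$ appears twice on one boundary, then $C$ lies on the same side of both copies, so the gluing is a fold. Each endpoint of $s$ then has a single prong and cone angle $\pi$, i.e.\ order $-\frac{k}{2}$. The endpoints are distinct and nothing else lies on that side, so that part of $\kappa$ is $(-\frac{k}{2},-\frac{k}{2})$.

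The check you propose for the third case is aimed at the wrong configuration. Two cone points of angle $\pi$ have holonomy $-1$, so the half-pillowcase never bounds a translation piece. What must be shown is that the third case cannot occur in genus zero at all. The cut-off piece $Y'$ is a compact flat disc whose interior cone angles satisfy $c_j\ge 2\pi$, since finite area and trivial holonomy force orders in $k\bZ_{\ge 0}$. It is bounded either by $s\cup t$ with two corners or, when $s,t$ are loops at one point, by a single loop with one corner. Gauss--Bonnet, $\sum_i(\pi-\alpha_i)+\sum_j(2\pi-c_j)=2\pi$, then gives $\alpha_p+\alpha_q\le 0$ in the first case and $\alpha<0$ in the second, both impossible.

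Only with both of these points does Corollary~\ref{C:TypesofCylinders} yield simplicity. Your fallback of building each side by explicit polygon regluing is not carried out and does not substitute for this argument.
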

\begin{proof} 
Let $\kappa = S \cup T$ be the partition into subsets that each sum to $-k$. Let $\gamma$ be a curve separating $S$ from $T$. Pinching this curve causes a node to develop with order $-k$ on both irreducible components. This is a horizontal node, which corresponds to a Euclidean cylinder, and gives the forward implication. The second statement is by Corollary \ref{C:TypesofCylinders}.

Conversely, given a Euclidean cylinder on a surface in $\Omega^k \cM_0(\kappa)$, we can pinch its core curve and obtain two genus zero irreducible components connected by a horizontal node. Each end of the node has a $-k$ pole, so the singularities of $\kappa$ on either of the two irreducible components must sum to $-k$.
\end{proof}

\begin{proof}[Proof of Theorem \ref{T:Cyl}:] This is just Theorem \ref{T:Cylinder} and Lemma \ref{L:Genus0Cylinder}.
\end{proof}

\section{Splits and merges preserving nonhyperellipticity}\label{S:Simple}

Recall that \emph{splits} and \emph{merges} are defined in Section \ref{S:Intro}, where it is explained how they can be constructed from simple Euclidean cylinders. In the sequel, the $\bP^1$ component that is pinched off in a split will always be at the lower level of the multiscale $k$-differential. A split or merge that preserves primitivity will be called \emph{simple}. A \emph{simple degeneration} will mean a simple split or simple merge. Examples include splits and merges obtained by collapsing simple Euclidean cylinders. The main theorem of this section is the following. 

\begin{thm} \label{T:SimpleDegeneration}
Any primitive nonhyperelliptic component of a stratum of surfaces of genus at least two, except for $\Omega^2 \cM_2(5,-1)$ and $\Omega^3 \cM_2(6)$, has a simple degeneration that preserves nonhyperellipticity. It is either a simple split of a zero of order at least $2$ or a simple merge of two singularities of order strictly greater than $-\frac{k}{2}$.
\end{thm}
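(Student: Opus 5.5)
The starting point is Theorem \ref{T:Cylinder}: any primitive nonhyperelliptic component $\cC$ of genus $g\ge 2$ contains a surface $X$ with a simple nonseparating Euclidean cylinder $C$. Collapsing $C$ (height and cross curve to zero) gives a split or a merge, as in Figure \ref{F:SplitMerge}.

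\textbf{Which kind of degeneration, and the order constraint.} If the two boundaries of $C$ carry the same zero $z$, collapsing splits $z$ into $a,b$ with $a+b=z-2k$ and $a,b>1-k$, so $z\ge 2$. If they carry distinct singularities $z_1,z_2$, we get a merge to $z_1+z_2$. The cone angle on each side of a simple cylinder boundary is at least $\pi$ for a simple nonseparating cylinder, which I expect forces $z_i>-k/2$. If not, I would perturb inside $\cC$ to a generic cylinder (Corollary \ref{C:TypesofCylinders2}) and use that a singularity of order $\le -k/2$ bounding a simple cylinder forces the configuration of a half-pillowcase. In that configuration one can instead choose another cylinder.

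\textbf{Primitivity.} Collapsing a cylinder keeps the holonomy of its complement. The limiting $k$-differential on the top level, or on the merged surface, is a $d$-th power only if the holonomy image of the original surface lies in $\bZ/(k/d)$. This is because the cylinder's own curves have trivial holonomy, and the curves crossing $C$ become curves through the node whose holonomy is determined by prong data. I would verify this with the framing: $w \bmod k$ is the holonomy, and $\pi_1$ of the degenerate surface surjects onto the relevant quotient. The simple node then does not kill holonomy, so the degeneration is simple.

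\textbf{Nonhyperellipticity, the main obstacle.} The resulting component could be hyperelliptic even though $\cC$ is not. The plan is to show that, since hyperelliptic components are classified (Chen--Gendron) and are small, the set of surfaces in $\cC$ whose chosen cylinder degenerates to a hyperelliptic boundary is too constrained.

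First, if the degenerate component is hyperelliptic, the involution would lift: the cylinder is mapped to itself, or the two merged singularities are fixed or swapped. Smoothing equivariantly would then make $\cC$ hyperelliptic, unless the prong matching breaks the symmetry. By the dimension count and Corollary \ref{C:GenusOneProngMatchings}, together with Theorem \ref{T:ProngHom}, such symmetry-breaking prong classes exist only for splits where the involution exchanges $a,b$.

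In that case I would use another cylinder. After a degeneration to a hyperelliptic boundary surface, I would find a second simple cylinder by applying Theorem \ref{T:Cylinder} inductively to a different degeneration. For example, I would merge two other singularities (Corollary \ref{C:GenusOneMerging}) or use the genus-one boundary analysis of Section \ref{S:ProngMatchings}, producing a split or merge not equivariant under the involution.

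The low-dimensional cases where no such alternative exists should be exactly $\Omega^2\cM_2(5,-1)$ and $\Omega^3\cM_2(6)$. There every possible split or merge lands in a hyperelliptic or imprimitive stratum, which I would check by listing the boundary strata directly.
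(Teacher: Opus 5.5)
Your setup is sound. Theorem~\ref{T:Cylinder} gives a simple nonseparating cylinder. A singularity on a boundary of a simple cylinder has cone angle strictly greater than $\pi$, so merged orders exceed $-\frac{k}{2}$ directly and no half-pillowcase detour is needed. The collapse preserves primitivity, because the only curve crossing the vanishing cycle is the core curve, which has trivial holonomy.

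The gap is in the nonhyperellipticity step, which is the real content of the theorem.

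\textbf{Your reduction is invalid.} Suppose the collapse lands in a hyperelliptic component $\cD$ with involution $J$. It does not follow that $\cC$ would be hyperelliptic unless prong matchings break the symmetry. The involution lives on the top level only, and the lower-level data can break the symmetry on their own. Three examples:
\begin{itemize}
\item a merge of $z\neq z'$ into a $J$-fixed point, where the lower $\bP^1$ carrying $(z,z',-2k-z-z')$ is not symmetric;
\item a merge into a point not fixed by $J$;
\item a split whose two nodes are not exchanged by $J$.
\end{itemize}
Nothing in your argument excludes these. They are exactly what Lemma~\ref{L:TwoNodesNotSwapped} and Cases 1--3 of the paper's proof must handle, whereas your plan treats only the swapped-split case.

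\textbf{``Use another cylinder'' has no mechanism.} Theorem~\ref{T:Cylinder} gives no control over which boundary stratum a collapse lands in. Re-applying it ``to a different degeneration'' is circular unless you explain how a degeneration of the boundary surface yields a new degeneration of $\cC$. Corollary~\ref{C:GenusOneMerging} does not help either, since $\cD$ is hyperelliptic and generally has genus greater than one.

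\textbf{The missing idea is re-smoothing.} Inside a hyperelliptic component one can always simply merge a $J$-orbit of two points or split a $J$-fixed point (Lemma~\ref{L:MergeHyp}). Doing so on $Y\in\cD$ produces a multi-level differential in $\partial\cC$. Smoothing a \emph{different} subset of its nodes then gives a new simple split or merge of $\cC$ (Lemmas~\ref{L:CommonConstruction} and~\ref{L:NewSplit}). Nonhyperellipticity of the new degeneration is certified in one of three ways:
\begin{itemize}
\item by the old nodes, whose lower-level data already break the $J$-symmetry;
\item by the facts that a hyperelliptic degeneration pinching two nonseparating curves pinches a $J$-swapped pair, and one pinching a separating curve pinches a $J$-fixed one;
\item in the swapped-split case, by passing through a genus one component $\Omega^k_r\cM_1(a,-a)$ with $r\neq a/2$ and splitting $a$ unequally via Lemma~\ref{L:LosingGenusInGenus1}.
\end{itemize}

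\textbf{The exceptions.} Re-smoothing is also what pins down the exceptions, which your plan cannot do. Checking that $\Omega^2\cM_2(5,-1)$ and $\Omega^3\cM_2(6)$ fail does not show that every other stratum succeeds. In the paper these arise exactly when the resplitting in Lemma~\ref{L:TwoNodesNotSwapped} passes through a genus zero stratum with more than one hyperelliptic involution. By Lemma~\ref{L:HyperellipticNonUniqueness}, those are only $\Omega^2\cM_0(-1^4)$ and $\Omega^3\cM_0(-2^3)$.
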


We emphasize that hyperelliptic components can be made into nonhyperelliptic components simply by marking a nonsingular point. 

\begin{lem}[Splitting zeros in genus one]\label{L:LosingGenusInGenus1}
The boundary of $\Omega^k_r \cM_1(a, \kappa)$ contains $\Omega^k \cM_0(a_1, a_2, \kappa)$ for any $a_1, a_2 > -k$ where $a_1 + a_2 = a - 2k$ and $r \mid \gcd(k+a_1, k+a_2, \kappa)$. If $r \ne a$, such a partition exists. 
\end{lem}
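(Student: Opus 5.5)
The plan is to write down an explicit two-level multiscale $k$-differential and smooth it. Fix $a_1,a_2>-k$ with $a_1+a_2=a-2k$. For the top level take any $X_1\in\Omega^k\cM_0(a_1,a_2,\kappa)$; this stratum is nonempty because genus zero strata are. For the bottom level take $X_2=\bP^1$ with a $k$-differential whose orders are $a$ at a marked point and $-2k-a_1,-2k-a_2$ at two further points. The orders sum to $a-4k-(a_1+a_2)=-2k$, so such a differential exists, for instance $z^{a}\,(z-1)^{-2k-a_1}\,dz^k$ with the third singularity at $\infty$.

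Glue $X_1$ and $X_2$ along the two pairs of points of orders $a_i$ and $-2k-a_i$. Since $a_i>-k$, we have $a_i>-2k-a_i$, so both nodes are vertical with $X_1$ on top. The resulting stable curve has genus one, and its marked points carry the orders $(a,\kappa)$.

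Next I would check that this is a genuine multiscale $k$-differential; the only nontrivial requirement is the global $k$-residue condition. That condition is vacuous unless $X_1$ is a $k$th power of a $1$-form whose only poles are at the nodes, and $a_1,a_2>-k$ already makes the nodes zeros or invisible-type points on $X_1$. In the remaining degenerate situation, the condition asks for the $k$-residues of $X_2$ at its two poles to sum to zero. There I would use the freedom in the bottom differential, namely the position of the third point, to arrange it; the space of such differentials on $\bP^1$ is large enough once the poles have order at most $-k$. I expect this to be a minor point. By the multiscale compactification \cite{BCGGM-k-diff}, any choice of prong matching at the two nodes then smooths to a surface in $\Omega^k\cM_1(a,\kappa)$, so $\Omega^k\cM_0(a_1,a_2,\kappa)$ lies in the boundary of some component.

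The main step, and the expected obstacle, is to choose the prong matching so that the smoothing lands in $\Omega^k_r\cM_1(a,\kappa)$.
\begin{itemize}
\item By the characterization of rotation number via $k$-framings (the proposition following Proposition \ref{P:WNofSCC}), the rotation number of the smoothing is the generator of the subgroup generated by $w$ on nonseparating simple closed curves.
\item A curve on the smoothed surface that crosses each node once is nonseparating, and together with curves supported on the top level and loops around the nodes these generate.
\item On the top-level curves, $w$ takes values in the subgroup generated by $k+\kappa_i$, $k+a_1$ and $k+a_2$. All of these are multiples of $r$ by hypothesis, noting $r\mid k$-shifts as in Remark \ref{R:GenusOneComplexAnalyticView}.
\item Shifting the prong matching at one node by one prong changes the winding number of a curve crossing the node by $1$ (the computation in the proof of Theorem \ref{T:ProngHom}).
\end{itemize}
Hence the value of $w$ on the curve through both nodes can be adjusted to be exactly $r$, and the rotation number is then $\gcd(r,\text{multiples of } r)=r$. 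The care needed is in tracking signs and the reduction modulo $k+a_1$ of the prong choice, so that the value $r$ is actually attained rather than only $r$ modulo something.

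Finally, for existence when $r\neq a$: here $r\mid a$ with $r<a$, so $r\le a/2$. Take $a_1=r-k$ and $a_2=a-r-k$. Both exceed $-k$, their sum is $a-2k$, and $k+a_1=r$, $k+a_2=a-r$. The entries of $\kappa$ are divisible by $r$ because $r\mid\gcd(a,\kappa)$. So the divisibility hypothesis $r\mid\gcd(k+a_1,k+a_2,\kappa)$ holds.
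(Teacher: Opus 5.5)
Your construction is the same as the paper's. You use the same two-level differential, with $X_1\in\Omega^k\cM_0(a_1,a_2,\kappa)$ on top and $X_2\in\Omega^k\cM_0(-2k-a_1,-2k-a_2,a)$ below. You use the same observation that shifting one prong changes $w(\eta)$ by $1$ for a curve $\eta$ crossing both nodes once, and the same choice $(a_1,a_2)=(r-k,a-r-k)$ for existence.

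However, one step in your rotation-number bookkeeping is false as written. In general $k+\kappa_i$ is not a multiple of $r$: given $r\mid\kappa_i$, this would need $r\mid k$. That fails, for example, for every primitive component with $r>1$, since then $\gcd(k,r)=1$.

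The fix is that the loops around punctures bound discs on the closed torus. So they are not in $\cS$ and never enter the gcd. What does enter is governed by homological coherence. The $w$-values of the three boundary curves of a pair of pants, oriented with the pants on the left, sum to $-k$. Hence pushing a curve across a singularity of order $m$ changes $w$ by $m$, not by $k+m$. Likewise, a top-level curve that separates the two node points and encloses the node of order $a_1$ together with a set $J$ of the $\kappa$-punctures has $w=(k+a_1)+\sum_{j\in J}\kappa_j\in r\bZ$. This is what justifies the formula the paper uses: the rotation number of the smoothing is $\gcd(w(\eta),k+a_1,k+a_2,\kappa)$.

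With that formula you also do not need $w(\eta)=r$ on the nose. Cycling the prong at the first node through its $k+a_1$ positions realizes every residue modulo $k+a_1$. Then $w(\eta)\equiv r$ modulo $k+a_1$ already forces the gcd to be $r$. The paper works modulo $\gcd(k+a_1,k+a_2)$ instead.

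Your treatment of the global $k$-residue condition should also be simplified, because the ``degenerate situation'' cannot occur. If $X_1=\eta^k$ with $\eta$ having poles only at the nodes, then $a_i>-k$ forces $\eta$ to be holomorphic at the nodes. It would then be holomorphic on all of $\bP^1$, which is impossible. The remedy you propose would not work anyway, since a three-pointed $\bP^1$ has no moduli to move. Were the condition in force, it would be automatic: a $k$th root of $X_2$ has its only poles at the two nodes, so the residue theorem applies.
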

\begin{proof}
Glue together a sphere in $\Omega^k \cM_0(a_1, a_2, \kappa)$ and a sphere in $\Omega^k \cM_0(-a_1-2k, -a_2-2k, a)$ along the first two singularities (see Figure \ref{F:SplitMerge}). Smooth these nodes to produce a surface $X$ in $\Omega^k\cM_1(a, \kappa)$ with $k$-framing $w$. On $X$, the nodes become smooth curves $\gamma_1, \gamma_2$ where $w(\gamma_i) = k+a_i$. The smoothing requires a choice of prong matching. The rotation number of $X$ is $\gcd(w(\eta), k+a_1, k+a_2, \kappa)$ where $\eta$ is a simple closed curve that intersects $\gamma_1$ and $\gamma_2$ exactly once. Suppose that $r \mid \gcd(k+a_1, k+a_2, \kappa)$. By changing the choice of prong matching, we can change $w(\eta)$ mod $\gcd(|k+a_1|, |k+a_2|)$ arbitrarily; in particular, we can change it to be congruent to $r$. The surface we obtain by smoothing according to this choice of prong matching thus has rotation number $r$. For the second claim, set $(a_1, a_2) = (r-k, a-r-k)$. 
\end{proof}

In the sequel, we will frequently use the following two constructions. 

\begin{lem}[Swapping splits and merges]\label{L:CommonConstruction}
Fix $X \in \Omega^k \cM_g(z, z',\kappa)$ with $z \geq 2$. If $k=1$, suppose that $z' \geq 1$.  The following hold:
\begin{enumerate}
    \item\label{I:SplitToMerge} Suppose that $z$ can be simply split to form $Y \in \Omega^k \cM_{g-1}(a,b,z',\kappa)$. Suppose that, on $Y$, $z'$ and $a$ can be simply merged to form $W \in \Omega^k \cM_{g-1}(z'+a,b,\kappa)$ where $z'+a > -k$. Then, on $X$, $z$ and $z'$ can be simply merged to form a $k$-differential $Z$.
    \item\label{I:SplitToMergeHyp} Under the assumptions of (\ref{I:SplitToMerge}), if, after forgetting marked points, either $W$ belongs to a nonhyperelliptic component or $W$ belongs to a hyperelliptic component but $b$ is not exchanged with $a+z'$ by a hyperelliptic involution, then $Z$ does not belong to a hyperelliptic component after forgetting marked points.
    \item\label{I:MergeToSplit} Suppose that $z$ and $z'$ can be simply merged to form $Z \in \Omega^k \cM_g(z+z', \kappa)$. Suppose that, on $Z$, $z+z'$ can be simply split to form $W \in \Omega^k \cM_{g-1}(a+z',b,\kappa)$ where $a,a+z',b > -k$. Then, on $X$, $z$ can be simply split to form a surface $Y \in \Omega^k \cM_{g-1}(a,b,z',\kappa)$.
    \item \label{I:MergeToSplitHyp} Under the assumptions of (\ref{I:MergeToSplit}), $Y$ belongs to a hyperelliptic component after forgetting marked points only if $W$ does too and either $a = 0$ or $a = z'$ and $a+z'$ is fixed by a hyperelliptic involution on $W$. 
\end{enumerate}
\end{lem}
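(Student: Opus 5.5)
The plan is to work entirely in the multiscale compactification and to realize each claim as a statement about a single two-step degeneration. The two operations involved are a split of $z$ and a merge of $z'$ with a singularity. A split pinches off a genus zero bubble at the lower level carrying $a,b$, attached at two vertical nodes. A merge pinches off a genus zero bubble carrying $z'$ and the merged singularity, attached at one node. Each operation is supported near a different curve on the surface. The key observation is that the two degenerations can be performed simultaneously. This gives a common boundary point $V$, a three-level multiscale differential (or two levels, after reordering), from which both orders of operation are obtained by smoothing different subsets of nodes.

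For (\ref{I:SplitToMerge}), I start from $W$, which lies in the boundary of the component of $Y$ via the merge of $z'$ and $a$. I then regard $Y$ as lying in the boundary of the component of $X$ via the split, and form the iterated boundary point $V$. The top level of $V$ is the genus $g-1$ surface carrying $z'+a$, $b$ and $\kappa$. Below it sits a sphere carrying $z'$ and $a$. Below that sits the split sphere, which carries $z$ and nodes of orders $-a-2k$ and $-b-2k$.

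Next I reorganize the levels. The sphere with $a,z'$ and the split sphere share the node $a$. I smooth that node first, which yields a single lower-level sphere carrying $z$, $z'$ and two nodes of orders $-(z'+a)-2k$ and $-b-2k$. Smoothing the two nodes joining this sphere to the top level is a merge-type degeneration of $X$ in which $z$ and $z'$ collide. The condition $z'+a>-k$ makes the node of order $z'+a$ have the correct sign, so that the sphere is genuinely lower level. The hypothesis $z'\ge 1$ when $k=1$ ensures that the residue condition is vacuous. With that in place, the intermediate point is a legitimate boundary point of the component of $X$, namely the simple merge $Z$. Primitivity is preserved because the top-level differential is primitive, since $W$ is.

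Parts (\ref{I:MergeToSplit}) and (\ref{I:MergeToSplitHyp}) are the same construction read in the other order. I start from $W$ as the top level of a split of $Z$, and from $X$ via the merge. Inside the lower sphere I then separate $z'$ from $z$ before the split, which produces $Y$ with the singularity $a+z'$ broken into $a$ and $z'$.

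For the hyperelliptic statements (\ref{I:SplitToMergeHyp}) and (\ref{I:MergeToSplitHyp}), I argue by contrapositive. Suppose $Z$, respectively $Y$, is hyperelliptic after forgetting marked points. The involution extends to the boundary point $V$, and it must preserve the level structure and the node orders. On the top level it therefore induces a hyperelliptic involution of $W$. For (\ref{I:SplitToMergeHyp}), the lower sphere of $Z$ is attached to the top level along two nodes, and these must be exchanged by the involution because it acts on the pinched curve. This forces $b$ to be swapped with $a+z'$ on $W$, which contradicts the hypotheses. For (\ref{I:MergeToSplitHyp}), the same bookkeeping shows that $a+z'$ is fixed and that $a$ and $z'$ are exchanged or one of them is unmarked. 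This yields $a=0$ or $a=z'$.

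I expect the main obstacle to be justifying the level reorganization in the first step. One must check that smoothing an internal node of the lower part gives a valid multiscale differential: the prong matchings must extend, and the global residue condition must remain vacuous. I would handle this using the fact that the top-level components are not $k$th powers with poles only at nodes.
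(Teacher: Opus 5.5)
There is a genuine gap in (1), and it carries over to (3) and to the way you set up (2) and (4). Your iterated boundary point $V$ and the smoothing of the node $a$ are correct, and they are the paper's first two steps. Here $V$ has $W$ on top, a sphere carrying $z'$ and the node $a$ in the middle, and the split sphere carrying $z$ at the bottom. But the resulting two-level differential $D$ is not the merge $Z$. It has $W$ on top and a sphere in $\Omega^k\cM_0(z,z',-2k-(z'+a),-2k-b)$ below. Its top level has genus $g-1$. The degeneration $X \to D$ pinches two nonseparating curves, not only a curve bounding a disc around $z$ and $z'$. Smoothing its two top-level nodes just returns a surface in the component of $X$ with $z \neq z'$. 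In fact no partial smoothing of $V$ has a component carrying exactly $z$, $z'$ and one node; the only possibilities are $V$, the split giving $Y$, $D$, and $X$ itself. So your guiding idea, that both orders of operation come from one boundary point by smoothing different subsets of nodes, cannot produce a merge.

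The missing idea is an additional degeneration inside the genus zero lower level. The lower sphere of $D$ lies in a genus zero stratum with four singularities, which is identified with $\cM_{0,4}$. So you may collide $z$ and $z'$ there while keeping $W$ fixed. This gives a three-level differential: $W$ on top, a middle sphere carrying the two nodes to $W$ and a node of order $z+z'$, and a bottom sphere in $\Omega^k\cM_0(z,z',-2k-z-z')$. Only now do you smooth the nodes joining $W$ to the middle sphere. The result has a genus $g$ top level in $\Omega^k\cM_g(z+z',\kappa)$ above the sphere with $z,z'$; this is $Z$. It is primitive because it degenerates to the primitive $W$. The vacuity of the global $k$-residue condition (from primitivity, and from $a,z'\ge 0$ when $k=1$) is what licenses both the smoothing of $a$ and this genus-zero degeneration.

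Part (3) needs the reverse move, and your phrase ``separate $z'$ from $z$'' does not supply it. After smoothing the merge node, bubble off $z'$ together with the node of order $-2k-(a+z')$ in $\cM_{0,4}$. This creates a sphere with $z'$, that node, and a new node of order $a$. Then smooth the node to $W$ to obtain $Y$.

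Your hyperelliptic arguments use the same principles as the paper's. In a hyperelliptic degeneration, two pinched nonseparating curves are exchanged by the involution, and a single pinched separating curve is fixed. But these principles must be applied to the degenerations $Z \to W$ and $Y \to W$ of the correctly constructed objects. As written, you attach the lower sphere of $Z$ along two nodes, which conflates $Z$ with $D$.
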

\begin{figure}[H]
    \centering
    \includegraphics[width=.75\linewidth]{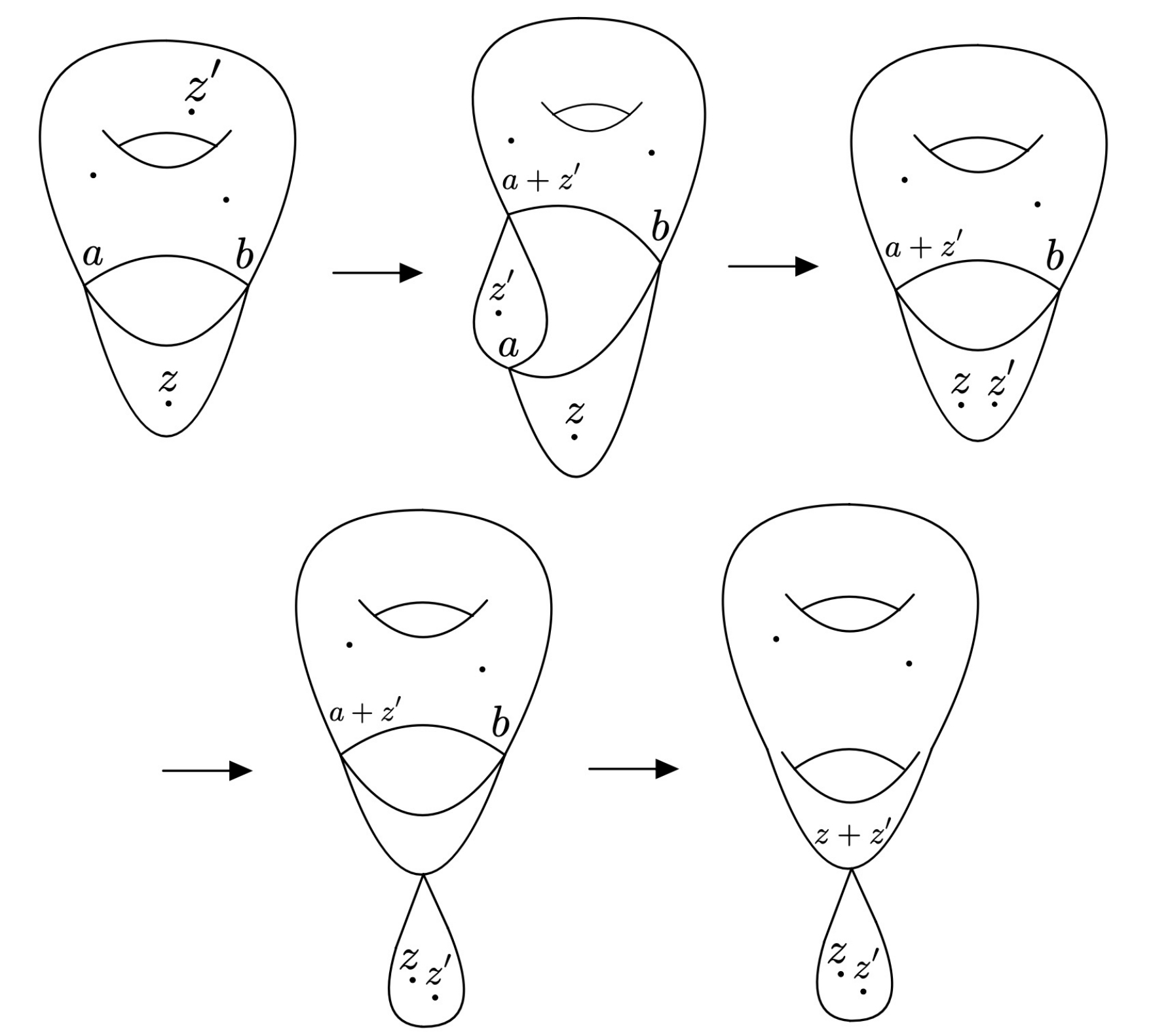}
    \caption{Proof of Lemma \ref{L:CommonConstruction}}
    \label{F:L10-2}
\end{figure} 
\begin{proof} The proof of (\ref{I:SplitToMerge}) is shown in Figure \ref{F:L10-2}. 

The first arrow is by assumption. The second is by smoothing a lower level node. The third is by merging two singularities on the lower level, which uses that genus zero strata with $n$ singularities can be identified with $\cM_{0,n}$. The fourth is by smoothing the top level nodes. The second and third operations require the vacuity of the global $k$-residue condition, which comes from the assumptions on primitivity of the degenerations and that $a, z' \geq 0$ if $k=1$. 

The proof of (\ref{I:SplitToMergeHyp}) follows since a degeneration of surfaces in a hyperelliptic locus (e.g. the one from $Z$ to $W$ when $Z$ belongs to a hyperelliptic component after forgetting marked points) that pinches two nonseparating curves necessarily pinches two curves exchanged by the hyperelliptic involution. 

The proof of (\ref{I:MergeToSplit}) is identical to that of (\ref{I:SplitToMerge}) with all degenerations in Figure \ref{F:L10-2} done in reverse.

The proof of (\ref{I:MergeToSplitHyp}) follows since a degeneration of surfaces in a hyperelliptic locus that pinches a single separating curve necessarily pinches one fixed by the hyperelliptic involution.
\end{proof}

\begin{lem}[Re-splitting]\label{L:NewSplit}
Fix $g \geq 2$. Let $S \in \Omega^k\mathcal{M}_g(z,\kappa)$ have a zero $z \geq 2$ that can be simply split to form $X \in \Omega^k \cM_{g-1}(a,b,\kappa)$ where $a \geq 2$. Suppose that $a$ can be simply split to form $Z \in \Omega^k \cM_{g-2}(c,d,b,\kappa)$. Then, $z$ can be simply split to form $Y \in \Omega^k \cM_{g-1}(c, b+d+2k, \kappa)$.

Moreover, $Y$ belongs to a hyperelliptic component (after forgetting marked points), only if $Z$ does and the involution on $Z$ exchanges $b$ and $d$.
    
If $g = 2$, then $Y$ has rotation number dividing $\gcd(k+d,k+b)$. If $X$ is not in a hyperelliptic component with $a$ and $b$ exchanged by the involution, then the rotation number of $Y$ can be set to $1$ unless the singularities in $\kappa \cup \{c\}$ are all even and $k,a,b$ are all odd, in which case the rotation number can be set to $1$ or $2$.
\end{lem}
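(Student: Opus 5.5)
We will argue with multiscale $k$-differentials, following the same pattern as Lemma \ref{L:CommonConstruction}. Begin with the two-step degeneration of $S$: first the simple split of $z$, which gives a top-level component $X$ joined at the nodes $a$ and $b$ to a lower-level $\bP^1$ component $P_1 \in \Omega^k\cM_0(-a-2k,-b-2k,z)$. Then split $a$ on $X$ as well. This yields a three-level multiscale differential. Its top level is $Z$. At the middle level sits the $\bP^1$ component $P_2 \in \Omega^k\cM_0(-c-2k,-d-2k,a)$. At the bottom sits $P_1$, glued to $Z$ at $b$ and to $P_2$ at $a$.

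The first move is to smooth the node at $a$ joining $P_2$ and $P_1$. This is possible because both components are genus zero and the global $k$-residue condition is vacuous: the degenerations are primitive, and $a\geq 2$. The result is a genus zero component $Q \in \Omega^k\cM_0(-c-2k,-d-2k,-b-2k,z)$ lying below $Z$. Since $\Omega^k\cM_0$ with $4$ marked points is identified with $\cM_{0,4}$, we can degenerate $Q$ inside $\overline{\cM_{0,4}}$ by colliding the points $-d-2k$ and $-b-2k$. This produces a two-level configuration. Its lower component lies in $\Omega^k\cM_0(-c-2k,-(b+d+2k)-2k,z)$, and its intermediate $\bP^1$ joins $d$ and $b$ on $Z$ to a single node of order $b+d+2k$. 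Smoothing the two upper nodes $d,b$ of $Z$ together with this intermediate $\bP^1$ gives $Y\in\Omega^k\cM_{g-1}(c,b+d+2k,\kappa)$, still sitting above a $\bP^1$ containing $z$. By construction, that configuration is a split of $z$ on a surface in the component of $S$, because smoothing everything recovers $S$.

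One point needs care: at each step we must check that the residue conditions are vacuous and that primitivity is preserved, so that the split of $z$ is simple. I expect both to follow from the orders being greater than $-k$ and the hypothesis that the original splits were simple.

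Hyperellipticity is handled as in Lemma \ref{L:CommonConstruction}(\ref{I:SplitToMergeHyp}). $Y$ degenerates to $Z$ by pinching the two nonseparating curves around $d$ and $b$ (merging them). If $Y$ were hyperelliptic, this degeneration would take place within a hyperelliptic locus, so the two curves would be exchanged by the involution. Hence $Z$ is hyperelliptic with $b$ and $d$ exchanged.

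For $g=2$, the surface $Y$ has genus one, and its rotation number is read off from the $k$-framing. The curve around the merged node gives a multiple of the rotation number by Lemma \ref{L:RotationNumberImplication}, and that curve is sent to $k+d$ or $k+b$. The rotation number therefore divides $\gcd(k+d,k+b)$, as in the proof of Lemma \ref{L:LosingGenusInGenus1}.

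To set the rotation number to $1$, we vary the prong matching at the two nodes $d,b$ when smoothing. By Lemma \ref{L:LosingGenusInGenus1}, changing the prong matching changes $w(\eta)$ freely mod $\gcd$, but only within the image allowed by the global equivalence classes. Corollary \ref{C:GenusOneProngMatchings}, applied to the genus one surface $X$ with nodes $a,b$, says there is one class unless $k,a,b$ are odd with all other orders even; in that exceptional case there are two classes. The case where $X$ is hyperelliptic with $a,b$ exchanged is excluded by hypothesis. We transfer these classes through the construction above, since the prong matching at $b$ on $X$ and the relevant winding number on $Y$ correspond. The outcome is that rotation number $1$ is attainable generally, and $1$ or $2$ in the spin case.

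The main obstacle is this last transfer: identifying how the prong-matching choices at the nodes of $X$ correspond to the winding number $w(\eta)$ on $Y$. I would attempt it by tracking the arc $s$ used in Theorem \ref{T:ProngHom} through the smoothing.
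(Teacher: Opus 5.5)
Your construction of $Y$ matches the paper's (Figure \ref{F:L10-3}): split $a$ on $X$, smooth the node joining the two lower spheres, collide the $b$- and $d$-poles in the four-pointed sphere, then smooth the nodes $b,d$. Your hyperellipticity argument is also the paper's. The divisibility of the rotation number by $\gcd(k+b,k+d)$ is fine once you use the two nonseparating vanishing cycles $\beta,\delta$ at the nodes $b,d$, with $w(\beta)=k+b$ and $w(\delta)=k+d$, rather than a loop around the merged singularity.

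The gap is in the last claim, and it is more than the bookkeeping you flag. You propose to vary the prong matching at the nodes $d,b$ between $Z$ and the intermediate $\bP^1$, citing the freedom in Lemma \ref{L:LosingGenusInGenus1}. That lemma has this freedom only because it builds a surface from scratch with no prescribed target component. Here the new split must still be a degeneration of the component of $S$. An arbitrary change of prong matching at $d,b$ inside the three-level configuration gives no such guarantee. Corollary \ref{C:GenusOneProngMatchings} controls prong matchings at the nodes $a,b$ of $X$, not at $d,b$, and you do not say how to pass from one to the other.

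The missing idea is to make the change at the first split and then rerun the whole construction.
\begin{itemize}
\item Take $\gamma$ on $Y$ meeting $\beta$ and $\delta$ once each. Let $\gamma_X$ be the corresponding curve on $X\cup P_1$: an arc on $X$ from $a$ to $b$, closed up by an arc on $P_1$.
\item The winding number of $\gamma_X$ is defined via the prong matching, and rotating the prong at $a$ by one step shifts $w(\gamma_X)$ by one.
\item Because $X$ is not hyperelliptic with $a,b$ exchanged, Corollary \ref{C:GenusOneProngMatchings} and Lemma \ref{L:ProngsAndComps} show that every prong matching at $a,b$ still smooths into the component of $S$. In the case $k,a,b$ odd and $\kappa$ even, this holds for every prong matching changing $w(\gamma_X)$ by an even amount. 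So you may assume $w(\gamma_X)\in\{1,2\}$.
\item Repeating the degenerations from this modified boundary point gives a new split $Y'$ of $z$ on the component of $S$. It may lie in a different genus one component than $Y$.
\item On $Y'$, $\gamma_X$ becomes a simple closed curve $\gamma'$ with $w(\gamma')=w(\gamma_X)$. It meets the vanishing cycle $\beta'$ at $b$ once, so it is nonseparating. By Lemma \ref{L:RotationNumberImplication}, the rotation number of $Y'$ divides $w(\gamma')$.
\item If $c$ is odd, the rotation number also divides $c$, forcing it to be $1$. This is why $c$ enters the parity condition.
\end{itemize}
No tracking of the arc from Theorem \ref{T:ProngHom} is needed.
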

\begin{proof} 
The proof of the first statement is Figure \ref{F:L10-3}. The second claim holds since, if $Y$ is hyperelliptic up to forgetting marked points, the two nonseparating pinched curves must be exchanged by the involution. 

For the third statement, let $\beta$ and $\delta$ be the simple closed curves on $Y$ formed by smoothing the nodes labeled $b$ and $d$ respectively. Let $\gamma$ be a simple closed curve on $Y$ intersecting these curves exactly once. Let $\gamma_X$ be the corresponding curve on the simple split that formed $X$. The curve is a union of two simple arcs, one on each component, between the two nodes. Let $w$ be the $k$-framing on $X$. By Corollary \ref{C:GenusOneProngMatchings}, we can change $w(\gamma_X)$ \footnote{The key point is that the winding number of a curve passing through the nodes is defined using the prong matching.} arbitrarily unless $k,a,b$ are odd and all singularities in $\kappa$ are even, in which case we can only change it by even numbers. So we can guarantee that $w(\gamma_X)$ is $1$ or $2$. Now redo the sequence of operations that took us from $X$ to $Y$. Since we changed the prong matchings at $X$, we arrive at a $k$-differential $Y'$ that may be different from $Y$. Let $\gamma'$ be the curve on $Y'$ corresponding to $\gamma_X$ and $\beta'$ the curve formed by smoothing the node labeled $b$. These two curves form a symplectic basis and now have winding numbers which yield the desired rotation number. 
\end{proof}

\begin{figure}[H]
    \centering
    \includegraphics[width=.8\linewidth]{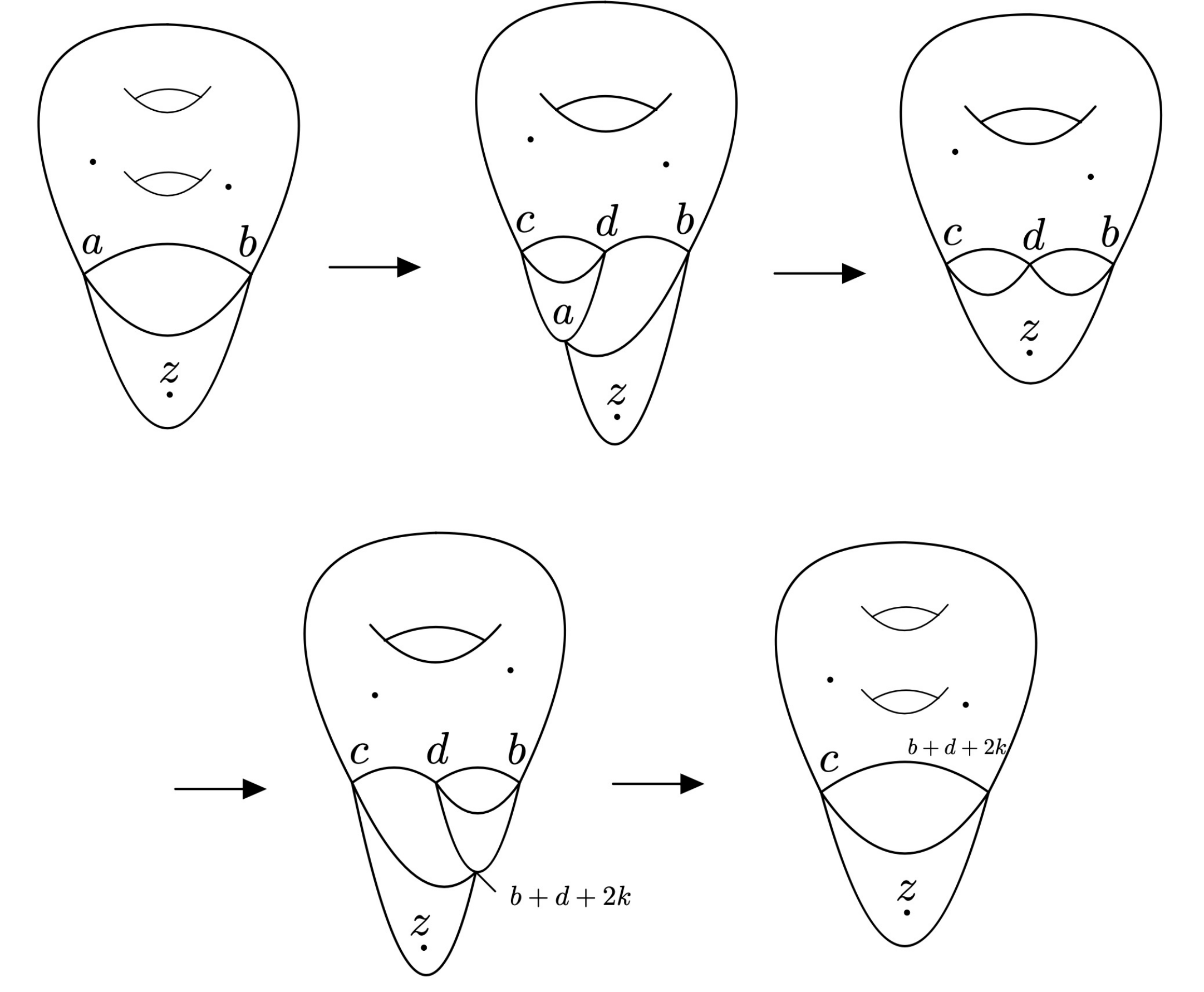}
    \caption{The proof of Lemma \ref{L:NewSplit}.}
    \label{F:L10-3}
\end{figure}

\begin{lem} \label{L:MergeHyp}
Two points exchanged by the involution can be simply merged in hyperelliptic components of strata of positive genus surfaces. A fixed point can be simply split into singularities of equal order in a hyperelliptic component.
\end{lem}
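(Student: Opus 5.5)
We plan to pass to the quotient by the hyperelliptic involution and do the degeneration in the genus zero quotient stratum, where components are identified with $\cM_{0,n}$ and every pair of marked points can be collided.

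\textbf{Setup.} Let $(X,\omega)$ lie in a hyperelliptic component with involution $J$. By definition the divisor of $\omega$ is $J$-invariant. Following Chen--Gendron, the component is then the pullback under the double cover $X \ra X/J \cong \bP^1$ of a genus zero stratum of $k$-differentials $\Omega^k\cM_0(\mu)$, with the following dictionary:
\begin{itemize}
\item a pair of points $\{p,J(p)\}$ of order $c$ upstairs corresponds to a single singularity of order $c$ downstairs;
\item a fixed point of order $c$ upstairs corresponds to a singularity of order $(c-k)/2$ downstairs;
\item every nonsingular fixed point gives a downstairs singularity of order $-k/2$ (allowing $k$-half-integral data, or replacing $k$ by $2k$ as in the quadratic case).
\end{itemize}
Since the component is a finite cover of an open subset of $\cM_{0,n}$, it suffices to degenerate the quotient.

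\textbf{Merging exchanged points.} If $p$ and $J(p)$ are exchanged, they give one downstairs point $q$. Choose a downstairs branch point $r$ of order $(c'-k)/2$, coming from a fixed point of order $c'$ upstairs.
\begin{itemize}
\item Collide $q$ and $r$ in $\overline{\cM_{0,n}}$. This bubbles off a $\bP^1$ carrying $q$, $r$ and a node.
\item Pull back by the double cover. The bubble becomes a sphere branched at $r$ and at the node, containing $p$ and $J(p)$. It is glued to the rest along a single separating node that is fixed by the involution.
\item This is exactly a merge of $p$ with $J(p)$: the curve bounding a disk containing only $p, J(p)$ is pinched, giving order $2c$. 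It is the lift of the downstairs curve around $q,r$.
\end{itemize}
Two checks remain.
\begin{itemize}
\item \emph{Primitivity.} The top level is the pullback of a generic genus zero differential in the (possibly merged) quotient stratum, and is still not a power. The gcd of the upstairs orders together with $k$ is unchanged, because $2c$ is a multiple of $c$ and the hyperelliptic structure persists.
\item \emph{The global $k$-residue condition.} This is vacuous unless the top level is a $k$th power of a $1$-form with poles only at nodes, which primitivity excludes.
\end{itemize}
Positive genus guarantees that at least two other branch points remain, so the top component stays stable.

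\textbf{Splitting a fixed point.} Let $z$ be a fixed point of order $c$, downstairs a point $u$ of order $(c-k)/2$. Degenerate downstairs by colliding $u$ with another branch point $v$ so that the lower-level bubble contains $u$ and $v$. Upstairs, this lower bubble lifts to a sphere branched at $u$ and $v$ but unbranched over the node. The node therefore has two preimages, and the lower sphere is attached at two nodes exchanged by $J$. The top level loses one genus, so this is a split of $z$.

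The two new node orders are exchanged by $J$, hence equal, say $a=b=(c-2k)/2$. This needs $a>-k$, which holds for $c>0$, and $c$ has the required parity because $z$ is a fixed zero.

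The main obstacle will be verifying that the resulting boundary point is genuinely a split in the sense of Figure~\ref{F:SplitMerge}. That is, the upper component must be connected of genus $g-1$ containing both nodes, and the prong-matching/level data must match. The genus should follow from Riemann--Hurwitz, since two fewer branch points drop the genus by one. Primitivity of the upper component should follow as in the merge case.
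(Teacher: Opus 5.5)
Your geometric construction is essentially the paper's: identify the hyperelliptic component with a genus zero configuration space $\cM_{0,n+\epsilon}$ on the quotient, and collide the image of the orbit with the image of a Weierstrass point. Two things need fixing, and the second is a genuine gap.

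First, the Weierstrass point you collide with must be \emph{nonsingular}. Suppose $r$ (or $v$) is the image of a fixed singularity of order $c'\neq 0$. Then the bubble upstairs carries $p$, $J(p)$ and that singularity, so you pinch a curve around three singularities and the node has order $2c+c'$, not $2c$. In the split case the lower sphere would carry two singularities, so the degeneration is not a split of $z$. This is easily repaired. At most two of the $2g+2\geq 4$ Weierstrass points are singularities, because there are at most two orbits of singularities. So a nonsingular one exists, and it is exactly the point the paper collides with.

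Second, and more seriously, your primitivity check fails, and primitivity is the entire content of the word ``simply.'' The claim that the $\gcd$ of the orders together with $k$ is unchanged is false: $c\mid 2c$ gives only one divisibility. The gcd is also not the right criterion in the first place.

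For example, in $\Omega^2\cM_2(2,1,1)^{hyp}$ the two simple zeros are exchanged and $\gcd(2,1,1,2)=1$. Merging them lands in $\Omega^2\cM_2(2,2)^{hyp}$, where $\gcd(2,2,2)=2$. Yet that component is primitive. Your sentence that the pullback of a generic genus zero differential ``is still not a power'' just restates what has to be proved.

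The missing input is the classification the paper uses. For $k\neq 1$, a hyperelliptic component is primitive if and only if its orders are $(2m,2\ell)$, $(2m,\ell,\ell)$, or $(m,m,\ell,\ell)$ with $\gcd(m,\ell,k)=1$.
\begin{itemize}
\item Merging two exchanged points of order $m$ gives $(2m,\ell,\ell)$ or $(2m,2\ell)$ with the same $m,\ell$.
\item Splitting a fixed point of order $2m$ gives $(m-k,m-k,\ell,\ell)$ or $(m-k,m-k,2\ell)$, and $\gcd(m-k,\ell,k)=\gcd(m,\ell,k)$.
\item For $k=1$, primitivity is automatic.
\end{itemize}
This classification also supplies the parity fact you asserted without justification: fixed singularities have even order $2m$, so the new orders $m-k$ are integers.

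Relatedly, your argument that the global residue condition is vacuous (``primitivity excludes'' a $k$th power) fails for $k=1$. That is harmless, since your boundary point arises as a limit in the compactification anyway.
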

\begin{proof}
Hyperelliptic components can be identified with $\cM_{0,n+\epsilon}$ where $\epsilon \in \{1,2\}$ is the number of orbits of singularities under the hyperelliptic involution and $n$ is the number of nonsingular fixed points. Moving the image of the point(s) to such a fixed point produces a merge or split. We now consider primitivity of the degeneration. 

When $k \neq 1$, a hyperelliptic component is primitive if and only if its multiset of singularity orders is $(2m,2\ell), (2m,\ell,\ell)$, or $(m,m,\ell,\ell)$ where $\gcd(m,\ell,k)=1$. Merging two points of order $m$ results in $(2m, \ell, \ell)$ or $(2m, 2\ell)$. Likewise, splitting a point of order $2m$ results in $(m-k, m-k, \ell, \ell)$ or $(m-k, m-k, 2\ell)$. So we retain primitivity. 
\end{proof}

\begin{lem}\label{L:HyperellipticNonUniqueness}
The only primitive hyperelliptic components of strata with more than one hyperelliptic involution preserving the divisor of the $k$-differential are $\Omega^2 \cM_0(-1^4)$ and $\Omega^3 \cM_0(-2^3)$. 
\end{lem}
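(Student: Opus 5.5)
The plan is to reduce the statement to a question about finite groups of Möbius transformations of $\bP^1$. If $(X,\omega)$ lies in a primitive hyperelliptic component and $J_1 \neq J_2$ are two hyperelliptic involutions preserving $\mathrm{div}(\omega)$, we must show the stratum is one of the two listed. The argument splits by genus.

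\textbf{Genus at least two.} The hyperelliptic involution is unique on $X$, so $J_1 = J_2$ and there is nothing to prove. This case should be dispatched in one line.

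\textbf{Genus one.} Here an involution $J$ with $X/J \cong \bP^1$ is $z \mapsto c - z$. Two such involutions differ by a translation $\tau$ that preserves the divisor. The divisors of the primitive hyperelliptic components are $\Omega^k_r\cM_1(r,r,-r,-r)$, $\Omega^k_r\cM_1(\pm 2r,\mp r,\mp r)$ and $\Omega^k_r\cM_1(2r,-2r)$, with $\gcd(k,r)=1$. I would check each against a nontrivial translation.
\begin{itemize}
\item A translation preserving the divisor must permute the points of each order. So in the $(2r,-2r)$ and $(\pm 2r,\mp r,\mp r)$ cases it fixes the unique point of order $\pm 2r$, and is therefore trivial.
\item In the $(r,r,-r,-r)$ case, $\tau$ must swap the two zeros $p_1,p_2$ and the two poles $q_1,q_2$. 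This forces $2(p_1-p_2)=0$ and $q_1 - q_2 = p_1 - p_2$.
\item In that configuration the divisor is $r(p_1+p_2-q_1-q_2) = r(p_1 - p_2 - (q_1-q_2)) \sim 0$. Then $\omega$ is an $r$-th power, since the rotation number computed by Remark~\ref{R:RotationNumberDefinition} is at least $2r$. This contradicts primitivity, or else makes $J$ a non-hyperelliptic symmetry.
\end{itemize}
I expect this bookkeeping to be routine but slightly fiddly.

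\textbf{Genus zero.} This is the main case. The subgroup $G$ generated by $J_1$ and $J_2$ is a finite subgroup of $\mathrm{PGL}_2(\bC)$ preserving the divisor $D$. Dihedral groups of order at least four, and also $A_4$, $S_4$, $A_5$, contain such subgroups. The key constraint is primitivity together with the hyperelliptic condition. "Hyperelliptic" in genus zero should mean that the divisor is invariant under an involution and the component is the corresponding locus, identified with $\cM_{0,n+\epsilon}$, so that the quotient has at most two orbits of singularities ($\epsilon \le 2$). The possible multisets are then $(2m,2\ell)$, $(2m,\ell,\ell)$, $(m,m,\ell,\ell)$, summing to $-2k$.
\begin{itemize}
\item \emph{$(2m,2\ell)$.} Here $D$ is two points and the full stabilizer contains a $\bC^*$. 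But then infinitely many involutions exist, so this needs care. I expect the intended convention is that the stratum must have dimension making the involution unique generically, or that $\Omega^k\cM_0(2m,2\ell)$ with $m+\ell=-k$ is excluded as non-primitive: $\omega = z^{2m}dz^k$ has $\gcd$ considerations, and $\gcd(2m,2\ell,k)$ must be $1$, forcing $k$ odd. I would have to decide how the paper treats this degenerate case, likely by noting $\omega = (z^{2m/k'}\cdots)$ is a power, and this is the step I expect to be the main obstacle.
\item \emph{$(2m,\ell,\ell)$.} Take the points $0,1,-1$ with $J_1(z)=-z$. A second involution preserving $D$ must permute three points and fix $0$ (the unique point of its order) unless $2m=\ell$. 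If it fixes $0$ it also fixes $\infty$, so it is $J_1$. Hence $\ell = 2m$, giving $(\ell,\ell,\ell)$ with $3\ell=-2k$. Primitivity forces $\ell=-2$ and $k=3$, which is $\Omega^3\cM_0(-2^3)$.
\item \emph{$(m,m,\ell,\ell)$.} This has cross-ratio freedom. For generic configurations the Klein four-group acts, so all four-point configurations admit three involutions. Generic uniqueness then fails only when the involutions can be told apart by orders, namely when $m \ne \ell$. So multiple involutions preserving the labeled divisor requires $m=\ell$, giving $4m=-2k$. Primitivity forces $m=-1$ and $k=2$, which is $\Omega^2\cM_0(-1^4)$.
\end{itemize}
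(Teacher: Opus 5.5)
\textbf{Verdict: there is a genuine gap in the genus one case, and the two-point genus zero case is left open.} The rest is sound. Your genus $\geq 2$ step is fine. Your three- and four-point genus zero analysis is an explicit version of the paper's argument: two distinct divisor-preserving involutions of a generic configuration in $\cM_{0,n}$, $n\le 4$, generate a group acting transitively on the points, so all orders agree, and primitivity leaves only $\Omega^3\cM_0(-2^3)$ and $\Omega^2\cM_0(-1^4)$. One slip: generic uniqueness \emph{holds}, not fails, when $m\neq\ell$.

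\textbf{The genus one gap.} It lies in the $(r,r,-r,-r)$ case. Your reduction to $p_2-p_1=q_2-q_1=\tau\in E[2]\setminus\{0\}$ is correct, but the conclusion is not.
\begin{itemize}
\item The identity $p_1+p_2-q_1-q_2=p_1-p_2-(q_1-q_2)$ fails in the group law. In your configuration the left side is $2(p_1-q_1)$ and the right side is $0$.
\item The rotation number $d/e$ divides $d=\gcd(\kappa)=r$, so it can never be $\geq 2r$.
\item No contradiction with primitivity is possible at all. Write $E[2]=\{0,\tau,\sigma,\sigma+\tau\}$, and put the zeros at $0,\tau$ and the poles at $\sigma,\sigma+\tau$. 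Then $p_1+p_2-q_1-q_2=0$, so the rotation number is exactly $r$. The differential therefore lies in the primitive hyperelliptic component $\Omega^k_r\cM_1(r,r,-r,-r)$ whenever $\gcd(k,r)=1$. Yet both $z\mapsto -z$ and $z\mapsto\tau-z$ preserve the divisor.
\end{itemize}
So the lemma can only be a statement about generic members of the component. The missing step is a dimension count:
\begin{itemize}
\item After projectivizing, the component is the $3$-dimensional locus $\{p_1+p_2=q_1+q_2\}$ in $\cM_{1,4}$.
\item Your conditions force $p_2-p_1\in E[2]$, and then $2(q_1-p_1)=0$.
\item These cut out a $1$-dimensional sublocus, so the generic member has a unique divisor-preserving hyperelliptic involution.
\end{itemize}
The paper's argument is likewise about generic points of the codimension-one loci in $\cM_{1,n}$. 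You already rely on genericity in genus zero: the harmonic configuration $0,\infty$ (order $m$), $\pm 1$ (order $\ell$) with $m\neq\ell$ is preserved by both $z\mapsto -z$ and $z\mapsto 1/z$. With genericity added, your translation argument $J_1J_2=\tau$ is a clean, more explicit alternative to the paper's terse transitivity claim.

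\textbf{The open two-point case.} You leave $(2m,2\ell)$ unresolved, but it is quick.
\begin{itemize}
\item If $2m\neq 2\ell$, a divisor-preserving involution must fix both points. The only involution among the maps $z\mapsto\lambda z$ is $z\mapsto -z$, so the $\bC^*$ symmetry is a red herring.
\item If $2m=2\ell$, both orders equal $-k$, and primitivity forces $k=1$. This is $\Omega^1\cM_0(-1,-1)$, which really does have infinitely many divisor-preserving involutions $z\mapsto\lambda/z$. It has to be excluded by the convention $n\geq 3$. That convention is implicit in the paper's identification of genus zero strata with $\cM_{0,n}$, and it also keeps the divisor stabilizer finite, as your group-theoretic setup assumes.
\end{itemize}
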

\begin{proof}
A genus $g$ hyperelliptic surface has a unique hyperelliptic involution unless possibly $g \leq 1$. Genus zero strata can be identified with $\cM_{0,n}$. Generic surfaces in $\cM_{0,n}$ admit a hyperelliptic involution if and only if $n \leq 4$, in which case the group generated by any two hyperelliptic involutions acts transitively on the $n$ points. Consequently, the orders of the singularities at the $n$ distinguished points must all agree. Since primitivity requires the gcd of $k$ and the orders of the singularities to be $1$, the result holds for genus zero strata.

It remains to consider genus one strata. Hyperelliptic components correspond to codimension $0$ loci of $\cM_{1,1}$ and $\cM_{1,2}$ (when $k=1$) and codimension $1$ loci of $\cM_{1,n}$ for $2 \leq n \leq 4$ (when $k \ne 1$). In codimension $0$ loci, there is clearly a unique hyperelliptic involution. For codimension $1$ loci, if there are two then they act transitively on the $n$ points, which implies that each singularity has order $0$, a contradiction. 
\end{proof}

\begin{lem}\label{L:TwoNodesNotSwapped}
Suppose that $Y \in \Omega^k \cM_{g-1}(a_1, a_2, \kappa)$ is a simple split of a primitive nonhyperelliptic component $\cC$ of $\Omega^k \cM_g(a,\kappa) \ne \Omega^2\cM_2(5,-1), \Omega^3 \cM_2(6)$ where $a, g \geq 2$. If $Y$ belongs to a hyperelliptic component and $a_1$ and $a_2$ are not exchanged by the involution, then $\cC$ admits a simple split of $a$ or a simple merge of two zeros to a nonhyperelliptic component. 
\end{lem}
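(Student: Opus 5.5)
We plan to exploit the explicit structure of hyperelliptic components together with the swapping constructions of Lemma \ref{L:CommonConstruction} and Lemma \ref{L:NewSplit}. Let $J$ be the hyperelliptic involution on $Y$ (unique by Lemma \ref{L:HyperellipticNonUniqueness}, apart from the two small genus-zero exceptions, which we treat by hand). Since $a_1$ and $a_2$ are not exchanged by $J$, each $a_i$ is either fixed by $J$ or is exchanged with some singularity $a_i' := J(a_i) \in \kappa$ of the same order. The singularity multiset of $Y$ therefore has one of the primitive hyperelliptic shapes $(2m,2\ell)$, $(2m,\ell,\ell)$, $(m,m,\ell,\ell)$ listed in the proof of Lemma \ref{L:MergeHyp}. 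This leaves a short list of cases for the position of $a_1,a_2$, and we handle each by producing a different simple degeneration of $\cC$.

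\textbf{Case A: some $a_i$, say $a_1$, is exchanged with $a_1' \in \kappa$.} By Lemma \ref{L:MergeHyp}, $a_1$ and $a_1'$ can be simply merged on $Y$, producing $W$ in a hyperelliptic component in which $a_1 + a_1'$ is a fixed point. On $W$, the node $b = a_2$ is not exchanged with $a_1+a_1'$, because a fixed point is exchanged with nothing. We then apply Lemma \ref{L:CommonConstruction}(\ref{I:SplitToMerge})--(\ref{I:SplitToMergeHyp}) with $z = a$ and $z' = a_1'$, which gives a simple merge of $a$ and $a_1'$ on $X$ to a nonhyperelliptic component $Z$. This requires checking the hypotheses $a_1' + a_1 > -k$ and, when $k = 1$, $a_1' \ge 1$. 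These hold when $a_1'$ is a zero. If $a_1'$ is a pole, then $a_1$ is a pole too, and we instead use the split variant described next.

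\textbf{Case B: both $a_1$ and $a_2$ are fixed by $J$, or Case A fails because of poles.} Here we pick a fixed zero $c$ of $Y$ of order at least $2$. Such a zero exists, possibly $a_1$ itself, since $a_1 + a_2 = a - 2k$ and $g - 1 \geq 1$ forces some positive-order fixed point. By Lemma \ref{L:MergeHyp} we split $c$ into two equal singularities swapped by $J$. Lemma \ref{L:NewSplit} then yields a new simple split of $a$ on $\cC$ whose result is nonhyperelliptic, because the re-split pair $b,d$ is not exchanged by the involution. If $c$ is instead a zero in $\kappa$ rather than a node, we first use Lemma \ref{L:CommonConstruction}(\ref{I:MergeToSplit}) to convert it into a merge of two zeros of $\cC$.

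The main obstacle will be checking primitivity and the side inequalities ($>-k$, and $\geq 1$ when $k = 1$) in every subcase, and ensuring the resulting component is genuinely nonhyperelliptic rather than becoming hyperelliptic only after forgetting marked points. This is where the excluded strata $\Omega^2\cM_2(5,-1)$ and $\Omega^3\cM_2(6)$ should appear, as the genus-two situations in which every available fixed point has order too small to split. We would verify these low-genus cases by enumerating the primitive hyperelliptic genus-one shapes from Corollary \ref{C:GenusOneMerging}. For the rotation-number and primitivity claims in genus two, we would rely on Corollary \ref{C:GenusOneProngMatchings}.
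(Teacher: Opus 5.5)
Your two constructions are the paper's own. If a node is not fixed by $J$, you merge it with its partner via Lemma \ref{L:MergeHyp} and turn the split into a merge via Lemma \ref{L:CommonConstruction} (\ref{I:SplitToMerge})--(\ref{I:SplitToMergeHyp}). If it is fixed, you split it via Lemma \ref{L:MergeHyp} and re-split $a$ via Lemma \ref{L:NewSplit}.

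The paper organizes this more economically. Order $a_1 \ge a_2$; hyperellipticity together with the non-swapping hypothesis forces $a_1 \ge 0$. So if $J(a_1)\ne a_1$, the partner of $a_1$ is automatically a zero; otherwise $a_1$ itself is the fixed zero to split. In particular, the fixed zero $c$ in your Case B is always one of the nodes. Your stated reason for its existence is not valid in general, since the shape $(m,m,\ell,\ell)$ has no fixed singularity. Your branch ``if $c$ is a zero in $\kappa$'' is vacuous, which is fortunate: Lemma \ref{L:CommonConstruction} (\ref{I:MergeToSplit}) turns merges into splits, not splits into merges.

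The genuine gap is the nonhyperellipticity claim in Case B. Lemma \ref{L:NewSplit} only says the re-split is hyperelliptic if the intermediate surface $Z$, obtained by splitting $c$, carries a hyperelliptic involution exchanging $b$ and $d$. The involution you know about swaps $d$ with its twin rather than with $b$. That settles the matter only if the hyperelliptic involution on $Z$ is unique, and $Z$ has genus $g-2$, which is $0$ when $g=2$.

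You invoked Lemma \ref{L:HyperellipticNonUniqueness} for $Y$. For $Y$, and for the merged surface $W$ in your Case A, uniqueness is automatic because both have genus $g-1\ge 1$, so Case A is fine. In Case B the lemma must instead be applied to $Z$, and this is exactly where the excluded strata arise:
\begin{itemize}
\item $\Omega^2\cM_2(5,-1)$ splits to $Y\in\Omega^2\cM_1(2,-1,-1)$;
\item $\Omega^3\cM_2(6)$ splits to $Y\in\Omega^3_1\cM_1(2,-2)$.
\end{itemize}
In both, the fixed node has order $2$ and splits without trouble. The result is $Z\in\Omega^2\cM_0(-1^4)$ or $Z\in\Omega^3\cM_0(-2^3)$, and these have extra involutions that can exchange $b$ and $d$; indeed the re-split lands back in the same hyperelliptic genus-one strata.

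So your diagnosis that the exceptions are cases where ``every available fixed point has order too small to split'' is wrong. Enumerating genus-one shapes would not locate the real obstruction. The fix is short: apply Lemma \ref{L:HyperellipticNonUniqueness} to $Z$, and check that its two exceptional components arise only when $g=2$ and the original stratum is $\Omega^2\cM_2(5,-1)$ or $\Omega^3\cM_2(6)$.
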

\begin{proof}
Let $J$ be the involution on $Y$ and suppose that $a_1 \geq a_2$. Using the hyperellipticity of $Y$ and the fact $a_1$ and $a_2$ are not swapped by $J$, $a_1 \geq 0$. If $J(a_1) \ne a_1$, then simply merge $a_1$ and $J(a_1)$ (by Lemma \ref{L:MergeHyp}) and conclude by converting the original split into a merge (Lemma \ref{L:CommonConstruction} (\ref{I:SplitToMerge}) and (\ref{I:SplitToMergeHyp})).

So suppose that $J(a_1) = a_1$. Simply split $a_1$ (by Lemma \ref{L:MergeHyp}) to form a surface $Z$ in a hyperelliptic component of $\Omega^k\cM_{g-2}(\frac{a_1}{2}-k, \frac{a_1}{2}-k, a_2, \kappa')$ where the two singularities of order $\frac{a_1}{2}-k$ are exchanged by the involution. Resplit $a$ (by Lemma \ref{L:NewSplit}) to form a component of $\Omega^k \cM_{g-1}(\frac{a_1}{2}-k, a_2+\frac{a_1}{2}+k, \kappa')$, which is nonhyperelliptic unless the hyperelliptic involution on $Z$ is non-unique. By Lemma \ref{L:HyperellipticNonUniqueness}, this only occurs when the original stratum is $\Omega^2\cM_2(5,-1)$ or $\Omega^3 \cM_2(6)$.
\end{proof}

\begin{proof}[Proof of Theorem \ref{T:SimpleDegeneration}:] Let $\Omega^k \cM_g(\kappa)$ be the stratum. By Theorem \ref{T:Cylinder}, the component has a simple split of a zero or a simple merge of two singularities of order greater than $-\frac{k}{2}$ by collapsing a Euclidean cylinder. This forms a surface $Y$ in a component $\cD$. If $\cD$ is nonhyperelliptic, then we are done, so suppose that $\cD$ is hyperelliptic. Let $J$ be the involution on $Y$ and let $N$ be the set of nodes on $Y$ formed by the simple degeneration. By Lemma \ref{L:TwoNodesNotSwapped}, we may assume that $N$ contains either one point or two points that are exchanged by $J$. 

\noindent \textbf{Case 1: The complement of $N$ contains a $J$-invariant set of zeros.} Simply split it or simply merge them (by Lemma \ref{L:MergeHyp}) and smooth the nodes in $N$ to conclude. (Note that the $k$-differential on the lower level $\bP^1$-component is either nonhyperelliptic or it is hyperelliptic, but the prong matchings at the nodes in $N$ are not $J$-invariant. This certifies that smoothing the nodes in $N$ destroys hyperellipticity.) We may now suppose that the nodes $N$ are zeros and that only zeros occur at the nodes or their image(s) under $J$.

\noindent \textbf{Case 2: $N = \{a\}$ is a zero not fixed by $J$.} The original simple degeneration merged two singularities $z$ and $z'$ where $z \geq z'$ and $z \geq 1$. On $Y$, simply merge $a$ and $J(a)$ (by Lemma \ref{L:MergeHyp}) and smooth the node at $a$ to form a surface $Z \in \Omega^k \cM_g(2a, \kappa') \times \Omega^k \cM_0(-2k-2a, a, z, z')$ where $\kappa' = \kappa \setminus (a,z,z')$ and where the top level $k$-differential is primitive. If $k = 1$, then $z' > 0$ and so the global $k$-residue condition is vacuous. Merge $a$ and $z$ and then smooth all other nodes to produce the desired simple merge $W \in \Omega^k \cM_g(a+z, z', \kappa')$. Passing from $W$ to $Z$ involves merging singularities of nonzero orders $a+z > z'$, which certifies that $W$ is nonhyperelliptic. 

\noindent \textbf{Case 3: $N$ consists of one zero fixed by $J$.} The original simple degeneration merged two singularities $z \geq z'$. If $z = z'$, then the multiscale $k$-differential formed by the merge has $\Omega^k \cM_0(-2k-z-z', z, z')$ at the lower level. This is a hyperelliptic genus zero stratum with a fixed point at the node, and hence the smoothing the node yields a hyperelliptic component. Therefore, we assume $z \ne z'$. We may suppose that $z > z'$ and hence $z \geq  2$. On $Y$, simply split $(z+z')=:2c$ (by Lemma \ref{L:MergeHyp}) and then apply Lemma \ref{L:CommonConstruction} (\ref{I:MergeToSplit}) to find a simple split of $z$ on the boundary of the original stratum into singularities of order $c-k$ and $z-c-k$. (Note that $c < z$, which is needed to apply Lemma \ref{L:CommonConstruction} (\ref{I:MergeToSplit})). If the resulting component is hyperelliptic, then on it we have $z-c-k$ and $z'$ exchanged by the involution (by Lemma \ref{L:CommonConstruction} (\ref{I:MergeToSplitHyp})). We then apply Lemma \ref{L:TwoNodesNotSwapped}.

\noindent \textbf{Case 4: The original degeneration split a zero $a \geq 2$ into two singularities of order $b \geq 0$ exchanged by $J$.}  We may suppose that $b > 0$ unless $k = 1$, in which case it is straightforward to check that the original component of $\Omega^1 \cM_2(2)$ is hyperelliptic. On $Y$, simply merge these two singularities (by Lemma \ref{L:MergeHyp}) and smooth the nodes in $N$ to pass to a surface $Z\in \Omega^k \cM_{g-1}(a-2k, \kappa')^{hyp} \times \Omega^k_r \cM_1(a,-a)$ where $\kappa' = \kappa \setminus (a)$ and the superscript ``hyp'' indicates a primitive hyperelliptic component. Let $n$ be the node joining the two components. Since smoothing $n$ produces a nonhyperelliptic component, $r \ne \frac{a}{2}$. (This follows since $\Omega^k \cM_{g-1}(a-2k, \kappa')^{hyp} \times \Omega^k_{\frac{a}{2}} \cM_1(a,-a)$ lies on the boundary of the hyperelliptic component of $\Omega^k\cM_g(\kappa)^{hyp}$. So smoothing any nodal surface in it results in a surface in a hyperelliptic component by Lemma \ref{L:ProngsAndComps}.) Split $a$ into two singularities of unequal order (by Lemma \ref{L:LosingGenusInGenus1}) and smooth $n$ to find the desired split of $a$.
\end{proof}

\section{Merging}\label{S:Merge}

The main result of this section is the following.

\begin{thm}\label{T:Merge}
In a primitive nonhyperelliptic component, any two zeros and any two poles can be simply merged while retaining nonhyperellipticity. 
\end{thm}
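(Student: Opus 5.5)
We argue by induction on the genus, the base case $g=1$ being Corollary \ref{C:GenusOneMerging}; genus zero strata are identified with $\cM_{0,n}$, and any two points can be collided there. So let $\cC$ be a primitive nonhyperelliptic component of genus $g\ge 2$, and let $z_1,z_2$ be two zeros or two poles. Theorem \ref{T:SimpleDegeneration} gives a simple degeneration of $\cC$ to a nonhyperelliptic component $\cD$. We treat the two exceptional strata $\Omega^2\cM_2(5,-1)$ and $\Omega^3\cM_2(6)$ by hand. The latter has a single singularity, so there is nothing to merge. The former has one zero and one pole, so again no merge of two zeros or two poles is required.

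\textbf{Case 1: the degeneration is a simple split of a zero $z$ into $a,b$.} Suppose first that $z\notin\{z_1,z_2\}$. On the genus $g-1$ surface $Y\in\cD$, the induction hypothesis lets us simply merge $z_1,z_2$ while keeping the component nonhyperelliptic. This merge takes place on a lower-level $\bP^1$ attached to the top level. The bottom-level split of $z$ is also on a $\bP^1$. Both sets of nodes can be smoothed independently, because the global $k$-residue condition is vacuous: the degenerations are primitive, and $z_1,z_2$ are zeros or poles of order $>-k/2$, apart from the $k=1$ pole case, which needs a separate check. Smoothing the split nodes gives a simple merge of $z_1,z_2$ on $\cC$. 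To show the result is nonhyperelliptic, we argue as in Lemma \ref{L:CommonConstruction}(\ref{I:SplitToMergeHyp}). A hyperelliptic degeneration pinching two nonseparating curves must have them exchanged by the involution. If that happens, the merged genus $g-1$ surface would be hyperelliptic with $a,b$ swapped, contradicting the induction hypothesis. If instead $z=z_1$, we want to merge $z$ with $z_2$. For this we use Lemma \ref{L:CommonConstruction}(\ref{I:SplitToMerge}): on $Y$, apply induction to merge $z_2$ with one of $a,b$, say $a$. This is possible when $a$ and $z_2$ have the same sign; otherwise choose $b$, or first re-split using Lemma \ref{L:NewSplit}. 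Part (\ref{I:SplitToMergeHyp}) then gives nonhyperellipticity.

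\textbf{Case 2: the degeneration is a simple merge of $w_1,w_2$ into $w$.} If $\{w_1,w_2\}=\{z_1,z_2\}$, we are done. If the two pairs are disjoint, we use the induction hypothesis on dimension: merge $z_1,z_2$ on the boundary surface, which lies in a lower-dimensional nonhyperelliptic component, and then smooth $w$. We argue by induction on the number of singularities, or equivalently on dimension, rather than on genus. If the pairs share a point, say $w_1=z_1$, then on the merged surface we merge $w$ with $z_2$ by induction. This produces a three-point collision on a lower-level $\bP^1$ containing $z_1,w_2,z_2$. Since genus zero strata are $\cM_{0,n}$, we can re-bubble that $\bP^1$ so that $z_1,z_2$ collide first, and then smooth. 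The resulting merge of $z_1,z_2$ is simple because primitivity is inherited from the top level.

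The main obstacle is nonhyperellipticity in the overlap cases. There the intermediate component may be hyperelliptic with the relevant nodes exchanged, and the counting fails. For this I would use Lemma \ref{L:TwoNodesNotSwapped}, Lemma \ref{L:MergeHyp} and Lemma \ref{L:HyperellipticNonUniqueness}, together with the hyperelliptic classification, to show that a hyperelliptic outcome forces $\cC$ itself to be hyperelliptic. A second delicate point is checking the vacuity of the global residue condition when $k=1$ and two simple poles are merged. This case requires the pole orders to sum to at most $-2$, giving a pole of order $\le -2$, and the residue constraint then has to be verified directly.
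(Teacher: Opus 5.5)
Your skeleton (induction on genus and then dimension, starting from the degeneration of Theorem \ref{T:SimpleDegeneration}) agrees with the paper's, and so do your disjoint subcases. The overlap subcases, where you insist on merging the prescribed pair $z_1,z_2$, do not go through, and the obstruction there is not only hyperellipticity: the inductive hypothesis merges only two zeros or two poles.

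\textbf{Case 1 with $z=z_1$.}
\begin{itemize}
\item The split can produce $a,b<0$, e.g.\ $k=3$, $z=4$, $(a,b)=(-1,-1)$. Then the zero $z_2$ cannot be merged with $a$ or $b$ by induction. Lemma \ref{L:NewSplit} does not help, since it re-splits a zero of order at least $2$.
\item If instead $a=0$ and $Y$ is hyperelliptic after forgetting this marked point, merging $z_2$ into $a$ lands in that hyperelliptic component. So induction does not supply the nonhyperelliptic $W$ that Lemma \ref{L:CommonConstruction}(\ref{I:SplitToMergeHyp}) needs. You have to use the involution $J$ to choose which zero to merge, as the paper does.
\end{itemize}

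\textbf{Case 2 with $w_1=z_1$.} Theorem \ref{T:SimpleDegeneration} may merge a zero with a pole of order $>-k/2$. So $w=z_1+w_2$ can be $0$ or have sign opposite to $z_2$ (e.g.\ $k=5$, $z_1=1$, $w_2=-2$), and ``merge $w$ with $z_2$ by induction'' is unavailable.

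Your closing appeal to Lemmas \ref{L:TwoNodesNotSwapped}, \ref{L:MergeHyp}, \ref{L:HyperellipticNonUniqueness} addresses neither sign problem.

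\textbf{The missing idea: a reduction.} It suffices to simply and nonhyperelliptically merge \emph{some} pair of zeros or of poles on $X$. Induction on dimension then merges all zeros into one singularity $Z$ and all poles into one singularity $P$. At the resulting boundary point over $\Omega^k\cM_g(Z,P)$, all zeros lie on one genus-zero component and all poles on another. Since these are copies of $\cM_{0,n}$, you can collide the prescribed pair first and then smooth the other nodes.

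With this reduction, the shared-point merge case only requires merging the third singularity with whichever of $w_1,w_2$ has the same sign. Two cases remain genuinely hard.
\begin{itemize}
\item The split with $a_1,a_2<0$, where $\kappa$ has exactly one other zero $z'$ and at most one pole. When $Y$ has genus one, the paper uses Proposition \ref{P:GenusOneMerging}, which in genus one does allow merging $z'$ with a pole. In higher genus, it merges all poles to reach $\Omega^k\cM_{g-1}(z',c)$ and takes a further simple degeneration there.
\item The merge case where the only singularities are $w_1,w_2,c$, with $w$ and $c$ of opposite signs. This needs a second application of Theorem \ref{T:SimpleDegeneration}.
\end{itemize}
Your proposal has no substitute for these steps.
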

\begin{proof}
We will induct on genus $g$ using Corollary \ref{C:GenusOneMerging} as the base case. We will now suppose that $g \geq 2$ and induct on the dimension of the stratum. By Theorem \ref{T:SimpleDegeneration}, there is a simple degeneration into a nonhyperelliptic component that either splits some singularity $a$ or merges some two singularities $a$ and $b$. Let $X \in \Omega^k \cM_g(a, \kappa)$ be a surface in the original component, and let $Y$ be this simple split or simple merge of $X$ from Theorem \ref{T:SimpleDegeneration}.

We only need to show that some two zeros or some two poles can be merged on $X$ while preserving primitivity and nonhyperellipticity. The induction hypothesis then allows us to merge the rest until we arrive at $X' \in \Omega^k \cM_g(Z, P)$ where $Z$ and $P$ are the sums of the orders of zeros and poles of $(a, \kappa)$ respectively. In the multiscale compactification, this moves all of the zeros and all of the poles to spheres joined to $Z$ and $P$ respectively. We may then merge any two zeros or two poles on these spheres and open the remaining nodes. 

\begin{sublem}\label{L:MergeNon-hyp}
If $Y \in \Omega^k \cM_{g-1}(a_1, a_2, \kappa)$ is a simple split of $a$, then $X$ admits a simple nonhyperelliptic merge of two zeros or two poles.
\end{sublem}
\begin{proof}
Note that $a_1, a_2 > -k$. Suppose first that $a_1 = 0$ and, after forgetting marked points, $Y$ belongs to a hyperelliptic component. Let $J$ be the hyperelliptic involution. Proceed as follows:
\begin{itemize}
    \item If $\kappa$ contains a zero that is distinct from $J(a_2)$, merge it with $a_1$ and conclude by Lemma \ref{L:CommonConstruction} (\ref{I:SplitToMerge}) and (\ref{I:SplitToMergeHyp}).
    \item If $\kappa$ contains a zero that is equal to $J(a_2)$, then merge it with $a_2$ using Lemma \ref{L:MergeHyp} and conclude by Lemma \ref{L:CommonConstruction} (\ref{I:SplitToMerge}) and (\ref{I:SplitToMergeHyp}).
    \item If $\kappa$ contains no zeros and two poles, then they are swapped by the hyperelliptic involution. We merge them using Lemma \ref{L:MergeHyp} and undo the split of $a$.
\end{itemize}
Suppose next that $Y$ belongs to a nonhyperelliptic component even after forgetting marked points. 

If $\kappa$ contains two poles or two zeros, then, by the induction hypothesis, we can merge them while preserving primitivity and nonhyperellipticity. Smoothing the two nodes formed by splitting $a$ gives the desired merge. If $\kappa$ is empty or only contains a pole, then the claim is vacuous. Thus we assume that $\kappa$ contains a unique zero $z'$ and possibly, although not necessarily, a pole $p$.

If $a_1$ or $a_2$ is a zero or marked point, then we merge $z'$ with it and conclude by Lemma \ref{L:CommonConstruction}. So we may suppose that $a_1,a_2 < 0$.

\textbf{Case 1: $Y$ has genus one.} If $Y \in \Omega_r^k \cM_{1}(a_1, a_2, z')$  then, by Proposition \ref{P:GenusOneMerging}, we can merge $z'$ with $a_1$ or $a_2$ (otherwise $a_1 = a_2 = -r$, implying that $Y$ belongs to a hyperelliptic component, a contradiction). We conclude by Lemma \ref{L:CommonConstruction}. If $Y \in \Omega_r^k \cM_1(a_1, a_2, z', p)$ then we can also merge $z'$ and $a_1$ or $a_2$ (otherwise $a_1 = a_2 = -r = -z'$, implying that $Y$ belongs to a hyperelliptic component, a contradiction). We conclude by Lemma \ref{L:CommonConstruction}.

\textbf{Case 2: $Y$ has genus at least two.} By the induction hypothesis, we can merge all poles to create a single pole of order $c \leq -2$ on a new surface $Z$ in a primitive nonhyperelliptic component of $\Omega^k \cM_{g-1}(z',c)$. By Theorem \ref{T:SimpleDegeneration}, there is a simple degeneration $W$ of $Z$.
\begin{itemize}
	\item If $W \in \Omega^k \cM_{g-2}(b_1, b_2, c)$ is a simple split, then some $b_i > 0$ since otherwise $W$ is a sphere, contradicting that $Y$ has genus at least two. Smooth all nodes formed by splitting $z'$ to reduce to a previously handled cases.
	\item If $W \in \Omega^k \cM_{g-1}(k(2g-4))$ is a simple merge,  then, after smoothing nodes at lower levels, the lower level is an element of $\Omega^k \cM_1(-2k(g-1), a, \kappa)$. Merge $a$ and $z'$ on the lower level and smooth the other nodes to conclude. When $k=1$ this requires verifying the vacuity of the global $k$-residue condition. But, when $k=1$, Theorem \ref{T:SimpleDegeneration} does not produce a degeneration where $z'$ and $c$ merge; so we're in the previous subcase.
\end{itemize}
\end{proof}

So it remains to suppose that $Y$ is a simple merge, say of $a$ and some other singularity $b$. Write $Y \in \Omega^k \cM_g(a+b, \kappa')$ where $\kappa' := \kappa \setminus (b)$. If $a$ and $b$ are both zeros or both poles, then we are done. We conclude with the following.

\begin{sublem}
If $k>1$ and $Y \in \Omega^k \cM_g(a+b, \kappa')$ where $a > 0 > b$, then $X$ admits a simple nonhyperelliptic merge of two zeros or two poles.
\end{sublem}
\begin{proof}
First assume that there is a pair of zeros (including marked points) or a pair of poles (including marked points) in $Y$. Simply merge them by Lemma \ref{L:MergeHyp} if $Y$ is hyperelliptic after forgetting marked points or by the induction hypothesis otherwise. Next, resolve the $a+b$ node to form a surface $Z$. If the points we just merged belonged to $\kappa'$, then we are done. Otherwise, we merged a point $c$ with $a+b$ and $Z \in \Omega^k \cM_g(a+b+c, \kappa' \setminus(c)) \times \Omega^k \cM_0(a,b,c,-2k-a-b-c)$. Merge $c$ with whichever of $a$ and $b$ has the same sign and smooth all other nodes to conclude. 

The remaining case is $\kappa = (a,b,c)$ where $a+b \ne 0$ and $a+b$ and $c$ have opposite signs. By Theorem \ref{T:SimpleDegeneration}, let $W$ be a simple degeneration of $Y$. 
\begin{itemize}
    \item If $W$ is a simple merge, then once again merge $c$ with whichever of $a$ and $b$ has the same sign and smooth all other nodes. 
    \item If $W$ is a simple split of $c \geq 2$, then smooth the $a+b$ node and apply Sublemma \ref{L:MergeNon-hyp}.
    \item If $W$ is a simple split of $a+b$ then apply Lemma \ref{L:CommonConstruction} (using that then $a \geq 2$ and $b<0$) to split $a$ and apply Sublemma \ref{L:MergeNon-hyp}.
\end{itemize}
\end{proof}
\end{proof}

\section{Proof of Theorem \ref{T1}}\label{S:ProofMainTheorem}

In this section we prove Theorem \ref{T1}. 

\begin{lem}\label{L:KeyChangeOfRN}
Suppose that we can split a zero $z\geq 2$ on $X \in \Omega^k \cM_2(z, \kappa)$ to form $Y \in \Omega^k_r \cM_1(a,b,\kappa)$ where $r \ne a \geq 2$ and the stratum is not hyperelliptic with $a$ and $b$ exchanged by the involution. Then we can split $z$ to form a surface in $\Omega^k_{r'} \cM_1(1-k,z-k-1,\kappa)$ where $r' = 1$ unless $k$ is odd and the entries of $\kappa$ are even, in which case $r' \in \{1,2\}$.
\end{lem}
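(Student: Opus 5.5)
The plan is to combine Lemma \ref{L:LosingGenusInGenus1} with the re-splitting Lemma \ref{L:NewSplit} in the case $g=2$.

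\textbf{Step 1: split $a$ on $Y$.} We are given $Y \in \Omega^k_r \cM_1(a,b,\kappa)$ with $a \geq 2$ and $r \ne a$. By Lemma \ref{L:LosingGenusInGenus1}, the boundary of $\Omega^k_r\cM_1(a,b,\kappa)$ contains a split of $a$ into two singularities of orders $a_1,a_2 > -k$ with $a_1+a_2 = a-2k$, provided $r \mid \gcd(k+a_1,k+a_2,b,\kappa)$. We want the order $1-k$ to appear in the final surface. So we take $c = 1-k$ and $d = a-k-1$. This gives $k+c = 1$, which forces $r = 1$ in the divisibility condition. That is too restrictive in general. Instead, we use the explicit partition $(a_1,a_2) = (r-k,\,a-r-k)$ from the proof of Lemma \ref{L:LosingGenusInGenus1}. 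This produces a sphere $Z \in \Omega^k\cM_0(r-k, a-r-k, b, \kappa)$ on the boundary of $Y$. Primitivity of the split is automatic, because the lower-level sphere has $k$-differential orders with gcd dividing $r$, and $r$ is coprime to $k$ for a primitive component. If $Y$ is not primitive, we first reduce to the primitive case via Remark \ref{R:GenusOneComplexAnalyticView}.

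\textbf{Step 2: resplit $z$.} Apply Lemma \ref{L:NewSplit} with $g=2$, $S = X$, and $Z$ as in Step 1. This yields a simple split of $z$ forming
\[ Y' \in \Omega^k\cM_1(c,\, b+d+2k,\, \kappa). \]
Since the orders in $\{c,d\}$ can be placed in either order, we choose $c$ to be whichever of the two produces $1-k$ after iterating. Concretely, $Y'$ again has a zero $b+d+2k$ or $c$ that we can split, so we iterate Steps 1 and 2. Each iteration changes the pair of orders at the two nodes while keeping their sum $z-2k$ fixed. The goal is to drive one node order down to $1-k$, which requires the rotation number at that stage to be $1$.

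\textbf{Step 3: set the rotation number.} The last clause of Lemma \ref{L:NewSplit} lets us set the rotation number of the resplit surface to $1$, or to $1$ or $2$ in the parity case where $k,a,b$ are odd and all of $\kappa\cup\{c\}$ is even. The hypothesis that $Y$ is not hyperelliptic with $a,b$ exchanged is exactly what that clause needs. Once the rotation number is $1$, Lemma \ref{L:LosingGenusInGenus1} allows any split of the zero of order at least $2$, in particular into $(1-k,\, \cdot)$. Resplitting once more with Lemma \ref{L:NewSplit}, choosing $c = 1-k$, gives $\Omega^k_{r'}\cM_1(1-k, z-k-1, \kappa)$ with $r' \in \{1,2\}$ as claimed. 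The order $z-k-1$ is forced by the sum $z-2k$.

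\textbf{Main obstacle.} The hardest part is the bookkeeping in Step 3. We must check that some singularity of order at least $2$ survives at each stage, so that Lemmas \ref{L:LosingGenusInGenus1} and \ref{L:NewSplit} remain applicable. We must also check that the parity exception can only occur when $k$ is odd and $\kappa$ is even. The second point should follow because $1-k$ and $z-k-1$ have the parity of $k+1$ and $z+k+1$. The first point should come from $z \geq 2$ together with $a \geq 2$.
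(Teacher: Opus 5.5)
Your skeleton is the paper's: split $a$ with Lemma \ref{L:LosingGenusInGenus1}, resplit $z$ with Lemma \ref{L:NewSplit} to control the rotation number, then split off a $1-k$ and resplit. But Step 3 has a genuine gap. You never show that the rotation number $1$ you need is actually reached.

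In the exceptional case of Lemma \ref{L:NewSplit} ($k,a,b$ odd and $\kappa\cup\{c\}$ even), ``can be set to $1$ or $2$'' is not a free choice. By Corollary \ref{C:GenusOneProngMatchings}, prong matchings only change $w(\gamma_X)$ by even amounts. So the parity, and hence whether you get $1$ or $2$, is fixed by the component; this is exactly what separates the two Arf components in Proposition \ref{P:MinimalGenusTwo}.

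Taking $c=r-k$, as your Step 1 suggests, you land in this case whenever $k,a,b$ are odd and $\kappa$ is even. Indeed $r\mid a$ forces $r$ odd, so $r-k$ is even. Your stated criterion for ordering $\{c,d\}$ (``whichever produces $1-k$ after iterating'') does not address parity. If the resplit lies in $\Omega^k_2\cM_1(c_1,c_2,\kappa)$, then Lemma \ref{L:LosingGenusInGenus1} would need $2\mid k+(1-k)=1$. So $c_1$ cannot be split into $(1-k,c_1-1-k)$, and ``iterate'' offers no way forward.

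Concretely, take $k=11$, $z=22$, $Y\in\Omega^{11}_3\cM_1(9,-9)$. Step 1 gives $(c,d)=(-8,-5)$, and the resplit lies in $\Omega^{11}_{r''}\cM_1(-8,8)$ with $r''\mid\gcd(k+d,k+b)=2$. For $X$ in the odd-Arf component, $r''=2$ is forced, whereas the target is $\Omega^{11}_{r'}\cM_1(-10,10)$.

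The paper treats this case separately.
\begin{enumerate}
\item Order $c\le d$ and set $c_1=b+d+2k\ge 2$, $c_2=c$. If $c_1=2$, then $b=d=c=1-k$ and you are already done.
\item If $c_1>2$, then $z\ge k+4$, and the intermediate surface is not hyperelliptic with $c_1,c_2$ exchanged. So Lemma \ref{L:NewSplit} can be reapplied.
\item When $r''=2$ (so $k$ is odd and $z,c_1,c_2,\kappa$ are even), split $c_1$ into $(2-k,c_1-2-k)$, which rotation number $2$ permits. Resplit $z$ to land in $\Omega^k\cM_1(2-k,z-k-2,\kappa)$. The nodes $c_1,c_2$ of this intermediate split are even, so the exception in Lemma \ref{L:NewSplit} does not apply and the rotation number can be set to $1$.
\item Use $z\ge k+4$ to rule out the hyperelliptic exchange and to ensure $z-k-2\ge 2$ and $z-2k-3>-k$. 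Then split $z-k-2$ into $(1-k,z-2k-3)$ and resplit once more.
\end{enumerate}

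Alternatively, you can take $c$ to be the odd one of $r-k$ and $a-r-k$. This is always possible in the exceptional case, since their sum $a-2k$ is odd, and then the exception never triggers. Your proposal neither makes nor justifies this choice.

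Separately, the checks you defer with ``should'' must actually be done. First, $c_1\ge 2$ follows from $b,d\ge 1-k$, not from $z,a\ge 2$. Second, the second application of Lemma \ref{L:NewSplit} requires that the intermediate genus one surface not be hyperelliptic with its two nodes exchanged.
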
 
\begin{proof}
Split $a$ using Lemma \ref{L:LosingGenusInGenus1} to form a surface in $\Omega^k \cM_0(c,d,b,\kappa)$ where $-k < c \leq d$. Resplit $z$ by Lemma \ref{L:NewSplit}, to form $Z \in \Omega_{r''}^k\mathcal{M}_1(c_1,c_2,\kappa)$ where $c_1 = b+d+2k\geq 2$, $c_2 = c$, and where $r'' =1$ unless $k, a,b$ are odd and $c$ and the entries of $\kappa$ are even, in which case $r'' \in \{1,2\}$. If $c_1=2$, then $b = d= 1-k$ and so $c = 1-k$. Therefore, $\Omega_{r''}^k\mathcal{M}_1(c_1,c_2,\kappa)$ is our desired component. 

Suppose now that $c_1>2$, which implies that $\Omega_{r''}^k\mathcal{M}_1(c_1,c_2,\kappa)$ is not hyperelliptic with $c_1$ and $c_2$ exchanged by the hyperelliptic involution. In particular, we may apply Lemma \ref{L:NewSplit} again. We also note that 
\[ z \geq c_1 + (1-k) + 2k \geq k+4. \]

If $r'' = 1$, then by Lemma \ref{L:LosingGenusInGenus1}, we can split $c_1$ to form $\Omega^k \cM_0(1-k,c_1-1-k,c_2,\kappa)$. We then conclude by resplitting $z$ with Lemma \ref{L:NewSplit}.

If $r'' = 2$, then $k$ is odd and the singularities in $z, c_1, c_2,$ and $\kappa$ are even. As above, we could have split $z$ to $\Omega_1^k\mathcal{M}_1(2-k,z-k-2,\kappa)$. This stratum is only hyperelliptic with $2-k$ and $z-k-2$ interchanged by the hyperelliptic involution when $2-k=1 =z-k-2$ or $2-k=-1 =z-k-2$. This only happens when $(k,z) = (1,4)$ or $(k,z) = (3,4)$, contradicting that $z \geq k+4$. Using that $z \geq k+4$ and Lemma \ref{L:LosingGenusInGenus1}, we can split $z-k-2$ to form $\Omega^k \cM_0(1-k,z-2k-3, 2-k, \kappa)$. We then conclude by resplitting $z$ with Lemma \ref{L:NewSplit}. 
\end{proof}

\begin{prop}[Classification of minimal genus two strata] \label{P:MinimalGenusTwo}
The number of primitive nonhyperelliptic components of $\Omega^k \cM_2(2k)$ is zero if $k = 1,2$, one if $k = 3$ or if $k$ is even, and two otherwise. Moreover, each such component has $\Omega_r^k \cM_1(1-k, k-1)$ on its boundary where $r = 1$ if $k$ is even or $k =3$ and $r \in \{1,2\}$ otherwise.
\end{prop}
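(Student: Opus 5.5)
We would attack $\Omega^k \cM_2(2k)$ through its splits, using that a stratum with a single zero admits no merges. By Theorem \ref{T:SimpleDegeneration}, every primitive nonhyperelliptic component $\cC$ (other than $\Omega^3\cM_2(6)$, which we treat separately) has a simple split of the zero $2k$ into a component of $\Omega^k_r\cM_1(a,b)$ with $a+b=0$, $a,b>-k$, $r\mid a$, $\gcd(r,k)=1$, which is not hyperelliptic with $a,b$ swapped. The first task is to push every such split to a normal form. If $a\ge 2$ and $r\neq a$, Lemma \ref{L:KeyChangeOfRN} with $\kappa=\emptyset$ and $z=2k$ gives a split to $\Omega^k_{r'}\cM_1(1-k,k-1)$ with $r'=1$, or $r'\in\{1,2\}$ when $k$ is odd. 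The leftover cases, $|a|\le 1$ or $r=|a|$, are exactly the hyperelliptic configurations (the involution exchanges the two nodes when $r=a$; in $\Omega^k\cM_1(a,-a)$ the two points are swapped once $r=|a|$). We would exclude them by Case 4 in the proof of Theorem \ref{T:SimpleDegeneration} together with Lemma \ref{L:TwoNodesNotSwapped}. This proves the ``moreover'' statement, and it shows that every component is obtained by smoothing a boundary point of $\Omega^k_{r}\cM_1(1-k,k-1)\times\Omega^k\cM_0(1-k-2k+2k,\dots)$, meaning the genus one top level glued at two nodes to the $\bP^1$ carrying the zero $2k$.

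For the upper bound we count components by counting prong-matching classes. By Lemma \ref{L:ProngsAndComps} and Corollary \ref{C:GenusOneProngMatchings} (with $a=1-k$, $b=k-1$, $\kappa=\emptyset$), each primitive component $\Omega_r^k\cM_1(1-k,k-1)$ contributes one global equivalence class. The exception is when $k$, $1-k$ and $k-1$ are all odd, which never happens, or when the component is hyperelliptic with the nodes exchanged, which happens exactly when $r=k-1$. Primitivity forces $\gcd(r,k)=1$, and $r\mid k-1$. So when $k$ is even the only admissible value is $r=1$, giving at most one component. When $k$ is odd, $r\in\{1,2\}$ gives at most two. We must still check that for $r\in\{1,2\}$ the value $r=k-1$ occurs only when $k\le 3$, and handle that separately.

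For the lower bound and the distinction, when $k$ is odd the zero $2k$ is even, so the spin is an invariant of $\cC$. After smoothing, a symplectic basis can be built from the core curve of $\Omega^k_r\cM_1$ with $w(\gamma)=r$, the curve through the nodes, and a second handle. Computing the Arf invariant shows it differs between $r=1$ and $r=2$, so there are two components provided both smoothings are nonhyperelliptic. Existence itself comes from smoothing; the resulting component is primitive because $\gcd(r,k)=1$ together with the node winding $k\pm(k-1)$.

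The small cases are where we expect the main obstacle. For $k=1,2$, every component of $\Omega^k\cM_2(2k)$ is hyperelliptic (classical; for $k=2$ this is visible because $r=1=k-1$ makes the only candidate degeneration hyperelliptic). For $k=3$, the $r=2=k-1$ candidate is hyperelliptic, leaving one component, but Theorem \ref{T:SimpleDegeneration} excludes $\Omega^3\cM_2(6)$. That stratum therefore needs a direct argument, either via the Bogatyrev--Gendron classification or by exhibiting a cylindrical degeneration by hand. The delicate step throughout is verifying, for odd $k\ge5$, that the two smoothed components are nonhyperelliptic and really carry distinct Arf invariants.
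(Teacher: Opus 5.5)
For $k\ge 4$ your argument is essentially the paper's. You take a nonhyperelliptic simple split from Theorem \ref{T:SimpleDegeneration} and normalize it to $\Omega^k_{r'}\cM_1(1-k,k-1)$ by Lemma \ref{L:KeyChangeOfRN}. There is a unique prong-matching class, so Lemma \ref{L:ProngsAndComps} gives the upper bound. For odd $k$ the Arf invariant equals $r'+1 \bmod 2$, because the loops pinched to the nodes have odd winding number.

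The genuine gap is $k=3$, which you leave open and whose diagnosis you have backwards. By Proposition \ref{P:Genus1Components}, $\Omega^k_r\cM_1(a,-a)$ is nonempty only if $e=|a|/r>1$. So $\Omega^3_2\cM_1(2,-2)$ is empty, while $\Omega^3_1\cM_1(2,-2)=\Omega^3_r\cM_1(2r,-2r)$ is the hyperelliptic one. Also $\Omega^3\cM_1(\pm1,\mp1)$ is empty and $\Omega^3\cM_1(0,0)$ consists of cubes. Hence every simple split of $\Omega^3\cM_2(6)$ lands in a hyperelliptic component. That is why Theorem \ref{T:SimpleDegeneration} excludes this stratum, and why no argument that insists on nonhyperelliptic splits can produce the answer one.

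The missing observation is that the upper bound never needs the boundary to be nonhyperelliptic. By Theorem \ref{T:Cylinder}, every component has a simple nonseparating cylinder. With a single singularity its collapse is a simple split, necessarily into $\Omega^3_1\cM_1(2,-2)$. Since $\gcd(3+2,3-2)=1$, there is one prong-matching class, so Lemma \ref{L:ProngsAndComps} allows at most one primitive component, and smoothing produces one. No appeal to Bogatyrev--Gendron is needed.

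Several supporting claims also confuse emptiness with hyperellipticity.
\begin{itemize}
\item The leftover cases $|a|\le 1$ or $r=|a|$ are empty, or non-primitive when $a=0$. They never occur, so Case 4 and Lemma \ref{L:TwoNodesNotSwapped} play no role.
\item Points of orders $a$ and $-a\neq a$ are never exchanged by a divisor-preserving involution. The stratum $\Omega^k_{k-1}\cM_1(1-k,k-1)$ is empty, not hyperelliptic with the nodes swapped. The hyperelliptic case is $k-1=2r$, with both points fixed.
\item Corollary \ref{C:GenusOneProngMatchings} is unnecessary, since $\gcd\bigl(k+(1-k),k+(k-1)\bigr)=1$.
\item For $k=2$ the candidate boundary strata are empty or non-primitive, not hyperelliptic.
\item Deducing $r=1$ for even $k$ from $\gcd(r,k)=1$ and $r\mid k-1$ is a non sequitur (take $k=10$, $r=3$). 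The restriction comes from Lemma \ref{L:KeyChangeOfRN}.
\end{itemize}

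Finally, the step you call delicate is automatic. Suppose a primitive $\omega$ with divisor $2kp$ on a genus two curve were hyperelliptic. Then $p$ would be a Weierstrass point, so $K_X=2p$, and $\omega$ would be a constant multiple of $\eta^k$ with $\mathrm{div}(\eta)=2p$, contradicting primitivity. You genuinely need this rather than an argument from the boundary: for $k=5$ the boundary stratum $\Omega^5_2\cM_1(-4,4)$ is itself hyperelliptic.
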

\begin{proof}
By Theorem \ref{T:SimpleDegeneration}, simply split the zero to form a surface in $\Omega_r^k \cM_1(a,-a)$ which is nonhyperelliptic if $k \ne 3$ and where $a = 0$ if $k = 1$ and $a \in \{2, \hdots, k-1\}$ otherwise. If $k = 1,2$, then these boundary strata are hyperelliptic or empty, showing that there are no primitive nonhyperelliptic components of these minimal strata. When $k=3$, $\Omega^3_1 \cM_1(2,-2)$ has one (local) equivalence class of prong matching between the singularities and so $\Omega^3 \cM_2(6)$ has one primitive nonhyperelliptic component (see Lemma \ref{L:ProngsAndComps}). 

Now assume that $k > 3$. Note that $r \ne a \geq 2$. By Lemma \ref{L:KeyChangeOfRN}, $\Omega_{r'}^k \cM_1(1-k, k-1)$ belongs to the boundary where $r' = 1$ if $k$ is even and $r' \in \{1,2\}$ if $k$ is odd. Each of these boundary strata has a unique (local) equivalence class of prong matchings, so the result follows by Lemma \ref{L:ProngsAndComps} when $k$ is even. When $k$ is odd, we achieve an upper bound of two primitive nonhyperelliptic components.

When $k > 3$ is odd, we will show that this bound is sharp by smoothing the nodes of a multiscale $k$-differential in $\Omega^k_{r'} \cM_1(1-k, k-1) \times \Omega^k \cM_0(-k-1, 1-3k, 2k)$ with the two surfaces glued together along the first two nodes and where $r' \in \{1,2\}$. Let $w$ be the $k$-framing after smoothing the nodes to form a surface $X$. If $\gamma$ is a loop that is pinched to form a node, then $w(\gamma)$ is odd. It follows that $r'+1$ mod $2$ is the Arf invariant of $X$. In particular, there are two components distinguished by the Arf invariant when $k>3$ is odd.
\end{proof}

\begin{cor}[Splitting in genus two] \label{C:SplitinGenusTwo}
Any primitive nonhyperelliptic component of $\Omega^k \cM_2(a, \kappa)$, where $a \geq 2$ can be split to form $\Omega^k_r \cM_1(1-k, a-k-1, \kappa)$ where $r = 1$ unless $k$ is odd and all elements of $\kappa$ are even in which case $r \in \{1,2\}$.
\end{cor}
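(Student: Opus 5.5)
The plan is to reduce to Lemma \ref{L:KeyChangeOfRN}. Its hypothesis asks for a split of the zero $a$ into a genus one surface $Y \in \Omega^k_r \cM_1(a_1,b_1,\kappa)$ with $r \ne a_1 \geq 2$, and with $Y$ not hyperelliptic with $a_1$ and $b_1$ exchanged. So the work is to manufacture such a split of $a$ itself. The source of splits is Theorem \ref{T:SimpleDegeneration}, together with the conversion tools Lemma \ref{L:CommonConstruction}, Lemma \ref{L:NewSplit} and Lemma \ref{L:TwoNodesNotSwapped}.

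First I produce some simple split of $a$. If $\kappa$ contains no other zero, then Theorem \ref{T:SimpleDegeneration} gives either a split of $a$ or a merge of $a$ with a pole. In the merge case, I first split the merged zero, using Theorem \ref{T:SimpleDegeneration} on the merged component (after checking it is primitive and nonhyperelliptic), and then pull the split back to $a$ by Lemma \ref{L:CommonConstruction} (\ref{I:MergeToSplit}). If $\kappa$ contains another zero $z'$, I use Theorem \ref{T:Merge} to merge $z'$ into $a$, split the resulting zero, and again pull the split back. Lemma \ref{L:TwoNodesNotSwapped} and Lemma \ref{L:CommonConstruction} (\ref{I:MergeToSplitHyp}) are used to arrange that the resulting genus one surface $Y$ is not hyperelliptic with the two nodes exchanged. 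The exceptional strata $\Omega^2\cM_2(5,-1)$ and $\Omega^3\cM_2(6)$ are handled directly: the latter by Proposition \ref{P:MinimalGenusTwo}, and the former by a short check that the split obtained there already has the required form.

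Next I arrange $a_1 \geq 2$ and $r \ne a_1$. Write the split as $Y\in\Omega^k_r\cM_1(a_1,b_1,\kappa)$ with $a_1+b_1=a-2k$, and take $a_1$ to be the larger node. If $a_1 \geq 2$ and $r \ne a_1$, Lemma \ref{L:KeyChangeOfRN} applies directly and gives exactly the statement, including the dichotomy $r\in\{1,2\}$ when $k$ is odd and all entries of $\kappa$ are even.

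Otherwise I modify the split. Suppose $a_1\le 1$; then both nodes are small. If $\kappa$ has a zero, merge it with a node on $Y$, using Proposition \ref{P:GenusOneMerging} on genus one strata. Convert this to a split of $a$ with a larger node via Lemma \ref{L:CommonConstruction} (\ref{I:SplitToMerge}) and then (\ref{I:MergeToSplit}). If $\kappa$ has no zero, then since $a\ge2$ the nodes satisfy $a_1+b_1=a-2k$ with each $>-k$, and I instead apply Lemma \ref{L:NewSplit} after a genus one split via Lemma \ref{L:LosingGenusInGenus1} to enlarge one node. If $r = a_1$, I resplit $a_1$ by Lemma \ref{L:LosingGenusInGenus1}, which is possible when $r\ne a_1$ fails only in specific small cases, and reglue by Lemma \ref{L:NewSplit}, choosing prong matchings by Corollary \ref{C:GenusOneProngMatchings} so that the rotation number drops to $1$ or $2$.

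The main obstacle is this case bookkeeping. One must guarantee a node of order at least $2$ that differs from the rotation number, while never landing in a hyperelliptic component with the nodes exchanged. The small-$k$ and small-$a$ cases (such as $k=1,2$ and $a=2$) will likely need separate direct inspection.
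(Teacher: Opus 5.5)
There is a genuine gap. Your plan has no induction, and the two repair steps in your third paragraph cannot be carried out. If $r=a_1$, no split of $a_1$ exists at all, so Lemma \ref{L:LosingGenusInGenus1} and Lemma \ref{L:NewSplit} have nothing to act on. Splitting $a_1$ into $(a_1',a_2')$ pinches two curves that are nonseparating on the genus one surface, with framings $k+a_1'$ and $k+a_2'$. By Lemma \ref{L:RotationNumberImplication} both must be multiples of $r$, but they are positive and sum to $a_1=r$.

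Similarly, if $a_1\le 1$, both nodes have order at most $1$ and neither can be split, so enlarging a node via Lemmas \ref{L:LosingGenusInGenus1} and \ref{L:NewSplit} is unavailable. When $\kappa$ has a zero $z'$, applying Lemma \ref{L:CommonConstruction} (\ref{I:SplitToMerge}) and then (\ref{I:MergeToSplit}) to the same split just returns the split you started with. To get a different split of $a$, you need a \emph{new} split of the merged zero $a+z'$ of a prescribed shape, which is the statement itself for a smaller stratum. Your first step has the same problem. Theorem \ref{T:SimpleDegeneration} may output a merge of two poles, which you never handle. On a merged component that still has other singularities, it need not split the merged zero at all.

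The missing idea is induction on $|\kappa|$, with Proposition \ref{P:MinimalGenusTwo} as base case. Suppose $a$ can be merged with some $b$, with $a+b\ge 2$, preserving primitivity and nonhyperellipticity; Theorem \ref{T:Merge} guarantees this whenever there is another zero. Then the induction hypothesis splits $a+b$ into exactly $(1-k,a+b-k-1)$ with the right rotation number. Lemma \ref{L:CommonConstruction} (\ref{I:MergeToSplit}) pulls this back to a split of $a$ into $(1-k,a-k-1)$; this is where $a\ge 2$ enters, via $a-k-1>-k$. If instead there are two poles, merge them, split $a$ by induction, and unmerge.

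What remains is $\kappa=(b)$ with $b<0$ and $a,b$ not mergeable. Then Theorem \ref{T:SimpleDegeneration} yields a nonhyperelliptic split into $\Omega^k_r\cM_1(a_1,a_2,b)$, with $\Omega^2\cM_2(5,-1)$ checked by hand. Here $a_1+a_2=-b>0$ does all the bookkeeping. The case $a_1\le 1$ forces $(1,0,-1)$, which is empty, or $(1,1,-2)$, which is hyperelliptic. The case $r=a_1$ forces $a_2\in\{0,r\}$, giving an empty stratum or the hyperelliptic $\Omega^k_r\cM_1(r,r,-2r)$. So Lemma \ref{L:KeyChangeOfRN} applies directly. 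The bad cases you try to repair should be ruled out after this reduction, not fixed.
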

\begin{proof}
Induct on $|\kappa|$. Proposition \ref{P:MinimalGenusTwo} is the base case, so assume now that $|\kappa| > 0$. First suppose $\kappa$ contains a singularity $b$ so that $a+b \geq 2$ and $a$ and $b$ can be merged while preserving primitivity and nonhyperellipticity. Note that by Theorem \ref{T:Merge}, this occurs if $\kappa$ contains a zero aside from $a$. Then merge $a$ and $b$, split the resulting zero (by the induction hypothesis) and apply Lemma \ref{L:CommonConstruction} (\ref{I:MergeToSplit}) to find the desired split of $a$.

Therefore, we may assume that $a$ is the only zero in $\kappa$. If $\kappa$ contains two poles, then merge them by Theorem \ref{T:Merge}, split $a$ by the induction hypothesis, and then unmerge the poles to conclude.

It remains to assume that $\kappa = (a,b)$ where $a \geq 2 > 0 > b$ and that $a$ and $b$ cannot be merged while preserving primitivity and nonhyperellipticity. The claim is clear for $\Omega^2 \cM_2(5,-1)$ because collapsing a simple cylinder sends us to $\Omega^2 \cM_1(2,-1,-1)$, so we ignore this case. By our hypothesis and Theorem \ref{T:SimpleDegeneration}, we can split $a$ while retaining nonhyperellipticity and primitivity to form $\Omega^k_r \cM_1(a_1, a_2, b)$ where $a_1 \geq a_2$. Since $\Omega^k \cM_1(1,0,-1)$ is empty and $\Omega^k \cM_1(1,1,-2)$ is hyperelliptic, we may suppose that $a_1 \geq 2$. If $r = a_1$, then we are in $\Omega^k_r \cM_1(r,r,-2r)$ which is hyperelliptic and hence a contradiction. So $r \ne a_1 \geq 2$ and we conclude with Lemma \ref{L:KeyChangeOfRN}. 
\end{proof}

\begin{prop}[Splitting in general]\label{P:Splitting}
In primitive nonhyperelliptic components of $\Omega^k \cM_g(\kappa)$ with $g \geq 2$, it is possible to split any zero of order $a \geq 2$ into $(1-k, a-k-1)$ while retaining primitivity. If $g \geq 3$, the split can be taken to be nonhyperelliptic after forgetting marked points with the following exceptions:
\begin{itemize}
    \item $\Omega^1 \cM_3(\kappa)$ where $\kappa$ is a positive partition of $4$ or $\Omega^1 \cM_3(\kappa', -2)$ or $\Omega^1 \cM_3(\kappa', -1, -1)$ where $\kappa' = (6), (4,2), (2,2,2)$. 
    \item $\Omega^2 \cM_3(\kappa', -1)$ where $\kappa' = (9), (6,3), (3,3,3)$.
    \item $\Omega^3 \cM_3(\kappa)$ where $\kappa = (12), (8,4), (4,4,4)$. 
    \item $\Omega^1 \cM_4(\kappa)$ with $\kappa = (6), (4,2), (2,2,2)$, 
\end{itemize}
\end{prop}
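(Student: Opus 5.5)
I would prove Proposition \ref{P:Splitting} by induction on the genus $g$, with Corollary \ref{C:SplitinGenusTwo} as the base case $g=2$. There the split into $(1-k,a-k-1)$ lands in $\Omega^k_r\cM_1(1-k,a-k-1,\kappa)$ with $r\in\{1,2\}$ and $\gcd(k,r)=1$, so it is primitive. The nonhyperellipticity clause is only required for $g\ge 3$, so nothing more is needed in genus two.

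For $g\ge 3$ I would first reduce to a single zero, mirroring the proof of Corollary \ref{C:SplitinGenusTwo}.
\begin{itemize}
\item If $\kappa$ contains another zero $b$, use Theorem \ref{T:Merge} to merge $a$ and $b$ nonhyperelliptically and primitively. Then split $a+b$ into $(1-k,a+b-k-1)$, which is possible by the reduced case or by induction on $|\kappa|$. Finally apply Lemma \ref{L:CommonConstruction} (\ref{I:MergeToSplit}) to obtain a split of $a$ into $(1-k,a-k-1)$. Lemma \ref{L:CommonConstruction} (\ref{I:MergeToSplitHyp}) controls hyperellipticity: the result is hyperelliptic only if $W$ is and the new $1-k$ point has special form, which I would rule out by checking orders.
\item Two poles are handled the same way: merge them, split $a$, and unmerge.
\end{itemize}
This leaves $\kappa=(a)$ or $(a,p)$ with $p$ a single pole.

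In the reduced case, Theorem \ref{T:SimpleDegeneration} gives a simple degeneration preserving nonhyperellipticity.
\begin{itemize}
\item If it is a merge of $a$ with $p$, I would split $a+p$ on the merged surface by induction on dimension and convert back with Lemma \ref{L:CommonConstruction} (\ref{I:MergeToSplit}).
\item If it is a split of $a$ into $(a_1,a_2)$ landing in a nonhyperelliptic genus $g-1$ component, I would take $a_1\ge 2$, which holds unless the component is degenerate. I would then split $a_1$ into $(1-k,a_1-k-1)$ using the induction hypothesis in genus $g-1$. Re-splitting via Lemma \ref{L:NewSplit} then produces a split of $a$ into $(1-k,\,a_2+a_1-k-1+2k)=(1-k,a-k-1)$, since $a_1+a_2=a-2k$.
\end{itemize}
Nonhyperellipticity of the result follows from the second clause of Lemma \ref{L:NewSplit}: $Y$ is hyperelliptic only if $Z$ is, with $b$ and $d$ exchanged by the involution. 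The induction gives $Z$ nonhyperelliptic in genus $g-2\ge 2$, or, when $g-1=2$, the genus-two corollary gives $Z$ in $\Omega^k_r\cM_1$ with $r$ small, and I would check directly that the relevant orders cannot be swapped.

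Primitivity is automatic when $g\ge 3$, because the lower-level component has genus at least $1$ and carries a nonseparating curve whose winding number we control by the prong-matching freedom (Corollary \ref{C:GenusOneProngMatchings}).

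I expect the main obstacle to be the hyperellipticity bookkeeping when $g=3$, where the intermediate surfaces have genus $1$ or $2$. The exceptional list shows exactly where this fails. In those strata the genus-two stratum reached, such as $\Omega^1\cM_2(2)$, $\Omega^2\cM_2(4)$, $\Omega^3\cM_2(6)$ with its unique component, or the genus-one hyperelliptic loci, forces every $(1-k,a-k-1)$ split to be hyperelliptic with the two new points swapped. My plan there is to enumerate the cases in which the genus-two or genus-one component reached in Lemma \ref{L:NewSplit} is hyperelliptic with $b$ and $d$ exchanged. I would show that this forces $k\le 3$ and the orders listed, using Proposition \ref{P:MinimalGenusTwo} and the classification of hyperelliptic genus-one components from Corollary \ref{C:GenusOneMerging}. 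In every other case I would alter the choice of $a_1$, or of the prong matching, to escape. The $\Omega^1\cM_4$ exceptions come from passing through these $\Omega^1\cM_3$ cases one more time.
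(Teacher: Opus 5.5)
Your overall plan matches the paper's: induction on genus starting from Corollary~\ref{C:SplitinGenusTwo}, reducing $\kappa$ by merges via Theorem~\ref{T:Merge} and Lemma~\ref{L:CommonConstruction}, and in the reduced case a simple split from Theorem~\ref{T:SimpleDegeneration}, an inductive split of $a_1$, a re-split by Lemma~\ref{L:NewSplit}, and a change of which singularity you split when hyperellipticity intervenes. \textbf{The gap is in the reduced case.} Theorem~\ref{T:SimpleDegeneration} only guarantees that the split lands in a component that is nonhyperelliptic \emph{with its marked points}. It can split $a$ into $(a_1,a_2)=(a-2k,0)$ and land in a component $\cC$ that becomes hyperelliptic once the new marked point is forgotten. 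For instance, when $\kappa=(2g-2)$ and $k=1$, it can land in the hyperelliptic component of $\Omega^1\cM_{g-1}(2g-4)$ with a marked point. Your step ``split $a_1$ into $(1-k,a_1-k-1)$ by the induction hypothesis'' breaks down there, for three reasons.
\begin{itemize}
\item After forgetting the marked point, $a_1$ is a fixed point of the involution $J$. Any split of $a_1$ pinches two curves exchanged by $J$, as in the proof of Lemma~\ref{L:CommonConstruction}~(\ref{I:SplitToMergeHyp}), so it produces two singularities of equal order.
\item The pair $(1-k,a_1-k-1)$ has equal orders only when $a_1=2$, which happens only in the exceptional $\Omega^1\cM_3(4)$. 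So the split you want does not exist there.
\item Even granting some $Z$, it would be hyperelliptic, so the criterion of Lemma~\ref{L:NewSplit} certifies nothing. Nor can you ``alter the choice of $a_1$'', since $(a_1,a_2)$ is forced.
\end{itemize}

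The missing step, which the paper handles as a separate case, is to split $a_1$ \emph{symmetrically}. Use Lemma~\ref{L:MergeHyp} to split $a_1$ into two singularities of order $\frac{a_1}{2}-k$ swapped by $J$. Then Lemma~\ref{L:NewSplit} gives a different simple split of $a$, into $\frac{a_1}{2}-k$ and $\frac{a_1}{2}+k$ with the marked point absorbed. This split is nonhyperelliptic after forgetting marked points; in the minimal case, where necessarily $k=1$, it fails only for $g=3$, which is exceptional. That returns you to the case you do handle.

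There are also two smaller problems.
\begin{itemize}
\item In the merging reduction, ``ruling out by checking orders'' can fail for a fixed merge partner when the merged stratum is exceptional. Lemma~\ref{L:CommonConstruction}~(\ref{I:MergeToSplitHyp}) then gives nothing if $a-k-1\in\{0,b\}$. For example, in $\Omega^3\cM_3(6,4,2)$, merging $6$ with $2$ lands in the exceptional $\Omega^3\cM_3(8,4)$ with $a-k-1=2=b$, so you must merge $6$ with $4$ instead, as in Sublemma~\ref{L:SplitToSplit}. For $k=1$ that lemma also needs $b>0$.
\item Primitivity is neither automatic nor a consequence of Corollary~\ref{C:GenusOneProngMatchings}; the lower level of a split is the $\bP^1$. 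It holds because the split you build degenerates to the primitive boundary point supplied by the induction hypothesis. That is exactly what ``simple'' records in Lemmas~\ref{L:CommonConstruction} and~\ref{L:NewSplit}.
\end{itemize}
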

The exceptions in the statement will be called \emph{exceptional strata}.
\begin{proof}
Induct on genus and then on $|\kappa|$ with Corollary \ref{C:SplitinGenusTwo} forming the base case for the induction on $g$. Suppose now that $g \geq 3$.

\begin{sublem}
The claim holds when $|\kappa|=1$.
\end{sublem}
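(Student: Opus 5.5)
We need to prove the sublemma: when $|\kappa|=1$, i.e. the stratum is the minimal stratum $\Omega^k\cM_g(a)$ with $a = k(2g-2)$ and $g\ge 3$, the zero $a$ can be split into $(1-k,a-k-1)$ while retaining primitivity. Outside the listed exceptional strata, the split should also land in a nonhyperelliptic component after forgetting marked points.

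The plan is to start from Theorem \ref{T:SimpleDegeneration}. Since a minimal stratum has a single singularity, no merge is possible, so this theorem gives a simple split of $a$ into a nonhyperelliptic primitive surface $Y\in\Omega^k\cM_{g-1}(a_1,a_2)$ with $a_1\ge a_2>-k$ and $a_1+a_2=a-2k$. If $a_1\ge 2$, I would apply the induction hypothesis on genus to $Y$ (genus $g-1\ge 2$) to split $a_1$ into $(1-k,a_1-k-1)$. This produces $Z\in\Omega^k\cM_{g-2}(1-k,a_1-k-1,a_2)$. Re-splitting $a$ via Lemma \ref{L:NewSplit} with $c=1-k$ and $d=a_1-k-1$ then gives
\[ Y'\in\Omega^k\cM_{g-1}(1-k,\,a_2+a_1-k-1+2k)=\Omega^k\cM_{g-1}(1-k,\,a-k-1), \]
which is exactly the desired split.

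Two points need care. First, primitivity of $Y'$: the re-split of Lemma \ref{L:NewSplit} smooths only the nodes between levels, so I would argue that its lower-level data are primitive because those of $Z$ are. Where needed (in low genus), I would use the prong-matching freedom of Corollary \ref{C:GenusOneProngMatchings} to adjust the relevant winding number to $1$. Second, nonhyperellipticity of $Y'$: by the second claim of Lemma \ref{L:NewSplit}, $Y'$ can be hyperelliptic only if $Z$ is hyperelliptic with $a_2$ and $a_1-k-1$ exchanged by the involution. The induction hypothesis makes $Z$ nonhyperelliptic unless $Y$ lies in an exceptional stratum of genus $g-1$ (when $g-1\ge3$), or $g-1=2$, where Corollary \ref{C:SplitinGenusTwo} gives no nonhyperellipticity guarantee. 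If $Z$ is hyperelliptic with the bad swap, I would instead first apply Lemma \ref{L:MergeHyp}/Lemma \ref{L:TwoNodesNotSwapped} to $Z$, or choose the other branch $a_2$ when $a_2\ge 2$, and repeat the re-split. If $a_1\le 1$, the split is $a_1=a_2=1$ with $a=2k+2$, or uses $0$/negative orders. I would handle this directly by first merging $a_1,a_2$ via Theorem \ref{T:Merge} when $Y$ has other structure, or by noting that it forces small $k$ and $g$, which should land in the exceptional list.

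I expect the main obstacle to be the low-genus and small-$k$ bookkeeping at $g=3$. There the intermediate surface $Z$ has genus one, so nonhyperellipticity must be checked by hand against the genus-one hyperelliptic list $\Omega^k_r\cM_1(r,r,-r,-r)$, $\Omega^k_r\cM_1(\pm2r,\mp r,\mp r)$, $\Omega^k_r\cM_1(2r,-2r)$. This is where the exceptions $\Omega^1\cM_3(4)$ and $\Omega^3\cM_3(12)$ should appear, as precisely those cases in which every available split forces $a_2$ and $a_1-k-1$ to be swapped. The approach is to enumerate the possible $(a_1,a_2)$ for $a=4k$, using the freedom of rotation number $r\in\{1,2\}$ from Corollary \ref{C:SplitinGenusTwo}, and to show that a nonhyperelliptic choice exists except in the listed strata. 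I would also treat $g=4$, $k=1$ separately, since it inherits the $\Omega^1\cM_3$ exception.
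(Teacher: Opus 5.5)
Your Case~1 strategy is the paper's: split $a_1$ by induction, re-split $a$ with Lemma~\ref{L:NewSplit}, and split $a_2$ instead if the involution swaps $a_1-k-1$ and $a_2$. But there is a genuine gap before you reach it.

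Theorem~\ref{T:SimpleDegeneration} only gives $Y$ in a component that is nonhyperelliptic \emph{with marked points counted}. When $a_2=0$ (so $a_1=k(2g-4)$), $Y$ can lie in a component that becomes hyperelliptic once the marked point is forgotten. Primitivity then forces $k=1$. Such boundary points do exist: glue a sphere to the zero and to a generic point of a surface in $\Omega^1\cM_{g-1}(2g-4)^{\mathrm{hyp}}$ and smooth. So the theorem may hand you one.

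For this $Y$ you cannot split $a_1=2g-4$ into $(0,2g-6)$ at all. In the boundary of a hyperelliptic component, the two nonseparating curves pinched by a split must be exchanged by the involution, so the two nodes carry equal orders. That fails for $g\ge 4$. Hence the induction hypothesis cannot be invoked for this $Y$, because the split it would supply does not exist. Your fallbacks act on $Z$, which you never reach here.

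The missing step is the paper's Case~2:
\begin{enumerate}
\item Split $2g-4$ symmetrically into $(g-3,g-3)$ using Lemma~\ref{L:MergeHyp}.
\item Re-split $a$ by Lemma~\ref{L:NewSplit} to land in $\Omega^1\cM_{g-1}(g-3,g-1)$. The paper prints $(g-3,g+1)$, which is a typo.
\item For $g\neq 3$ this component is nonhyperelliptic even after forgetting marked points, and $g=3$ is exceptional. This new split puts you back in Case~1.
\end{enumerate}

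Within Case~1 your plan is also looser than it needs to be.
\begin{itemize}
\item Lemma~\ref{L:NewSplit} already makes the re-split simple, so no prong-matching adjustment is needed for primitivity.
\item No genus-one enumeration is needed either. An exchanged pair has equal orders, so the bad case is exactly $a_1-k-1=a_2$, i.e.\ $a_1=\frac{k(2g-3)+1}{2}$ and $a_2=\frac{k(2g-5)-1}{2}$.
\item If $a_2\ge 2$, splitting $a_2$ instead can never produce the bad swap, since $a_2-k-1<a_1$.
\item If $a_2<2$, then $(k,g)\in\{(1,3),(1,4),(3,3)\}$, and all of these are exceptional strata.
\item The case $a_1\le 1$ never arises. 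Since $a_1\ge k(g-2)$, it would force $Y\in\Omega^1\cM_2(1,1)$, which is hyperelliptic.
\end{itemize}
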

\begin{proof}
Split the zero by Theorem \ref{T:SimpleDegeneration} to form a primitive nonhyperelliptic surface in a component $\cC$ of $\Omega^k \cM_{g-1}(b,c)$ where $b \geq c > -k$. 

\noindent \textbf{Case 1: $\cC$ is nonhyperelliptic after forgetting marked points.} By the induction hypothesis split $b$ to form a component $\cD$ of $\Omega^k \cM_{g-2}(1-k, b-k-1, c)$. By Lemma \ref{L:NewSplit}, resplit the original stratum to form a component $\cC'$ of $\Omega^k \cM_{g-1}(1-k, k(2g-3)-1)$. Moreover, $\cC'$ is hyperelliptic after forgetting marked points only if $\cD$ is and $(b-k-1)$ and $c$ are exchanged by the involution. So
\[ b = \frac{k(2g - 3) + 1}{2} \quad \text{and} \quad c = \frac{k(2g - 5) - 1}{2}. \]
If $c \geq 2$, then we split $c$ instead of $b$ by the induction hypothesis and run the previous argument to conclude. If $c < 2$, then $(k,g) = (1,3), (1,4), (3,3)$, which yield exceptional strata.

\noindent \textbf{Case 2: $\cC$ is hyperelliptic after forgetting marked points.} In other words, $b = k(2g-4)$ and $c = 0$. After forgetting the marked point, $\cC$ becomes a primitive hyperelliptic component of $\Omega^k \cM_{g-1}(k(2g-4))$, so $k=1$. Split $b$ to form $\Omega^1 \cM_{g-2}(g-3, g-3, 0)$. By Lemma \ref{L:NewSplit} we split the original stratum to $\Omega^1 \cM_{g-1}(g-3, g+1)$, which is nonhyperelliptic after forgetting marked points for $g \ne 3$, which is exceptional. This reduces us to the previous case.
\end{proof}

We may suppose that $\kappa$ contains at most one pole. To see this, use Theorem \ref{T:Merge} to merge the poles while retaining nonhyperellipticity and primitivity. Merging poles never causes a stratum to become exceptional unless it was already exceptional. Now use the induction hypothesis to split the desired zero, while retaining primitivity and, in the non-exceptional cases, nonhyperellipticity, after forgetting marked points, and undo the merge of the poles.

\begin{sublem}\label{L:SplitToSplit}
Suppose that $a$ can be merged with a singularity $b$ while preserving nonhyperellipticity and primitivity. Suppose too that $a+b \geq 2$ and, if $k=1$, that $b > 0$. Then the claim holds.
\end{sublem}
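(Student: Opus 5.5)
Sublemma \ref{L:SplitToSplit} is an application of the merge-then-split swap. Work in the induction of Proposition \ref{P:Splitting}: genus $g \geq 3$, induction on $|\kappa|$, and at most one pole in $\kappa$. Let $X \in \Omega^k \cM_g(a, b, \kappa')$ lie in the given primitive nonhyperelliptic component.

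\textbf{Step 1: merge $a$ and $b$.} By hypothesis we can simply merge $a$ and $b$ to reach a primitive nonhyperelliptic component $\cC'$ of $\Omega^k \cM_g(a+b, \kappa')$. This stratum has the same genus and $|\kappa'| = |\kappa| - 1$ singularities besides the zero of order $a+b \geq 2$, so the induction hypothesis applies to it.

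\textbf{Step 2: split the merged zero.} By the induction hypothesis, split $a+b$ into $(1-k,\, a+b-k-1)$. This gives a surface $W$ in a primitive component of $\Omega^k \cM_{g-1}(1-k, a+b-k-1, \kappa')$. If $\cC'$ is not exceptional, $W$ is nonhyperelliptic after forgetting marked points.

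\textbf{Step 3: transfer the split back to $a$.} Apply Lemma \ref{L:CommonConstruction} (\ref{I:MergeToSplit}) with $z = a$ and $z' = b$. Write the split orders as $\alpha + b$ and $\beta$, where $\alpha + b = a+b-k-1$ and $\beta = 1-k$. Then $\alpha = a-k-1$, so the lemma splits $a$ on $X$ into $(a-k-1, 1-k)$, with $b$ left untouched. This is exactly the split the Proposition asks for.

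The lemma needs three order conditions, each strictly greater than $-k$:
\begin{itemize}
\item $\beta = 1-k > -k$ holds trivially.
\item $\alpha + b = a+b-k-1 > -k$ holds because $a+b \geq 2$.
\item $\alpha = a-k-1 > -k$ requires $a \geq 2$, which is the standing assumption of the Proposition.
\end{itemize}
The hypothesis $b > 0$ when $k = 1$ is what makes the global $k$-residue condition vacuous in the intermediate multiscale differentials of Figure \ref{F:L10-2}. Primitivity is preserved because every degeneration used is simple.

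\textbf{Step 4: nonhyperellipticity.} By Lemma \ref{L:CommonConstruction} (\ref{I:MergeToSplitHyp}), the split $Y$ can be hyperelliptic only if $W$ is. It also needs either $a-k-1 = 0$, or $a-k-1 = b$ with $a+b-k-1$ fixed by the involution. If $W$ is nonhyperelliptic we are done.

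\textbf{Main obstacle: $\cC'$ exceptional.} I expect the real difficulty here. Merging can create an exceptional stratum, for instance merging into $\Omega^3\cM_3(12)$ or $\Omega^1 \cM_3(4)$. Then $W$ may be hyperelliptic and Step 4 gives no conclusion.

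\begin{itemize}
\item First try: choose a different merge partner $b$ so that $(a+b, \kappa')$ is not exceptional. The exceptional list in Proposition \ref{P:Splitting} is short, so this should work whenever several partners exist.
\item Fallback 1: do the finitely many residual cases by hand. In these, $\kappa$ is one of the listed partitions with an extra entry.
\item Fallback 2: if $W$ turns out hyperelliptic with the two new singularities not exchanged by the involution, use Lemma \ref{L:TwoNodesNotSwapped} and re-split via Lemma \ref{L:NewSplit} to restore nonhyperellipticity, as in the proof of Theorem \ref{T:SimpleDegeneration}.
\end{itemize}
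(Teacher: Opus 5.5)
Your Steps 1--4 are the paper's argument. You merge $a$ and $b$, split $a+b$ into $(1-k,a+b-k-1)$ by induction, and pull the split back to $a$ with Lemma \ref{L:CommonConstruction} (\ref{I:MergeToSplit}). You then use (\ref{I:MergeToSplitHyp}) to see that the result can be hyperelliptic only if $\cS:=\Omega^k\cM_g(a+b,\kappa')$ is exceptional and $a-k-1\in\{0,b\}$.

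The gap is that you leave this exceptional case open, and it is the actual content of the sublemma. The missing observation is that Proposition \ref{P:Splitting} asks for nonhyperellipticity only when the \emph{original} stratum $\Omega^k\cM_g(a,b,\kappa')$ is not exceptional. Set $c=a+b$. The two bad conditions force $(a,b)=(k+1,c-k-1)$ or $(a,b)=(\frac{c+k+1}{2},\frac{c-k-1}{2})$. Run through the exceptional $(c,\kappa')$ for $k=3$, $k=2$, and $k=1$ with $g=3,4$, using $b>0$ when $k=1$. In almost every case the original stratum is itself exceptional: for example $(4,4,4)$, $(4,8)$, $(8,4)$, $(3,3,3,-1)$, $(6,3,-1)$, $(2,2,2,-2)$, $(2,4)$, or one of these with a marked point when $b=0$. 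There Steps 1--3, which already give a primitive split, finish the proof. The only non-exceptional survivors are $\Omega^3\cM_3(6,2,4)$, $\Omega^1\cM_3(3,1,2,-2)$ and $\Omega^1\cM_4(3,1,2)$. In those you merge $a$ with the other zero instead, which Theorem \ref{T:Merge} allows. This is the one place where your ``first try'' is the right move.

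Your fallbacks cannot replace this observation.
\begin{itemize}
\item Changing partners does not help in $\Omega^3\cM_3(4,4,4)$ or $\Omega^2\cM_3(3,3,3,-1)$, because merging $a$ with any other zero lands in an exceptional stratum.
\item Fallback 2 aims at the wrong target. Lemma \ref{L:TwoNodesNotSwapped} only gives a simple split of $a$ of unspecified shape, or a merge of two zeros; it does not give the split $(1-k,a-k-1)$.
\item In exceptional strata the nonhyperelliptic split you would be trying to build need not exist. For instance, $\Omega^3\cM_3(12)$ has three primitive nonhyperelliptic components, while $\Omega^3\cM_2(-2,8)$ has only two. Each boundary stratum here has a single prong-matching class, so it smooths into a single component. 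Hence some component of $\Omega^3\cM_3(12)$ has only hyperelliptic splits of this shape.
\end{itemize}
So ``do the residual cases by hand'' works only once you see that the check to perform is whether the original stratum is on the exceptional list.
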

\begin{proof}
By the induction hypothesis, split $a+b$ to produce a component $\cC$ of $\Omega^k \cM_{g-1}(1-k, a+b-k-1, \kappa')$ where $\kappa' = \kappa \setminus (a,b)$ and where $\cC$ is nonhyperelliptic after forgetting marked points unless $\cS := \Omega^k \cM_g(a+b, \kappa')$ is exceptional. By Lemma \ref{L:CommonConstruction}, split $a$ to form a component $\cD$ of $\Omega^k \cM_{g-1}(1-k,a-k-1,b,\kappa')$. This split is nonhyperelliptic after forgetting marked points unless $\cS$ is exceptional and either (1) $a-k-1=0$ or (2) $a-k-1=b$. Let $c = a+b$. Then either (1) $a = k+1$ and $b = c - (k+1)$ or (2) $a = \frac{c+k+1}{2}$ and $b = \frac{c-k-1}{2}$.

\noindent \textbf{Case 1: $k = 3$.} In this case, $(a+b, \kappa') = (12), (8,4), (4,4,4)$ so $c \in \{4,8,12\}$ and so $(a,b) = (4,0), (4,4), (4,8)$ or $(a,b) = (4,0), (6,2), (8,4)$. The only case that is not an exceptional stratum is $\Omega^3 \cM_3(6,2,4)$. However, in this case we simply merge $a = 6$ with the singularity of order $4$ instead of $2$.

\noindent \textbf{Case 2: $k = 2$.} In this case, $(a+b, \kappa') = (9, -1), (6,3,-1), (3,3,3,-1)$ so $c \in \{3,6,9\}$ and so $(a,b) = (3,0), (3,3), (3,6)$ or $(a,b) = (3,0), (6,3)$. All cases give exceptional strata.

\noindent \textbf{Case 3: $k = 1$ and $g=3$.} The case of $\kappa$ a positive partition of $4$ is an exception, so we ignore this case and therefore assume that $\kappa$ contains a pole of order $-2$. So given that $a \geq 2$ and $b >0$ by assumption, $(a+b, \kappa') = (6,-2), (4,2,-2)$ and so $\kappa = (2,4,-2), (2,2,2,-2), (4,2,-2), (3,1,2,-2)$. All cases give exceptional strata except the final one, where here we can merge $a = 3$ with the double zero instead of the simple zero. 

\noindent \textbf{Case 4: $k = 1$ and $g=4$.} Given that $a \geq 2$ and $b >0$ by assumption, $(a+b, \kappa') = (6), (4,2)$ so $\kappa = (2,4), (2,2,2), (4,2), (3,1,2)$. All cases give exceptional strata except the final one, where we then merge $a = 3$ with the double zero instead of the simple zero. 
\end{proof}

By Sublemma \ref{L:SplitToSplit} and Theorem \ref{T:Merge}, we conclude with the following.

\begin{sublem}
The claim holds for components of $\Omega^k \cM_g(a,b)$ where $a > 0 > b$. 
\end{sublem}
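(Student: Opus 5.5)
The plan is to follow the pattern of the $|\kappa|=1$ sublemma. Start from a simple degeneration supplied by Theorem \ref{T:SimpleDegeneration}, then use Lemma \ref{L:CommonConstruction} and Lemma \ref{L:NewSplit} to convert it into a split of $a$ of the required form $(1-k, a-k-1)$.

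Here $a = k(2g-2)-b > 0$ is the unique zero and $b<0$ the unique pole. By Theorem \ref{T:SimpleDegeneration}, the component has a nonhyperelliptic simple degeneration. Since there is only one zero and one pole, this degeneration is one of two things:
\begin{itemize}
\item a simple merge of $a$ and $b$, landing in $\Omega^k \cM_g(a+b)$, which requires $b > -\frac{k}{2}$;
\item a simple split of $a$ into $(a_1, a_2)$ with $a_1,a_2>-k$, landing in $\Omega^k \cM_{g-1}(a_1,a_2,b)$.
\end{itemize}

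\textbf{Merge case.} Here $a+b = k(2g-2) \geq 2$. The merged stratum $\Omega^k\cM_g(a+b)$ has $|\kappa|=1$, so the first sublemma splits $a+b$ into $(1-k, a+b-k-1)$, nonhyperelliptically unless that stratum is exceptional. Lemma \ref{L:CommonConstruction} (\ref{I:MergeToSplit}) then turns this into a split of $a$ into $(1-k, a-k-1)$ in the presence of $b$; the hypothesis $a-k-1 > -k$ holds since $a\ge 2$. Lemma \ref{L:CommonConstruction} (\ref{I:MergeToSplitHyp}) shows the result is hyperelliptic only if $a-k-1=0$ or $a-k-1=b$. The second is impossible because $b<0<a-k-1$ whenever $a>k+1$. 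When $k=1$ the merge lemma requires $b>0$, which fails. I expect Theorem \ref{T:SimpleDegeneration} then gives no merge of a pole when $k=1$ (the residue condition obstructs it), so this subcase only arises for $k\ge 2$.

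\textbf{Split case.} If $(a_1,a_2)$ already equals $(1-k,a-k-1)$ we are done. Otherwise, let $a_1 \geq a_2$ be the larger part, so $a_1\ge 2$ (small cases are checked by hand as in Corollary \ref{C:SplitinGenusTwo}). By the induction hypothesis on genus, split $a_1$ in $\Omega^k\cM_{g-1}(a_1,a_2,b)$ into $(1-k, a_1-k-1)$. Then resplit $a$ by Lemma \ref{L:NewSplit} to land in $\Omega^k\cM_{g-1}(1-k, a_2 + a_1 - k -1 + 2k, b) = \Omega^k\cM_{g-1}(1-k, a-k-1, b)$, as desired. Lemma \ref{L:NewSplit} gives nonhyperellipticity unless the intermediate surface is hyperelliptic with $a_2$ and $a_1-k-1$ swapped. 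In that case, first split $a_2$ instead when $a_2\ge2$. If $a_2<2$, use Lemma \ref{L:TwoNodesNotSwapped}-style moves: merge $a_2$ with $b$ on the lower stratum via Lemma \ref{L:CommonConstruction} (\ref{I:SplitToMerge}), which returns us to the merge case.

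\textbf{Main obstacle.} The hardest part will be the bookkeeping of hyperelliptic coincidences. These are the arithmetic conditions $a_2 = a_1-k-1$ or $a-k-1\in\{0,b\}$ combined with small $(k,g)$. Each surviving coincidence must be checked to be exactly one of the listed exceptional strata, which have one pole of order $-1$ or $-2$. I would organize this as a finite case check over $g=3,4$ and $k\le 3$, as in Sublemma \ref{L:SplitToSplit}, solving for $(a,b)$ in each case.
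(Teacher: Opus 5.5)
Your merge case (which is Sublemma \ref{L:SplitToSplit} specialized to $\kappa=(b)$) matches the paper. So does the main line of your split case: split $a_1$ by induction, resplit $a$ with Lemma \ref{L:NewSplit}, and split $a_2$ instead when $a_2=a_1-k-1\ge 2$.

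The genuine gap is the situation the paper treats as its Case~2. Theorem \ref{T:SimpleDegeneration} only guarantees that the split component $\cC$ of $\Omega^k\cM_{g-1}(a_1,a_2,b)$ is nonhyperelliptic \emph{with} its marked points. It can happen that $a_2=0$ and $\cC$ becomes hyperelliptic once the marked point is forgotten. In that case your step ``split $a_1$ into $(1-k,a_1-k-1)$ by the induction hypothesis'' is not just unjustified but impossible:
\begin{itemize}
\item After forgetting the marked point, any split of $a_1$ in $\cC$ is a degeneration inside a hyperelliptic component.
\item By the principle behind Lemma \ref{L:CommonConstruction} (\ref{I:SplitToMergeHyp}) and Lemma \ref{L:NewSplit}, the two pinched curves are then exchanged by the involution, so the two new singularities have equal order.
\item But $a=k(2g-2)-b>4k$ gives $a_1=a-2k>2k$, so $1-k\neq a_1-k-1$.
\end{itemize}
The same problem undermines ``if $(a_1,a_2)=(1-k,a-k-1)$ we are done'' when $k=1$. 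There $a_2=1-k=0$ is a marked point, and the claim requires nonhyperellipticity after forgetting it.

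The missing idea is to replace $\cC$ by a better split first. Since $a_1$ is fixed by the involution, Lemma \ref{L:MergeHyp} splits it symmetrically into $(\frac{a_1}{2}-k,\frac{a_1}{2}-k)$. Lemma \ref{L:NewSplit} then resplits $a$ into $\Omega^k\cM_{g-1}(\frac{a_1}{2}-k,\frac{a_1}{2}+k,b)$. The nodes of orders $0$ and $\frac{a_1}{2}-k\neq 0$ cannot be exchanged, so this split is nonhyperelliptic even after forgetting marked points, and your argument applies to it.

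Your fallback for $a_2<2$ should be dropped; it is both unavailable and unnecessary. That fallback merges $a_2$ with the pole $b$ via Lemma \ref{L:CommonConstruction} (\ref{I:SplitToMerge}) and returns to the merge case. That lemma excludes $z'=b<0$ when $k=1$, and nothing in the paper merges a zero with a pole in genus $\ge 2$. To see it is unnecessary: from $2a_2+k+1=a_1+a_2=a-2k>2k$ you get $a_2>\frac{k-1}{2}$. So $a_2<2$ forces $a_2=1$, $k\in\{1,2\}$ and $a=3k+3$, and $b<0$ then forces $g=3$. These are exactly $\Omega^1\cM_3(6,-2)$ and $\Omega^2\cM_3(9,-1)$, which are exceptional, so only primitivity is required there; the paper's Case 1a reaches the same list. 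Also, in your merge case both hyperelliptic conditions $a-k-1=0$ and $a-k-1=b$ are ruled out by $a>4k$, not only the second.
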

\begin{proof}
By Sublemma \ref{L:SplitToSplit} and Theorem \ref{T:SimpleDegeneration}, we may suppose that $a$ can be split to form a nonhyperelliptic component $\cC$ of $\Omega^k \cM_{g-1}(a_1, a_2, b)$ where $a_1 \geq a_2 > -k$. 

\noindent \textbf{Case 1: $\cC$ is nonhyperelliptic after forgetting marked points.} By the induction hypothesis, split $a_1$ to form a component $\cD$ of $\Omega^k \cM_{g-2}(1-k, a_1-k-1, a_2, b)$. If $g > 3$ and $\cC$ is not an exception listed in the statement, then we may also suppose that $\cD$ is nonhyperelliptic after forgetting marked points. By Lemma \ref{L:NewSplit}, we may resplit $a$ to form a component $\cC'$ of $\Omega^k \cM_{g-1}(1-k, a-k-1, b)$. This is nonhyperelliptic unless either (1) $g=3$ or (2) $\cC$ is exceptional and, in both cases, $a_1-k-1 = a_2$. Additionally, $\cD$ must be hyperelliptic after forgetting marked points. 

\noindent \textbf{Case 1a: $g=3$.} In this case, $\cD = \Omega^k_r \cM_{g-2}(1-k, a_1-k-1, a_2, b)$ where $r = 1$ unless $k$ is odd and $\{a_1, a_2, b\}$ are all even in which case $r \in \{1,2\}$. Since $\cD$ is hyperelliptic, $a_1-k-1=a_2=r$ and $b = -2r$ if $k=1$ and $b = -r = 1-k$ otherwise. The solutions are 
\[(a_1,a_2,b,k,r)\in \{(3,1,-2,1,1),\ (4,2,-4,1,2),\ (4,1,-1,2,1),\ (6,2,-2,3,2)\}.\]
The first and third correspond to the exceptional strata $\Omega^1 \cM_3(6,-2)$ and $\Omega^2 \cM_3(9,-1)$. For the other two we proceed by splitting $a_2$ and running the same argument.

\noindent \textbf{Case 1b: $g>3$.} Since $\cC$ is exceptional and contains a pole, $k \in \{1,2\}$. Either $k=1$ and $(a_1, a_2, b) = (4,2,-2), (6,0,-2)$ or $k=2$ and $(6,3,-1), (9,0,-1)$. The second cases are ruled out by the condition that $a_1-k-1 = a_2$. For the first cases, split $a_2$ instead of $a_1$ and run the same argument.

\noindent \textbf{Case 2: $\cC$ is hyperelliptic after forgetting marked points.} So $a_2 = 0$. Split $a_1$ (by Lemma \ref{L:MergeHyp}) to form $\Omega^k\cM_{g-2}( \frac{a_1}{2}-k, \frac{a_1}{2}-k, a_2, b)$. Resplitting $a$ (by Lemma \ref{L:NewSplit}) produces a component of $\Omega^k \cM_{g-1}( \frac{a_1}{2}-k, \frac{a_1}{2}+k, b)$, which is not hyperelliptic (even after forgetting marked points) since there is a degeneration of this surface which produces a marked point. This reduces us to a previously handled case.
\end{proof}
\end{proof}

\begin{proof}[Proof of Theorem \ref{T1}:] Induct first on $g$ and then on $|\kappa|$. Proposition \ref{P:MinimalGenusTwo} is the base case. Since the sum of residues of poles for an abelian differential is zero, if $k = 1$, then the multiset of poles in $\kappa$ is not $(-1)$. Similarly, if $k=1$, then merging (Theorem \ref{T:Merge}) implies that there are no nonhyperelliptic components of $\Omega^1 \cM_2(1,1)$ since then there would be one for $\Omega^1 \cM_2(2)$, contradicting Proposition \ref{P:MinimalGenusTwo}. We now have two cases for strata remaining.

\noindent \textbf{Case 1: $\kappa$ contains three simple zeros.} Set $\kappa' := \kappa \setminus (1,1,1)$. Merge two simple zeros (by Theorem \ref{T:Merge}). The resulting stratum is a primitive and nonhyperelliptic component of $\Omega^k \cM_g(2,1,\kappa')$. By the induction hypothesis, this is nonempty and connected except when $k=2$, $g=2$, and $\kappa' = (1)$; in which case we see that $\Omega^2 \cM_2(1,1,1,1)$ has no primitive hyperelliptic component since the same holds for $\Omega^2 \cM_2(2,1,1)$ by the induction hypothesis. The remaining cases are connected by Lemma \ref{L:ProngsAndComps} and since they merge to a unique component.

\noindent \textbf{Case 2: $\kappa$ contains a zero $a \geq 2$.} Set $\kappa' := \kappa \setminus (a)$. Split $a$ by Corollary \ref{C:SplitinGenusTwo} (when $g = 2$) and Proposition \ref{P:Splitting} (when $g > 2$). The resulting component is, 
\begin{itemize}
    \item when $g = 2$, $\Omega^k_r \cM_1(1-k,a-k-1, \kappa')$ where $r = 1$ unless $k$ is odd and all the singularities in $\kappa$ are even, in which case $r \in \{1,2\}$, and
    \item when $g > 2$, a nonhyperelliptic component of $\Omega^k \cM_{g-1}(1-k, a-k-1, \kappa')$ that remains nonhyperelliptic after forgetting marked points outside of the exceptions listed in Proposition \ref{P:Splitting}.
\end{itemize}
Because all prong matchings are (locally) equivalent, Lemma \ref{L:ProngsAndComps} implies that the original stratum has the number of primitive nonhyperelliptic components bounded above by the number of boundary strata described above. 

\begin{sublem}\label{L:ClassificationForGenus2}
Theorem \ref{T1} holds for $g=2$.
\end{sublem}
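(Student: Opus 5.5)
We are inside the proof of Theorem \ref{T1}, in Case 2 of the induction: $\kappa$ contains a zero $a\ge 2$, and the upper bound comes from counting boundary strata. So for $g=2$ the plan is to compare that upper bound with a matching lower bound, and to handle the listed genus two exceptions separately. Splitting $a$ by Corollary \ref{C:SplitinGenusTwo} sends every primitive nonhyperelliptic component of $\Omega^k\cM_2(\kappa)$ to some $\Omega^k_r\cM_1(1-k,a-k-1,\kappa')$. Here $r=1$, except that $r\in\{1,2\}$ when $k$ is odd and every singularity is even. Case 1 (three simple zeros) reduces by merging to Case 2, and the base case $\kappa=(2k)$ is Proposition \ref{P:MinimalGenusTwo}. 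So, after Lemma \ref{L:ProngsAndComps}, there is at most one component in general and at most two in the odd/even case.

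\textbf{Lower bound for $k$ odd, all singularities even.} Here two components should exist, and I would reuse the construction in the proof of Proposition \ref{P:MinimalGenusTwo}. Take a surface in $\Omega^k_{r}\cM_1(1-k,a-k-1,\kappa')$ with $r\in\{1,2\}$. Glue it to a sphere in $\Omega^k\cM_0(-k-1,a-3k+1... )$, i.e. the lower-level sphere with poles $-k-1$ and $-a-k+1$ at the nodes and the zero $a$, and smooth. Both pinched curves have odd $k$-framing value, since $k+(1-k)=1$ and $k+(a-k-1)=a-1$ is odd. Using a symplectic basis made of a pinched curve, a curve crossing both nodes, and curves on the genus one part, the Arf invariant should come out as $r+1 \bmod 2$, so the two values of $r$ give components with different spin. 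The remaining points to check are that the genus one component $\Omega^k_2$ is nonempty and primitive (we need $\gcd(k,2)=1$, which holds) and not hyperelliptic in a way that makes the smoothed surface hyperelliptic. Nonexistence of a second component in the other cases is already the upper bound.

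\textbf{Existence and exceptions.} Nonemptiness of a primitive nonhyperelliptic component follows from smoothing the same multiscale differential with $r=1$. By Lemma \ref{L:ProngsAndComps} and the fact that hyperelliptic components degenerate only to hyperelliptic boundary configurations, the result is nonhyperelliptic as long as $\Omega^k_1\cM_1(1-k,a-k-1,\kappa')$ is nonhyperelliptic or its involution does not swap the two nodes. The exceptions for $k\le 3$ in genus two are then exactly the strata where this fails:
\begin{itemize}
\item For $k=1,2$ with $\kappa=(2k),(k,k)$ and analogous strata, the boundary genus one stratum is hyperelliptic or empty. Here I would combine Proposition \ref{P:MinimalGenusTwo} with Theorem \ref{T:Merge}: merging would produce a nonhyperelliptic component of a stratum known to have none, which rules out all of $\Omega^2\cM_2(4),(3,1),(2,2),(2,1,1),(1,1,1,1)$ and $\Omega^1\cM_2(2),(1,1)$.
\item For $k=3$ with $\kappa=(6),(4,2),(2,2,2)$, the case $r=2$ leads to a hyperelliptic smoothing. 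I would verify this by reducing, via merges, to Proposition \ref{P:MinimalGenusTwo}, which gives a single component.
\item $\Omega^1\cM_2(\kappa,-2)$ with $\kappa=(4),(2,2)$ needs the same kind of check.
\end{itemize}

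\textbf{Main obstacle.} The hardest step is the exact bookkeeping for the exceptional small cases. There one must show that the $r=2$ (odd spin) smoothing is hyperelliptic, so only one component survives, or that the stratum admits none at all. I would handle these by merging all zeros and all poles with Theorem \ref{T:Merge}. Since merges preserve spin and nonhyperellipticity, the count in a stratum is at most the count in the fully merged stratum, and everything reduces to Proposition \ref{P:MinimalGenusTwo} and the small list of genus two strata with one zero and one pole, which I would treat directly.
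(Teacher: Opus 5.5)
Your skeleton matches the paper's:
\begin{itemize}
\item the split of Corollary \ref{C:SplitinGenusTwo}, together with Lemma \ref{L:ProngsAndComps}, gives the upper bound;
\item undoing the split gives the lower bound;
\item the odd framing values of the pinched curves make the spin equal to $r+1$ mod $2$.
\end{itemize}
The gap is that you never carry out the actual content of this sublemma. That content is determining exactly when the needed boundary strata $\Omega^k_r\cM_1(1-k,a-k-1,\kappa')$ (with $r=1$, and also $r=2$ when $k$ is odd and $\kappa$ is even) are empty, or hyperelliptic with the two nodes exchanged. You list these as ``remaining points to check'' and then assert that the exceptions are ``exactly the strata where this fails.''

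As written, two things are unproved:
\begin{itemize}
\item the lower bound of two for every non-exceptional, non-minimal stratum with $k$ odd and $\kappa$ even (e.g.\ $\Omega^5\cM_2(8,2)$, $\Omega^3\cM_2(8,-2)$, $\Omega^1\cM_2(6,-4)$);
\item the upper bound of one for $\Omega^1\cM_2(4,-2)$, which you leave to ``treat directly.''
\end{itemize}
Your fallback does not close this. Merges only give upper bounds, and genus two strata with one zero and one pole form an infinite family, not a small list. The $k=3$ mechanism is also misdescribed. For $\kappa=(6)$, and for $(4,2)$ when the $4$ is split, the $r=2$ boundary stratum $\Omega^3_2\cM_1(-2,2)$ (up to a marked point) is empty, not hyperelliptic.

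The missing step is short.
\begin{itemize}
\item \emph{Emptiness.} By Proposition \ref{P:Genus1Components}, a genus one component with rotation number $r$ dividing the gcd is empty only if, after forgetting marked points, the stratum is $(c,-c)$ with $r=|c|$. Running through $\kappa'=\emptyset$, $\kappa'=(b)$ and $|\kappa'|\geq 2$, this happens exactly for $\Omega^1\cM_2(4,-2)$, $\Omega^1\cM_2(2,2,-2)$, $\Omega^2\cM_2(3,1)$ and $\Omega^3\cM_2(4,2)$, besides the minimal strata.
\item \emph{Exchanged nodes.} By the list of hyperelliptic genus one components in the proof of Corollary \ref{C:GenusOneMerging}, exchanged nodes must have equal order. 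This forces $a=2$ and $r=k-1$, which gives $\Omega^2\cM_2(2,2)$, $\Omega^2\cM_2(2,1,1)$, $\Omega^3\cM_2(4,2)$ and $\Omega^3\cM_2(2,2,2)$. A node of order $1-k$ carries a single prong, so the unique smoothing there is hyperelliptic.
\end{itemize}
Each such failure removes one boundary stratum from the count. Outside these cases the relevant boundary strata are nonempty and their smoothings are nonhyperelliptic, so the upper bound is attained.

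Your merge argument for the holomorphic strata with $k\le 3$ is a valid alternative for those particular upper bounds, because a merge has a single class of prong matchings, so distinct components cannot merge to the same one. It does not replace the enumeration above.
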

\begin{proof}
We have seen that there is at most one component unless $k$ is odd and all elements of $\kappa$ are even, in which case there are at most two. By undoing the split, these upper bounds are equalities unless $\Omega^k_r\cM_1(1-k,a-k-1, \kappa')$ is empty or hyperelliptic with $1-k$ and $a-k-1$ exchanged by the involution. Genus one strata are only ever empty if, ignoring marked points, there are two singularities. If $|\kappa'| \geq 2$, this can only occur if $k=1, a=2, r=2, \kappa' = (2,-2)$; the original stratum is then $\Omega^1 \cM_2(2,2,-2)$. If $\kappa' = (b)$, then the exceptions are $r=k-1 = b$ when $k>1$ and $r = a-2 = -b$ when $k=1$. These exceptions are
\[ \Omega^1 \cM_2(4,-2), \text{ } \Omega^1 \cM_2(2,2,-2), \text{ } \Omega^2 \cM_2(3,1), \text{ } \text{and } \Omega^3 \cM_2(4,2).\] 
Each empty boundary stratum reduces the number of expected components by one, which now produces a sharp upper bound on the number of components. In the exceptions due to hyperellipticity, $a=2$ and $r = k-1$ so the exceptions are 
\[ \Omega^2 \cM_2(2,1,1), \text{ } \Omega^2 \cM_2(2,2), \text{ } \Omega^3 \cM_2(4,2), \text{and } \Omega^3 \cM_2(2,2,2).\] 
Each of these strata has its expected number of primitive nonhyperelliptic components reduced by one, which now produces a sharp upper bound on the number of components.
\end{proof}

When $g > 2$ the boundary strata given by Proposition \ref{P:Splitting} are always nonempty. They are nonhyperelliptic after forgetting marked points in the unexceptional cases. When $k>3$, there is one primitive nonhyperelliptic component of these boundary strata unless $k$ is odd and all elements of $\kappa$ are even, in which case there are two (by the induction hypothesis). Because all prong matchings are (locally) equivalent, Lemma \ref{L:ProngsAndComps} implies that there is one primitive nonhyperelliptic component of $\Omega^k \cM_g(\kappa)$ except when $k$ is odd and $\kappa$ is an even partition, in which case we have an upper bound of two components. As before, the $k$-framing of small loops around the nodes are odd for a multiscale $k$-differential in $\Omega^k \cM_{g-1}(1-k, a-k-1, \kappa') \times \Omega^k \cM_0(-k-1, -k-a+1, a)$. This implies that the Arf invariant of the top level $k$-differental and the Arf invariant of the $k$-differential formed by smoothing the nodes coincide. By the induction hypothesis, the Arf invariant distinguishes the two primitive nonhyperelliptic components of $\Omega^k\cM_{g-1}(1-k,a-k-1,\kappa')$, and hence we realize our upper bound of two components. 

When $k=3$, the same argument goes through with the caveat that $\Omega^3 \cM_3(\kappa)$ for $\kappa = (12), (4,8), (4,4,4)$ have splits that may be hyperelliptic. So there are three possible components of boundary strata and hence at most three primitive nonhyperelliptic components of these strata. This upper bound is sharp by Theorem \ref{T:Genus3MinimalK=3}, whose proof we defer. 

When $k=2$, the same argument as with $k>3$ mostly goes through. The cases of $\Omega^2 \cM_3(\kappa, -1)$ where $\kappa = (9), (6,3), (3,3,3)$ are handled as with the exceptions in $k=3$. The only new phenomena is that genus four strata might split to genus three strata that have ``extra'' components. These strata are precisely $\Omega^2 \cM_4(\kappa)$ where $\kappa = (12), (9,3), (6,6), (6,3,3), (3,3,3,3)$. Since these genus four strata have no poles, splits in higher genera don't lead to them and hence genus five strata don't accrue ``extra'' components. ``Extra'' components in genus three and four actually arise by Chen-M\"oller \cite{ChenMoller-Exceptional}. The analysis when $k=1$ is completely analogous.
\end{proof}

\section{Sporadic components of genus $3$ cubic differentials}\label{S:SporadicCubics}

Let $X$ be a compact genus $3$ Riemann surface with sheaf of regular functions $\cO_X$ and canonical sheaf $K_X$. The main result is the following.

\begin{thm}\label{T:Genus3MinimalK=3}
There are at least three primitive components of $\Omega^3 \cM_3(\kappa)$ where $\kappa = (12), (8,4), (4,4,4)$. They are distinguished by the Arf invariant and, given a cubic differential $X$ with divisor $4P$, the generic value of $h^0(\cO_X(P))$.
\end{thm}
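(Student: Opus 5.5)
The plan is to build two algebraic invariants of a cubic differential $(X,\omega)$ in $\Omega^3\cM_3(\kappa)$, where $\kappa=(12),(8,4),(4,4,4)$, and show they separate three components. In all three cases the divisor of $\omega$ is $4P$ with $P$ an effective divisor of degree $3$. The first invariant is $\theta:=\cO_X(2P)\otimes K_X^{-1}$. Since $4P\sim 3K_X$, we have $2(2P-K_X)\sim K_X$, so $\theta$ is a theta characteristic. The first step is to identify the Arf invariant of Section \ref{S:Arf} of the $3$-framing $w$ with the parity $h^0(\theta)$ mod $2$.

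\textbf{Step 1: Arf invariant equals theta-parity.} Since $k=3$ is odd and all orders are even, $w$ mod $2$ is a $1$-framing on the closed surface. Locally $\omega=f\,dz^3$ with $f$ vanishing to order $4m$ at each point of $P$ (with multiplicity $m$). The $\bZ/2$-reduction of $w$ is the winding-number function of the square-root-type object $\omega\otimes K_X^{-1}$, a $2$-differential-like section of $\cO(4P-K_X)=\theta^{\otimes 2}$. By Atiyah \cite{Atiyah-spin} and Johnson \cite{Johnson-Spin}, such a spin structure has Arf invariant $h^0(\theta)$ mod $2$. I would make this precise by the same local computation used in the abelian case (Kontsevich-Zorich), checking only the twist at the zeros. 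Consequently the Arf invariant is locally constant and equals $h^0(\cO_X(2P-K_X))$ mod $2$.

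\textbf{Step 2: the invariant $h^0(\cO_X(P))$.} Since $\deg P=3$ and the primitive components are nonhyperelliptic, we have $h^0(\cO_X(P))\in\{1,2\}$. It equals $2$ exactly when $P$ moves in a $g^1_3$, and on a nonhyperelliptic genus three curve every $g^1_3$ is $|K_X-q|$ for some point $q$. In that case $4P\sim 3K_X$ forces $4q\sim K_X$, so $q$ is a hyperflex of the canonical quartic. The following all hold on this locus:
\begin{itemize}
\item $P\sim K_X-q$;
\item $2P-K_X\sim K_X-2q$;
\item $h^0(K_X-2q)=1$, since the tangent line at $q$ is the unique line through $2q$.
\end{itemize}
Hence $\theta$ is odd on this locus. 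By upper semicontinuity, $\{h^0(P)\ge 2\}$ is closed in the stratum. To see that it is also open, i.e. a union of components, I would compute its dimension and compare it with the dimension of the stratum, or with the dimension of every component, which is bounded below by period coordinates \cite{BCGGM-k-diff}. The locus is parametrized by quartics with a hyperflex $q$ together with an effective $P\in|K_X-q|$ of the required shape $(3p)$, $(2p_1+p_2)$ or $(p_1+p_2+p_3)$. It should have the full dimension of the stratum, for instance $5=\dim\cM_3-1$ for $\kappa=(12)$ after projectivization, matching the count. In parallel I would check that the generic primitive cubic differential has $h^0(P)=1$ by exhibiting one example.

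\textbf{Step 3: distinguishing three components.} Step 2 shows that the hyperflex locus $\{h^0(P)=2\}$ contains a union of primitive components on which $\theta$ is odd. On the complementary locus $\{h^0(P)=1\}$, I would realize both Arf invariants. I would smooth the multiscale differentials $\Omega^3\cM_2(1-3,a-4,\kappa')\times\Omega^3\cM_0(-4,-a-2,a)$ from the proof of Theorem \ref{T1}, using the two genus two components distinguished by spin. Then I would verify that a generic smoothing has $h^0(P)=1$, by semicontinuity from a boundary point where the limiting divisor does not move. Since the odd-spin, $h^0(P)=1$ component and the $h^0(P)=2$ component have the same Arf invariant, they are separated by $h^0(P)$. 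This yields at least three components.

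\textbf{Main obstacle.} I expect the hardest part to be the openness in Step 2: showing that the hyperflex locus is genuinely a union of components and not a proper subvariety of an odd component. This needs a careful dimension count, or alternatively a deformation-theoretic argument in the spirit of Chen-M\"oller \cite{ChenMoller-Exceptional}: a first-order deformation within the stratum preserves the $g^1_3$ because $h^1$ of the relevant twisted bundle vanishes. I would try the Chen-M\"oller argument first, adapting their treatment of quadratic differentials with divisor $2P$ and $h^0(P)$ jumping.
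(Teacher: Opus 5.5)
Your outline matches the paper's in structure. Step 1 is Lemma \ref{L:Arf}: for $P=3p$ one has $\cO_X(2P-K_X)\cong K_X^{\otimes 2}(-6p)$. Your observation that $h^0(\cO_X(P))=2$ forces $P\sim K_X-q$ with $4q\sim K_X$, hence $h^0(\theta)=h^0(K_X-2q)=1$, is also how the paper sees that the sporadic component is odd. Your justification of Step 1 is garbled: $\theta^{\otimes 2}=\cO_X(4P-2K_X)\cong K_X$, not $\cO_X(4P-K_X)$. It is easily repaired: for any meromorphic $1$-form $\alpha$, $\omega/\alpha^2$ is a $1$-form with even divisor $4P-2\,\mathrm{div}\,\alpha$, and its framing agrees with that of $\omega$ mod $2$.

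\textbf{The gap is Step 2, which carries the real content of the theorem.} You never show that primitive points with $h^0(\cO_X(P))=2$ exist. Nor do you show that this closed locus is a union of components rather than a proper subvariety of an odd component. The dimension count you give is also wrong. For $\kappa=(12)$ the projectivized stratum has dimension $2g-2+n-1=4$, not $5$. Moreover, the locus is not parametrized by hyperflex curves alone. You also need $p\neq q$ with $3p\in|K_X-q|$, i.e.\ a second total ramification point of the projection from $q$ (equivalently $3p\sim 3q$). That is a codimension-one condition on the $5$-dimensional hyperflex locus. So an upper bound of $4$ is immediate, but nonemptiness and a matching lower bound are missing. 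For $(8,4)$ and $(4,4,4)$ the analogous count does come out right, namely $5$ and $6$.

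The paper supplies both inputs for $(12)$. It starts from the Chen--M\"oller exceptional component $\cQ\subseteq\bP\Omega^2\cM_3(9,-1)$, a genuine $5$-dimensional component on which $K_X\sim 3p+r$ identically. Consider the locus $\cL$ where $r$ coincides with the pole $q$ of the quadratic differential. By Weierstrass preparation, $\cL$ is either empty or pure of codimension one, hence $4$-dimensional. The map $(X,Q)\mapsto(X,\omega Q)$ sends $\cL$ onto a union of components. Nonemptiness of $\cL$ is certified by an explicit smooth plane quartic with a non-hyper flex (Proposition \ref{P:ConstructionOfSporadicComponentK3}).

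Within your framework you could instead argue determinantally. On the $6$-dimensional space of triples $(X,q,p)$ with $4q\sim K_X$, the condition $h^0(K_X-q-3p)\geq 1$ is the degeneracy locus of a map from a rank-$2$ bundle to a rank-$3$ bundle, so its components have codimension at most $2$. You would still need an explicit curve to get off the component $\{p=q\}$, which consists of the non-primitive cubes $\eta^3$.

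\textbf{Step 3 also needs more than you say.} At the boundary point you propose, $P$ sits on a rational bridge meeting the genus-two curve twice. For $\kappa=(12)$ the naive limit of $\cO_X(P)$ there has $h^0=1+4-2=3$, so semicontinuity gives nothing unless you twist by the bridge. The paper sidesteps this by using an explicit quartic with $h^0(K_X(-3p))=0$ (Proposition \ref{P:Component1}) and then attaching spheres. In that construction the stabilized limit curve is smooth, so semicontinuity applies directly.
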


Let $X$ be a surface in a primitive component of $\Omega^3 \cM_3(12)$ with cubic differential $w$ having divisor $12p$. Since $3K_X = 12p$ in the Jacobian, $K(-4p)$ is $3$-torsion in the Picard group. Since $w$ is primitive, i.e. since $K_X \ne 4p$, $K(-4p)$ is not trivial. So there is a homomorphism $\chi \in H^1(X, \bZ/3)$ so that if $\bC_\chi$ is the associated local system, $K_X(-4p) = \cO_X \otimes \bC_\chi$. The kernel of $\chi$ determines a holomorphic cover $f: Y \ra X$ so that $f^*(K_X(-4p))$ has a global nonzero section $\omega$. Write $Q = q_1 + q_2 + q_3 = f^{-1}(p)$. Then $f^*(K_X(-4p)) = K_Y(-4Q)$. So $\omega$ is a holomorphic $1$-form whose divisor is $4Q$. Since $\omega^3$ and $f^* w$ are both global sections of $K_Y^{\otimes 3}(-12Q)$, they must be equal (up to rescaling $\omega)$, which shows that $f: Y \ra X$ is the holonomy cover of $(X, w)$. By Johnson \cite{Johnson-Spin}, the Arf invariant of $(Y, \omega)$ is $h^0(\cO_Y(2Q))$ mod $2$. 

\begin{lem}\label{L:Arf}
The Arf invariant of $(X,w)$ is $h^0(K_X^{\otimes2}(-6p))$ mod $2$. 
\end{lem}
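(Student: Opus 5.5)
The plan is to identify the spin structure, i.e. the theta characteristic, determined by the $3$-framing of $(X,w)$ reduced mod $2$. Then Johnson's (Atiyah's) theorem gives the Arf invariant as $h^0$ of that theta characteristic mod $2$. The candidate is $L := K_X^{\otimes 2}(-6p)$.

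First, $L$ is a square root of $K_X$. Using $3K_X \sim 12p$,
\[ L^{\otimes 2} = K_X^{\otimes 4}(-12p) \cong K_X. \]
So $L$ is a theta characteristic.

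Second, I would rewrite $w$ locally as the cube of a twisted $1$-form. By the discussion preceding the lemma, $K_X \cong \cO_X(4p) \otimes \bC_\chi$, where $\chi$ has order $3$. The form $\omega$ on $Y$ descends to a multivalued holomorphic $1$-form $\omega_X$ on $X$: it is a section of $K_X \otimes \bC_\chi^{-1}$ with divisor $4p$, and $\omega_X^3 = w$. Since $w = \omega_X^3$ locally, the winding number function of $w$ equals that of $\omega_X$. Hence the $3$-framing of $w$ is $3$ times the (well-defined, integer-valued) $1$-framing of the flat metric of $\omega_X$. Mod $2$ this is simply the $1$-framing of $\omega_X$ mod $2$.

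Third, I would compute the spin structure of a twisted $1$-form whose zeros all have even order. Away from $p$, the horizontal direction of $\omega_X$ trivializes $TX$ up to the monodromy of $\bC_\chi$. A square root of $\omega_X$ is a section of a line bundle $M$ with $M^{\otimes 2} \cong K_X \otimes \bC_\chi^{-1}$ and divisor $2p$. The spin bundle is therefore
\[ \cO(2p) \otimes \bC_\chi^{1/2}, \]
where the square root $\bC_\chi^{1/2}$ must be chosen so that the mod-$2$ framing is the one induced from the $\bZ$-valued winding numbers. Because $\chi$ has odd order, the correct choice is the one of order $3$, namely $\bC_\chi^{1/2} = \bC_\chi^{2} = \bC_\chi^{-1}$. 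Choosing another square root would twist by a nontrivial $2$-torsion character and change the mod-$2$ winding numbers. Then
\[ \cO(2p) \otimes \bC_\chi^{2} \cong \cO(2p) \otimes (K_X(-4p))^{\otimes 2} = K_X^{\otimes 2}(-6p) = L. \]
The main obstacle is justifying this choice of square root carefully. I would do it by checking on a symplectic basis: the winding number of $w$ along a curve equals that of $\omega_X$, the $\chi$-monodromy contributes only multiples of $3$, and so it is invisible mod $2$ once the order-$3$ root is taken. Comparing with the standard dictionary between spin structures and winding numbers for abelian differentials with even zeros then confirms that this spin structure is $L$.

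Finally, Johnson's theorem \cite{Johnson-Spin} gives $\mathrm{Arf} = h^0(L) \bmod 2$. As a consistency check, pulling back to $Y$ gives $f^*L = \cO_Y(2Q)$, which matches the stated Arf invariant $h^0(\cO_Y(2Q))$ of $(Y,\omega)$. The pullback of the spin structure of $(X,w)$ under the odd-degree unramified cyclic cover $f$ is the spin structure of $(Y,\omega)$.
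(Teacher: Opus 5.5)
You take a genuinely different route from the paper, and it can be completed. As written, though, its crux is asserted rather than proved: that the spin structure of $w \bmod 2$ is $\cO_X(2p)\otimes\bC_\chi^{\otimes 2}$.

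The supporting claim that the $3$-framing of $w$ is $3$ times an integer-valued $1$-framing of $\omega_X$ is false. $\omega_X$ and $w$ have the same flat metric, whose winding numbers lie in $\frac13\bZ$. Moreover $w \bmod 3$ is the holonomy $\chi\neq 0$, so $w(\gamma)$ is not divisible by $3$ when $\chi(\gamma)\neq 0$. Only $w \bmod 2$ matters, so this is not fatal. But the remark that the $\chi$-monodromy ``contributes only multiples of $3$'' does not single out the order-$3$ square root.

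The clean repair is to turn your closing consistency check into the argument:
\begin{enumerate}
\item Two theta characteristics on $X$ differ by a $2$-torsion bundle $\eta$.
\item If $f^*\eta\cong\cO_Y$, adjunction gives a nonzero map $\eta\to f_*\cO_Y=\bigoplus_{j=0}^{2}\cO_X\otimes\bC_\chi^{\otimes j}$.
\item So $\eta\cong\cO_X\otimes\bC_\chi^{\otimes j}$ for some $j$. This is $3$-torsion, hence $\eta$ is trivial, and $f^*$ is injective on theta characteristics.
\item Since $f^*w=\omega^{3}$, $f^*$ carries the spin structure of $(X,w)$ to that of $(Y,\omega)$, namely $\cO_Y(2Q)=f^*L$.
\item Hence the spin structure of $(X,w)$ is $L$, and Atiyah--Johnson on $X$ gives the lemma.
\end{enumerate}

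The paper never names a theta characteristic on $X$. It first proves $\mathrm{Arf}(X,w)=\mathrm{Arf}(Y,\omega)$ topologically, using a symplectic basis in which only $b_3$ has nontrivial holonomy. Every other basis curve then has integral winding number and lifts to $Y$ with the same winding number, while the single lift of $b_3$ has winding number $3v(b_3)$. It then applies Johnson on $Y$ and splits $f_*\cO_Y(2Q)=\cO_X(2p)\oplus K_X(-2p)\oplus K_X^{\otimes 2}(-6p)$ into $\chi$-eigensheaves. Finally it discards the first two summands mod $2$ by Riemann--Roch.

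That route needs only the standard spin dictionary for honest abelian differentials on $Y$. Yours applies Atiyah--Johnson directly on $X$, and after the repair the identification also passes through $Y$. In exchange it proves more: the spin structure of $(X,w)$ is itself $K_X^{\otimes 2}(-6p)$. It also explains the paper's bookkeeping: the two discarded summands are $L\otimes\bC_\chi\cong\cO_X(2p)$ and $L\otimes\bC_\chi^{\otimes 2}\cong K_X(-2p)$, which are Serre dual to each other because $L^{\otimes 2}\cong K_X$.
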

\begin{proof}
Let $a_3$ be a simple closed curve so that the holonomy homomorphism is given by $a_3^*$ mod $3$, where $a_3^*$ is the Poincare dual of $a_3$. Complete this to a symplectic basis $(a_i, b_i)_{i=1}^3$ of $X$. The winding number $v(\cdot)$ of these curves is valued in $\frac{1}{3}\bZ$ and the Arf invariant of $X$ is
\[ \sum_{i=1}^3 (3v(a_i)+1)(3v(b_i)+1) = (v(a_3)+1)(3v(b_3)+1)+\sum_{i=1}^2 3(v(a_i)+1)(v(b_i)+1) \]
where we have used that $(3x+1) = 3(x+1) = (x+1)$ mod $2$ for any integer $x$. But the expression on the right is the Arf invariant for $(Y, \omega)$ which has a symplectic basis given by the three lifts of $(a_i, b_i)_{i=1}^2$ together with the unique lift of $b_3$ and any of the three lifts of $a_3$.

Recall that $\bC_\chi$ is the local system associated to $\chi$. Then,
\[ f_* \cO_Y = \cO_X \otimes (\bC \oplus \bC_\chi \oplus \bC_\chi^{\otimes 2}) = \cO_X \oplus K_X(-4p) \oplus K_X^{\otimes 2}(-8p), \] 
which is just a way of saying that holomorphic functions defined on open $\bZ/3$-invariant subsets of $Y$ can be decomposed into eigenspaces for the deck group action. The Arf invariant of $(Y, \omega)$ is the parity of
\[ h^0(\cO_Y(2Q)) = h^0(f_* \cO_Y(2Q)) = h^0(\cO_X(2p)) + h^0(K_X(-2p)) + h^0(K_X^{\otimes 2}(-6p)).\]
By Riemann-Roch, the sum of the first two summands is even.
\end{proof}

\begin{lem}\label{L:QDsAtp}
If $(X, w)$ has odd Arf invariant, then $h^0(K_X^{\otimes 2}(-6p)) = 1$ and $h^0(K_X^{\otimes 2}(-7p)) = 0$. If $(X, w)$ has even Arf invariant, then $h^0(K_X^{\otimes 2}(-6p)) = 0$.
\end{lem}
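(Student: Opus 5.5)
Write $\eta := K_X(-4p)$, the nontrivial $3$-torsion line bundle above, and $L := K_X^{\otimes 2}(-6p) \cong K_X(-2p)\otimes \eta$. This is a line bundle of degree $2$ on the genus $3$ curve $X$. By Lemma \ref{L:Arf}, the Arf invariant of $(X,w)$ is $h^0(L)$ mod $2$. Clifford's theorem gives $h^0(L) \le 2$, with equality only if $X$ is hyperelliptic and $L$ is the hyperelliptic class $H$. The plan is to pin down $h^0(L)$ by ruling out $h^0(L)=2$, and then to rule out a section of $L(-p)$ in the odd case. Throughout we use only the relations $3K_X \sim 12p$ and $K_X \not\sim 4p$, the latter being primitivity.

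\textbf{Step 1: $h^0(L)\neq 2$.} Suppose $h^0(L)=2$. Then $X$ is hyperelliptic, $L \cong H$ and $K_X \cong 2H$. From $L = 2K_X - 6p$ we get $H \sim 4H - 6p$, that is, $6p \sim 3H$. On a hyperelliptic curve of genus $3$ we have $h^0(3H)=4$, and every section of $3H$ is pulled back from $\cO_{\bP^1}(3)$. Hence the divisor $6p \in |3H|$ is invariant under the hyperelliptic involution, so $p$ is a Weierstrass point and $2p \sim H$. But then $K_X \sim 2H \sim 4p$, contradicting primitivity. It follows that $h^0(L)=1$ when the Arf invariant is odd, and $h^0(L)=0$ when it is even. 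This already proves the even case and the first half of the odd case.

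\textbf{Step 2: odd case, $h^0(L(-p))=0$.} Suppose instead that $L(-p) = K_X^{\otimes 2}(-7p)$ has a section. Since it has degree $1$, it equals $\cO_X(q)$ for some point $q$, so $2K_X \sim 7p+q$. Subtracting this from $3K_X \sim 12p$ gives $K_X \sim 5p - q$, that is, $K_X + q \sim 5p$.

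Now $h^0(K_X+q) = h^0(K_X) = 3$ by Riemann--Roch, so $q$ is a base point of $|K_X+q|$. The effective divisor $5p$ lies in this linear system, so $q$ must appear in it, and therefore $q=p$. Then $K_X \sim 4p$, which again contradicts primitivity. Hence $h^0(K_X^{\otimes 2}(-7p))=0$.

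The main obstacle is Step 2. The tempting route through $3p \sim 3q$ and trigonal maps leads nowhere, whereas the base-point argument for $|K_X+q|$ settles the case directly. Step 1 relies on the standard fact that $H^0(mH)$ consists of pullbacks from $\bP^1$ for $m \le g$, which I would cite rather than reprove.
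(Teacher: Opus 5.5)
Your proof is correct. Step 2 is the paper's argument, phrased algebraically.

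\begin{itemize}
\item The paper notes that a quadratic differential $w'$ vanishing to order $8$ at $p$ would make $w/w'$ a holomorphic $1$-form with divisor $4p$; this is your case $q=p$.
\item It also notes that a $w'$ vanishing to order exactly $7$ at $p$ and once at $q\neq p$ would make $w/w'$ a meromorphic $1$-form with divisor $5p-q$. Such a form has a single simple pole, which contradicts the residue theorem.
\item Your observation that $h^0(K_X+q)=h^0(K_X)$, so $q$ is a base point of $|K_X+q|$, is exactly the Riemann--Roch form of that residue-theorem fact.
\end{itemize}

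Where you diverge is Step 1, and that step is unnecessary. Step 2 never uses the parity of the Arf invariant, so it shows $h^0(K_X^{\otimes 2}(-7p))=0$ unconditionally. Imposing one further order of vanishing at $p$ drops $h^0$ by at most one, so
\[ h^0(K_X^{\otimes 2}(-6p))\le h^0(K_X^{\otimes 2}(-7p))+1=1. \]
This is how the paper obtains the bound, and together with Lemma~\ref{L:Arf} it settles both parity cases at once.

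Your Step 1 is correct: Clifford's equality case, $K_X\sim 2H$, the fact that every divisor in $|3H|$ is pulled back from $\bP^1$, and hence $p$ Weierstrass and $K_X\sim 4p$. But it is redundant. Proving Step 2 first shortens the argument and removes any dependence on those heavier facts about hyperelliptic curves.
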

\begin{proof}
Since, by primitivity, $K_X \ne 4p$, it follows that $X$ has no quadratic differential $w'$ that vanishes to order at least (and hence exactly) $8$ at $p$ since then $\frac{w}{w'}$ would be a holomorphic $1$-form with divisor $4p$. So $h^0(K_X^{\otimes 2}(-8p)) = 0$.

If $X$ had a quadratic differential $w'$ that vanished to order exactly $7$ at $p$ and to order $1$ at $q \ne p$, then $\frac{w}{w'}$ would be a meromorphic $1$-form with divisor $5p-q$, which is a violation of the residue theorem. So $h^0(K_X^{\otimes 2}(-6p)) \leq 1$. The claim now follows from Lemma \ref{L:Arf}.
\end{proof}

By Lemma \ref{L:QDsAtp}, if $(X, w)$ has even Arf invariant, then $h^0(K_X(-3p)) = 0$. We suppose now that $(X, w)$ has odd Arf invariant, which implies (by Lemma \ref{L:QDsAtp}) that $X$ has a quadratic differential $Q$ that vanishes to order exactly $6$ at $p$. We will see that the components of $\Omega^3 \cM_3(12)$ with odd Arf invariant are distinguished by whether or not this quadratic differential is the square of a holomorphic $1$-form, i.e. whether $h^0(K_X(-3p))$ is $0$ or $1$.

\begin{prop}\label{P:Component1}
There is a primitive component of $\Omega^3 \cM_3(12)$ consisting of generically nonhyperelliptic surfaces, with odd Arf invariant, and so that $h^0(K_X(-3p)) = 0$ generically. 
\end{prop}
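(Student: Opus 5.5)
Our plan is to exhibit one explicit cubic differential in $\Omega^3 \cM_3(12)$ that is primitive, nonhyperelliptic, has odd Arf invariant and satisfies $h^0(K_X(-3p)) = 0$. We then pass from this one surface to its whole component by semicontinuity. Both $h^0(K_X(-3p)) = 0$ and nonhyperellipticity are open conditions. Primitivity and the Arf invariant are constant on components. Hence a single such $(X,w)$ suffices, and the generic statements follow immediately.

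\textbf{Construction.} The natural source is the hyperelliptic-free odd spin setting of Chen-M\"oller \cite{ChenMoller-Exceptional}. Take a nonhyperelliptic genus $3$ curve $X$, i.e. a smooth plane quartic, together with a point $p$. Then $K_X = \cO_X(1)$. We need $3K_X \sim 12p$ with $K_X \not\sim 4p$. Equivalently, we need a cubic curve $C$ meeting the quartic only at $p$, with multiplicity $12$, such that $C$ is not the cube of a line.

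One route is to start from a quadratic differential vanishing to order $6$ at $p$. By Lemma \ref{L:QDsAtp} such a conic $D$ is forced anyway. Then $2K_X - 6p$ is effective, so $2K_X \sim 6p + q_1 + q_2$ for some points $q_1, q_2$. We then look for a cubic section of the form $D\cdot L$ or an irreducible cubic. Rather than solving this by hand, we would do the following. First, write down the family of quartics having a point $p$ with a conic $D$ meeting $X$ at $p$ with multiplicity exactly $6$. Second, impose the existence of a cubic with contact order $12$. This is a dimension count: the condition is codimension $\dim \cM_3 + 1 - \dim \bP\Omega^3\cM_3(12) = 7 - 5 = 2$ inside $\{(X,p)\}$. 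Third, verify numerically, or for a specific symmetric example such as a quartic invariant under $\bZ/3$ acting with $p$ fixed, that a solution exists with $D$ not a double line.

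The condition $h^0(K_X(-3p)) = 0$ says exactly that $p$ is not a flex point on a line with contact order $3$ that is also a tangent of contact $\ge 3$ in a way making $D$ a double line. So we need the tangent line at $p$ to have contact order exactly $2$ or $3$ but not $4$, and $D \ne$ (tangent line)$^2$.

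\textbf{Arf invariant.} Odd Arf invariant is read off from Lemma \ref{L:Arf}: it suffices that $h^0(K_X^{\otimes 2}(-6p)) = 1$, which holds by construction since $D$ exists. Primitivity holds because the cubic is not a cube of a line, i.e. $K_X \ne 4p$. Nonhyperellipticity is automatic for a smooth plane quartic.

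\textbf{Main obstacle.} The main obstacle is actually producing an example, or proving existence, with $D$ reduced rather than the square of the tangent line. We would first try a $\bZ/3$-symmetric quartic such as $y^3z = x^4 + a x^2 z^2 + \cdots$ around $p = [0:1:0]$. There the symmetry makes the contact-order conditions linear in a few parameters. Failing that, we would use the degeneration approach of the paper: smooth a multiscale differential whose top level is a genus $2$ or genus $1$ piece in a component where the analogue of $D$ visibly does not come from a $1$-form, and check that the property persists by semicontinuity.
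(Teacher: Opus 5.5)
Your reduction is essentially the paper's. It suffices to produce one smooth plane quartic $X$, a point $p$ that is not a flex, a cubic curve meeting $X$ only at $p$, and a conic with contact order $6$ at $p$. Lemmas \ref{L:Arf} and \ref{L:QDsAtp} then give odd Arf invariant, $h^0(K_X(-3p))=0$ gives primitivity for free, and openness gives the generic statements.

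\textbf{The gap.} You never produce such an example or prove that one exists, and that existence is the whole content of the proposition. The inductive argument in the proof of Theorem \ref{T1} only bounds the number of primitive nonhyperelliptic components of $\Omega^3\cM_3(12)$ above by three. Nothing abstract shows that the odd-Arf, non-flex one is nonempty. A dimension count cannot fill this in, since it only predicts the dimension of a locus once you know it is nonempty. Yours is also off: $\bP\Omega^3\cM_3(12)$ has dimension $2g-3+n=4$, so it has codimension $3$, not $2$, in $\cM_{3,1}$. The paper closes the gap with explicit $F$, $g$, $h$. The $x^2$-coefficient of $f$ is $-1\neq 0$, so $p$ is not a flex. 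Expanding along $y=\phi(x)$, one checks that $h(x,\phi(x))=-x^6+\cdots$ and that $g(x,\phi(x))=x^{12}+\cdots$.

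\textbf{Your concrete route fails.} Take a $\bZ/3$-invariant quartic of the form $y^3z=f_4(x,z)$ with $p=[0:1:0]$. The tangent line at $p$ is $z=0$, and it meets $X$ only at $p$, with multiplicity $4$. So $p$ is a hyperflex and $K_X\sim 4p$. Then $h^0(3K_X-12p)=1$, and the only cubic differential with divisor $12p$ is $\omega^3$, which is non-primitive and has $h^0(K_X(-3p))=1$.

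\textbf{The non-flex condition is misstated.} $h^0(K_X(-3p))=0$ simply says $p$ is not a flex, i.e.\ the tangent line $L$ has contact exactly $2$; this is the paper's ``coefficient of $x^2$ is nonzero.'' Contact $3$ already gives $h^0(K_X(-3p))=1$. In that case $L^2$ has contact $6$, and by the bound $h^0(K_X^{\otimes 2}(-6p))\le 1$ it is the unique such conic. So your case ``contact $3$ with $D\neq L^2$'' never occurs. This flex/non-flex dichotomy is exactly what separates the two odd-Arf components; the flex one is Proposition \ref{P:ConstructionOfSporadicComponentK3}. It is therefore the thing your construction must control, and your degeneration fallback gives no mechanism for controlling $h^0(K_X(-3p))$ after smoothing.

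\textbf{A minor point.} Your option $C=D\cdot L$ is impossible. It would force $\mathrm{div}(L)=4p$, contradicting primitivity, so the cubic must be irreducible.
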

\begin{proof}
A nonhyperelliptic genus $3$ surface $X$ can be given by a plane quartic, i.e. by a homogeneous degree four polynomial $F \in \bC[x,y,z]$. A $k$-differential $G$ on the plane quartic is determined by a homogeneous polynomial $G$ of degree $k$. Primitivity simply means that $G$ is not the power of a lower degree polynomial. Without loss of generality fix a point $p = [0:0:1]$ on a plane quartic. We can dehomogenize $F$ to write it as $f(x,y) \in \bC[x,y]$, a degree $4$ polynomial. Now $p$ corresponds to $(0,0)$. We can apply projective transformations to ensure that $T_p X$ is the $x$-axis, which simply says that $f(x,y) = y + h.o.t.$ where \emph{h.o.t.} will mean ``higher order terms''.

The point $p$ has the property\footnote{In classical plane algebraic geometry, this called being a \emph{flex} point.} that $h^0(K_X(-3p)) = 1$ if the tangent plane is tangent to $X$ at $p$ to order $2$. The tangent line is given by $t \ra (t,0)$, so the requirement is that $f(x,y) = y + axy + by^2 + h.o.t.$, i.e. the coefficient of $x^2$ vanishes. Conversely, $h^0(K(-3p)) = 0$ if the coefficient of $x^2$ is nonzero. This shows that if this cohomology vanishes at any point in a component of $\Omega^3 \cM_3(12)$, it vanishes generically in that component (which also follows by upper semicontinuity). 

When $f(x,y) = y  + h.o.t.$ we can parameterize the locus where $f = 0$ near $(0,0)$ by $(x, \phi(x))$ where $\phi(x) = \sum_{n \geq 0} a_n x^n$ can be computed by choosing the coefficients recursively to ensure that $f(x, \phi(x)) = 0$. Given a plane curve defined by a polynomial $g \in \bC[x,y]$ that passes through $p = (0,0)$, the intersection multiplicity with $f$ is defined as the order of vanishing of $g(x, \phi(x))$ at $x = 0$. 

To summarize:
\begin{itemize}
    \item Producing a nonhyperelliptic curve $X \in \cM_3$ with a point $p$ so that $h^0(K_X(-3p)) = 0$ requires specifying a smooth plane quartic defined by $F \in \bC[x,y,z]$ so that the dehomogenization $f(x,y) = F(x,y,1) = y+axy+by^2 +cx^2 + h.o.t.$ for some $a,b,c \in \bC$ with $c \ne 0$.
    \item Producing a cubic differential $w$ on $X$ with divisor $12p$ requires producing a cubic $g \in \bC[x,y]$ a cubic so that $g(x, \phi(x))$ vanishes to order $12$ at $(0,0)$. Primitivity is automatic by $h^0(K_X(-3p)) = 0$.
    \item Guaranteeing that $(X, w)$ has odd Arf invariant amounts to producing a quadratic differential that vanishes to order at least $6$ at $p$, i.e. a quadratic $h \in \bC[x,y]$ so that $h(x, \phi(x))$ vanishes to order $6$ at $(0,0)$.
\end{itemize}
The following are the desired polynomials:
\[
F(X,Y,Z)
= X^{4} - X Y^{3} + X^{3} Z - Y^{3} Z - X^{2} Z^{2} - X Y Z^{2} - Y^{2} Z^{2} + Y Z^{3},
\]
and
\[
f(x,y)
= F(x,y,1)
= x^{4} - x y^{3} + x^{3} - y^{3} - x^{2} - x y - y^{2} + y,
\]
so
\[
\phi(x)
= x^{2} + x^{6} + 2x^{7} + 4x^{8} + 8x^{9} + 19x^{10} + 44x^{11} + 101x^{12} + O(x^{13}),
\]
and
\[ g(x,y)  = 2 x^{3} - y^{3} - x^{2} - 2 x y + y, \]
and 
\[h(x,y) = x^{2} - y.\]
%
\end{proof}

\begin{prop}\label{P:ConstructionOfSporadicComponentK3}
There is a primitive component of $\Omega^3 \cM_3(12)$ consisting of generically nonhyperelliptic surfaces, with odd Arf invariant, and so that $h^0(K(-3p)) = 1$. 
\end{prop}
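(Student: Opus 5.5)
The construction will parallel Proposition \ref{P:Component1}, again realizing $X$ as a smooth plane quartic $F$ with $p=[0:0:1]$, $T_pX$ the $x$-axis and the dehomogenization $f(x,y)=y+h.o.t.$. The difference is that we now require $p$ to be a flex, i.e. the $x^2$ coefficient $c$ of $f$ vanishes. Then $h^0(K_X(-3p))=1$, witnessed by the tangent line $y=0$, whose restriction vanishes to order at least $3$ at $p$. We must keep $p$ a genuine flex and not a hyperflex: if $y$ vanished to order $4$, then $K_X=4p$ and every cubic differential with divisor $12p$ would be the cube of the $1$-form $y$, i.e. imprimitive. So we impose that the $x^3$ coefficient of $f$ is nonzero, which makes $\phi(x)=a_3x^3+\dots$ with $a_3\neq 0$.

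The remaining requirements are as follows. First, we need a cubic $g\in\bC[x,y]$ with $g(x,\phi(x))$ vanishing to order $12$ at $0$. This is $12$ linear conditions on the $10$ coefficients of a cubic, modulo the ideal of $X$. Imposing these conditions is what cuts out the codimension in the moduli of pointed quartics, so we expect to have to choose $F$ carefully, or leave its coefficients as unknowns and solve. Second, we need primitivity. It is no longer automatic, since $g$ could a priori be $y^3$ times a unit, which is impossible degree-wise only if the order of vanishing of $y$ is $4$. With $p$ an ordinary flex, $\operatorname{ord}_p(y)=3$, so $y^3$ has order $9\neq 12$. Since any $1$-form with divisor $4p$ would force a hyperflex, the condition $K_X\neq 4p$ holds, and this certifies primitivity. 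Third, we need odd Arf invariant. By Lemma \ref{L:QDsAtp}, this amounts to a quadric $h$ with $h(x,\phi(x))$ of order at least $6$. Here this is automatic: $h=y^2$ has order $6$, being the square of the flex $1$-form. Lemma \ref{L:Arf} together with Lemma \ref{L:QDsAtp} then gives odd Arf invariant. (Indeed, $h^0(K_X^{\otimes 2}(-6p))\ge 1$ forces odd parity via Lemma \ref{L:QDsAtp}'s dichotomy.)

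Concretely, I would take $f=y+axy+by^2+dx^3+\dots$ with undetermined coefficients, compute $\phi$ recursively to order $12$, and solve for a cubic $g$. One can normalize $g\equiv y\cdot(\text{linear})+\dots$, reducing to polynomial equations in the coefficients of $f$ and $g$. I would then exhibit an explicit integer solution, as in Proposition \ref{P:Component1}, and verify by computer algebra that $F$ is smooth (nonvanishing discriminant / gradient check), that $d\neq 0$, and that $g(x,\phi(x))$ has order exactly $12$.

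Finally, we need to conclude that this gives a component different from that of Proposition \ref{P:Component1}. Nonhyperellipticity is generic because $X$ is a smooth quartic, and the plane quartic locus is open, so this holds on an open subset of the component containing $(X,g)$. The flex condition $h^0(K_X(-3p))=1$ is closed, so we must argue it holds on the whole component, not merely at our point. My plan is the following. On a smooth quartic, $h^0(K_X^{\otimes 2}(-6p))\le 1$ by Lemma \ref{L:QDsAtp}. If $p$ is not a flex, the unique quadric through $6p$ is not a square, while at a flex it is $y^2$. To distinguish the two cases I would use a dimension count. I would compute the dimension of the locus of pairs (quartic, flex point) admitting such a cubic and compare it to $\dim\Omega^3\cM_3(12)$, which equals $2g-2+n=5$ projectively (as a $\bP$-stratum, dimension $2g-3+n=4$ plus scaling). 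If the flex locus produces a family of full stratum dimension, then it is open in a component, and being closed it is a whole component.

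The main obstacle is this dimension count: verifying that the solution set near our explicit example has the full expected dimension, for instance via a tangent-space/Jacobian rank computation at the explicit point. The explicit polynomial search itself is routine.
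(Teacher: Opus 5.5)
\textbf{There is a genuine gap: the dimension count.} Your checks of primitivity (an ordinary flex gives $K_X\not\sim 4p$), of odd Arf invariant (via $y^2$ and Lemmas \ref{L:Arf} and \ref{L:QDsAtp}), and of generic nonhyperellipticity are fine. An explicit quartic of the kind you describe does show that flex points occur in $\bP\Omega^3\cM_3(12)$, but that only gives nonemptiness. The content of the proposition is that the flex locus is a whole component, i.e.\ has dimension $4=\dim\bP\Omega^3\cM_3(12)$. A priori it could be a divisor in a component on which $h^0(K_X(-3p))$ is generically $0$ (e.g.\ the one from Proposition \ref{P:Component1}).

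You defer exactly this step as the main obstacle, and the tool you propose cannot settle it. A Jacobian or tangent-space computation at one point bounds the local dimension only from above. Moreover, the naive count points the wrong way. Flex-pointed curves form a $6$-dimensional locus in $\cM_{3,1}$. The condition $h^0(3K_X-12p)\ge 1$ is a rank condition on the $10\times 12$ evaluation map $H^0(3K_X)\to 3K_X|_{12p}$, of expected codimension $3$. So the expected dimension of the flex locus in the stratum is $3$, not $4$. You need a genuine source of excess dimension that produces a \emph{lower} bound.

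\textbf{How the paper gets the lower bound.} It uses Chen--M\"oller's exceptional component $\cQ\subseteq\bP\Omega^2\cM_3(9,-1)$, which is $5$-dimensional and consists of $(X,Q)$ with $\mathrm{div}(Q)=9p-q$ and $h^0(K_X(-3p))=1$. If $\omega$ is the $1$-form with divisor $3p+r$, then $\omega Q$ is a cubic differential with divisor $12p+r-q$. The locus $\{r=q\}$ is where two sections of the universal curve coincide, so wherever it is nonempty it has codimension at most one in $\cQ$, hence dimension at least $4$. The explicit quartic is used only to show this locus is nonempty.

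\textbf{A self-contained alternative.} If you prefer to avoid citing Chen--M\"oller, the same count can be done directly.
\begin{itemize}
\item Write $K_X\sim 3p+q$. The pencil $|3p|=|K_X-q|$ is cut out by lines through $q$, so $3K_X\sim 12p$ if and only if $3p\sim 3q$, i.e.\ $q$ is a hyperflex.
\item The locus $\{3p\sim 3q,\ p\ne q\}\subseteq\cM_{3,2}$ is cut out by three equations, so every component has dimension at least $5$.
\item On this locus $h^0(\cO_X(3p))\ge 2$, which by Riemann--Roch forces $h^0(K_X(-3p))\ge 1$.
\item Requiring $K_X\sim 3p+q$ (that $q$ is the residual point of the tangent line at $p$) is one further equation.
\end{itemize}
This gives dimension at least $4$ through your explicit example.
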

\begin{proof}
By Chen-M\"oller \cite{ChenMoller-Exceptional}, there is a (projectived) component $\cQ \subseteq \bP(\Omega^2 \cM_3(9,-1))$ whose points $(X, Q)$ have divisor $9p-q$ so that $h^0(K_X(-3p)) = 1$. In other words, $X$ has a holomorphic $1$-form $\omega$ with divisor $3p+r$. This form cannot vanish to order $4$ at $p$ since then $\omega^2/Q$ would be a degree $1$ map to $\mathbb{P}^1$. So $p \ne r$ and $\omega Q$ is a cubic differential with divisor $12p + r - q$. As $\cQ$ is a projectivized component, we may write $\cQ \subseteq \cM_{3,2}$. Let $\cC_{\cQ}$ be the restriction of the universal curve to $\cQ$ and consider the two section $s_r, s_q: \cQ \ra \cC_{\cQ}$ given by marking $r$ and $q$ respectively. It follows from the Weierstrass preparation theorem that the locus $\cL \subseteq \cQ$ where the two holomorphic sections coincide is, after deleting the singular locus, either empty or a pure codimension one complex analytic subspace. If $\cL$ is nonempty, then it has dimension $\dim \cQ - 1 = \dim \bP( \Omega^3 \cM_3(12))$ Therefore, the image of $\cL$ in $\Omega^3 \cM_3(12)$ under the map sending $(X, Q)$ to $(X, \omega Q)$ is a union of components where $h^0(K_X(-3p)) = 1$ and where $\omega Q$ is primitive since we have already shown that $X$ has no holomorphic $1$-form that vanishes to order $4$ at $p$. Moreover, by Lemma \ref{L:QDsAtp}, $(X, \omega Q)$ has odd Arf invariant.

So it remains to show that $\cL$ is nonempty. It suffices to construct a smooth plane quartic $X$ with a flex point $p$, i.e. so that $h^0(K_X(-3p)) = 1$, that is not a hyperflex point, i.e. $h^0(K_X(-4p)) = 0$, and a cubic differential $w$ with divisor $12p$. To see this, the flex and hyperflex conditions give a holomorphic $1$-form $\omega$ on $X$ with divisor $3p+q$ where $p \ne q$ and so $w/\omega$ is a quadratic differential with divisor $9p-q$. As in Proposition \ref{P:Component1}, to produce $(X,p)$ it suffices to produce a quartic homogeneous polynomial $F \in \bC[x,y,z]$ that determines a smooth plane curve and whose dehomogenization $f \in \bC[x,y]$ can be written as $f(x,y) = y + h.o.t.$ where the coefficient of $x^2$ is zero and the coefficient of $x^3$ is nonzero. The cubic differential is then specified by a cubic $g \in \bC[x,y]$ so that $g(x, \phi(x))$ vanishes to order $12$ at $(0,0)$. Set
\[
F(X,Y,Z)
= -X^{4} + X^{3}Y - XY^{3} - Y^{4} - X^{3}Z - X^{2}YZ - XY^{2}Z - Y^{3}Z - XYZ^{2} - Y^{2}Z^{2} + YZ^{3},
\]
and so 
\[
f(x,y)
= F(x,y,1)
= -x^{4} + x^{3}y - xy^{3} - y^{4} - x^{3} - x^{2}y - xy^{2} - y^{3} - xy - y^{2} + y,
\]
and so 
\[
\phi(x)
= x^{3} + 2x^{4} + 3x^{5} + 5x^{6} + 11x^{7} + 27x^{8} + 66x^{9}
+ 162x^{10} + 407x^{11} + 1043x^{12} + O(x^{13}),
\]
and set
\[
g(x,y)
= -x^{3} + x^{2}y - y^{3} - 2xy - y^{2} + y.
\]
\end{proof}

\begin{proof}[Proof of Theorem \ref{T:Genus3MinimalK=3}:]
A primitive component of $\Omega^3 \cM_3(12)$ with even Arf invariant exists by attaching an element of $\Omega^3 \cM_0(-4, -14, 12)$ to the appropriate singularities of a surface with even Arf invariant in $\Omega^3 \cM_2(8,-2)$ (which exists by Sublemma \ref{L:ClassificationForGenus2}) and then smoothing the nodes. A similar construction proves that primitive components with even Arf invariants exist for all strata under consideration. It now suffices to show that there are at least two primitive components in these strata with odd Arf invariant. Let $X$ be a cubic differential in one of these strata. We will show that the generic value of $h^0(K_X(-P))$ distinguishes components with odd Arf invariant. 

To produce components where $h^0(K_X(-P)) = 0$ take a surface in the primitive component of $\Omega^3 \cM_3(12)$ where $h^0(K_X(-3p)) = 0$ generically (which exists by Proposition \ref{P:Component1}) and glue to the zero a sphere in $\Omega^3 \cM_3(-18, \kappa)$. Smoothing the singularities creates the desired component since the Arf invariant and primitivity are unchanged under this operation. Upper semicontinuity shows that $h^0(K_X(-P)) = 0$ after smoothing.

Now we will produce a component where $h^0(K_X(-P)) = 1$ for all surfaces $(X, P)$ in the (projectivized) component. By Chen-M\"oller \cite{ChenMoller-Exceptional}, there is a component $\cQ$ of the partial compactification of $\Omega^2 \cM_2(3,3,3,-1)$ in the multiscale compactification consisting of multiscale quadratic differentials with exactly one top vertex, which, additionally, has genus three and where $h^0(K_X(-P)) = 1$ where $3P-D$ is the divisor of the differential on the top vertex. Let $(X, Q) \in \cQ$ be the pair of the genus three top vertex and the quadratic differential $Q$ on it. Let $\omega$ be the holomorphic $1$-form on $X$ that vanishes at $P$. As in Proposition \ref{P:ConstructionOfSporadicComponentK3}, $(X, \omega Q)$ is a cubic differential that vanishes to order at least $4$ at points in $P$ and where $h^0(K_X(-P)) = 1$. We saw in Proposition \ref{P:ConstructionOfSporadicComponentK3} that the locus where $\mathrm{div}(\omega Q) = 4P$ is nonempty and hence it is codimension one, which constructs our desired components. 
\end{proof}

\section{Proof of Theorem \ref{T2}}

In this section, we only consider strata parameterizing finite area surfaces i.e. all orders of singularities are strictly greater than $-k$. Given a primitive component $\cC$ of a stratum $\Omega^k \cM_g(\kappa)$ let $\cM_\cC$ be the smallest $\mathrm{GL}(2, \bR)$-orbit closure containing the locus $\cN_\cC$ of all holonomy covers of $k$-differentials in $\cC$. 

Consider a surface $X$ with singularities $\Sigma$ in an invariant subvariety $\cM$ and with a collection of parallel cylinders $\bfC$. The \emph{standard shear} of $\bfC$ is $\sum_{C \in \bfC} h_C \gamma_C^*$ where $h_C$ is the height of $C$ and $\gamma_C^*$ is the Poincare dual of its core curve. A cylinder $C$ is \emph{$\cM$-free} if the standard shear in $\bfC := \{C\}$ belongs to the tangent space of $\cM$. Two cylinders $C$ and $D$ are called \emph{twins} if the equation $h_C = h_D$ holds in $\cM$ and the standard shear in $\bfC := \{C, D\}$ belongs to the tangent space of $\cM$. Finally, given a cylinder $C$ on $X$, its \emph{$\cM$-equivalence class} is the maximal collection of cylinders on $X$ that are \emph{$\cM$-parallel} to $C$, i.e. parallel to $C$ on $X$ and on all nearby surfaces in $\cM$. 

\begin{lem} \label{L:Mfreecyl}
The lift of a Euclidean cylinder to a surface in $\cN_\cC$ is $\mathcal{M_\cC}$-free (resp. has a twin) when $k$ is odd (resp. even). 
\end{lem}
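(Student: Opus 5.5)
The plan is to exhibit the relevant shear as a derivative of a path inside $\cN_\cC$. Since $\cN_\cC \subseteq \cM_\cC$, such a derivative lies in the tangent space of $\cM_\cC$.

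Let $C$ be a Euclidean cylinder on a $k$-differential $(X,\omega) \in \cC$, and let $f:\widehat X \ra X$ be the holonomy cover with deck group $\bZ/k$ generated by $\tau$. The core curve $\gamma$ of $C$ has trivial holonomy, since parallel transport around a flat cylinder's core is the identity. It follows that $f^{-1}(C)$ is a disjoint union of $k$ cylinders $\widehat C_0, \hdots, \widehat C_{k-1}$, permuted by $\tau$. The $k$-differential pulls back to $\widehat\omega^k$, and $\tau^*\widehat\omega = \zeta \widehat\omega$ with $\zeta$ a primitive $k$th root of unity. So the lifts $\widehat C_j = \tau^j \widehat C_0$ have holonomy vectors $\zeta^j v$, and heights all equal to $h_C$.

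Next I deform $X$ inside $\cC$ by the \emph{cylinder shear} of $C$ alone. This means cutting along a core curve of $C$ and regluing with a horizontal twist $t$ (in the cylinder's own direction), which keeps the stratum and hence stays in $\cC$. On the cover, this twists each $\widehat C_j$ by $t$ in its own direction, that is, by $t\zeta^j \cdot(\text{unit})$ in the $\widehat\omega$-period. Differentiating at $t=0$ in period coordinates of the stratum of $\widehat X$ gives the tangent vector
\[ \sum_{j=0}^{k-1} \zeta^j h_C \,\gamma_{\widehat C_j}^* \]
(up to normalization), and this vector lies in $T\cM_\cC$. Because $\cM_\cC$ is $\mathrm{GL}(2,\bR)$-invariant, its tangent space is defined over $\bR$ (Eskin–Mirzakhani–Mohammadi, Filip; in fact over $\bQ$ after considering Galois conjugates by Wright's field of definition). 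Taking real and imaginary parts, and more generally Galois conjugates of the coefficients $\zeta^j$, I obtain that the span of these conjugate vectors contains each $\gamma_{\widehat C_j}^*$ in the relevant grouping.

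The grouping is where $k$ odd and $k$ even diverge. The $\widehat C_j$ that are parallel to $\widehat C_0$ are those with $\zeta^j = \pm 1$. When $k$ is odd, only $j=0$ qualifies. Projecting the span onto the $\cM$-parallel class of $\widehat C_0$ then leaves $h_C\gamma_{\widehat C_0}^*$, which is exactly the standard shear, so $\widehat C_0$ is $\cM_\cC$-free. When $k$ is even, $\widehat C_{k/2}$ is parallel to $\widehat C_0$, with opposite orientation, and equal height. The surviving combination is then $h_C(\gamma_{\widehat C_0}^* + \gamma_{\widehat C_{k/2}}^*)$, accounting for orientation, together with the equation $h_{\widehat C_0} = h_{\widehat C_{k/2}}$ holding on all of $\cN_\cC$ and hence on $\cM_\cC$. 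This makes $\widehat C_{k/2}$ a twin of $\widehat C_0$.

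The main obstacle is justifying the projection step rigorously. Linear algebra alone gives that $T\cM_\cC$ contains the real span of the conjugates, but isolating the summand for a single parallel class requires the cylinder deformation theorem of Wright. I would invoke it as follows: since $\sum_j \zeta^j h_C \gamma_{\widehat C_j}^*$ is in $T\cM_\cC$, apply the cylinder deformation theorem to the $\cM_\cC$-equivalence class of $\widehat C_0$. Equivalently, I would use that the tangent space restricted to each equivalence class contains the standard shear of that class. I would then need to check that the $\cM_\cC$-equivalence class of $\widehat C_0$ is exactly $\{\widehat C_0\}$ (for $k$ odd) or $\{\widehat C_0, \widehat C_{k/2}\}$ (for $k$ even). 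I would try to establish this by perturbing $X$ generically in $\cC$ to destroy any accidental parallelism with cylinders not lying over $C$, using genericity of $C$ in the sense of Corollary \ref{C:TypesofCylinders}.
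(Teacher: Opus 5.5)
\textbf{Main gap.} The problem is your final step. Wright's cylinder deformation theorem only puts the standard shear of the \emph{entire} $\cM_\cC$-equivalence class $\bfD$ of $\widehat{C}_0$ into $T\cM_\cC$. Your argument therefore needs $\bfD=\{\widehat{C}_0\}$ (resp. $\{\widehat{C}_0,\widehat{C}_{k/2}\}$), and that is false in general, not just hard to check.

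Suppose $X$ has a second Euclidean cylinder $C'$ such that $C$ and $C'$ together bound a subsurface $T$ with trivial holonomy. Then $T$ lifts to $k$ disjoint copies in $\widehat X$, each bounded by one lift of $C$ and one lift of $C'$. So $\widehat{C}_0$ is homologous in $H_1(\widehat X)$ to a lift of $C'$. The two cylinders stay parallel on every nearby surface of the ambient stratum, so they lie in the same $\cM$-equivalence class for every invariant subvariety $\cM$ containing $\widehat X$.

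Perturbing $X$ inside $\cC$ cannot separate them. Such perturbations only detect parallelism that is accidental on $\cN_\cC$. Corollary \ref{C:TypesofCylinders} does not help either: it controls the saddle connections on the boundary of $C$, not other cylinders parallel to $C$.

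The Galois detour also fails. $T\cM_\cC$ need not be defined over $\bQ$. Even granting all conjugates of $\sum_j\zeta^j\gamma^*_{\widehat{C}_j}$, they span only the primitive-character part of $\mathrm{span}\{\gamma^*_{\widehat{C}_j}\}$, which never contains a single $\gamma^*_{\widehat{C}_j}$.

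\textbf{The missing idea.} Restrict the deformation of $C$ itself to $\bfD$, rather than shearing all of $\bfD$.
\begin{enumerate}
\item The paper normalizes $C$ to circumference $1$ and makes its height exceed the Mirzakhani--Wright cylinder finiteness constant $c_{\cM_\cC}$. Then no cylinder in $\bfD$ can cross a lift of $C$, since its circumference would exceed $c_{\cM_\cC}$.
\item Stretching $C$ is a path in $\cC$, so the derivative $\sigma$ of the corresponding path in $\cN_\cC$ lies in $T\cM_\cC$.
\item Since $\sigma$ does not change the core period of $\widehat{C}_0$, the Apisa--Wright twist space decomposition puts the restriction of $\sigma$ to $\bfD$ in $T\cM_\cC$.
\item By the non-crossing property, that restriction is the standard deformation of $\bfC\cap\bfD$ alone. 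This is the deformation of $\widehat{C}_0$ when $k$ is odd, or of $\widehat{C}_0,\widehat{C}_{k/2}$ when $k$ is even, whatever else $\bfD$ contains.
\end{enumerate}
Your computed shear vector is the right input; the twist space decomposition plus the tall-cylinder normalization is what isolates its part on $\bfD$.

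\textbf{A smaller point.} The equation $h_{\widehat{C}_0}=h_{\widehat{C}_{k/2}}$ on $\cM_\cC$ needs its own justification. It holds because $\cM_\cC$ lies in the quadratic double, on which $\tau^{k/2}$ persists and exchanges the two lifts. An equation holding on $\cN_\cC$ does not pass to its orbit closure for free.
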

\begin{proof}
By the cylinder finiteness theorem \cite[Theorem 5.1]{MirWri}, there is a constant $c_{\cM_\cC}$ so that any two $\cM_\cC$-parallel cylinders have a ratio of circumferences bounded above by $c_{\cM_\cC}$. Let $X \in \cC$ have a Euclidean cylinder $C$ which we normalize to have circumference $1$ and which, by increasing its height, we may suppose has height greater than $c_{\cM_\cC}$.

Let $Y$ be the holonomy cover of $X$ and let $\bfC$ be the lifts of $C$ to $Y$. Arbitrarily fix some $\wt{C} \in \bfC$. Let $X_t$ be the family of surfaces in $\cC$ formed by sending the height of $C$ to $\infty$ while fixing its complement. Let $Y_t$ be the corresponding holonomy covers and let $\sigma$ be the corresponding tangent direction at $Y$. Let $\bfD$ be the $\cM_\cC$-equivalence class of $\wt{C}$.

By the twist space decomposition theorem \cite[Proposition 4.20]{ApisaWrightHighRank}, since $\sigma$ does not change the period of the core curve of $\wt{C}$, the restriction of $\sigma$ to $\bfD$ belongs to $T_Y \cM_\cC$. However, any cylinder in $\bfD$ not contained in $\bfC \cap \bfD$ cannot cross a cylinder in $\bfC$ (if it did, its circumference would exceed $c_\cM$) and so it is unchanged by $\sigma$. So the restriction of $\sigma$ to $\bfD$ is the standard shear on $\bfC \cap \bfD$. This intersection only contains $\wt{C}$ if $k$ is odd and $\wt{C}$ and a twin when $k$ is even. 
\end{proof}

Given a surface $X\in \cC$, let $f: Y \ra X$ be its holonomy cover. Let $P$ be the collection of singularities on $X$, and let $Q$ be the collection of their preimages on $Y$.  Let $\rho: \pi_1(X-P) \ra \bZ/k$ be the holonomy homomorphism. Let $t: Y \ra Y$ generate the deck group.

\begin{lem}\label{L:HypOrbClosure}
If $\cM_{\cC}$ is contained in a hyperelliptic locus, then either the hyperelliptic involution is $t^{k/2}$ or $\cC$ is a hyperelliptic component. Conversely, if $\cC$ is a hyperelliptic component, then $\cM_{\cC}$ is contained in an abelian or quadratic double. 
\end{lem}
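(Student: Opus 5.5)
The plan is to treat the two directions separately. In both, the basic tool is that an involution preserving the divisor of an abelian differential is determined by what it does on the generic surface of a $\GL$-invariant locus. Here the locus is $\cN_\cC$, the set of holonomy covers.

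\textbf{Forward direction.} Suppose $\cM_\cC$ lies in a hyperelliptic locus. Then every $Y \in \cN_\cC$ carries a hyperelliptic involution $\iota$ with $\iota^*\omega = -\omega$.

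\begin{itemize}
\item First, $\iota$ normalizes the deck group $\langle t \rangle$. The reason is that $t^*\omega = \zeta\omega$ for a primitive $k$th root of unity $\zeta$. So $\iota t \iota^{-1}$ is again an automorphism multiplying $\omega$ by $\zeta$. Such an automorphism differs from $t$ by an automorphism preserving $\omega$, and for generic $Y \in \cN_\cC$ this difference lies in $\langle t\rangle$.
\item In fact $\iota$ is central: the hyperelliptic involution is central in $\Aut(Y)$ because it is unique. Hence $\iota$ commutes with $t$ and descends to an involution $\bar\iota$ of $X = Y/\langle t \rangle$.
\item If $\iota \in \langle t\rangle$, then since $\iota$ has order two we get $\iota = t^{k/2}$, and $k$ is even.
\item Otherwise $\bar\iota$ is a nontrivial involution of $X$ with $\bar\iota^* w = w$, since $\iota^*\omega^k = (-1)^k\omega^k$ and the sign issue is absorbed into the choice of lift $\iota t^j$. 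Because $Y/\iota \cong \bP^1$ and $X/\bar\iota$ is a quotient of $Y/\iota$, we get $X/\bar\iota \cong \bP^1$. So $\bar\iota$ is a hyperelliptic involution of $X$ preserving the divisor of $w$, on a dense set of $X \in \cC$. By closedness this holds on all of $\cC$, so $\cC$ is a hyperelliptic component.
\end{itemize}

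\textbf{Converse.} Suppose $\cC$ is hyperelliptic with involution $J$, so $J^*w = w$ and $X/J = \bP^1$. The plan is to write $w = \pi^* q$, where $\pi\colon X \ra \bP^1$ and $q$ is a $k$-differential on $\bP^1$. Its holonomy cover is the normalization of $\{\eta^k = q\}$. Then I would lift $J$ to $\tilde J$ on $Y$; lifting is possible because the holonomy homomorphism of $w$ is $J$-invariant. The lift satisfies $\tilde J^*\omega = \xi\,\omega$ with $\xi^k = 1$.

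Since $J$ has order two, $\tilde J^2 \in \langle t\rangle$. The group $G = \langle \tilde J, t\rangle$ has order $2k$ and contains $\langle t\rangle$ with index two. After composing $\tilde J$ with a suitable power of $t$, I aim to arrange $\tilde J^*\omega = \pm \omega$. In that case $\tilde J$ is an involution, or $\tilde J^2$ is central.

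\begin{itemize}
\item If $\tilde J^*\omega = -\omega$, then $(Y,\omega)$ is the holonomy double cover of $(Y/\tilde J,\omega^2)$, a quadratic differential. Since the condition persists on all of $\cN_\cC$, which is a $\GL$-invariant closed condition, $\cM_\cC$ lies in a quadratic double.
\item If $\tilde J^*\omega = \omega$, then $\omega$ descends to $Y/\tilde J$, which gives an abelian double.
\end{itemize}

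\textbf{Main obstacle.} I expect the hardest step to be the normalization. I need to show that some element of $\tilde J\langle t\rangle$ acts on $\omega$ by $\pm 1$ and squares to an element fixing $\omega$, so that it is genuinely an involution of $Y$ rather than an element of order $2k$. My first attempt would be to examine the action on a fixed point of $J$ lifted to $Y$, reading off $\xi$ from the local model $z^{k_i}dz^k$ at a Weierstrass singularity. If that fails, the fallback is to use that $\cM_\cC$ lies in the locus of covers with deck group $G$ and pass to the intermediate quotient by the order-two subgroup.
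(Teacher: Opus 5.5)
\paragraph{Forward direction.} This is the paper's argument: the hyperelliptic involution on $Y$ is central (when $Y$ has genus at least two; genus-one covers need the separate check the paper gives in a footnote), it descends to $\bar\iota$ on $X$, and $X/\bar\iota$ is dominated by $Y/\iota\cong\bP^1$. Your first bullet is unnecessary. Your sign remark is also off. Since $t^j$ covers the identity, composing with it cannot change $\bar\iota$, and in fact $\bar\iota^*w=(-1)^k w$. This does no harm, because $\cC$ being hyperelliptic only requires the divisor of $w$ to be invariant.

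\paragraph{Converse: the gap.} You lift $J$ exactly as the paper does (the paper then simply asserts the lift is an involution). However, the step you call the main obstacle is left open: producing a lift that is an involution and acts on $\omega$ by $\pm1$. Without it you do not exhibit a double at all, so as written the converse is incomplete.

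\paragraph{The missing idea.} It is short.
\begin{enumerate}
\item Your worry about order $2k$ disappears once $\tilde J^*\omega=\pm\omega$. The deck group acts on $\omega$ through the injective character $t^i\mapsto\zeta^i$. So $\tilde J^2\in\langle t\rangle$ together with $(\tilde J^2)^*\omega=\omega$ forces $\tilde J^2=1$.
\item It therefore suffices to find $j$ with $\xi\zeta^j=\pm1$. Write $J^*w=cw$ with $c=\pm1$; then $\xi^k=c$. Such a $j$ exists unless $k$ is even and $c=-1$.
\item To exclude that case, evaluate at a fixed point $p$ of $J$ where $w$ is holomorphic and nonzero. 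Such $p$ exists: in positive genus $J$ has $2g+2\geq 4$ fixed points, while a hyperelliptic component has at most two $J$-fixed singularities. Since $dJ_p=-1$, you get $c=(-1)^k$. Hence $(-\xi)^k=1$, and some $h=\tilde J t^j$ satisfies $h^*\omega=-\omega$ and $h^2=1$.
\end{enumerate}
This exhibits every $(Y,\omega)\in\cN_\cC$ as the holonomy double cover of $(Y/h,\omega^2)$, so $\cM_\cC$ lies in a quadratic double. Equivalently, the stabilizer in $G=\langle\tilde J,t\rangle$ of a point of $Y$ over $p$ has order $2k/k=2$ and meets $\langle t\rangle$ trivially.

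Working at a singular Weierstrass point, as you propose, is less clean. The outcome there depends on the parity of the order at that point. Moreover, the point stabilizer is cyclic of order $2e$, with $e$ the ramification index of $Y\to X$, so its involution lies in $\langle t\rangle$ when $e$ is even. Your fallback has a similar problem: it presupposes an order-two subgroup of $G$ not contained in $\langle t\rangle$, which is exactly what has to be proved.
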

\begin{proof}
For the first claim, the hyperelliptic involution is central in the group of holomorphic self-maps of a genus at least two\footnote{The cases where $Y$ has genus $1$ correspond to the cases where $X$ is either a pillowcase or the double of an equilateral, isosceles-right, or $30-60-90$ triangle. The claim is clear in those cases.} Riemann surface. So if $J$ is the hyperelliptic involution on $Y$, it descends to an involution $j$ on $X$. The involution $j$ is nontrivial if $J$ is not a power of $t$, in which case $X/j$ is a sphere since it is the image of $Y/J$. 

For the second claim, a holomorphic involution $j$ on $X$ lifts to a holomorphic involution on the regular cover $Y$ determined by $\rho$ if and only if $\rho \circ j = \rho$. Since $j$ preserves the winding number of curves on $X$ this holds. 
\end{proof}


\begin{lem}\label{L:CertificateToConclude}
Let $(X_t)_{t \geq 0}$ be a family of surfaces in $\cC$ that converge to a primitive boundary point $Z \in \cD$. Suppose that the space of vanishing cycles on $X := X_0$ is generated by a single arc $s$ between two singularities and that there is a Euclidean cylinder $C$ on $X$ that has algebraic intersection number $1$ with $s$. Then $\cM_\cC$ is as big as possible if $\cM_\cD$ is.
\end{lem}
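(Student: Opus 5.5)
We pass to holonomy covers and use the standard boundary principle for orbit closures: the boundary of an invariant subvariety is controlled by its tangent space. Let $Y_t \ra X_t$ be the holonomy covers and $Y$ the cover of $X$. Since $Z$ is primitive, the holonomy homomorphism does not degenerate in the limit. So $Y_t$ converges to a (possibly nodal) translation surface $W$ that is a union of holonomy covers of points of $\cD$.

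By the theory of boundaries of invariant subvarieties (Mirzakhani--Wright, and its refinements in \cite{ApisaWrightHighRank}), the orbit closure of $W$ contains $\cM_\cD$. Moreover, the tangent space of $\cM_\cC$ at $Y$ surjects onto that of the boundary invariant subvariety $\cM'$ at $W$. The kernel of this surjection is $T_Y\cM_\cC \cap \mathrm{Ann}(V)$, where $V$ is the span of the vanishing cycles, namely the lifts $\wt s$ of $s$. Hence
\[ \dim \cM_\cC \geq \dim \cM_\cD + \dim (T_Y \cM_\cC \cap \mathrm{Ann}(V)) , \]
or more precisely $\dim \cM_\cC = \dim \cM' + \dim(\text{kernel})$ with $\cM' \supseteq \cM_\cD$. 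Since $\cM_\cD$ is as big as possible, it equals the ambient (abelian stratum, quadratic double, or hyperelliptic locus) of the boundary. The dimension of the target ambient locus for $\cC$ exceeds that of $\cD$'s ambient locus by the dimension of the span of the lifted vanishing cycles, projected to the appropriate eigenspace. This is the $\zeta$-eigenspace count, one for each nontrivial character and the trivial one as appropriate. It therefore suffices to show that $T_Y\cM_\cC$ contains enough cohomology classes pairing nontrivially with $V$ to account for this gap.

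Here the cylinder $C$ enters. By Lemma \ref{L:Mfreecyl}, each lift $\wt C$ of $C$ is $\cM_\cC$-free when $k$ is odd, and has a twin when $k$ is even. So the standard shear $\gamma_{\wt C}^*$ (respectively the twin pair shear) lies in $T_Y\cM_\cC$. Because $C$ meets $s$ with algebraic intersection $1$ and $C$ avoids the other singularities' collisions, the pairing matrix between the lifts $\{\wt C\}$ and the lifts $\{\wt s\}$ is a circulant matrix. Equivalently, it is the regular representation of $\bZ/k$ acting on lifts, possibly restricted by the holonomy of $C$. Its rank equals the number of lifts of $s$ modulo the symmetry (the $t^{k/2}$ identification when $k$ is even). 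Thus the shears of the lifts of $C$ span a subspace of $T_Y\cM_\cC$ whose image in $V^*$ is as large as the dimension gap between the ambient loci. We also use that $\cM_\cC$ is contained in the right ambient locus, by Lemma \ref{L:HypOrbClosure} and the quadratic double containment, to get the matching upper bound. Combining this with $\cM' \supseteq \cM_\cD$ equal to its ambient locus yields $\dim \cM_\cC$ equal to the ambient dimension, so $\cM_\cC$ is as big as possible.

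The main obstacle is the rank computation. We must check that the lifts of $C$ really pair with the lifts of $s$ through an invertible (on the relevant eigenspaces) circulant. This requires tracking how the holonomy of $C$ and the endpoint singularities of $s$ determine how many distinct lifts there are and how they intersect. In the even case we must also confirm that twin shears still separate all needed eigenspaces except the one killed by the quadratic double. I would first handle it by computing the intersection $\langle t^i\wt C, t^j \wt s\rangle = \delta_{i,j}$ (up to the holonomy action) directly from the cover construction.
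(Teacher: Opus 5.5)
Your architecture is the paper's. You split $T_Y\cM_\cC$ according to pairing with the lifts of $s$, identify the part annihilating the vanishing cycles with the boundary tangent space (which contains $T_W\cM_\cD$) via the Mirzakhani--Wright boundary formula, and use the shears from Lemma \ref{L:Mfreecyl} to make the image of the pairing map as large as on the ambient locus.

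\textbf{The gap.} It is in your last step, where you assert that the ambient locus of $\cC$ exceeds that of $\cD$ by exactly the rank of the vanishing-cycle pairing, so that a dimension count finishes. This is false when $k$ is odd, $\cD$ is hyperelliptic and $\cC$ is not. The lemma as stated puts no restriction on $\cD$, and this case is needed. In the $g \geq 2$ step of Theorem \ref{T2}, the lemma is applied after collapsing a simple cylinder in an arbitrary component. Such a collapse can a priori land in a hyperelliptic component; the proof of Theorem \ref{T:SimpleDegeneration} has to work around exactly this possibility. In that case $\cM_\cD$ is only a codimension one hyperelliptic locus inside the boundary stratum. Your count then gives only that $\cM_\cC$ has codimension at most one in the ambient stratum, and no dimension count can do better.

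The paper closes this with an extra rigidity input:
\begin{enumerate}
\item An invariant subvariety of codimension at most one is full rank. Its projection to absolute cohomology has codimension at most one and is symplectic by \cite{AEM}, so it is everything.
\item Full-rank invariant subvarieties are components of strata or hyperelliptic loci by \cite{MirWri2}.
\item Lemma \ref{L:HypOrbClosure} rules out a hyperelliptic locus when $k$ is odd unless $\cC$ itself is hyperelliptic.
\end{enumerate}
In your proposal Lemma \ref{L:HypOrbClosure} is used only for the upper bound, and nothing plays this role. For $k$ even your count is fine, since the boundary of the quadratic double at $W$ is the quadratic double containing the cover of $Z$.

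\textbf{Two smaller points.} First, you state the boundary formula backwards. $T_Y\cM_\cC\cap\mathrm{Ann}(V)$ \emph{is} the tangent space of the boundary subvariety at $W$, not the kernel of a map onto it. The complementary piece is the image of $T_Y\cM_\cC$ in $V^*$. Read literally, your displayed inequality counts the boundary twice. Your later ``it suffices'' sentence uses the correct decomposition, so this is easily repaired.

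Second, the pattern $\langle t^i\wt C, t^j\wt s\rangle=\delta_{ij}$ that you plan to verify needs the core curve of $C$ to cross $s$ exactly once geometrically. That holds in all of the paper's applications, and the paper's surjectivity claim tacitly uses it. Algebraic intersection one only gives that the circulant's symbol $m(t)=\sum_\ell \langle \wt C, t^\ell\wt s\rangle t^\ell$ satisfies $m(1)=1$. That alone does not stop $m$ vanishing at a primitive $d$th root of unity when $d\mid k$ is not a prime power, e.g.\ $m(t)=1-t+t^2$ with $k=6$.
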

\begin{proof}
Let $\cM$ be the quadratic double containing $\cM_\cC$ if $k$ is even, the smallest locus of doubles containing $\cM_\cC$ if $k$ is odd and $\cC$ is hyperelliptic, and the ambient stratum otherwise. 

Let $S$ (resp. $\bfC$) be the lift of $s$ (resp. $C$) to $Y$. Let $Y_t$ (resp. $W$) be the lift of $X_t$ (resp. $Z$) to the multiscale compactification of $\cN_{\cC}$. Then $S$ generates the space of vanishing cycles of $(Y_t)_{t \geq 0}$. Let $p: H_1(Y-Q; \bC) \ra \bC^k$ be the map that sends $\gamma$ to $(\sigma(\gamma))_{\sigma \in S}$ where $\sigma(\gamma)$ is the intersection pairing. When $k$ is odd, Lemma \ref{L:Mfreecyl} implies that the restriction of $p$ to $T_Y \cM_{\cC}$ is a surjection. When $k$ is even, Lemma \ref{L:Mfreecyl} implies that the restriction of $p$ to $T_Y \cM$ sends $v-t^{k/2}v$ to $(\sigma(v)-(t^{k/2}\sigma)(v))_{\sigma \in S}$. These are the maps $f: S \ra \bC$ for which $f \circ t^{k/2} = -f$. By Lemma \ref{L:Mfreecyl}, the image of $p$ when restricted to either $T_Y \cM_{\cC}$ or $T_Y \cM$ is the same. So the claim follows if we can show that $\ker(p) \cap T_Y \cM_{\cC} = \ker(p) \cap T_Y \cM$. 

By the Mirzakhani-Wright boundary formula \cite{MirWri}, $\ker(p) \cap T_Y \cM_{\cC}$ contains a subspace that can be identified with $T_W \cM_{\cD}$. 

When $k$ is odd, this implies that $\cM_{\cC}$ is at most codimension one in the ambient stratum if $\cM_{\cD}$ is as big as possible. By Mirzakhani-Wright \cite{MirWri2}, $\cM_{\cC}$ is either a stratum or a codimension one hyperelliptic locus. We are now done by Lemma \ref{L:HypOrbClosure}.

When $k$ is even, $\cM_{\cD}$ is the quadratic double that contains the holonomy cover of $Z$, so 
\[  T_Z \del \cM = T_Z \cM_{\cD} \subseteq T_Z \del \cM_{\cC}  \subseteq T_Z \del \cM. \]
We're done since
\[ \ker(p) \cap T_Y \cM_{\cC} \cong T_Z \del \cM_{\cC} \cong T_Z \del \cM \cong \ker(p) \cap T_Y \cM. \]
\end{proof}

The \emph{cylinder submodule} $C_Y \subseteq H_1(Y-Q)$ is the submodule generated by curves which, up to deformations within the locus of holonomy covers, project to core curves of Euclidean cylinders.

\begin{lem}\label{L:CylinderSubmodule}
If $X$ belongs to a primitive component $\Omega_r^k \cM_1(a,-a)$ where $a \in \{2, \hdots, k-1\}$, then $H_1(Y-Q)$ splits as $C_Y \oplus \bZ$ where a generator of the $\bZ$ summand is invariant under the deck group.
\end{lem}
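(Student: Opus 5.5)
We plan to compute everything explicitly on a concrete model and then transport the result along the component. By Proposition \ref{P:Genus1Components} and the description at the start of Section \ref{S:ProngMatchings}, $\bP\Omega^k_r\cM_1(a,-a)$ is the modular curve $\bH/\Gamma_1(e)$ with $a = re$. So $X$ can be taken to be the square torus with singularities at $0$ and $p=(1/e,0)$. Its $k$-differential is $f(z)^r dz^k$, where $f$ has divisor $e\cdot 0 - e\cdot p$ (Remark \ref{R:GenusOneComplexAnalyticView}). The holonomy cover $Y$ is the $k$-fold cyclic cover of $X - \{0,p\}$ determined by $\rho$. Let $w$ be the $k$-framing. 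By Proposition \ref{P:WNofSCC}, for curves $\eta$ of zero flat winding number,
\[ \rho(\eta) \equiv r\langle\gamma,\eta\rangle - a\langle s,\eta\rangle \pmod k. \]
Primitivity gives $\gcd(r,k)=1$. Since $a \not\equiv 0 \pmod k$, the cover is totally ramified over neither puncture in the trivial way. We will compute $H_1(Y-Q)$ as a $\bZ[\bZ/k]$-module, with rank $1 + (\text{rank of } H_1(X-P)=3)\cdot$ an Euler characteristic count. The Euler characteristic of $Y-Q$ is $k\cdot(-2) = -2k$, so $H_1(Y-Q)\cong \bZ^{2k+1}$.

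First we produce cylinders. The horizontal cylinder (core $\gamma$) and, more generally, the cylinders of every rational slope $g(\gamma)$ for $g \in \Gamma_1(e)$ are Euclidean cylinders on deformations within the component. Their lifts to $Y$ are the connected components of the preimages. A curve $c$ with $\rho(c)=m$ lifts to $k/\gcd(m,k)$ closed curves, each winding $\gcd(m,k)$ times. Using a few specific cylinders, we obtain generators of $C_Y$. We take the horizontal core $\gamma$, with $\rho(\gamma)=r$ a unit, and a slope cylinder $\gamma'$ crossing $s$ with $\rho(\gamma') = r\cdot(\text{stuff}) - a$. We also take cylinders obtained by Dehn twisting $\gamma'$ about $\gamma$, and use $\Gamma_1(e)$ for this. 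Since $\rho(\gamma)$ is a unit, $\gamma$ lifts to a single curve of degree $k$. We then expect $\gamma$, $\gamma'$ and their translates, together with twisted versions, to span a sublattice.

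The plan is to identify $C_Y$ with the kernel of a deck-invariant functional. There is a natural map $\pi: H_1(Y-Q)\to \bZ$, namely intersection (or winding) with a deck-invariant class. We take the full preimage $\widetilde{s}$ of the arc $s$, or equivalently the pairing with the invariant relative class $\sum_{i} t^i \tilde s$. Every cylinder core projects to a closed curve on $X$ disjoint from, or crossing, the singularities in a controlled way. We will instead use the map $H_1(Y-Q)\to H_1(Y-Q)/\mathrm{(image\ of\ } (1-t))$ together with the loops $\delta_Q$ around punctures. Concretely, the complement summand will be generated by a small loop $\delta$ around a point of $Q$ over $0$. This point is the unique preimage when $\gcd(k+a,k)$ forces total ramification, or else the sum of loops around the preimages, which is deck-invariant. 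Cylinder cores have zero intersection with the invariant relative class dual to this loop. So we need to show two things. First, $C_Y$ is exactly the annihilator $\{x : \langle x, \widetilde{s}\rangle = 0\}$ of the invariant arc class $\widetilde s$. Second, $\langle \delta,\widetilde s\rangle = 1$. The splitting $C_Y\oplus\bZ\delta$ then follows.

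The main obstacle is showing that the cylinder classes generate the full annihilator over $\bZ$, not just a finite-index sublattice. For this we will use the $\Gamma_1(e)$-orbit of cylinders and the prong-matching surjectivity of Proposition \ref{P:ProngHomG1TwoSing}, where $a \le k-1$ rules out the index-$2$ issue except when $k$ and $a$ are odd. By choosing cylinders with $\rho$-values covering all residues, we get all $t$-translates and differences. If this fails, we fall back on an explicit rank count: $2k$ independent cylinder classes in a rank-$2k$ primitive sublattice, with primitivity checked by a determinant mod each prime dividing $k$.
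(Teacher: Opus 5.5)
The proposal has a genuine gap, and it begins at the first concrete computation. The core of a Euclidean cylinder always has trivial holonomy. Concretely, Proposition \ref{P:WNofSCC} gives $w(\gamma)=r\langle\gamma,\gamma\rangle-a\langle s,\gamma\rangle=0$, because the horizontal core is disjoint from $s$; likewise the slope-$(1,e)$ core gets $re-a=0$. So $\rho(\gamma)=0$, not $r$. Hence $\gamma$ lifts to $k$ disjoint closed curves, one per sheet, rather than to one curve of degree $k$. (Your lifting count is also inverted: a loop with $\rho(c)=m$ has $\gcd(m,k)$ lifts, each of degree $k/\gcd(m,k)$.)

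This triviality is exactly what makes the lemma easy, and it is the paper's route. The horizontal core $\gamma_1$ and the slope-$(1,e)$ core $\gamma_2$ (the image of $\gamma_1$ under a lower unipotent element of $\Gamma_1(e)$) extend to a free basis $(\gamma_1,\gamma_2,\gamma_3)$ of $\pi_1(X-P)$. In the $k$-vertex cover of the corresponding rose, $\gamma_1$ and $\gamma_2$ each contribute a loop at every vertex. The holonomy of $\gamma_3$ must generate $\bZ/k$ because $Y$ is connected, so $\gamma_3$ lifts to a single $k$-cycle, which is deck-invariant. These $2k+1$ cycles form a basis of $H_1(Y-Q)$. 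Thus $C_Y$ contains a rank-$2k$ direct summand with a deck-invariant complement. Equality holds because $f_*(C_Y)=C_X$, and every cylinder core on $X$ lies in the kernel of $w\circ s_*$ (with $s_*$ the section from the horizontal field of $dz$), which has rank $2$.

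Your identification of $C_Y$ and of its complement also fails.

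\begin{itemize}
\item $C_Y$ is not the annihilator of the full preimage $\widetilde s$. The slope-$(1,e)$ core crosses $s$ exactly once (this is what Sublemma \ref{L:CylinderCrossing} exploits), so its lifts pair nontrivially with $\widetilde s$.
\item The functional that cuts out $C_Y$ is $\phi(\eta)=w(s_*f_*\eta)$, i.e.\ pairing $f_*\eta$ with $r\gamma-as$, not with $s$.
\item Since $\rho\equiv w\circ s_* \pmod k$ and $\gcd(r,k)=1$, we get $\phi(H_1(Y-Q))=rk\bZ$. A loop $\delta_q$ around a preimage $q$ of $0$ has $\phi(\delta_q)=\pm ka/\gcd(a,k)$, which generates $rk\bZ$ only when $e\mid k$.
\item For example, in $\Omega^5_1\cM_1(2,-2)$ the point $0$ is totally ramified and $\delta_q$ is deck-invariant, yet $C_Y+\bZ\delta_q$ has index $2$ in $H_1(Y-Q)$.
\end{itemize}

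So the complement must be a curve like $\gamma_3$, transverse to the cylinder directions in $H_1$ of the closed torus, not a puncture loop. The appeal to Proposition \ref{P:ProngHomG1TwoSing} plays no role here.
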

\begin{proof}
Let $(E, P)$ be the unit square torus with a set $P$ of two marked points consisting of the origin $0$ and the point $p = \left( \frac{1}{n}, 0 \right)$ where $n = \frac{a}{r}$. Let $X$ be the corresponding point in the stratum of $k$-differentials. As observed in the proof of Corollary \ref{C:GenusOneCyl}, the core curves of cylinders in one-cylinder directions on the abelian differential $(E,P)$ correspond to core curves of Euclidean cylinders at some point in the stratum of $k$-differentials.

On $(E, P)$ the horizontal direction and the direction parallel to $(1,n)$ are both one-cylinder directions. Figure \ref{F:DeformationRetract} shows that their core curves, $\gamma_1$ and $\gamma_2$ respectively, can be extended by adding an element $\gamma_3$ to a generating set of the rank $3$ free group $\pi_1(E-P)$.

\begin{figure}[h]
    \centering
    \includegraphics[width=.70\linewidth]{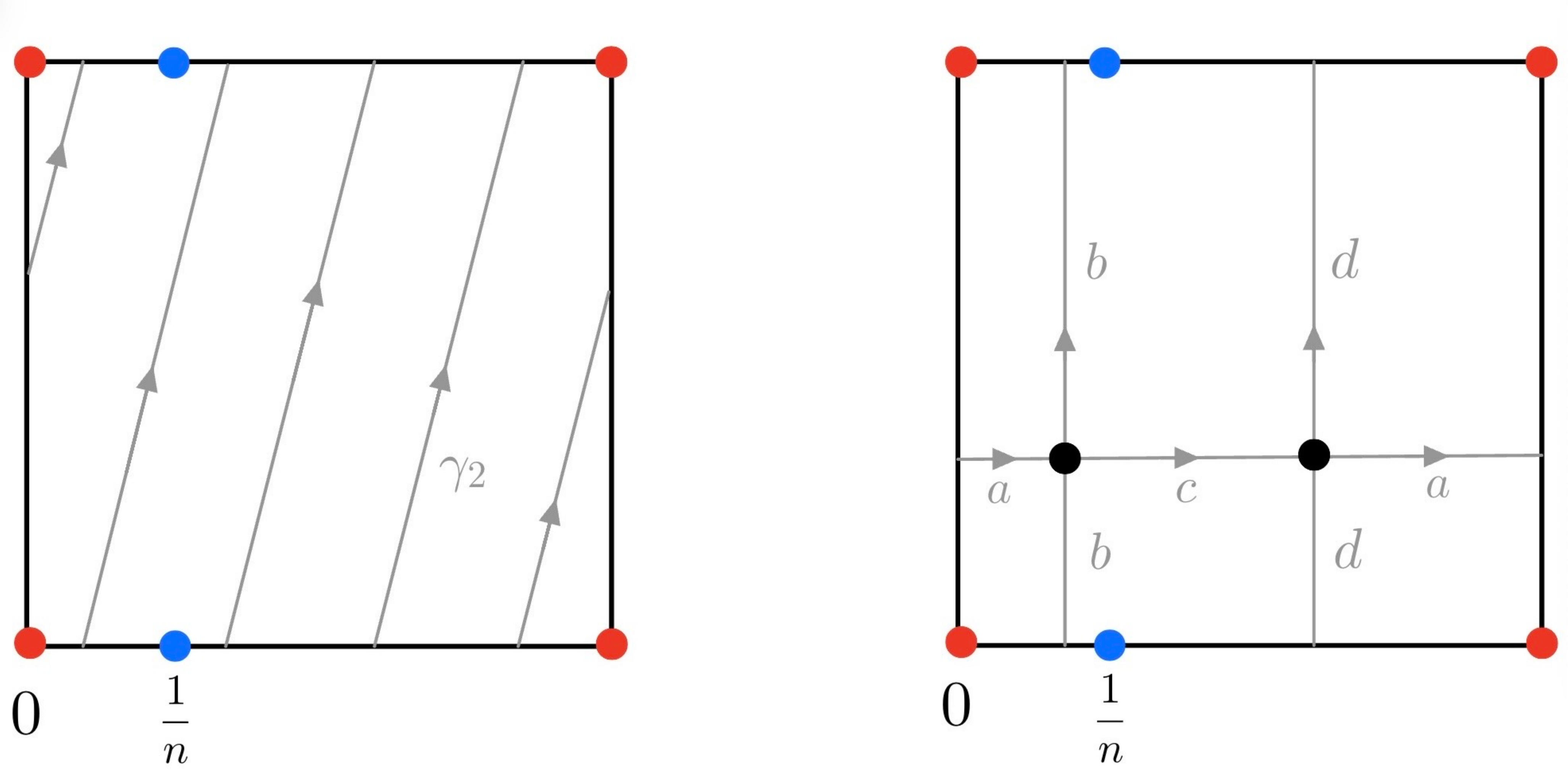}
    \caption{The curve $\gamma_2$ is shown on the left. A skeleton onto which $E-P$ deformation retracts is shown on the right. Contracting $c$ to a point, a basis for $\pi_1(E - P)$ is $(a,b,d)$. In this basis, $\gamma_1 = a$ and $\gamma_2 = d^{n-1}ab$. Together with $d$ this generates $\pi_1(E-P)$.}
    \label{F:DeformationRetract}
\end{figure}

Let $B$ be the rose with $3$ petals, each labeled by $\gamma_i$ for some $i \in \{1,2,3\}$. We identify its fundamental group with $\pi_1(E - P)$ by sending the $i$th petal to $\gamma_i$. Let $B'$ be the cover of $B$ determined by the holonomy cover $f:Y \ra X$. We therefore can identify the fundamental group of $B'$ with $\pi_1(Y - Q)$. Under this identification, the cylinder subspace contains the lift of the first two petals. Note that $B'$ is a graph with $k$ vertices. Since $\rho(\gamma_1) = 0 =\rho(\gamma_2)$, for each vertex, there is a unique lift of $\gamma_1$ and $\gamma_2$ that begins and ends at it. There is one lift of $\gamma_3$ and it is, consequently, $t$-invariant. These $2k+1$ lifts generate $H_1(Y-Q)$ and $C_Y$ contains the rank $2k$ submodule generated by lifts of $\gamma_1$ and $\gamma_2$. It remains to see that $C_Y$ coincides with this submodule.

Let $C_X$ be the cylinder submodule of $X$. Since $f_*(C_Y) = C_X$ it suffices to show that $C_X$ has rank two. Using the horizontal vector field on $\Sigma_{1,2} := E-P$ we have a section $s: \Sigma_{1,2} \ra UT\Sigma_{1,2}$ which provides a $\Gamma_1(d)$-equivariant section $s_*: H_1(\Sigma_{1,2}) \ra H_1(UT\Sigma_{1,2})$. The cylinder core curves on $X$ are in the kernel of $w \circ s_*$, where $w: H_1(UT\Sigma_{1,2}) \ra \bZ$ is the $k$-framing, which completes the claim.
\end{proof}

\begin{cor}\label{C:OrbitClosureBaseCase}
Given a primitive component $\cC$ of $\Omega_r^k \cM_1(a,-a)$, $\cM_\cC$ is as big as possible.
\end{cor}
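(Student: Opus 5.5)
We prove Corollary \ref{C:OrbitClosureBaseCase} by combining Lemma \ref{L:Mfreecyl} with the cylinder submodule computation of Lemma \ref{L:CylinderSubmodule}. Let $\cM$ be the target locus: the quadratic double when $k$ is even, the codimension one hyperelliptic (double) locus when $\cC$ is hyperelliptic and $k$ is odd, and the ambient stratum otherwise. The goal is to show $T_Y \cM_\cC = T_Y \cM$ at the holonomy cover $Y$ of a surface $X \in \cC$.

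First treat $a \in \{2,\hdots,k-1\}$, the case of Lemma \ref{L:CylinderSubmodule}. Let $\wt\gamma$ range over lifts of Euclidean cylinder core curves on surfaces in the component. Lemma \ref{L:Mfreecyl} shows that the standard shear $\wt\gamma^*$ of each lift lies in $T_Y\cM_\cC$ when $k$ is odd. When $k$ is even, the lifts come in twin pairs and the shear of the pair lies in $T_Y\cM_\cC$. Since $\cM_\cC$ is a linear subvariety in period coordinates, its tangent space is constant along the component of $\cN_\cC$. We may therefore move to the surfaces where each generator of $C_Y$ is a cylinder core curve and transport these shears back to $Y$. Consequently $T_Y\cM_\cC$ contains the Poincar\'e duals of all of $C_Y$ (resp.\ of the anti-invariant combinations $v - t^{k/2}v$ for $v\in C_Y$ when $k$ is even). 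By Lemma \ref{L:CylinderSubmodule}, $H_1(Y-Q) = C_Y\oplus\bZ\eta$ with $\eta$ deck-invariant. Hence the span obtained has codimension at most one in $H^1(Y,Q;\bC)$ (dually), and in the even case it is all of the $t^{k/2}$-anti-invariant part, which is $T_Y\cM$.

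Next, the odd $k$ case has a remaining codimension one gap. The period of $\eta$ is the deck-invariant direction, which is exactly the scaling direction since $\eta$ is $t$-invariant and its period is generically nonzero. To see this, note that the holonomy cover locus contains $\bC^\times$ multiples, and scaling changes every period. So the tautological vector $\omega \in T_Y\cM_\cC$ pairs nontrivially with $\eta$. I would check that $\omega(\eta) \neq 0$ by noting that $\eta$ projects to a curve on $X$ whose $k$-th power period is nonzero generically. Together with the cylinder shears, this gives full dimension. Alternatively, Mirzakhani--Wright \cite{MirWri2} together with Lemma \ref{L:HypOrbClosure} reduces the problem to ruling out a codimension one hyperelliptic locus; that locus occurs only when $\cC$ is hyperelliptic, in which case it is the expected answer.

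Finally, remaining cases: when $a \ge k$ or $a \le 1$ is allowed in primitive components, including $a=0$ (marked point) or $k=1$, the statement reduces to known facts. For $k=1$, $\cM_\cC$ is the stratum component itself. Otherwise I would use Lemma \ref{L:CertificateToConclude} by degenerating to a smaller such stratum, or redo Lemma \ref{L:CylinderSubmodule} verbatim, since its proof only used that $\gamma_1,\gamma_2$ have trivial holonomy. The main obstacle is the odd-$k$ codimension one gap: one must verify that the invariant class $\eta$ is detected by $T_Y\cM_\cC$ rather than by a hidden linear relation. The scaling argument is the one I would try first.
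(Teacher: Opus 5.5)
Your first step, using Lemma \ref{L:Mfreecyl} and Lemma \ref{L:CylinderSubmodule} to get codimension at most one when $k$ is odd and the full $t^{k/2}$-anti-invariant part when $k$ is even, is exactly what the paper does. The gap is in how you close the odd-$k$ codimension one gap: the scaling argument fails.

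\textbf{Why the scaling argument fails.} Since $t^*\omega=\zeta\omega$ with $\zeta\neq 1$ and $t_*\eta=\eta$, you get $\int_\eta\omega=\int_\eta t^*\omega=\zeta\int_\eta\omega$. So the period of $\eta$ is identically zero, not generically nonzero.

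Worse, $\omega$ already lies in the span $W$ of the cylinder shears. The splitting $H_1(Y-Q;\bC)=(C_Y\otimes\bC)\oplus\bC\eta$ is into $t$-stable pieces, with $t$ acting trivially on $\eta$. Hence every eigenspace of $t_*$ with eigenvalue $\neq 1$ lies in $C_Y\otimes\bC$. Dually, the $\zeta$-eigenspace of $t^*$ on $H^1(Y,Q;\bC)$ lies in $W$. That eigenspace contains $\omega$, and indeed all of $T_Y\cN_\cC$.

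Even granting $\omega(\eta)\neq 0$, that would not be the right criterion. What decides whether $\omega\notin W$ is the $\eta$-coefficient of the Poincar\'e dual of $\omega$.

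A sanity check shows no argument of this shape can work. It would prove that $\cM_\cC$ is the whole stratum for every odd $k$. But $\Omega^3_1\cM_1(2,-2)$ is a primitive hyperelliptic component, and its $\cM_\cC$ lies in a proper hyperelliptic locus.

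\textbf{Your fallback.} The Mirzakhani--Wright route is correct and is the paper's argument. However, you omitted the step that makes \cite{MirWri2} applicable, namely that $\cM_\cC$ has full rank. This is where \cite{AEM} enters. Suppose $T_Y\cM_\cC$ has codimension one in $H^1(Y,Q;\bC)$. Then its image in $H^1(Y;\bC)$ has codimension at most one and is symplectic, hence even-dimensional, hence all of $H^1(Y;\bC)$. Now \cite{MirWri2} says $\cM_\cC$ is a hyperelliptic locus. By Lemma \ref{L:HypOrbClosure}, since the involution cannot be $t^{k/2}$ when $k$ is odd, $\cC$ must be hyperelliptic. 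In that case the codimension one hyperelliptic locus is exactly the expected answer.

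\textbf{Your last paragraph.} It is unnecessary. In this section all strata have finite area, so $a<k$. Components of $\Omega^k_r\cM_1(a,-a)$ exist only for $a\geq 2$ (taking $a>0$). So $a\in\{2,\hdots,k-1\}$ automatically, and the cases $k=1$ or $a\leq 1$ never arise.
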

\begin{proof}
By Lemma \ref{L:CylinderSubmodule}, $H_1(Y-Q) = C_Y \oplus \bZ$ where the generator of the $\bZ$ summand is $t$-invariant. If $k$ is even, then $Y$ belongs to a quadratic double $\cQ$ and its holonomy involution on $Y$ is $t^{k/2}$. In this case, the projection from $H_1(Y-Q; \bC)$ to $T_Y \cQ$ is given by sending $v$ to $\frac{v-t^{k/2}\cdot v}{2}$. Note that the generator of the $\bZ$-summand is in the kernel of this projection. 

By Lemma \ref{L:Mfreecyl}, $T_Y \cM_{\cC}$ has codimension at most one in $H_1(Y-Q)$ when $k$ is odd and coincides with $T_Y \cQ$ when $k$ is even. If $T_Y \cM_\cC$ has codimension one, then it is full rank since its image in absolute homology is symplectic by \cite{AEM}. Mirzakhani-Wright \cite{MirWri2} implies that $\cM_{\cC}$ is a hyperelliptic locus, which occurs precisely when $\cC$ is hyperelliptic by Lemma \ref{L:HypOrbClosure}. When $\cC$ is hyperelliptic, Lemma \ref{L:HypOrbClosure} implies that $\cM_\cC$ has positive codimension unless the ambient component of the stratum of abelian differentials is hyperelliptic. But this does not occur since the preimages of the two singularities on the $k$-differential have different orders.
\end{proof}

A genus zero stratum containing a surface with a Euclidean cylinder will be called \emph{cylindrical}. These are classified in Lemma \ref{L:Genus0Cylinder}.

\begin{lem}\label{L:CylinderSubmoduleGenus0}
If $X$ belongs to a genus zero cylindrical stratum $\cC$ with four singularities, then $H_1(Y-Q; \bQ) = (C_Y \otimes \bQ) \oplus \bQ^{2}$ where generators of the $\bQ^{2}$ summand may be chosen to be $t$-invariant and hence trivial elements of $H_1(Y; \bQ)$. Moreover, $\cM_\cC$ is as big as possible. 
\end{lem}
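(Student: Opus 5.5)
We model the argument on Lemma \ref{L:CylinderSubmodule} and Corollary \ref{C:OrbitClosureBaseCase}. Since $\cC$ is a cylindrical genus zero stratum with four singularities, Lemma \ref{L:Genus0Cylinder} lets us partition $\kappa=(k_1,k_2,k_3,k_4)$ into two pairs $\{k_1,k_2\}$, $\{k_3,k_4\}$, each summing to $-k$. The stratum is identified with $\cM_{0,4}$, so $X-P$ is a four-punctured sphere and $\pi_1(X-P)$ is free of rank $3$, generated by loops $\delta_1,\delta_2,\delta_3$ around three of the punctures. The curve $\gamma_1:=\delta_1\delta_2$ separating $\{1,2\}$ from $\{3,4\}$ is the core curve of a Euclidean cylinder, and $\rho(\gamma_1)=0$ because $w(\gamma_1)=2k+k_1+k_2=k$. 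Applying the same reasoning to a mapping class of $\cM_{0,4}$ (for instance a half-twist along an arc joining points $2$ and $3$, or a Dehn twist) produces a second separating curve $\gamma_2$ whose flat holonomy is trivial; it can also be realized as a Euclidean core curve at another point of $\cC$ whenever the corresponding partition again sums to $-k$. Realizing $\gamma_2$ in this way is the delicate part, and if it fails we will instead use the images of $\gamma_1$ under $\pi_1(\cC)$, which are all cylinder curves up to deformation.

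Next we carry over the covering-graph argument from Lemma \ref{L:CylinderSubmodule}. Take a rose $B$ whose petals are $\gamma_1,\gamma_2,\gamma_3$ (choosing $\gamma_3$ so that the three generate $\pi_1$), and let $B'$ be the cover determined by $\rho$. Each $\rho$-trivial petal lifts to $k$ loops, one at each vertex, and $C_Y$ contains all of these. The rank of $H_1(Y-Q)$ is $1+k\cdot(3-1)=2k+1$, so in this form the argument only produces a single extra summand. To get $\bQ^2$ instead of $\bZ$, we need $C_X$ to have rank $1$ over $\bQ$ after projection: rationally, the cylinder curves span only the class $[\gamma_1]$ and its $\pi_1(\cC)$-images. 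The complementary summand should then be spanned by $t$-orbit sums of lifts of $\delta_i$. These are $t$-invariant, and since they are sums of loops around punctures of $Y$ they vanish in $H_1(Y;\bQ)$. A cleaner route is to compute $H_1(Y-Q;\bQ)$ as a $\bQ[\bZ/k]$-module: its $t$-invariant part is $H_1(X-P;\bQ)\cong\bQ^3$, and the non-invariant isotypic parts should be spanned by lifts of cylinder curves, in analogy with the use of the section $s_*$ and the kernel of $w\circ s_*$ in Lemma \ref{L:CylinderSubmodule}. Making the count of exactly two invariant classes outside $C_Y\otimes\bQ$ come out right is the step I expect to be the main obstacle; it needs care with the puncture loops.

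For the final claim we follow Corollary \ref{C:OrbitClosureBaseCase}. By Lemma \ref{L:Mfreecyl}, $T_Y\cM_\cC$ contains the standard shears (or twin shears when $k$ is even) of all lifted cylinders, so its image in $H^1(Y,Q)$ accounts for the duals of $C_Y$. The extra $\bQ^2$ summand is $t$-invariant and trivial in absolute homology, so it lies in the kernel of the projection to the quadratic double when $k$ is even. It also contributes nothing to absolute periods, so at worst it cuts out relative directions. When $k$ is odd, the projection of $T_Y\cM_\cC$ to absolute cohomology is therefore everything, and $\cM_\cC$ has full rank with only relative deficiency. We then rule out the missing relative directions using the dimension count together with Mirzakhani--Wright \cite{MirWri2} and Lemma \ref{L:HypOrbClosure}, exactly as in Corollary \ref{C:OrbitClosureBaseCase}. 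Where the hyperelliptic-double case arises, we use that the singularity orders in $\kappa$ differ, or else that $\cC$ is itself hyperelliptic.
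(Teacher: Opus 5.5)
Your treatment of the ``Moreover'' part essentially follows the paper. For $k$ even, the twins of Lemma \ref{L:Mfreecyl} suffice, and $t$-invariant classes die under $1-t^{k/2}$. For $k$ odd, $\cM_\cC$ is full rank because the complement is trivial in $H_1(Y;\bQ)$; Mirzakhani--Wright and Lemma \ref{L:HypOrbClosure} then finish, with the hyperelliptic case $\Omega^k\cM_0(a,a,-k-a,-k-a)$ giving a codimension-one locus since $|Q/J|=2$.

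All of that depends on the first claim, and the first claim is not proved. The sentence ``the non-invariant isotypic parts should be spanned by lifts of cylinder curves'' is the entire content of the lemma, and you give no mechanism for it. Your rose construction cannot supply one:
\begin{itemize}
\item A second $\rho$-trivial cylinder curve $\gamma_2$ completing $\gamma_1$ to a free basis would give $\operatorname{rank} C_Y\ge 2k$ and a complement of rank at most $1$. That is incompatible with the $\bQ^2$ you want.
\item For general $\kappa$ no such curve exists. The half-twist is not a pure braid, and a separating curve like $\delta_2\delta_3$ has trivial holonomy only if $k_2+k_3\equiv 0 \bmod k$.
\item Your fallback, images of $\gamma_1$ under $\pi_1(\cC)$, is the right source. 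But pure braids act trivially on $H_1(X-P)$, so these images are all homologous to $\gamma_1$ downstairs. Their new contribution lives only in the cover, and identifying it is the missing idea.
\item The section $s_*$ and $\ker(w\circ s_*)$ from Lemma \ref{L:CylinderSubmodule} do not help: they bound $C_X$ from above, whereas here you need a lower bound on $C_Y$.
\end{itemize}

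Here is what the paper does. Take puncture loops $a,c,b$ with $d=ac$ a cylinder curve, and a pure braid sending $d$ to $d[b,a]$ (up to conjugacy). Every lift of $d$ and of $d[b,a]$ is a cylinder class. So $C_Y$ contains every lift of the commutator $[b,a]$, i.e. the $R_\bQ$-span of $(1-t^a)B-(1-t^b)A$, together with $R_\bQ\cdot D$; here $a,b$ also denote the holonomies.

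Now fix a $k$th root of unity $\zeta\neq 1$. The $\zeta$-isotypic part of $H_1(Y-Q;\bC)$ is the kernel of $(x_A,x_B,x_D)\mapsto(\zeta^a-1)x_A+(\zeta^b-1)x_B$. This kernel is $2$-dimensional and is spanned by $D$ and the commutator lift, exactly because primitivity ($\rho$ surjective with $\rho(d)=0$) rules out $\zeta^a=\zeta^b=1$.

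Only after this does your invariant-part count apply. The invariant part is $\cong H_1(X-P;\bQ)\cong\bQ^3$, spanned by $(\sum t^i)D\in C_Y$ and the puncture orbit sums $(\sum t^i)A$, $(\sum t^i)B$. So the step you flagged as the main obstacle is the easy one.

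A minor point: as a cylinder core curve, $\gamma_1$ has $w(\gamma_1)=k+k_1+k_2=0$, not $k$. Only $\rho(\gamma_1)=0$ is used, so this does not affect the argument.
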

\begin{proof}
We begin by deducing the second claim from the first. If $k$ is even, we're done by Lemma \ref{L:Mfreecyl}. If $k$ is odd, then $\cM_{\cC}$ is full rank and hence coincides with the ambient stratum unless $\cC$ is hyperelliptic (by Lemma \ref{L:HypOrbClosure}), in which case $\cC = \Omega^k \cM_0(a,a,-k-a,-k-a)$ where $\gcd(a,k) = 1$. The hyperelliptic involution on $X$ exchanges the two singularities of equal order. Each singularity is fully ramified so the hyperelliptic involution $J$ on $Y$ exchanges the corresponding pairs of points in $Q$. Since $|Q/J|=2$, the hyperelliptic locus $\cM_{\cC}$ is codimension one. 

Let $(a,c,b)$ be loops around three points in $P$. These are generators for the rank three free group $\pi_1(X-P)$. Since the stratum is cylindrical we may suppose that $a$ and $c$ are chosen so that $d := ac$ is a cylinder core curve. The pure braid group preserves the holonomy homomorphism and there is a pure braid that sends $ac$ to $ac[b,a]$, so $ac[b,a]$ is also a cylinder core curve. We will abuse notation and use $a,b,c,d$ for both loops and their holonomies in $\bZ/k$. 


Let $B$ be the rose with $3$ petals, each labeled by $(a,b,d)$. We identify its fundamental group with $\pi_1(X - P)$ by sending the petals to $a$, $b$, and $d$ respectively. Let $B'$ be the cover determined by the holonomy cover $Y \ra X$. We can identify the fundamental group of $B'$ with $\pi_1(Y - Q)$. Under this identification, the cylinder submodule contains the lifts of $d$ and $d[b,a]$ and hence the lifts of $[b,a]$. Note that $B'$ is a graph with vertices labeled by elements of $\bZ/k$. Each vertex has a lift of $d$ that begins and ends at the vertex. It also has a lift of $a$ (resp. $b$) connecting the vertex $v$ to $v+a$ (resp. $v+b$). 

Let $R = \bZ[\bZ/k] = \bZ[t]/(t^k-1)$. Let $R_\bQ := R \otimes \bQ$. Letting $A$ (resp. $B$, $D$) be the lift of the $a$ (resp. $b$, $d$) edge that joins vertex $0$ to vertex $a$ (resp. $b$, $0$) we have that as a $1$-chain, the lift of $[b,a]$ based at $0$ is $B+t^bA - t^{a}B - A = (1-t^a)B - (1-t^b)A$. The subspace $\cL$ of $C_1(Y-Q; \bQ)$ generated by these lifts is $R_\bQ \cdot \left( (1-t^b) \cdot A - (1-t^a) B\right)$. The boundary map from $C_1(Y-Q; \bQ)$ to $R_\bQ \cong C_0(Y-Q)$ sends $A$ to $t^a-1$, $B$ to $t^b-1$, and $D$ to zero. So $H_1(Y-Q; \bQ)$ corresponds to elements $r_a A + r_b B + r_d D$ such that $r_a (1-t^a) + r_b(1-t^b) = 0$ where $r_a, r_b, r_d \in R_\bQ$. Set $e =\gcd(a,b)$. To compute the kernel, we consider the following short exact sequence
\[ 0 \ra \bQ[t] \cdot \left( \frac{1-t^b}{1-t^e}, \frac{t^a-1}{1-t^e} \right) \ra \bQ[t]^2 \xrightarrow{(r_a, r_b) \ra r_a(t^a-1) + r_b(t^b-1)} \bQ[t] \cdot (1-t^e) \ra 0.  \]
Since $\gcd(e,k)=1$, the kernel of the quotient map $\bQ[t] \cdot (1-t^e) \ra \bQ[t]/(t^k-1)$ is $\bQ[t] \cdot \left( \frac{(t^k-1)(t^e-1)}{t-1} \right)$. The kernel of the composition $\bQ[t]^2 \ra \bQ[t]/(t^k-1)$ is then 
\[ \bQ[t] \cdot \left( \frac{1-t^b}{1-t^e}, \frac{t^a-1}{1-t^e} \right) + \bQ[t] \cdot \left( \frac{t^k-1}{1-t},0 \right) + \bQ[t] \cdot \left( 0,\frac{t^k-1}{1-t} \right) \]
Reducing this kernel mod $(t^k-1)$ we have,
\[ H_1(Y-Q; \bQ) = \frac{1}{1-t}\cL \oplus R_\bQ \cdot D \oplus \bQ \cdot \left(\sum_{i=0}^{k-1} t^i\right)A \oplus \bQ \cdot \left(\sum_{i=0}^{k-1} t^i\right)B.\]
Using that an element of $R_\bQ$ can be written as $q(\sum_{i=0}^{k-1} t^i) + r(1-t)$ where $q \in \bQ$ and $r \in R_\bQ$, we have 
\[ H_1(Y-Q; \bQ) = \cL \oplus R_\bQ \cdot D \oplus \bQ \cdot \left(\sum_{i=0}^{k-1} t^i\right)A \oplus \bQ \cdot \left(\sum_{i=0}^{k-1} t^i\right)B.\]
\end{proof}


\begin{proof}[Proof of Theorem \ref{T2} when $g \geq 1$:]
First assume that $g =1$. Let $\cC$ be the component. Induct on the number of singularities $n$ with Corollary \ref{C:OrbitClosureBaseCase} serving as the base case. We now suppose that $n > 2$. 

By Proposition \ref{P:GenusOneMerging}, if $\cC$ is nonhyperelliptic we can simply merge a zero and a pole. This operation has a single vanishing cycle $s$ joining the zero and pole. 

\begin{sublem}\label{L:CylinderCrossing}
If $\cC$ is nonhyperelliptic, we may choose $s$ so that there is a curve on $X$ that crosses $s$ exactly once and can be deformed into a core curve of a simple Euclidean cylinder. 
\end{sublem}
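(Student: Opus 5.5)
Work directly in the explicit genus one models of Section \ref{S:GenusOneComponents}. A surface in $\Omega^k_r \cM_1(\kappa)$ is an elliptic curve $E$, taken to be the unit square torus $\bC/(\bZ+i\bZ)$ with invariant differential $dz$, together with points $(p_i)$ satisfying $\sum_i \overline{k_i} p_i = g$, where $g$ has order $e$. Since $\cC$ is nonhyperelliptic and $n>2$, Proposition \ref{P:GenusOneMerging} gives a zero $z$ and a pole $p$ that can be merged. Within the fiber over $E$ this is a path in the affine subtorus $B_g$ that brings $p_z$ to $p_p$. We choose it to be a short straight segment $s$ in the flat $dz$ metric, with all other points held fixed, so the only vanishing cycle is the arc $s$ from $z$ to $p$. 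The free parameters here are the position of $s$ and the positions of the remaining points, subject to the linear constraint. The plan is to use this freedom to place all points, and $s$, on one horizontal line $\bR \times \{0\}$, and to make $s$ vertical if that helps.

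The cylinder itself comes from the mechanism in the proof of Corollary \ref{C:GenusOneCyl} and Lemma \ref{L:CylinderSubmodule}. Take a closed geodesic direction on $(E,dz)$ that avoids all the marked points and has winding number $0$ in the $dz$ metric. Along such a curve the $k$-differential $f(z)\,dz^k$ has holonomy and $k$-framing value determined by the singularities it encloses. A one-cylinder direction for the marked torus, meaning that all marked points lie on a single closed leaf, gives a curve $\eta$ parallel to that leaf. This $\eta$ is the core curve of a Euclidean cylinder for the $k$-differential after a deformation within the stratum. The deformation sends the transverse height of the torus to $\infty$ while keeping the points collinear on the leaf, exactly as the unit square torus degenerates to a cylindrical boundary point in Corollary \ref{C:GenusOneCyl}. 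Concretely, put all points on the horizontal line and let the vertical period of $E$ go to infinity. The limit is a sphere with two points glued at a horizontal node of order $-k$ on both sides. By Lemma \ref{L:CylindricalBoundary} and Corollary \ref{C:TypesofCylinders2}, a nearby interior surface then has a simple nonseparating Euclidean cylinder whose core curve is homotopic to a vertical closed curve $\eta$ missing the points.

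To get the single crossing, choose $s$ to be a horizontal segment between $z$ and $p$ on the common horizontal line, and take $\eta$ to be a vertical loop $\{x_0\}\times[0,1]$ with $x_0$ strictly between $p_z$ and $p_p$ and no other marked point on that side. Then $\eta$ meets $s$ exactly once. The difficulty is making $z$ and $p$ adjacent on the line, that is, ordering the points so that no other singularity lies between them. Collinearity is not automatic: the positions must satisfy $\sum \overline{k_i}x_i \equiv g$ with $x_i \in \bR/\bZ$. This constraint has a real solution in the correct component because $g$ has order $e$ and we may choose its representative on the horizontal circle via the $\mathrm{SL}(2,\bZ)$-transitivity used in Proposition \ref{P:Genus1Components}. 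Within that one-real-dimensional-codimension torus of solutions, we can freely order the points, since for $n\ge 3$ the solution set $\{x : \sum \overline{k_i}x_i = g\}$ in $(\bR/\bZ)^n$ is a union of $(n-1)$-tori that projects onto any $n-1$ coordinates. This lets us place $x_z$ and $x_p$ adjacent.

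The main obstacle is checking compatibility between the merge and the cylinder. The merge of $z$ and $p$ along this horizontal $s$ must land in the boundary component that Proposition \ref{P:GenusOneMerging} certifies (same rotation number, avoiding the excluded cases 1–3), and $\eta$ must remain a genuine Euclidean cylinder core rather than an envelope. I would handle the first by noting that the merge path lies entirely inside $B_g$, so it stays in the component with rotation number $r$. For the second, I would use that $\eta$ separates no pair of singularities on the closed torus and is nonseparating, so Corollary \ref{C:TypesofCylinders2} makes it generically simple.
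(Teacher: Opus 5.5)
There are two genuine gaps: the curve you choose is not a cylinder core, and your primitivity argument does not work.

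\textbf{The cylinder step fails.} Put all singularities on the line $y=0$ of the square torus and send the vertical period to $\infty$. The curve pinched to the horizontal node is then the \emph{horizontal} loop: it keeps length $1$ while the flat cylinder around it becomes arbitrarily long. So Lemma \ref{L:CylindricalBoundary} gives a Euclidean cylinder whose core is horizontal and never meets your horizontal $s$. The vertical loop $\{x_0\}\times[0,1]$ is transverse to that cylinder, not its core; this also contradicts your own remark that $\eta$ is parallel to the leaf containing the points.

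In general no other deformation rescues the vertical loop either. A Euclidean cylinder core has $k$-framing $0$. But already in the model $\Omega^k_r\cM_1(a,-a)$ with singularities at $0$ and $(\frac1e,0)$, Proposition \ref{P:WNofSCC} gives every vertical loop $k$-framing $\pm r$ or $\pm(r-a)$, both nonzero since $a=re$ with $e>1$.

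The missing idea is a one-cylinder direction \emph{transverse} to $s$ whose closed leaf through one endpoint of $s$ comes back through the other. The paper uses the direction $(1,e)$ on the square torus with singularities $0$ and $p=(\frac1e,0)$:
\begin{itemize}
\item the leaf through $0$ passes through $p$;
\item the core curve crosses the horizontal segment from $0$ to $p$ exactly once;
\item its $k$-framing is $\pm(re-a)=0$ (Figure \ref{F:DeformationRetract}).
\end{itemize}
To fit a zero--pole merge into this picture, the paper first reduces to $n=3$ by merging zeros and poles; cylinders and saddle connections persist when those merges are undone. It then realizes the three singularities by splitting $p$ into $b$ and $c$, and slides $b$ horizontally onto $0$. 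The vanishing cycle is then the segment from $0$ to $p$.

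\textbf{Your primitivity argument is also wrong.} Staying inside $B_g$ only gives $r\mid r'$ for the rotation number $r'$ of the merged surface. By the proof of Proposition \ref{P:GenusOneMerging}, if $\gcd(\kappa_{ij})=dm$, the collision lands in the component of $C$ indexed by some $g'$ with $mg'=g$, and then $r'=rm/m'$ where $|g'|=em'$. For $m>1$ this depends on where the points collide, and $r'$ can fail to be coprime to $k$. For example, merging $4$ and $-1$ in $\Omega^3_1\cM_1(4,-1,3,-6)$ can land in $\Omega^3_1\cM_1(3,3,-6)$ or in the imprimitive $\Omega^3_3\cM_1(3,3,-6)$.

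Since $s$ must be the vanishing cycle of a \emph{simple} merge (Lemma \ref{L:CertificateToConclude} needs a primitive boundary point), you have to control where the points collide. The paper does this explicitly: when $b$ reaches $0$, $c$ sits at $(\frac{n_b+n_c}{n_c e},0)$, and Remark \ref{R:RotationNumberDefinition} shows the merged rotation number is exactly $r$.
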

\begin{proof}
Using Proposition \ref{P:GenusOneMerging}, we may merge any two zeros and any two poles while preserving primitivity and nonhyperellipticity. Because cylinders and saddle connections are preserved under small deformations, this observation implies that it suffices to prove the claim when $n = 3$. Let $(a,b,c)$ be the three singular points on a torus $E$ and let $n_a,n_b,$ and $n_c$ be their respective orders. Suppose that $n_a$ has a different sign than $n_b$ and $n_c$ and identify $a$ with $0$. By Proposition \ref{P:GenusOneMerging} it is possible to merge $b$ and $c$ while preserving rotation number $r$. Up to applying an element of $\mathrm{SL}(2, \bR)$, we may suppose that merging $b$ and $c$ gives us a square torus $E$ with marked points $P = \{0, p\}$ where $p := (\frac{1}{e}, 0)$ is the point formed by merging $b$ and $c$ and $e = \frac{|n_a|}{r}$ (see Remark \ref{R:RotationNumberDefinition}). Since this stratum is not hyperelliptic $e > 2$. Since the stratum is primitive, $\gcd(r,k)=1$. Let $d = \gcd(n_a, n_b, n_c) = \gcd(n_a, n_c) = \gcd(er, n_c)$. Let $e'$ be the order of $\frac{n_a}{d}a + \frac{n_b}{d}b + \frac{n_c}{d} c$ in $E$. Then the rotation number satisfies $d = e'r$ by Remark \ref{R:RotationNumberDefinition}. 

The equation governing the location of $b$ and $c$ in $E$ is $n_b b + n_c c = 0$. Suppose that $|n_b| \leq |n_c|$, which implies that $b$ ``moves faster than $c$.'' Starting from the point where $b = c = (\frac{1}{e}, 0)$, move $b$ in the negative horizontal direction to the origin. By the time that $b$ arrives at the origin, $c$ is at $(\frac{n_b+n_c}{n_c e}, 0)$. Then $e''$ is the order of this point where $\frac{n_a'}{e''}$ is $\frac{n_a}{n_c e}$ in lowest terms. By Remark \ref{R:RotationNumberDefinition}, the rotation number $r''$ of the stratum formed by merging $a$ and $b$ is $\frac{|n_c|}{e''}$, i.e. $|n_c| = e''r''$. Since $r \mid |n_c|$ write $|n_c| = rm$. Then $\frac{n_a}{n_c e} = -\frac{er}{erm} = -\frac{1}{m}$ so $m = e''$, i.e. $|n_c| = re''$, so $r = r''$. In other words, this merge does not change the rotation number. The vanishing cycle for this merge is the horizontal line connecting $0$ to $p$, which is crossed exactly once by a Euclidean cylinder (on the $k$-differential) as seen in Figure \ref{F:DeformationRetract}.
\end{proof}

By Sublemma \ref{L:CylinderCrossing} and Lemma \ref{L:CertificateToConclude}, we are done with genus one (using the induction hypothesis) when $\cC$ is nonhyperelliptic. 

If $\cC$ is hyperelliptic, then find a Euclidean cylinder $C$ on $X$ (by Theorem \ref{T:Cyl}). Send its height to zero to collapse a cross curve. The resulting $k$-differential belongs to a stratum $\cD$ where $\cM_{\cD}$ is as big as possible by the induction hypothesis (when the collapse is a simple merge) or Lemma \ref{L:CylinderSubmoduleGenus0} (when the collapse is a simple split, since hyperelliptic genus zero strata with four singularities are necessarily cylindrical). We are done by Lemma \ref{L:CertificateToConclude}. This completes the $g=1$ case.

Now suppose that $g \geq 2$. Induct first on genus and then dimension of the stratum with the previous $g=1$ case serving as the base case. Let $X$ be a surface in the component with simple Euclidean cylinder $C$ (which exists by Theorem \ref{T:Cylinder}). Collapsing it, as in the previous paragraph, produces a surface in a component $\cD$ where $\cM_{\cD}$ is as big as possible by the induction hypothesis. We're done by Lemma \ref{L:CertificateToConclude}. 
\end{proof}

\begin{proof}[Proof of Theorem \ref{T2}:]
It remains to handle cylindrical genus zero strata. Induct on $k$ with the base case of $k=2$ being trivial. Then induct on the number $n$ of singularities with Lemma \ref{L:CylinderSubmoduleGenus0} giving the base case. 

\begin{sublem}\label{SL:Genus0Merge}
Let $n \geq 5$. Two pairs of singularities can be merged to form a cylindrical stratum of finite area surfaces. Both merges may be taken to be nonhyperelliptic after forgetting marked points unless $\kappa = (-a, -a, -a, a-k, 2a-k)$. 
\end{sublem}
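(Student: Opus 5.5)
The plan is to work entirely in $\cM_{0,n}$, using the identification of genus zero strata with $\cM_{0,n}$ (as in the proof of Lemma \ref{L:CommonConstruction}). Under this identification any two marked points can be made to collide, producing a merge. The merged point has order $z_i+z_j$, so the merged stratum consists of finite area surfaces exactly when $z_i+z_j>-k$. Since $\cC$ is cylindrical, Lemma \ref{L:Genus0Cylinder} gives a partition $\kappa=S\cup T$ with each part summing to $-k$. If both merged singularities lie in the same part, the same partition still works after the merge, so by Lemma \ref{L:Genus0Cylinder} the merged stratum is again cylindrical. The task therefore reduces to finding two distinct pairs, each contained in $S$ or in $T$, whose orders sum to more than $-k$.

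\textbf{Step 1: one good pair.} Since $n\geq 5$, one part, say $S$, has at least three elements. Order $S$ as $s_1\geq s_2\geq \cdots\geq s_m$ with $m\ge3$, and suppose $s_1+s_2\le -k$. Then $s_2\le -k/2<0$, so every $s_i$ with $i\geq 3$ is negative. On the other hand,
\[ \sum_{i\ge 3}s_i=-k-(s_1+s_2)\geq 0, \]
a contradiction. Hence $\{s_1,s_2\}$ is a good pair. The same argument applies to $T$ whenever $|T|\geq 3$.

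\textbf{Step 2: a second good pair.} If $|S|,|T|\geq 3$, take the top pair from each part. Otherwise $|S|\geq 4$ when $n\geq 6$, or $(|S|,|T|)=(3,2)$ when $n=5$. For $|S|\ge 4$, I would run the Step 1 argument on the pair $\{s_1,s_3\}$. If $s_1+s_3\le -k$, then $s_3,\dots,s_m\le -k/2$ and $s_2\le s_1$, and the sum condition leads to a contradiction once there are at least two further negative terms. When $|S|=3$ and $n=5$, I would test the pairs $\{s_1,s_3\}$ and $\{s_2,s_3\}$ directly, and also consider repartitioning $\kappa$ into two other subsets summing to $-k$ if one exists.

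\textbf{Step 3: nonhyperellipticity.} For $n\geq 6$ the merged stratum has at least $5$ points. A generic point of $\cM_{0,\geq5}$ has no hyperelliptic involution (as used in Lemma \ref{L:HyperellipticNonUniqueness}), so nonhyperellipticity after forgetting marked points is automatic. For $n=5$ the merged stratum lies in $\cM_{0,4}$, where the only hyperelliptic components are those whose orders pair up as $(x,x,y,y)$, compatibly with the involution. I would enumerate the possible pairs of merges and check which choices force both merged strata to have this paired form.

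The main obstacle is this last $n=5$ case analysis, which combines the finite area condition from Step 2 with the pairing condition from Step 3. Here one must show that the only $\kappa$ with no two admissible nonhyperelliptic merges is $(-a,-a,-a,a-k,2a-k)$. The way to do this is to write $S=(x,y,z)$ and $T=(u,v)$ with $u+v=-k$, and list every admissible merge. Merging within $T$ is impossible since $u+v=-k$, so all merges come from $S$. Demanding that every resulting $4$-tuple be of the form $(x,x,y,y)$ then forces equalities among the orders, and these should pin down exactly the excluded family.
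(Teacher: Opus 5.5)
Your Steps 1--2 are fine and match the paper's admissibility argument. The paper shows $a_1+a_m>-k$ and then that $a_2+a_m$ or $a_1+a_2$ exceeds $-k$. Your $\{s_1,s_3\}$ argument even works for $|S|=3$, since $s_1+s_3\le -k$ forces $s_1<0$ and hence $s_2<0$.

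The genuine gap is Step 3: nonhyperellipticity is not automatic for $n\ge 6$. If you merge $z_i$ with $z_j=-z_i$, the new point has order $0$. After forgetting it only $n-2$ points remain, so for $n=6$ you can land in a hyperelliptic four-point stratum. Your own selection rule does exactly this. Take $k=7$ and $\kappa=(1,-1,-2,-5,-2,-5)$. The only partitions are $\{-2,-5\}\cup\{1,-1,-2,-5\}$, so Step 2 picks $\{s_1,s_2\}=\{1,-1\}$. The merged stratum $(0,-2,-5,-2,-5)$ becomes the primitive hyperelliptic $\Omega^7\cM_0(-2,-2,-5,-5)$ once the marked point is forgotten.

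The paper treats this situation separately, $\kappa=(b,-b,-a,a-k,-a,a-k)$ with $b>0$:
\begin{itemize}
\item Merge $b$ with either copy of $-a$ instead. This is cylindrical via $\{b-a,-b,a-k\}\cup\{-a,a-k\}$ and has five nonzero orders unless $b=a$.
\item If $b=a$, merge it with either copy of $a-k$. This gives $(2a-k,-a,-a,-a,a-k)$, which fails only when $a=k/2$, i.e.\ $k=2$, since $\gcd(a,k)=1$.
\end{itemize}

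Your $n=5$ step is still only a plan, and as framed it targets the wrong condition. You need to rule out that \emph{at most one} admissible merge is nonhyperelliptic, not that all of them are hyperelliptic. You also cannot restrict to merges inside one fixed $S$. For example, take $k=7$, $\kappa=(1,-3,-2,-5,-5)$ (not in the excluded family) and $S=\{1,-3,-5\}$:
\begin{itemize}
\item merging $\{1,-3\}$ gives the hyperelliptic $(-2,-2,-5,-5)$;
\item $\{-3,-5\}$ has infinite area;
\item only $\{1,-5\}$ is good.
\end{itemize}
The second good merge uses the other copy of $-5$, i.e.\ a different partition.

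The paper's argument is short. Suppose an admissible merge produces $(-a,-a,a-k,a-k)$ with the merged point of order $-a$. Write $\kappa=(x,-x-a,-a,a-k,a-k)$ with $x>-a$, and merge $x$ with either copy of $a-k$. Both merges have finite area and are cylindrical via $\{x+a-k,-x-a\}\cup\{-a,a-k\}$. Each gives $(x+a-k,-x-a,-a,a-k)$, which is hyperelliptic only if $\{x+a-k,-x-a\}=\{-a,a-k\}$. For nonzero orders this means $x=k-2a$, which is the excluded $\kappa$ after $a\mapsto k-a$.
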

\begin{proof}
Let $\kappa = (a_1, \hdots, a_m, a_{m+1}, \hdots, a_n)$ be arranged so that $\sum_{i=1}^m a_i = -k$ and where $m \geq 3$ and $a_1 \geq a_2 \geq \hdots \geq a_m$. 

If $a_i + a_m \leq -k$, then $a_i < 0$. If this held for $a_1$, then $\sum_{i=1}^m a_i = (a_1+a_m) + \sum_{i=2}^{m-1} a_i < -k$, a contradiction. If it held for $a_2$, then the same reasoning implies that $a_1 > 0$ and so $a_1 + a_2 > a_2 > -k$. So we have two merges that remain in cylindrical strata of finite area $k$-differentials. All subsequently considered merges will be assumed to have this property. 

Suppose first that merging $a_i$ and $a_j$ is hyperelliptic after forgetting marked points and that the merged singularity is a marked point, i.e. $a_i = -a_j$. Assume that $a_i > 0$. Then there is some $a \in \{1,\hdots, k-1\}$ so that $\kappa = (a_i, -a_i, -a,a-k,-a,a-k)$. Thus, consider merging $a_i$ with $-a$ instead. This is nonhyperelliptic after forgetting marked points unless $a_i = a$ and so $\kappa = (a,-a,-a, a-k, -a, a-k)$ and primitivity implies that $\gcd(a,k) = 1$. So consider merging $a_i$ and $a-k$. We are done unless the stratum is $(a,-a,-a,-a,-a, -a)$ where $a = \frac{k}{2}$. Since $\gcd(a,k) = 1$, this only occurs when $k=2$ and $a=1$. However, Theorem \ref{T2} is trivial when $k=2$.

Suppose now that $n=5$ and that some merge is hyperelliptic, i.e. the new stratum is $(-a,-a,a-k,a-k)$ where $1 \leq a \leq k-1$ and where $a$ is the singularity formed by merging. Write $\kappa = (x, -x-a, -a, a-k, a-k)$ where we choose $x$ so that $x > -a$. Merge $x$ and $a-k$ to form $(x+a-k, -x-a, -a, a-k)$, which is nonhyperelliptic unless $x = k-2a$ (which is our exception). The result is a finite area since $x+a-k > -k$.
\end{proof}

There is a subtle point that merging the singularities may cause the stratum to cease to be primitive. 

\begin{sublem}\label{SL:DisconnectedDegeneration}
Suppose that merging two singularities forms a cylindrical stratum $\cD$ of primitive finite area $d$-differentials\footnote{Technically, these boundary objects are still $k$-differentials; just ones that are $(k/d)$th powers of $d$-differentials.} where $1 \ne d \mid k$. Then the corresponding boundary of $\cM_{\cC}$ contains $(\cM_\cD)^{k/d} = \cM_\cD \times \times \hdots \cM_\cD$ or $k$ is even, $d$ is odd, and it contains the sublocus where surfaces with indices $i$ and $i+k/2$ mod $k/d$ are identical.  
\end{sublem}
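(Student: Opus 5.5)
We will understand the boundary object on the holonomy cover and then apply the Mirzakhani–Wright boundary theory, as in Lemma \ref{L:CertificateToConclude}.

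\textbf{Step 1: the boundary cover.} Let $X_t \in \cC$ degenerate to the merged $k$-differential $Z \in \cD$, and write $Z = \eta^{k/d}$ with $\eta$ a primitive $d$-differential. The holonomy homomorphism $\rho$ of $X_t$ restricts on $\pi_1(Z - P)$, the complement of the small disc containing the two merged points, to a homomorphism $\rho_Z$. Since $Z$ is a $(k/d)$th power of $\eta$, the image of $\rho_Z$ is the index $k/d$ subgroup $(k/d)\bZ/k \cong \bZ/d$. Consequently the preimage of $Z$ in the holonomy cover $Y_t \ra X_t$ has $k/d$ connected components. The deck generator $t$ permutes these components cyclically, and $t^{k/d}$ acts on each one as the deck group of the holonomy cover of $\eta$. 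So the boundary point $W$ of $\cN_\cC$ in the multiscale compactification is a disjoint union $W_0 \sqcup \cdots \sqcup W_{k/d-1}$. Each $W_i$ is isomorphic to the holonomy cover of $\eta$, and its abelian differential is the pullback rotated by the $k$th root of unity $\zeta^i$.

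\textbf{Step 2: tangent space.} By the Mirzakhani–Wright boundary formula \cite{MirWri}, the boundary of $\cM_\cC$ at $W$ is an invariant subvariety containing every such degenerate surface. Its tangent space is the image in the boundary of $\ker(p)\cap T_Y\cM_\cC$, where $p$ records periods of the vanishing cycles. The $\cD$-deformations of $\eta$ lift simultaneously to all $W_i$, so the boundary contains the diagonal locus. To obtain the full product, we vary the pieces independently. Each $W_i$ is $\mathrm{GL}(2,\bR)$-equivalent to $W_0$ via rotation by $\zeta^i$. The boundary of an invariant subvariety containing a disconnected surface projects onto each factor as an invariant subvariety. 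Applying the $\mathrm{GL}(2,\bR)$-action separately to factors is not automatic, however.

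\textbf{Step 3: the main obstacle, decoupling the components.} We would argue via cylinder shears. Since $\cD$ is cylindrical, $\eta$ has a Euclidean cylinder, and its lifts lie in the various $W_i$. By Lemma \ref{L:Mfreecyl}, applied on the smooth surfaces $X_t$ where the cylinder persists, a lift is $\cM_\cC$-free for $k$ odd. Hence the shear supported in a single $W_i$ lies in the tangent space of the boundary. For $k$ even, a lift has a twin, namely its image under $t^{k/2}$. This twin lies in $W_{i+k/2 \bmod k/d}$, which is a different piece exactly when $k/d$ is even, that is, when $d$ is odd with $k$ even. In that case shears only decouple pairs of pieces, which produces the exceptional sublocus where surfaces indexed $i$ and $i+k/2$ are identified.

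We then combine two facts. First, by the induction hypothesis on $k$ (note $d<k$), $\cM_\cD$ is as big as possible. Second, by the cylinder deformation theorem, the shears together with $\mathrm{GL}(2,\bR)$ generate. Together these show that the projection of the boundary to each $W_i$ (resp. each pair) is $\cM_\cD$, and that the tangent space contains independent copies. A dimension count then gives $(\cM_\cD)^{k/d}$, or the paired sublocus. The delicate point, which we would check carefully, is that cylinder shears span enough of $T\cM_\cD$ in each factor. This requires using that $\cM_\cD$ is the smallest invariant subvariety containing $\cN_\cD$, and that the twist space of each factor generates its tangent space modulo the $\mathrm{GL}(2,\bR)$ directions. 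We would cite the twist space decomposition \cite{ApisaWrightHighRank} for this.
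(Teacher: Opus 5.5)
\textbf{The gap is in how you pass from local independence of the factors to the product.} Your Steps 1--2 and the cylinder observation in Step 3 match the paper's set-up. The boundary of $\cM_\cC$ contains the diagonal locus $\{(Z,\zeta Z,\dots,\zeta^{k/d-1}Z) : Z\in\cN_\cD\}$. Each projection contains $\cM_\cD$ by induction on $k$. Lemma \ref{L:Mfreecyl} gives a cylinder in one piece that can be sheared alone, or only together with its twin in piece $i+k/2$.

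Your proposed mechanism is that free-cylinder shears, together with $\mathrm{GL}(2,\bR)$, span $T\cM_\cD$ in each factor, so that a dimension count yields $(\cM_\cD)^{k/d}$. Neither the cylinder deformation theorem nor the twist space decomposition provides this.
\begin{itemize}
\item Where you do have freeness, the spanning is false. Lemma \ref{L:Mfreecyl} only certifies freeness for lifts of Euclidean cylinders, i.e.\ at points of $\cN_\cD$. There the core curves of such lifts need not span: in Lemma \ref{L:CylinderSubmodule} they generate only a corank-one submodule $C_Y$ of $H_1(Y-Q)$. This is why the paper needs extra input (Avila--Eskin--M\"oller and Mirzakhani--Wright full rank) even in that base case.
\item At generic points of the boundary component you have no control over which cylinders are free, since you do not yet know it is a product.
\end{itemize}
Knowing that the tangent space contains the diagonal plus a few single-factor shear directions gives no lower bound of $(k/d)\dim\cM_\cD$ on the dimension. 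You flag this as ``the delicate point,'' but it is exactly the step that fails.

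\textbf{The missing ingredient is the prime decomposition theorem of Chen--Wright} \cite[Theorem 1.3 (2)]{ChenWright}. Apply it to the boundary component $\cL$ of $\cM_\cC$ containing the diagonal locus. Call indices $i,j$ part of the same prime factor when the absolute periods of the $i$th projection of $\cL$ determine those of the $j$th. The theorem says $\cL$ is the product over its prime factors.

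With this, a single free cylinder suffices and no spanning or dimension count is needed. Shearing the free lift in piece $j$ changes absolute periods there while fixing piece $i$. So $i$ and $j$ lie in different prime factors unless $k$ is even, $d$ is odd and $j\equiv i+k/2 \bmod k/d$. Hence $\cL$ is the product of its projections (or the paired sublocus), and each projection contains $\cM_\cD$.

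\textbf{A small slip.} The twin $t^{k/2}\widetilde C$ lies in a different piece exactly when $d$ is odd, not when $k/d$ is even. For example, with $k=8$ and $d=2$, $k/d$ is even but $t^{k/2}$ preserves each piece.
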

\begin{proof}
The boundary of $\cM_\cC$ then contains $\{(Z, \zeta Z, \hdots, \zeta^{k/d-1}Z) : Z \in \cN_{\cD} \}$ where $\zeta$ is a primitive $k$th root of unity and $\cN_{\cD}$ is the locus of holonomy covers of surfaces in $\cD$. Let $\cL$ be the boundary component of $\cM_{\cC}$ containing this locus. The projection of $\cL$ to each factor is $\cM_{\cD}$ (by the induction hypothesis on $k$). Two indices $i, j \in \{0, \hdots, k/d-1\}$ are \emph{in the same prime factor} if all the absolute periods on the projection of $\cL$ to the $i$th factor determine all the absolute periods on the projection of $\cL$ to the $j$th factor. By assumption, we may choose $Z \in \cN_{\cD}$ so that it contains the preimage of a Euclidean cylinder under the holonomy cover. By Lemma \ref{L:Mfreecyl}, $(Z, \zeta Z, \hdots, \zeta^{k/d-1}Z)$ either has an $\cL$-free cylinder or a cylinder with a twin on $t^{k/2} Z$, which is a different surface only when $k$ is even and $d$ is odd. Therefore, $i= j$ are in the same prime factor if and only if $i=j$ or $k$ is even, $d$ is odd, and $j = i + k/2$ mod $k/d$. The result now follows from the prime decomposition of Chen-Wright \cite[Theorem 1.3 (2)]{ChenWright}.
\end{proof} 

\begin{sublem}
The claim holds when $\kappa = (-a,2a-k,-a,-a, a-k)$ for some $a \in \{1,\hdots, k-1\}$ with $\gcd(a,k)=1$.
\end{sublem}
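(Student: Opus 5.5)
We treat the exceptional five-singularity stratum $\kappa = (-a, 2a-k, -a, -a, a-k)$ directly, following the template of the $g \geq 1$ case. We find a single merge with a certifying cylinder and then invoke Lemma \ref{L:CertificateToConclude}.

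The stratum is cylindrical, since $(-a,-a,a-k)$ sums to $-k$, and so does $(-a, 2a-k)$. Among the possible merges, Sublemma \ref{SL:Genus0Merge} shows that merging $2a-k$ with a pole $-a$ produces $(a-k,-a,-a,a-k)$. This is cylindrical but hyperelliptic. We therefore avoid it and instead merge two of the $-a$ poles, producing
\[ \kappa' = (-2a, 2a-k, -a, a-k). \]
This is finite area provided $2a<k$. If $2a>k$, we merge $-a$ with $a-k$ to obtain $(-k+\ldots)$, which is excluded. In that range we instead merge $2a-k$ (now positive) with $a-k$, producing $(-a,-a,-a,3a-2k)$.

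We then check three properties of the merged stratum. First, it is cylindrical, using subsets summing to $-k$ as in Lemma \ref{L:Genus0Cylinder}; for $\kappa'$ the subset $(-2a, 2a-k)$ works. Second, it is nonhyperelliptic, since its four orders are not of the form $(b,b,c,c)$ unless $a$ is forced by $\gcd(a,k)=1$ into $k=2$, which is trivial. Third, we determine whether it is primitive. If it is, the induction on $n$, with base case Lemma \ref{L:CylinderSubmoduleGenus0}, gives that $\cM_\cD$ is as big as possible. If it is not primitive, it is a power of a $d$-differential. In that case we use Sublemma \ref{SL:DisconnectedDegeneration} together with the induction on $k$ to identify the boundary of $\cM_\cC$ with the expected product, or with its diagonal sublocus. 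We then run the dimension count of Lemma \ref{L:CertificateToConclude} with $T_W\cM_\cD$ replaced by the tangent space of that product.

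To apply Lemma \ref{L:CertificateToConclude}, we must exhibit a Euclidean cylinder crossing the vanishing arc $s$ exactly once. Because a genus zero surface is identified with $\cM_{0,5}$, we position the two merging poles on opposite sides of the core curve of the cylinder coming from the partition $\{-a,2a-k\}\cup\{-a,-a,a-k\}$. We then take $s$ to be the arc joining them across the cylinder. The core curve separates them, so its algebraic intersection with $s$ is $1$.

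The main obstacle is the non-primitive case. There, Lemma \ref{L:CertificateToConclude} is stated only for a primitive boundary $\cD$. We therefore expect to redo its tangent-space comparison with the product locus, checking that the cylinder twins on $t^{k/2}$ are accounted for when $k$ is even and $d$ is odd.
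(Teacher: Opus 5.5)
\textbf{There is a genuine gap in the branch $2a>k$.} Your argument for $2a<k$ goes through. For $2a>k$, however, merging $2a-k$ with $a-k$ gives $(-a,-a,-a,3a-2k)$, and no subset of this multiset sums to $-k$. The possible subset sums are $-a,-2a,-3a,3a-2k,2a-2k,a-2k$. Each equals $-k$ only if $3a=k$, $2a=k$, or $a=k$, and all three are impossible when $k/2<a<k$. So that boundary stratum is not cylindrical. Neither Lemma \ref{L:CylinderSubmoduleGenus0} nor the inductive hypothesis, which only concerns cylindrical genus zero strata, tells you that its $\cM_\cD$ is as big as possible. No other merge helps in this range either: two $-a$'s give $-2a<-k$, and $-a$ with $a-k$ gives $-k$, both infinite area. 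The only admissible merge is the one you discarded.

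\textbf{The missing observation is that a hyperelliptic boundary is harmless here.} Lemma \ref{L:CertificateToConclude} makes no nonhyperellipticity assumption on $\cD$. Its proof absorbs a codimension-one $\cM_\cD$ via Mirzakhani--Wright and Lemma \ref{L:HypOrbClosure}. Nonhyperellipticity is only needed in the final two-merge annihilator argument.

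The paper's route is to merge $p_1=-a$ with $p_2=2a-k$. This lands in the stratum $(a-k,-a,-a,a-k)$, which is primitive because $\gcd(a,k)=1$, and is hyperelliptic and cylindrical. Lemma \ref{L:CylinderSubmoduleGenus0} already shows its $\cM_\cD$ is as big as possible. The certificate is the cylinder separating $\{p_1,p_5\}$ from $\{p_2,p_3,p_4\}$, which crosses the vanishing arc from $p_1$ to $p_2$ once. This works uniformly in $a$, with no case split.

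\textbf{Two smaller points.}
\begin{itemize}
\item Your claim that $(-2a,2a-k,-a,a-k)$ is nonhyperelliptic fails for $(k,a)=(3,1)$, where the stratum is $(-2,-1,-1,-2)$. This is harmless for the same reason as above.
\item The non-primitive case you flag as the main obstacle never arises. Every merged stratum here keeps an entry $-a$, and $\gcd(a,k)=1$.
\end{itemize}
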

\begin{proof}
Label the singularities as $(p_1, \hdots, p_5)$. Merging $p_1$ and $p_2$ passes to the primitive hyperelliptic component $(a-k, -a, -a, a-k)$. Let $s$ be the vanishing cycle for the merge. At some point in the original stratum, the vanishing cycle is crossed by the cylinder core curve that separates $\{p_1, p_5\}$ from $\{p_2, p_3, p_4\}$. We are done by Lemma \ref{L:CertificateToConclude}.
\end{proof}

There is a basis for $\pi_1(X-P)$ consisting of loops $a_i$ around the $i$th puncture excluding the last one. These generate a free group $F_{n-1}$. Identify $X-P$ with its deformation retract to the rose $B$ with one basepoint $p$ and $n-1$ petals $a_1, \hdots, a_{n-1}$. The universal abelian cover $A$ of $B$ is the one specified by the map from $F_{n-1}$ to its abelianization. $A$ is a graph whose vertices can be identified with $\bZ^{n-1}$ and on which $\bZ^{n-1}$ acts. Let $e_i$ be the lifts of $a_i$ to the origin. The $1$-chains are $C_1(A) = \bigoplus_{i=1}^{n-1} \bZ[\bZ^{n-1}] \cdot e_i$. Write $\bZ[\bZ^{n-1}]$ as Laurent polynomials in $t_1, \hdots, t_{n-1}$. The chain map $C_1(A) \ra C_0(A)$ is the $\bZ[\bZ^{n-1}]$-module homomorphism sending $e_i$ to $1-t_i$. 

Let $s_1, s_2$ be the two (simple) vanishing cycles produced by the merges in Sublemma \ref{SL:Genus0Merge}. These do not intersect one another except at endpoints. Up to relabeling the punctures, we may suppose that $s_1$ joins the first and second puncture and therefore positively (resp. negatively) intersects $a_1$ (resp. $a_2$) exactly once and intersects none of $a_3, \hdots, a_{n-1}$. We make the analogous supposition on $s_2$ which will join either the second and third or third and fourth punctures. Let $S_{i,A}$ be the lifts of $s_i$ to the universal abelian cover of $X-P$. These elements have well-defined intersections pairings with elements of $C_1(A)$. For instance, there is a lift of $s_1$ whose intersection pairing sends $e_1$ to $1$ and $e_2$ to $-1$. In other words, this lift determines the map $\bigoplus_{i=1}^{n-1} \bZ[\bZ^{n-1}] \cdot e_i$ that sends $\sum_i p_i(\vec{t})e_i$ to $p_1(0)-p_2(0)$. Let $\mathrm{Ann}(S_{i,A})$ be the chains in $C_1(A)$ that pair trivially with every element of $S_{i,A}$. When $i=1$, these are precisely $\{ \sum_i p_i(\vec{t})e_i : p_1 = p_2\}$. 

Let $C$ be the $(\bZ/k)$-regular cover of the rose determined by the holonomy homomorphism $\rho$. The map on $1$-chains $C_1(A) \ra C_1(C)$ is then the map induced by $\rho$ from $\bigoplus_{i=1}^{n-1} \bZ[\bZ^{n-1}] \cdot e_i$ to $\bigoplus_{i=1}^{n-1} \bZ[\bZ/k] \cdot e_i$. We will work rationally in order to use that $\bQ[\bZ/k] = \bigoplus_{d \mid k} \bQ(\zeta_d)$ where $\zeta_d := \exp\left( \frac{2 \pi i}{d} \right)$. In other words $C_1(C) = \bigoplus_{d \mid k} C_1(C)_d$ where $C_1(C)_d := \bigoplus_{i=1}^{n-1} \bQ(\zeta_d) \cdot e_i$. Let $H_1(C)_d$ be the kernel of the map on $C_1(C)_d$ that sends $e_i$ to $(1-\zeta_d^{k_i})$ where $\rho(a_i) = \zeta_k^{k_i}$. Then $H_1(C) = \bigoplus_{d \mid k} H_1(C)_d$. The annihilator $\mathrm{Ann}_{1,d}$ in $H_1(C)_d$ of the lifts of $s_1$ to $Y-Q$ is the kernel of the morphism that sends $e_1$ to $1$, $e_2$ to $-1$, and all other $e_i$ to $0$.

$\mathrm{Ann}_{1,d}$ has codimension $1$ in $H_1(C)_d$ unless $(1,-1,0, \hdots)$ and $(1-\zeta_d^{k_1}, \hdots, 1-\zeta_d^{k_{n-1}})$ both lie on the same line and the second vector is nonzero. The collinearity condition forces the second vector to be zero, so $\mathrm{Ann}_{1,d}$ is always codimension $1$. 

The smallest subspace of $H_1(C)_d$ containing both $\mathrm{Ann}_{1,d}$ and $\mathrm{Ann}_{2,d}$ is $H_1(C)_d$ unless the two subspaces coincide. This implies, depending on whether $s_2$ joins the second and third or third and fourth puncture, that $(1-\zeta_d^{k_1}, \hdots, 1-\zeta_d^{k_{n-1}})$ is nonzero and in the span of $(1, -1, 0, \hdots)$ and either $(0, 1, -1, 0, \hdots)$ or $(0,0,1,-1, 0, \hdots)$. When the second vector is $(0,0,1,-1,0, \hdots)$ this forces $d$ to divide all $k_i$, contrary to our assumption. When the second vector is $(0,1,-1,0, \hdots)$ we have that $d \mid k_i$ for $4 \leq i \leq n-1$ and that $3 - \sum_{i=1}^3 \zeta_d^{k_i} = 0$, which again implies that $(1-\zeta_d^{k_1}, \hdots, 1-\zeta_d^{k_{n-1}}) = 0$. In particular, $\mathrm{Ann}_{1,d}$ and $\mathrm{Ann}_{2,d}$ always generate $H_1(C)_d$. 

By the induction hypothesis, Sublemma \ref{SL:DisconnectedDegeneration}, and the Mirzakhani-Wright boundary formula \cite{MirWri}, the tangent space to $\cM_{\cC}$ contains $\mathrm{Ann}_{1,d}$ and $\mathrm{Ann}_{2,d}$ for all $d \mid k$ when $k$ is odd and the antisymmetriziations of these when $k$ is even, i.e. the result of multiplication by $(1-t^{k/2})$. This shows that the tangent space is all of $H_1(C)$ when $k$ is odd and is $(1-t^{k/2})H_1(C)$ when $k$ is even. 
\end{proof}

We sketch the following proof since it is standard. 

\begin{proof}[Proof of Corollary \ref{C:Asymptotics}:] Counting trajectories on translation surfaces using the Siegel-Veech constant of its orbit closure is the work of Eskin-Masur \cite{EMa} and Eskin-Mirzakhani-Mohammadi \cite[Theorem 2.12]{EMM}. The passage from counting on the holonomy cover to counting on the underlying $k$-differential appears in various references, e.g. \cite[Lemma 8.6]{Apisa-Codim1Hyp}. The holonomy cover of a generic surface in $\cC$ has orbit closure $\cM_{\cC}$ by \cite[Lemma 3.1]{Aygun2025_PrimeOrderkDifferentials}. The large genus asymptotics of Siegel-Veech constants for connected strata of abelian differentials is Aggarwal \cite{Aggarwal-LargeGenus}, whose estimates involve an additive $O(1/h)$ term where $h$ is the genus of the abelian differential. The genus $h$ of the holonomy cover of a genus $g$ finite-area primitive $k$-differential satisfies $h \geq Ck(g+1)$ where $C$ is a constant independent of $k$ and $g$. In light of Theorem \ref{T2}, the hypotheses placed on $\cC$ are precisely those needed to guarantee that $\cM_{\cC}$ is a connected stratum of abelian differentials. The asymptotics in the statement of the corollary are, a priori, ``weak asymptotics'' in the sense of Eskin-Masur \cite{EMa}, but, using recently announced work of Solan can be upgraded to usual asymptotics.
\end{proof}

\bibliography{mybib}{}

@article {Tahar-Chamber,
    AUTHOR = {Tahar, Guillaume},
     TITLE = {Chamber structure of modular curves {$X_1(N)$}},
   JOURNAL = {Arnold Math. J.},
  FJOURNAL = {Arnold Mathematical Journal},
    VOLUME = {4},
      YEAR = {2018},
    NUMBER = {3-4},
     PAGES = {459--481},
      ISSN = {2199-6792,2199-6806},
   MRCLASS = {14G35 (14H15)},
  MRNUMBER = {3949813},
MRREVIEWER = {Gregorio\ Baldi},
       DOI = {10.1007/s40598-019-00099-7},
       URL = {https://doi-org.ezproxy.library.wisc.edu/10.1007/s40598-019-00099-7},
}

@article {EskinOkounkov,
    AUTHOR = {Eskin, Alex and Okounkov, Andrei},
     TITLE = {Asymptotics of numbers of branched coverings of a torus and
              volumes of moduli spaces of holomorphic differentials},
   JOURNAL = {Invent. Math.},
  FJOURNAL = {Inventiones Mathematicae},
    VOLUME = {145},
      YEAR = {2001},
    NUMBER = {1},
     PAGES = {59--103},
      ISSN = {0020-9910,1432-1297},
   MRCLASS = {32G15 (05A17 11F23 37A25 57M12)},
  MRNUMBER = {1839286},
MRREVIEWER = {Christopher\ M.\ Judge},
       DOI = {10.1007/s002220100142},
       URL = {https://doi-org.ezproxy.library.wisc.edu/10.1007/s002220100142},
}

@article {AEZ,
    AUTHOR = {Athreya, Jayadev S. and Eskin, Alex and Zorich, Anton},
     TITLE = {Right-angled billiards and volumes of moduli spaces of
              quadratic differentials on {$\Bbb C\rm P^1$}},
      NOTE = {With an appendix by Jon Chaika},
   JOURNAL = {Ann. Sci. \'Ec. Norm. Sup\'er. (4)},
  FJOURNAL = {Annales Scientifiques de l'\'Ecole Normale Sup\'erieure.
              Quatri\`eme S\'erie},
    VOLUME = {49},
      YEAR = {2016},
    NUMBER = {6},
     PAGES = {1311--1386},
      ISSN = {0012-9593,1873-2151},
   MRCLASS = {32G15 (30F30 37D40 37D50)},
  MRNUMBER = {3592359},
MRREVIEWER = {Serge\ E.\ Troubetzkoy},
       DOI = {10.24033/asens.2310},
       URL = {https://doi-org.ezproxy.library.wisc.edu/10.24033/asens.2310},
}

@article {Aggarwal-LargeGenus,
    AUTHOR = {Aggarwal, Amol},
     TITLE = {Large genus asymptotics for {S}iegel-{V}eech constants},
   JOURNAL = {Geom. Funct. Anal.},
  FJOURNAL = {Geometric and Functional Analysis},
    VOLUME = {29},
      YEAR = {2019},
    NUMBER = {5},
     PAGES = {1295--1324},
      ISSN = {1016-443X,1420-8970},
   MRCLASS = {32G15 (37D40)},
  MRNUMBER = {4025514},
MRREVIEWER = {Ioannis\ D.\ Platis},
       DOI = {10.1007/s00039-019-00509-0},
       URL = {https://doi-org.ezproxy.library.wisc.edu/10.1007/s00039-019-00509-0},
}

@article {Mumford-theta,
    AUTHOR = {Mumford, David},
     TITLE = {Theta characteristics of an algebraic curve},
   JOURNAL = {Ann. Sci. \'Ecole Norm. Sup. (4)},
  FJOURNAL = {Annales Scientifiques de l'\'Ecole Normale Sup\'erieure.
              Quatri\`eme S\'erie},
    VOLUME = {4},
      YEAR = {1971},
     PAGES = {181--192},
      ISSN = {0012-9593},
   MRCLASS = {14H99},
  MRNUMBER = {292836},
MRREVIEWER = {P.\ E.\ Newstead},
       URL = {http://www.numdam.org/item?id=ASENS_1971_4_4_2_181_0},
}

@article {Atiyah-spin,
    AUTHOR = {Atiyah, Michael F.},
     TITLE = {Riemann surfaces and spin structures},
   JOURNAL = {Ann. Sci. \'Ecole Norm. Sup. (4)},
  FJOURNAL = {Annales Scientifiques de l'\'Ecole Normale Sup\'erieure.
              Quatri\`eme S\'erie},
    VOLUME = {4},
      YEAR = {1971},
     PAGES = {47--62},
      ISSN = {0012-9593},
   MRCLASS = {57.50 (30.00)},
  MRNUMBER = {286136},
MRREVIEWER = {J.\ Eells},
       URL = {http://www.numdam.org/item?id=ASENS_1971_4_4_1_47_0},
}

@article{chen2026algebrogeometric,
  title         = {An algebro-geometric perspective on the topology of moduli spaces of differentials},
  author        = {Chen, Dawei and Yu, Fei},
  journal       = {arXiv preprint arXiv:2601.13127},
  year          = {2026},
  eprint        = {2601.13127},
  archivePrefix = {arXiv},
  primaryClass  = {math.AG},
  url           = {https://arxiv.org/abs/2601.13127}
}

@article {Johnson-Spin,
    AUTHOR = {Johnson, Dennis},
     TITLE = {Spin structures and quadratic forms on surfaces},
   JOURNAL = {J. London Math. Soc. (2)},
  FJOURNAL = {Journal of the London Mathematical Society. Second Series},
    VOLUME = {22},
      YEAR = {1980},
    NUMBER = {2},
     PAGES = {365--373},
      ISSN = {0024-6107,1469-7750},
   MRCLASS = {57R15 (10C05)},
  MRNUMBER = {588283},
MRREVIEWER = {Neal\ W.\ Stoltzfus},
       DOI = {10.1112/jlms/s2-22.2.365},
       URL = {https://doi-org.ezproxy.library.wisc.edu/10.1112/jlms/s2-22.2.365},
}

@article{ApisaSalter,
  author       = {Paul Apisa and Nick Salter},
  title        = {Invariant measures on moduli spaces of twisted holomorphic 1-forms and strata of dilation surfaces},
  journal      = {arXiv preprint arXiv:2507.10685},
  year         = {2025},
  archivePrefix= {arXiv},
  eprint       = {2507.10685},
  primaryClass = {math.GT},
  url          = {https://arxiv.org/abs/2507.10685},
  doi          = {10.48550/arXiv.2507.10685}
}

@article {Randal-Williams-RelativeArf,
    AUTHOR = {Randal-Williams, Oscar},
     TITLE = {Homology of the moduli spaces and mapping class groups of
              framed, {$r$}-{S}pin and {P}in surfaces},
   JOURNAL = {J. Topol.},
  FJOURNAL = {Journal of Topology},
    VOLUME = {7},
      YEAR = {2014},
    NUMBER = {1},
     PAGES = {155--186},
      ISSN = {1753-8416,1753-8424},
   MRCLASS = {55R40 (57R15)},
  MRNUMBER = {3180616},
MRREVIEWER = {J.\ M.\ Boardman},
       DOI = {10.1112/jtopol/jtt029},
       URL = {https://doi-org.ezproxy.library.wisc.edu/10.1112/jtopol/jtt029},
}

@article {HumphriesJohnson,
    AUTHOR = {Humphries, Stephen P. and Johnson, Dennis},
     TITLE = {A generalization of winding number functions on surfaces},
   JOURNAL = {Proc. London Math. Soc. (3)},
  FJOURNAL = {Proceedings of the London Mathematical Society. Third Series},
    VOLUME = {58},
      YEAR = {1989},
    NUMBER = {2},
     PAGES = {366--386},
      ISSN = {0024-6115,1460-244X},
   MRCLASS = {57N05 (20F34 55M25)},
  MRNUMBER = {977482},
MRREVIEWER = {Ruth\ Charney},
       DOI = {10.1112/plms/s3-58.2.366},
       URL = {https://doi-org.ezproxy.library.wisc.edu/10.1112/plms/s3-58.2.366},
}

@article {Bogatyrev-Gendron,
    AUTHOR = {Bogatyr\"ev, Andrei and Gendron, Quentin},
     TITLE = {The space of solvable {P}ell-{A}bel equations},
   JOURNAL = {Compos. Math.},
  FJOURNAL = {Compositio Mathematica},
    VOLUME = {161},
      YEAR = {2025},
    NUMBER = {7},
     PAGES = {1483--1511},
      ISSN = {0010-437X,1570-5846},
   MRCLASS = {30F30 (14H15 14H45 32G15)},
  MRNUMBER = {4954107},
       DOI = {10.1112/s0010437x25007158},
       URL = {https://doi-org.ezproxy.library.wisc.edu/10.1112/s0010437x25007158},
}

@article {BCGGM-k-diff,
    AUTHOR = {Bainbridge, Matt and Chen, Dawei and Gendron, Quentin and
              Grushevsky, Samuel and M\"oller, Martin},
     TITLE = {Strata of {$k$}-differentials},
   JOURNAL = {Algebr. Geom.},
  FJOURNAL = {Algebraic Geometry},
    VOLUME = {6},
      YEAR = {2019},
    NUMBER = {2},
     PAGES = {196--233},
      ISSN = {2313-1691,2214-2584},
   MRCLASS = {30F30 (14H15 32J05)},
  MRNUMBER = {3914751},
MRREVIEWER = {Sebasti\'an\ Reyes-Carocca},
       DOI = {10.14231/ag-2019-011},
       URL = {https://doi-org.ezproxy.library.wisc.edu/10.14231/ag-2019-011},
}

@article{Aygun2025_PrimeOrderkDifferentials,
  author       = {Juliet Aygun},
  title        = {Counting geodesics on prime-order $k$-differentials},
  journal      = {arXiv preprint arXiv:2506.24084},
  year         = {2025},
  archivePrefix= {arXiv},
  eprint       = {2506.24084},
  primaryClass = {math.DS},
  url          = {https://arxiv.org/abs/2506.24084},
  doi          = {10.48550/arXiv.2506.24084}
}

@article {Chen-AffineGeometry,
    AUTHOR = {Chen, Dawei},
     TITLE = {Affine geometry of strata of differentials},
   JOURNAL = {J. Inst. Math. Jussieu},
  FJOURNAL = {Journal of the Institute of Mathematics of Jussieu. JIMJ.
              Journal de l'Institut de Math\'ematiques de Jussieu},
    VOLUME = {18},
      YEAR = {2019},
    NUMBER = {6},
     PAGES = {1331--1340},
      ISSN = {1474-7480,1475-3030},
   MRCLASS = {14H15 (14H10)},
  MRNUMBER = {4021107},
MRREVIEWER = {Milagros\ Izquierdo},
       DOI = {10.1017/s1474748017000445},
       URL = {https://doi-org.ezproxy.library.wisc.edu/10.1017/s1474748017000445},
}

@book {FarbMargalit-Primer,
    AUTHOR = {Farb, Benson and Margalit, Dan},
     TITLE = {A primer on mapping class groups},
    SERIES = {Princeton Mathematical Series},
    VOLUME = {49},
 PUBLISHER = {Princeton University Press, Princeton, NJ},
      YEAR = {2012},
     PAGES = {xiv+472},
      ISBN = {978-0-691-14794-9},
   MRCLASS = {57M50 (20F36 20F65 57M07 57N05)},
  MRNUMBER = {2850125},
MRREVIEWER = {Stephen\ P.\ Humphries},
}

@article{ABW2,
  author       = {Paul Apisa and Matt Bainbridge and Jane Wang},
  title        = {Holonomy of affine surfaces},
  journal      = {arXiv preprint arXiv:2508.00100},
  year         = {2025},
  archivePrefix= {arXiv},
  eprint       = {2508.00100},
  primaryClass = {math.AG},
  url          = {https://arxiv.org/abs/2508.00100},
  doi          = {10.48550/arXiv.2508.00100}
}

@InCollection{thurstonshapes,
  author     = {Thurston, W.},
  title      = {Shapes of polyhedra and triangulations of the sphere},
  booktitle  = {The {E}pstein birthday schrift},
  publisher  = {Geom. Topol. Publ., Coventry},
  year       = {1998},
  volume     = {1},
  series     = {Geom. Topol. Monogr.},
  pages      = {511--549},
  doi        = {10.2140/gtm.1998.1.511},
  mrclass    = {57M50 (20F67 22E40 53C21)},
  mrnumber   = {1668340},
  mrreviewer = {Feng Luo},
  url        = {https://doi.org/10.2140/gtm.1998.1.511},
}

@article {ApisaWrightHighRank,
    AUTHOR = {Apisa, Paul and Wright, Alex},
     TITLE = {High rank invariant subvarieties},
   JOURNAL = {Ann. of Math. (2)},
  FJOURNAL = {Annals of Mathematics. Second Series},
    VOLUME = {198},
      YEAR = {2023},
    NUMBER = {2},
     PAGES = {657--726},
      ISSN = {0003-486X,1939-8980},
   MRCLASS = {32G15 (14H15 37D40)},
  MRNUMBER = {4635302},
       DOI = {10.4007/annals.2023.198.2.4},
       URL = {https://doi.org/10.4007/annals.2023.198.2.4},
}

@article {CMZ,
    AUTHOR = {Costantini, Matteo and M\"oller, Martin and Zachhuber,
              Jonathan},
     TITLE = {The area is a good enough metric},
   JOURNAL = {Ann. Inst. Fourier (Grenoble)},
  FJOURNAL = {Universit\'e{} de Grenoble. Annales de l'Institut Fourier},
    VOLUME = {74},
      YEAR = {2024},
    NUMBER = {3},
     PAGES = {1017--1059},
      ISSN = {0373-0956,1777-5310},
   MRCLASS = {14H15 (30F30 32C30 32G15 32J05 53C07)},
  MRNUMBER = {4770336},
MRREVIEWER = {R.\ Michael\ Porter},
       DOI = {10.5802/aif.3592},
       URL = {https://doi.org/10.5802/aif.3592},
}

@article {EMMW,
    AUTHOR = {Eskin, Alex and McMullen, Curtis T. and Mukamel, Ronen E. and
              Wright, Alex},
     TITLE = {Billiards, quadrilaterals and moduli spaces},
   JOURNAL = {J. Amer. Math. Soc.},
  FJOURNAL = {Journal of the American Mathematical Society},
    VOLUME = {33},
      YEAR = {2020},
    NUMBER = {4},
     PAGES = {1039--1086},
      ISSN = {0894-0347},
   MRCLASS = {32G15 (14C30 14H52 37C83)},
  MRNUMBER = {4155219},
       DOI = {10.1090/jams/950},
       URL = {https://doi-org.proxy.lib.umich.edu/10.1090/jams/950},
}

@unpublished{MMW,
	author = "McMullen, Curtis T. and Mukamel, Ronen E. and Wright, Alex",
	howpublished = "\url{http://math.harvard.edu/~ctm/papers/home/text/papers/gothic/gothic.pdf}",
	note = "preprint",
	title = "{Cubic curves and totally geodesic subvarieties of moduli space}",
	year = "2016"
}

@article {Boissy,
    AUTHOR = {Boissy, Corentin},
     TITLE = {Configurations of saddle connections of quadratic
              differentials on {$\Bbb{CP}^1$} and on hyperelliptic {R}iemann
              surfaces},
   JOURNAL = {Comment. Math. Helv.},
  FJOURNAL = {Commentarii Mathematici Helvetici. A Journal of the Swiss
              Mathematical Society},
    VOLUME = {84},
      YEAR = {2009},
    NUMBER = {4},
     PAGES = {757--791},
}

@article {Boissy-Components,
    AUTHOR = {Boissy, Corentin},
     TITLE = {Connected components of the strata of the moduli space of
              meromorphic differentials},
   JOURNAL = {Comment. Math. Helv.},
  FJOURNAL = {Commentarii Mathematici Helvetici. A Journal of the Swiss
              Mathematical Society},
    VOLUME = {90},
      YEAR = {2015},
    NUMBER = {2},
     PAGES = {255--286},
      ISSN = {0010-2571,1420-8946},
   MRCLASS = {32G15 (30F30 57R30)},
  MRNUMBER = {3351745},
MRREVIEWER = {Athanase\ Papadopoulos},
       DOI = {10.4171/CMH/353},
       URL = {https://doi-org.ezproxy.library.wisc.edu/10.4171/CMH/353},
}

@article {ChenMoller-Exceptional,
    AUTHOR = {Chen, Dawei and M\"oller, Martin},
     TITLE = {Quadratic differentials in low genus: exceptional and
              non-varying strata},
   JOURNAL = {Ann. Sci. \'Ec. Norm. Sup\'er. (4)},
  FJOURNAL = {Annales Scientifiques de l'\'Ecole Normale Sup\'erieure.
              Quatri\`eme S\'erie},
    VOLUME = {47},
      YEAR = {2014},
    NUMBER = {2},
     PAGES = {309--369},
      ISSN = {0012-9593,1873-2151},
   MRCLASS = {30F30 (14F10 14H15 30F60 32G15)},
  MRNUMBER = {3215925},
MRREVIEWER = {Andreas\ H\"oring},
       DOI = {10.24033/asens.2216},
       URL = {https://doi-org.ezproxy.library.wisc.edu/10.24033/asens.2216},
}

@article {ChenGendron,
    AUTHOR = {Chen, Dawei and Gendron, Quentin},
     TITLE = {Towards a classification of connected components of the strata
              of {$k$}-differentials},
   JOURNAL = {Doc. Math.},
  FJOURNAL = {Documenta Mathematica},
    VOLUME = {27},
      YEAR = {2022},
     PAGES = {1031--1100},
      ISSN = {1431-0635,1431-0643},
   MRCLASS = {30F30 (14H10 14H15 32G15)},
  MRNUMBER = {4452231},
}

@Article{troyanov,
  author     = {Troyanov, M.},
  title      = {Les surfaces euclidiennes \`a singularit\'{e}s coniques},
  journal    = {Enseign. Math. (2)},
  year       = {1986},
  volume     = {32},
  number     = {1-2},
  pages      = {79--94},
  issn       = {0013-8584},
  fjournal   = {L'Enseignement Math\'{e}matique. Revue Internationale. 2e S\'{e}rie},
  mrclass    = {30F20},
  mrnumber   = {850552},
  mrreviewer = {E. Bujalance},
}

@Article{veech93,
  author     = {Veech, W. A.},
  title      = {Flat surfaces},
  journal    = {Amer. J. Math.},
  year       = {1993},
  volume     = {115},
  number     = {3},
  pages      = {589--689},
  issn       = {0002-9327},
  doi        = {10.2307/2375075},
  fjournal   = {American Journal of Mathematics},
  mrclass    = {30F60 (32G15)},
  mrnumber   = {1221838},
  mrreviewer = {Yoichi Imayoshi},
  url        = {https://doi-org.proxyiub.uits.iu.edu/10.2307/2375075},
}

@article {MZ,
    AUTHOR = {Masur, Howard and Zorich, Anton},
     TITLE = {Multiple saddle connections on flat surfaces and the principal
              boundary of the moduli spaces of quadratic differentials},
   JOURNAL = {Geom. Funct. Anal.},
  FJOURNAL = {Geometric and Functional Analysis},
    VOLUME = {18},
      YEAR = {2008},
    NUMBER = {3},
     PAGES = {919--987},
}

@article {ApisaWrightDiamonds,
    AUTHOR = {Apisa, Paul and Wright, Alex},
     TITLE = {Reconstructing orbit closures from their boundaries},
   JOURNAL = {Mem. Amer. Math. Soc.},
  FJOURNAL = {Memoirs of the American Mathematical Society},
    VOLUME = {298},
      YEAR = {2024},
    NUMBER = {1487},
     PAGES = {v+141},
      ISSN = {0065-9266,1947-6221},
      ISBN = {978-1-4704-6911-5; 978-1-4704-7853-7},
   MRCLASS = {37D40 (14H15 32G15)},
  MRNUMBER = {4772262},
       DOI = {10.1090/memo/1487},
       URL = {https://doi-org.ezproxy.library.wisc.edu/10.1090/memo/1487},
}

@misc{Apisa-Codim1Hyp,
Author = {Paul Apisa},
Title = {Billiards in right triangles and orbit closures in genus zero strata},
Year = {2021},
Note = {preprint, arXiv:2110.07540 (2021)},
}

@article {AEM,
    AUTHOR = {Avila, Artur and Eskin, Alex and M\"{o}ller, Martin},
     TITLE = {Symplectic and isometric {${\rm SL}(2,\Bbb R)$}-invariant
              subbundles of the {H}odge bundle},
   JOURNAL = {J. Reine Angew. Math.},
  FJOURNAL = {Journal f\"{u}r die Reine und Angewandte Mathematik. [Crelle's
              Journal]},
    VOLUME = {732},
      YEAR = {2017},
     PAGES = {1--20},
}

@misc{B,
   AUTHOR = {Beukers,Frits},
   TITLE = {Gauss' hypergeometric functions},
    howpublished = "\url{www.staff.science.uu.nl/~beuke106/MRIcourse93.ps}"
}

@book {C,
    AUTHOR = {Carlson, James and M{\"u}ller-Stach, Stefan and Peters, Chris},
     TITLE = {Period mappings and period domains},
    SERIES = {Cambridge Studies in Advanced Mathematics},
    VOLUME = {85},
 PUBLISHER = {Cambridge University Press},
   ADDRESS = {Cambridge},
      YEAR = {2003},
}

@article {ChenWright,
    AUTHOR = {Chen, Dawei and Wright, Alex},
     TITLE = {The {WYSIWYG} compactification},
   JOURNAL = {J. Lond. Math. Soc. (2)},
  FJOURNAL = {Journal of the London Mathematical Society. Second Series},
    VOLUME = {103},
      YEAR = {2021},
    NUMBER = {2},
     PAGES = {490--515},
      ISSN = {0024-6107,1469-7750},
   MRCLASS = {14H15 (14H10 30F30 32G15 32G20)},
  MRNUMBER = {4230909},
MRREVIEWER = {Eugenii\ Shustin},
       DOI = {10.1112/jlms.12382},
       URL = {https://doi-org.ezproxy.library.wisc.edu/10.1112/jlms.12382},
}

@article {DM,
    AUTHOR = {Deligne, P. and Mostow, G. D.},
     TITLE = {Monodromy of hypergeometric functions and nonlattice integral
              monodromy},
   JOURNAL = {Inst. Hautes \'Etudes Sci. Publ. Math.},
  FJOURNAL = {Institut des Hautes \'Etudes Scientifiques. Publications
              Math\'ematiques},
    NUMBER = {63},
      YEAR = {1986},
     PAGES = {5--89},
}

@article {EKZbig,
    AUTHOR = {Eskin, Alex and Kontsevich, Maxim and Zorich, Anton},
     TITLE = {Sum of {L}yapunov exponents of the {H}odge bundle with respect
              to the {T}eichm\"uller geodesic flow},
   JOURNAL = {Publ. Math. Inst. Hautes \'Etudes Sci.},
  FJOURNAL = {Publications Math\'ematiques. Institut de Hautes \'Etudes
              Scientifiques},
    VOLUME = {120},
      YEAR = {2014},
     PAGES = {207--333},
}

@article {EMa,
    AUTHOR = {Eskin, Alex and Masur, Howard},
     TITLE = {Asymptotic formulas on flat surfaces},
   JOURNAL = {Ergodic Theory Dynam. Systems},
  FJOURNAL = {Ergodic Theory and Dynamical Systems},
    VOLUME = {21},
      YEAR = {2001},
    NUMBER = {2},
     PAGES = {443--478},
}

@article {EMZboundary,
    AUTHOR = {Eskin, Alex and Masur, Howard and Zorich, Anton},
     TITLE = {Moduli spaces of abelian differentials: the principal
              boundary, counting problems, and the {S}iegel-{V}eech
              constants},
   JOURNAL = {Publ. Math. Inst. Hautes \'Etudes Sci.},
  FJOURNAL = {Publications Math\'ematiques. Institut de Hautes \'Etudes
              Scientifiques},
    NUMBER = {97},
      YEAR = {2003},
     PAGES = {61--179},
}

@article {EM,
    AUTHOR = {Eskin, Alex and Mirzakhani, Maryam},
     TITLE = {Invariant and stationary measures for the {${\rm SL}(2,\Bbb
              R)$} action on moduli space},
   JOURNAL = {Publ. Math. Inst. Hautes \'Etudes Sci.},
  FJOURNAL = {Publications Math\'ematiques. Institut de Hautes \'Etudes
              Scientifiques},
    VOLUME = {127},
      YEAR = {2018},
     PAGES = {95--324},
      ISSN = {0073-8301,1618-1913},
   MRCLASS = {37D40 (22E50 37C85)},
  MRNUMBER = {3814652},
MRREVIEWER = {Thomas\ Ward},
       DOI = {10.1007/s10240-018-0099-2},
       URL = {https://doi.org/10.1007/s10240-018-0099-2},
}

@article {EMM,
    AUTHOR = {Eskin, Alex and Mirzakhani, Maryam and Mohammadi, Amir},
     TITLE = {Isolation, equidistribution, and orbit closures for the {${\rm
              SL}(2,\Bbb R)$} action on moduli space},
   JOURNAL = {Ann. of Math. (2)},
  FJOURNAL = {Annals of Mathematics. Second Series},
    VOLUME = {182},
      YEAR = {2015},
    NUMBER = {2},
     PAGES = {673--721},
}

@article {F,
    AUTHOR = {Forni, Giovanni},
     TITLE = {Deviation of ergodic averages for area-preserving flows on
              surfaces of higher genus},
   JOURNAL = {Ann. of Math. (2)},
    VOLUME = {155},
      YEAR = {2002},
    NUMBER = {1},
     PAGES = {1--103},
}

@article {Fi1,
    AUTHOR = {Filip, Simion},
     TITLE = {Splitting mixed {H}odge structures over affine invariant
              manifolds},
   JOURNAL = {Ann. of Math. (2)},
  FJOURNAL = {Annals of Mathematics. Second Series},
    VOLUME = {183},
      YEAR = {2016},
    NUMBER = {2},
     PAGES = {681--713},
}

@article {H,
    AUTHOR = {Hooper, W. Patrick},
     TITLE = {Grid graphs and lattice surfaces},
   JOURNAL = {Int. Math. Res. Not. IMRN},
  FJOURNAL = {International Mathematics Research Notices. IMRN},
      YEAR = {2013},
    NUMBER = {12},
     PAGES = {2657--2698},
}

@article {KZ,
    AUTHOR = {Kontsevich, Maxim and Zorich, Anton},
     TITLE = {Connected components of the moduli spaces of {A}belian
              differentials with prescribed singularities},
   JOURNAL = {Invent. Math.},
  FJOURNAL = {Inventiones Mathematicae},
    VOLUME = {153},
      YEAR = {2003},
    NUMBER = {3},
     PAGES = {631--678},
}

@incollection {K,
    AUTHOR = {Kontsevich, M.},
     TITLE = {Lyapunov exponents and {H}odge theory},
 BOOKTITLE = {The mathematical beauty of physics ({S}aclay, 1996)},
    SERIES = {Adv. Ser. Math. Phys.},
    VOLUME = {24},
     PAGES = {318--332},
 PUBLISHER = {World Sci. Publ., River Edge, NJ},
      YEAR = {1997},
}

@article {KS,
    AUTHOR = {Kenyon, Richard and Smillie, John},
     TITLE = {Billiards on rational-angled triangles},
   JOURNAL = {Comment. Math. Helv.},
  FJOURNAL = {Commentarii Mathematici Helvetici},
    VOLUME = {75},
      YEAR = {2000},
    NUMBER = {1},
     PAGES = {65--108},
}

@article {Lconn,
    AUTHOR = {Lanneau, Erwan},
     TITLE = {Connected components of the strata of the moduli spaces of
              quadratic differentials},
   JOURNAL = {Ann. Sci. \'Ec. Norm. Sup\'er. (4)},
  FJOURNAL = {Annales Scientifiques de l'\'Ecole Normale Sup\'erieure.
              Quatri\`eme S\'erie},
    VOLUME = {41},
      YEAR = {2008},
    NUMBER = {1},
     PAGES = {1--56},
}

@article {L,
    AUTHOR = {Lochak, Pierre},
     TITLE = {On arithmetic curves in the moduli spaces of curves},
   JOURNAL = {J. Inst. Math. Jussieu},
  FJOURNAL = {Journal of the Institute of Mathematics of Jussieu. JIMJ.
              Journal de l'Institut de Math\'ematiques de Jussieu},
    VOLUME = {4},
      YEAR = {2005},
    NUMBER = {3},
     PAGES = {443--508},
}

@article {Masur-cylinder,
    AUTHOR = {Masur, Howard},
     TITLE = {Closed trajectories for quadratic differentials with an
              application to billiards},
   JOURNAL = {Duke Math. J.},
  FJOURNAL = {Duke Mathematical Journal},
    VOLUME = {53},
      YEAR = {1986},
    NUMBER = {2},
     PAGES = {307--314},
      ISSN = {0012-7094,1547-7398},
   MRCLASS = {30F30 (30C60 58F17)},
  MRNUMBER = {850537},
MRREVIEWER = {H.\ Renelt},
       DOI = {10.1215/S0012-7094-86-05319-6},
       URL = {https://doi-org.ezproxy.library.wisc.edu/10.1215/S0012-7094-86-05319-6},
}

@article {MirWri,
    AUTHOR = {Mirzakhani, Maryam and Wright, Alex},
     TITLE = {The boundary of an affine invariant submanifold},
   JOURNAL = {Invent. Math.},
  FJOURNAL = {Inventiones Mathematicae},
    VOLUME = {209},
      YEAR = {2017},
    NUMBER = {3},
     PAGES = {927--984},
}

@article {MirWri2,
    AUTHOR = {Mirzakhani, Maryam and Wright, Alex},
     TITLE = {Full-rank affine invariant submanifolds},
   JOURNAL = {Duke Math. J.},
  FJOURNAL = {Duke Mathematical Journal},
    VOLUME = {167},
      YEAR = {2018},
    NUMBER = {1},
     PAGES = {1--40},
}

@article {M,
    AUTHOR = {M{\"o}ller, Martin},
     TITLE = {Variations of {H}odge structures of a {T}eichm\"uller curve},
   JOURNAL = {J. Amer. Math. Soc.},
  FJOURNAL = {Journal of the American Mathematical Society},
    VOLUME = {19},
      YEAR = {2006},
    NUMBER = {2},
     PAGES = {327--344 (electronic)},
}

@article {N,
    AUTHOR = {Nguyen, Duc-Manh},
     TITLE = {Parallelogram decompositions and generic surfaces in {$
              H^{\rm hyp}(4)$}},
   JOURNAL = {Geom. Topol.},
  FJOURNAL = {Geometry \& Topology},
    VOLUME = {15},
      YEAR = {2011},
    NUMBER = {3},
     PAGES = {1707--1747},
}

@article {T,
    AUTHOR = {Takeuchi, Kisao},
     TITLE = {Arithmetic triangle groups},
   JOURNAL = {J. Math. Soc. Japan},
  FJOURNAL = {Journal of the Mathematical Society of Japan},
    VOLUME = {29},
      YEAR = {1977},
    NUMBER = {1},
     PAGES = {91--106},
}

@article {V,
    AUTHOR = {Veech, W. A.},
     TITLE = {Teichm\"uller curves in moduli space, {E}isenstein series and
              an application to triangular billiards},
   JOURNAL = {Invent. Math.},
  FJOURNAL = {Inventiones Mathematicae},
    VOLUME = {97},
      YEAR = {1989},
    NUMBER = {3},
     PAGES = {553--583},
}

@article {Vo,
    AUTHOR = {Vorobets, Ya. B.},
     TITLE = {Plane structures and billiards in rational polygons: the
              {V}eech alternative},
   JOURNAL = {Uspekhi Mat. Nauk},
  FJOURNAL = {Rossi\u\i skaya Akademiya Nauk. Moskovskoe Matematicheskoe
              Obshchestvo. Uspekhi Matematicheskikh Nauk},
    VOLUME = {51},
      YEAR = {1996},
    NUMBER = {5(311)},
     PAGES = {3--42},
}

@article {Ward,
    AUTHOR = {Ward, Clayton C.},
     TITLE = {Calculation of {F}uchsian groups associated to billiards in a
              rational triangle},
   JOURNAL = {Ergodic Theory Dynam. Systems},
  FJOURNAL = {Ergodic Theory and Dynamical Systems},
    VOLUME = {18},
      YEAR = {1998},
    NUMBER = {4},
     PAGES = {1019--1042},
}

@book {Y,
    AUTHOR = {Yoshida, Masaaki},
     TITLE = {Fuchsian differential equations},
 PUBLISHER = {Friedr. Vieweg \& Sohn},
   ADDRESS = {Braunschweig},
      YEAR = {1987},
}

@incollection {Z,
    AUTHOR = {Zorich, Anton},
     TITLE = {Flat surfaces},
 BOOKTITLE = {Frontiers in number theory, physics, and geometry. {I}},
     PAGES = {437--583},
 PUBLISHER = {Springer},
   ADDRESS = {Berlin},
      YEAR = {2006},
}
\bibliographystyle{amsalpha}
\end{document}